\allowdisplaybreaks \numberwithin{equation}{section}
\numberwithin{equation}{section}
\newtheorem{theorem}{Theorem}[section]
\newtheorem{proposition}[theorem]{Proposition}
\newtheorem{lemma}[theorem]{Lemma}
\theoremstyle{definition}
\newenvironment{customthm}[1]
{\innercustomthm}
{\endinnercustomthm}
\theoremstyle{remark}
\newtheorem{remark}[theorem]{Remark}
\begin{document}

\title[Nearly parallel helical vortices]
{Co-rotating nearly parallel helical vortices with small cross-section in 3D incompressible Euler equations}

\author{Daomin Cao, Jie Wan}
	
\address{State Key Laboratory of Mathematical Sciences, Academy of Mathematics and Systems Science, Chinese Academy of Sciences, Beijing 100190, and University of Chinese Academy of Sciences, Beijing 100049,  P.R. China}
\email{dmcao@amt.ac.cn}
\address{School of Mathematics and Statistics, Beijing Institute of Technology, Beijing 100081,  P.R. China}
\email{wanjie@bit.edu.cn}

%\footnotetext[0]{Corresponding author}

%\thanks{*  Corresponding author.}

\begin{abstract}
In this article, we consider clustered solutions to a semilinear elliptic equation in divergence form
\begin{equation*}
\begin{cases}
-\varepsilon^2\text{div}(K(x)\nabla u)=  (u-q|\ln\varepsilon|)^{p}_+,\ \ &x\in \Omega,\\
u=0,\ \ &x\in\partial \Omega
\end{cases}
\end{equation*}
%Using the finite-dimensional reduction method,
for small values of $ \varepsilon $. Using Green’s function of the elliptic operator  $ -\text{div}(K(x)\nabla) $ and finite-dimensional reduction method, we prove that there exist  clustered solutions with cluster point $ 0 $ and cluster distance $ |\ln\varepsilon| ^{-\frac{1}{2}} $ whose small-structure is governed by some functional $ H_N $ determined by $ K $ and $ q $. As an application, we prove the existence of traveling-rotating helical vorticity fields to 3D incompressible Euler equations in infinite cylinders, whose support sets consist  of  helical tubes with small cross-section of radius $ \varepsilon $ and arbitrary circulation $ \kappa $ and concentrates near ``$ 2N $'' and ``$ 2N+1 $'' type of co-rotating helical solutions of nearly parallel vortex filaments model as $ \varepsilon\to0 $, which justifies the result in Klein, Majda and Damodaran [1995, JFM] and generalizes results in Guerra and Musso [arxiv: 2502.01470]. Several kinds of solutions such as ``2 asymmetric'', ``$ 2\times2 $ asymmetric'' and ``$ 2\times2+1 $ asymmetric'' type of co-rotating helical filaments are also considered.

%In this article, we prove the existence of clustered traveling-rotating helical vortex solutions to 3D Euler equations, whose   support set  consists of $ arbitrary $ many  helical tubes with small cross-section of radius $ \varepsilon $ and concentrates near a single helix as $ \varepsilon\to0 $. %we first consider  solutions to % helical vortices with small cross-section to the 3D incompressible Euler equations in infinite pipes. By considering
%The key is to prove the existence of clustered solutions to a semilinear elliptic problem in divergence form
%\begin{equation*}
%\begin{cases}
%-\varepsilon^2\text{div}(K(x)\nabla u)=  (u-q|\ln\varepsilon|)^{p}_+,\ \ &x\in \Omega,\\
%u=0,\ \ &x\in\partial \Omega
%\end{cases}
%\end{equation*}
%%Using the finite-dimensional reduction method,
%for small values of $ \varepsilon $. By using the variational method, we prove that  there exists a family of clustered solutions which have $ arbitrary $ many bubbles and collapse  into a certain critical point  of  $ q^2\sqrt{\det K} $ as $ \varepsilon\to0. $ %Then as an application, 
\vspace{0.3cm}

\textbf{Keywords:} Incompressible Euler equation;  Helical symmetry; Semilinear elliptic equations; Clustered solution; Lyapunov-Schmidt reduction method.

\vspace{0.3cm}
\textbf{MSC:} Primary 76B47\,\,\,\,\,\,\,\,Secondary 76B03, 35A02, 35Q31
\end{abstract}

\maketitle

\section{Introduction and main results}
The motion of 3D incompressible ideal flow is governed by   Euler
equations
\begin{equation}\label{Euler eq}
\begin{cases}
\partial_t\mathbf{v}+(\mathbf{v}\cdot \nabla)\mathbf{v}=-\nabla P,\ \ &D\times (0,T),\\
\nabla\cdot \mathbf{v}=0,\ \ &D\times (0,T),\\
\mathbf{v}\cdot \mathbf{n}=0,\ \ &\partial D\times (0,T),
\end{cases}
\end{equation}
where $ D\subseteq \mathbb{R}^3 $, $ \mathbf{v}=(v_1,v_2,v_3) $ is the velocity field, $ P$ is the scalar pressure, $ \mathbf{n} $ is the outward unit normal to $ \partial D $. % The third equation means that the net flux of velocity across the boundary is zero.

For velocity field $\mathbf{v}$, we define the  vorticity field $ \mathbf{w}=\nabla\times \mathbf{v} $.  Then $ \mathbf{w} $ satisfies vorticity equations  
\begin{equation}\label{Euler eq2}
\begin{cases}
\partial_t\mathbf{w}+(\mathbf{v}\cdot \nabla)\mathbf{w}=(\mathbf{w}\cdot \nabla)\mathbf{v},\\
\nabla\times \mathbf{v}=\mathbf{w}.
\end{cases}
\end{equation}

In this paper, we consider the existence of vorticity field of 3D incompressible Euler equations \eqref{Euler eq} concentrating in a small neighborhood of  nearly parallel vortex filaments. The study of the existence   of vorticity field of Euler equations concentrating near some curves  has a long history, traced back to the classical work of Helmholtz \cite{He}. % and Kelvin \cite{KH}.
 Consider the motion of a vortex tube with a small cross-section of radius $ \varepsilon $ and a fixed circulation $ \kappa $, uniformly distributed around one curve $ \gamma(t) $. Da Rios  and Levi-Civita  \cite{DR, LC} formally found that $ \gamma $ asymptotically obeys a law of the form 
\begin{equation}\label{1003}
\partial_t \gamma=\frac{\kappa}{4\pi}|\ln\varepsilon|(\partial_s\gamma\times\partial_{ss}\gamma)=\frac{\kappa\bar{K}}{4\pi}|\ln\varepsilon|\mathbf{b}_{\gamma(t)},
\end{equation}
%Basically, it concerns the motion of vorticity field of \eqref{Euler eq} which cincentrates 
where $ s $ is the parameter of arclength, $ \bar{K} $ is the local curvature and $ \mathbf{b}_{\gamma(t)}  $  is the unit binormal vector. $ \gamma $ and \eqref{1003} are  called the vortex filament and the vortex filament equation respectively.

From mathematical justification, Jerrard and Seis \cite{JS} first %gave a precise form to Da Rios’ computation under some mild conditions on a solution to \eqref{Euler eq2} which remains suitably concentrated around an evolving vortex filament. Their result 
 showed that under some assumptions on   solutions $ \mathbf{w}_\varepsilon $ of \eqref{Euler eq2}, there holds in the sense of distribution 
\begin{equation}\label{1005}
\mathbf{w}_\varepsilon(\cdot,t)-\kappa \delta_{\gamma(t)}\mathbf{t}_{\gamma(t)}\rightharpoonup0,\ \ \text{as}\ \varepsilon\to0,
\end{equation}
where $ \gamma(t) $ satisfies \eqref{1003}, $ \mathbf{t}_{\gamma } $ is the tangent unit vector of $ \gamma $ and $ \delta_{\gamma } $ is the uniform Dirac measure on the curve $ \gamma $. For  any curve $ \gamma $ satisfying \eqref{1003}, whether there are vorticity fields of \eqref{Euler eq} concentrating near $ \gamma $ in sense of \eqref{1005} is   well-known as the vortex filament conjecture, which is unsolved except for   several kinds of special curves: the straight lines, the traveling circles and the traveling-rotating helices, see, e.g., \cite{CLW,CW,CW1,DDMW,DDMW2,DLM,Fr1,FB,GZ,MB} and reference therein.

As for the motion of multiple filaments, Klein, Majda and Damodaran \cite{KMD} have formally derived a simplified asymptotic law
of the evolution of several  nearly parallel vortex filaments. Consider the motion of $ N $ slender vortex tube which are all nearly parallel to the $ x_3 $-axis, whose centerlines are curves parameterized as $ \tilde{X}_{j,\varepsilon}(s,t) $ in the complex plane $ \mathbb{C} $, circulations are $ \kappa_j $ and cross-sections are of radius $ \varepsilon $ for $ j=1,\cdots,N $. The wave-length of each vortex tube is of the order $ O(1) $ and the mutual distance of any two vortex tubes is of the order $ O\left (|\ln\varepsilon|^{-\frac{1}{2}}\right ) $, i.e., $ dist(\tilde{X}_{i,\varepsilon}, \tilde{X}_{j,\varepsilon}) =O\left (|\ln\varepsilon|^{-\frac{1}{2}}\right )$ for $ i\neq j $. Denote $   \tau=|\ln\varepsilon|t $ and $ X_j(s, \tau)=\sqrt{|\ln\varepsilon|}\tilde{X}_{j,\varepsilon}(s, t) $. Then $ X_j(s, \tau) $ asymptotically obeys  (see (2.20) in \cite{KMD})
\begin{equation}\label{NP vor fi} 
\partial_\tau X_j=\frac{1}{4\pi}\left(\mathbf{i}\alpha_j\kappa_j\partial_{ss}X_j+2\mathbf{i}\sum_{k\neq j}\kappa_k\frac{X_j-X_k}{|X_j-X_k|^2} \right),
\end{equation}
where $ \mathbf{i} $ is  the imaginary unit and $ \alpha_j $ are structure constants of the vortex core, which are generally assumed to be 1.

%It governs the motion of N ≥ 2 filaments and takes into consideration the pairwise interaction between the filaments along with an approximation for motion by self-induction. The model is

%System \eqref{NP vor fi} has been studied for its own (see e.g. \cite{BM,BFM,CGY,KPV,KVG,LM}), in particular its wellposedness and the possibility of colliding filaments. Nevertheless, the justification of the model itself as a limit from a classical fluid mechanics model such as the Euler equation has so far only been obtained formally through matched asymptotic. 
System \eqref{NP vor fi} has been extensively studied  (see, for example, \cite{BM,BFM,CGY,KPV,KVG,LM}), particularly concerning its well-posedness and the potential for filament collisions. A natural question arises as to whether a rigorous justification can be provided to establish the model itself as a limit from  the Euler equations, thereby  giving a  mathematical explanation of the model proposed by \cite{KMD}.

Recently, some progress has been made  for this problem when  filaments $ X_j(s, \tau) $ are some co-rotating helices. In \cite{GM2}, Guerra and Musso considered two special type of  helices $ X_j(s, \tau) $ satisfying \eqref{NP vor fi}: ``$ N $  vortices of equal strength $ 8\pi $" placed at each vertex of a regular $ N $-polygon, and ``$ N + 1 $ vortices   of equal strength $ 8\pi $" $ N $ of them placed at the corners of a regular $ N $-polygon with the $ N + 1 $ vortex   located at the center of vorticity. By considering the existence of solutions to a 2D semilinear elliptic equations of divergence type
\begin{equation*} 
\begin{split}
-\text{div}\left(K_H(x)\nabla v\right)=f_\varepsilon(v) ,\ \ \ \ \text{in}\ \ \mathbb{R}^2,
\end{split}
\end{equation*}
where $ K_H $ is some positive definite matrix (defined by \eqref{coef matrix}) and the profile function $ f_\varepsilon $ is some exponential type function which is chosen properly, the authors constructed vorticity fields of \eqref{Euler eq} with the orthogonality condition whose vortex sets tend to rescaled versions of the   the above two kinds of co-rotating helices as $ \varepsilon\to0. $ Note that by the choice of $ f_\varepsilon $, the associated circulation of each component of vorticity field must be $ 8\pi $ and  support sets of vorticity field are the whole space, rather than a collection of helical vortex tubes with small cross-section. So as $ \varepsilon\to0 $, the vorticity fields tend to filaments of \eqref{NP vor fi} in distributional sense.

Our goal of this paper is to establish the existence of a family of solutions of \eqref{Euler eq}   with vorticities concentrating near co-rotating helical filaments of  \eqref{NP vor fi} with a more general setting. First, as for the two configurations considered in \cite{GM2}, we prove the existence of a family of solutions with vorticities $ \textbf{w}_\varepsilon $ concentrating near helical filaments as $ \varepsilon\to0 $, whose cross-sections are   compact sets with radius $ \varepsilon $ and circulations are arbitrary rather than $8\pi$. So the concentration is in both distributional and topological sense, which coincides with the model deduced in \cite{KMD}. Second, we show the existence of a family of solutions with vorticities $ \textbf{w}_\varepsilon $ concentrating near 2 asymmetric co-rotating helical filaments of \eqref{NP vor fi} with small cross-section as $ \varepsilon\to0 $. Both circulations and  distances between helical tubes and $ x_3 $-axis are different. Some other asymmetric co-rotating helical filaments of \eqref{NP vor fi} are also considered. Following the ansatz in \cite{CW,CW1}, we assume that the fluid domain is an infinitely long cylinder, that is, $ D=B_{R^*}(0)\times\mathbb{R} $ for some $ R^*>0. $

%$ N + 1 $ vortices N of them of equal strength set at the corners of a regular N-polygon with the $ N + 1 $ vortex of arbitrary strength located at the center of vorticity.

We define co-rotating nearly parallel helical filaments as follows. Let $ N\geq 2 $ be an integer. We say $ X_j(s, \tau) $, for $ j=1,\cdots,N $,  co-rotating helical filaments of model \eqref{NP vor fi} with pitch $ h>0 $ and angular velocity $ \alpha\in\mathbb{R} $, if $ X_j(s, \tau)=Z_j(\tau)e^{\mathbf{i}\frac{s}{h}} $ for some $ Z_j(\tau) $ and $   Z_j(\tau)=  Z_j(0)e^{-\mathbf{i}\alpha\tau} $ for $ j=1,\cdots,N $. %Consider $ N $ helical solutions of \eqref{NP vor fi} whose pitch is   $ h>0, $ i.e., $ X_j(s, \tau)=Z_j(\tau)e^{\mathbf{i}\frac{s}{h}} $. Then 
Note that from \eqref{NP vor fi}, $ Z_j $ satisfies for $ j=1,\cdots,N $
\begin{equation}\label{NP heli vor}
\partial_\tau Z_j=\frac{1}{4\pi}\left(-\frac{\mathbf{i}\kappa_j}{h^2}Z_j+2\mathbf{i}\sum_{k\neq j}\kappa_k\frac{Z_j-Z_k}{|Z_j-Z_k|^2} \right).
\end{equation} 
%The co-rotating solution $   Z_j $ of \eqref{NP heli vor} means that there exists $ \alpha\in \mathbb{R} $ such that $   Z_j(\tau)=  Z_j(0)e^{-\mathbf{i}\alpha\tau} $ for $ j=1,\cdots,N. $

Now we give some exact co-rotating helical solutions of \eqref{NP vor fi} and state our main results.

\textbf{Case 1:} ``$ N $  vortices of equal strength $ \kappa $", which was considered in \cite{GM2}. Let $ N\geq 2 $, $ \kappa_j=\kappa $ for   $ \kappa>0 $. Then 
\begin{equation*}
X_{j}(s,\tau)=Z_j(\tau)e^{\mathbf{i}\frac{s}{h}}=r_*e^{-\mathbf{i}\alpha\tau}e^{\mathbf{i}\frac{s}{h}}e^{\mathbf{i}\frac{2\pi (j-1)}{N}},\ \ \ \ \ \ \ \ \  j=1,\cdots,N, 
\end{equation*}
satisfies \eqref{NP vor fi}, where $ \alpha=\frac{\kappa}{4\pi}\left( \frac{1}{h^2}-\frac{N-1}{r_*^2}\right)  $, see also    Appendix \ref{lemA1}. This corresponds to   nearly parallel vortex filaments being of the form
\begin{equation}\label{fila 1}
\tilde{X}_{j,\varepsilon}(s, t)=\frac{r_*}{\sqrt{|\ln\varepsilon|}}e^{-\mathbf{i}\alpha|\ln\varepsilon|t}e^{\mathbf{i}\frac{s}{h}}e^{\mathbf{i}\frac{2\pi (j-1)}{N}},\ \ \ \ j=1,\cdots,N.
\end{equation}
We have 
\begin{theorem}\label{thm01}
	Let $ R^*>0 $, $ h>0 $ and $ N\geq 2 $ be an integer. For $ r_*\in (0,R^*) , \kappa>0 $, let $ \tilde{X}_{j,\varepsilon}$   $(j=1,\cdots,N)$ be $ N $ filaments satisfying \eqref{fila 1}. Then  there exists $ \varepsilon_0>0 $, such that for every $ \varepsilon\in(0,\varepsilon_0] $, \eqref{Euler eq2} has a  family of solutions with vorticities  $ \mathbf{w}_\varepsilon\in C^1(B_{R^*}(0)\times \mathbb{R}\times\mathbb{R}^+) $ satisfying in distributional sense
	\begin{equation*} 
	\mathbf{w}_\varepsilon(x,t)- \kappa\sum_{j=1}^N\delta_{\tilde{X}_{j,\varepsilon}}\textbf{t}_{\tilde{X}_{j,\varepsilon}}\rightharpoonup0\ \ \ \ \text{as}\ \varepsilon\to0.
	\end{equation*}
	Moreover, if we set $ A_{\varepsilon,i}=supp\left (\mathbf{w}_\varepsilon(x_1,x_2,0,0)\right ) \cap B_{\rho_0|\ln\varepsilon|^{-\frac{1}{2}}}\left ( \tilde{X}_{i,\varepsilon}(0,0)\right )  $, where $ \rho_0>0 $ is a small constant, then there exist constants $ R_1,R_2>0 $ independent of $ \varepsilon $  such that
	\begin{equation*}
	R_1\varepsilon \leq diam(A_{\varepsilon,i})\leq R_2\varepsilon .
	\end{equation*}
\end{theorem}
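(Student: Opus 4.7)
The plan is to pass from 3D to a 2D semilinear elliptic equation via the helical symmetry and then to apply the general clustered-solution construction alluded to in the abstract. Following the ansatz of \cite{CW,CW1}, I would seek a vorticity $\mathbf{w}_\varepsilon$ that is helical with pitch $h$ and traveling-rotating in time with angular velocity $\alpha$; under this symmetry the whole field is determined by a single scalar helical stream function $\psi$ on the cross-sectional disk $B_{R^*}(0)\subset\mathbb{R}^2$, and the Euler equations reduce to
\begin{equation*}
-\varepsilon^2\,\text{div}\bigl(K_H(x)\nabla \psi\bigr)=\bigl(\psi-q|\ln\varepsilon|\bigr)_+^{\,p}\ \text{ in }B_{R^*}(0),\qquad \psi=0\ \text{ on }\partial B_{R^*}(0),
\end{equation*}
where $K_H$ is the positive-definite matrix induced by the helical frame and $q$ is an explicit constant depending on $\alpha,\kappa,h$. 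This places the problem into the framework of the clustered-solution theorem for the equation displayed in the abstract.

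Next, I would fix the clustered ansatz with cluster point $0$ and $N$ centers
\begin{equation*}
P_{j,\varepsilon}=\frac{r_{*}}{\sqrt{|\ln\varepsilon|}}\,e^{\mathbf{i} 2\pi(j-1)/N},\qquad j=1,\dots,N,
\end{equation*}
using as building blocks the radial solutions $U_{\varepsilon,j}$ of the constant-coefficient limit problem $-\varepsilon^2\Delta U=(U-a)_+^{\,p}$, scaled so that each bump is compactly supported in a disk of radius $O(\varepsilon)$ and carries the prescribed circulation $\kappa$. The approximate solution is $\Psi_\varepsilon=\sum_{j=1}^N U_{\varepsilon,j}+\text{corrector}$, where the corrector is built from the Green's function $G_K$ of $-\text{div}(K_H\nabla\cdot)$ and encodes both the deviation of $K_H$ from the identity and the Dirichlet boundary condition on $\partial B_{R^*}(0)$.

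The core of the argument is a Lyapunov--Schmidt reduction. Linearizing at $\Psi_\varepsilon$ one obtains an operator whose approximate kernel is $2N$-dimensional, spanned by the translations $\partial_{x_1}U_{\varepsilon,j},\partial_{x_2}U_{\varepsilon,j}$. The main technical obstacle is proving uniform invertibility of this linearization on the orthogonal complement of the approximate kernel in an appropriate weighted norm, in spite of the fact that the inter-cluster distance $|\ln\varepsilon|^{-1/2}$ is only logarithmically smaller than the ambient scale; this relies on the nondegeneracy of the radial profile and on sharp estimates for the interaction terms $G_K(P_{j,\varepsilon},P_{k,\varepsilon})$, which depend logarithmically on $\varepsilon$. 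A contraction-mapping argument then produces a solution $\Psi_\varepsilon+\phi_\varepsilon$ of the equation modulo the $2N$-dimensional kernel, with $\phi_\varepsilon$ small in the weighted norm.

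Finally, the reduced problem asks that the centers $(P_{1,\varepsilon},\dots,P_{N,\varepsilon})$ be a critical point of a finite-dimensional functional $H_N$ which, after inserting the scaling $P_j=|\ln\varepsilon|^{-1/2}Z_j$ and using the logarithmic singularity of $G_K$, agrees to leading order with the Hamiltonian of the Klein--Majda--Damodaran system \eqref{NP heli vor}. The regular $N$-gon configuration at radius $r_*$ is a critical point of this Hamiltonian precisely when $\alpha=\frac{\kappa}{4\pi}\bigl(\frac{1}{h^2}-\frac{N-1}{r_*^2}\bigr)$ (cf.\ Appendix~\ref{lemA1}), and its Hessian, once the natural rotational invariance is quotiented out, can be checked to be nondegenerate; this allows the critical point to be located and furnishes the vanishing of the Lagrange multipliers. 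Undoing the helical reduction and tracking the support of $\psi-q|\ln\varepsilon|$ yields a vorticity field $\mathbf{w}_\varepsilon\in C^1$ whose restriction to each cross-section consists of $N$ compact components of diameter comparable to $\varepsilon$, giving the bound $R_1\varepsilon\le \text{diam}(A_{\varepsilon,i})\le R_2\varepsilon$, while the prescribed circulation and vanishing cross-section imply the distributional convergence $\mathbf{w}_\varepsilon-\kappa\sum_j\delta_{\tilde X_{j,\varepsilon}}\mathbf{t}_{\tilde X_{j,\varepsilon}}\rightharpoonup 0$.
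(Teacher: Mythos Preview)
Your overall architecture matches the paper's: helical reduction to a 2D divergence-form elliptic equation on the disk, Lyapunov--Schmidt around a sum of rescaled radial profiles, finite-dimensional reduction to a functional governed by the Klein--Majda--Damodaran Hamiltonian. Two points, however, need correction.

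First, $q$ is \emph{not} a constant. Passing to the frame rotating with angular velocity $\alpha|\ln\varepsilon|$ as in \eqref{104}--\eqref{rot eq-1} produces the centrifugal term $\frac{\alpha}{2}|x|^2|\ln\varepsilon|$, so the correct choice is $q(x)=\frac{\alpha}{2}|x|^2+\beta$, where $\beta=\frac{\kappa}{2\pi}$ is a free shift fixing the circulation at $\kappa$ (Lemma~\ref{circulation}). This $|x|^2$ dependence is what makes the reduced functional solve correctly: the quadratic self-term in $H_N$ involves $\nabla^2\bigl(q^2\sqrt{\det K_H}\bigr)(0)$, and the paper computes $\bigl(q^2\sqrt{\det K_H}\bigr)''(0)=\frac{\beta(2\alpha h^2-\beta)}{h^2}$. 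The cross term $2\alpha\beta$ coming from the $|x|^2$ part of $q$ is indispensable for the critical-radius equation to reproduce the KMD relation $\alpha=\frac{\kappa}{4\pi}\bigl(\frac{1}{h^2}-\frac{N-1}{r_*^2}\bigr)$; with constant $q$ only the $-\beta^2/h^2$ contribution from $\sqrt{\det K_H}$ survives and the reduced problem does not have $r_*$ as a critical point.

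Second, on the rotational degeneracy: the regular $N$-gon is never a \emph{strict} extremum of $H_N$ in the full $2N$-dimensional space, so the general Theorem~\ref{thm0} does not apply as stated, and your appeal to ``nondegeneracy once rotational invariance is quotiented out'' would require a genuine argument you have not given. The paper sidesteps this entirely by building the $N$-fold symmetry into the configuration space: one restricts to $z_j=e^{\mathbf{i}2\pi(j-1)/N}z_1$ (the set $\Lambda_{\varepsilon,N,1}$ in Section~7) and, using that $K_H$, $q$, and $B_{R^*}(0)$ are all rotation-invariant, observes that the reduced functional $K_\varepsilon$ depends only on the single scalar $r=|z_1|\sqrt{|\ln\varepsilon|}$. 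Then one just checks (Lemma~\ref{lm701}) that the one-variable leading profile $H_1(r)=\frac{N}{2}\bigl(q^2\sqrt{\det K_H}\bigr)''(0)\,r^2+N(N-1)\bigl(q^2\det K_H\bigr)(0)\ln r$ has a strict maximum at $r=r_*$, which is an elementary computation.
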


\begin{remark}
In \cite{GM2}, the authors proved the existence of $ \textbf{w}_\varepsilon $ of \eqref{Euler eq} concentrating near $ \eqref{fila 1} $ in sense of distribution, when $ \kappa=8\pi $. In contrast to \cite{GM2}, the circulation of vorticity field in our construction is allowed to be any positive constant, and the support set of each component of $ \textbf{w}_\varepsilon $ is a slender helical tube with cross-section of radius $ \varepsilon $. Our result coincides with formal computational results in \cite{KMD}.
\end{remark}

\textbf{Case 2:} ``$ N+1 $ vortices", $ N $ of them having identical circulation and placed  at each vertex of a regular $ N $-polygon   and the $ (N +1) $th vortex of arbitrary strength
at the center of vorticity. Let  $ N\geq 2 $, $ \kappa_0=\mu $, $ \kappa_j=\kappa $ for $ j=1,\cdots,N $ for   $ \mu, \kappa>0 $. Then 
\begin{equation*} 
X_0(s,\tau)=0,\ \ \ \ X_{j}(s,\tau)=r_*e^{-\mathbf{i}\alpha\tau}e^{\mathbf{i}\frac{s}{h}}e^{\mathbf{i}\frac{2\pi (j-1)}{N}},\ \ \ \ \ \ \  j=1,\cdots,N,
\end{equation*}
satisfies \eqref{NP vor fi}, where $ \alpha=\frac{\kappa}{4\pi}\left( \frac{1}{h^2}-\frac{N-1}{r_*^2}\right)-\frac{\mu}{2\pi r_*^2}  $. This corresponds to  
\begin{equation}\label{fila 2}
\tilde{X}_{0,\varepsilon}(s,t)=0,\ \ \  \tilde{X}_{j,\varepsilon}(s,t)=\frac{r_*}{\sqrt{|\ln\varepsilon|}}e^{-\mathbf{i}\alpha|\ln\varepsilon|t}e^{\mathbf{i}\frac{s}{h}}e^{\mathbf{i}\frac{2\pi (j-1)}{N}},
 \ \ \ \ j=1,\cdots,N.
\end{equation}
We have 
\begin{theorem}\label{thm02}
	Let $ R^*>0 $, $ h>0 $ and $ N\geq 2 $ be an integer. For $ r_*\in (0,R^*) , \kappa>0, \mu>0 $, let $ \tilde{X}_{j,\varepsilon} $ $(j=0,1,\cdots,N)$ be $ N+1 $ filaments satisfying \eqref{fila 2}. Then  there exists $ \varepsilon_0>0 $, such that for every $ \varepsilon\in(0,\varepsilon_0] $, \eqref{Euler eq2} has a family of solutions with vorticities  $ \mathbf{w}_\varepsilon\in C^1(B_{R^*}(0)\times \mathbb{R}\times\mathbb{R}^+) $ satisfying in distributional sense
	\begin{equation*} 
	\mathbf{w}_\varepsilon(x,t)-\mu\delta_{\tilde{X}_{0,\varepsilon}}\textbf{t}_{\tilde{X}_{0,\varepsilon}}- \kappa\sum_{j=1}^N\delta_{\tilde{X}_{j,\varepsilon}}\textbf{t}_{\tilde{X}_{j,\varepsilon}}\rightharpoonup0\ \ \ \ \text{as}\ \varepsilon\to0.
	\end{equation*}
	Moreover, if we set $ A_{\varepsilon,i}=supp\left (\mathbf{w}_\varepsilon(x_1,x_2,0,0)\right ) \cap B_{\rho_0|\ln\varepsilon|^{-\frac{1}{2}}}\left ( \tilde{X}_{i,\varepsilon}(0,0)\right )  $, then there exist constants $ R_1,R_2>0 $ independent of $ \varepsilon $  such that
	\begin{equation*}
	R_1\varepsilon \leq diam(A_{\varepsilon,i})\leq R_2\varepsilon .
	\end{equation*}
\end{theorem}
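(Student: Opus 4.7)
The plan is to mirror the argument for Theorem \ref{thm01} while accommodating one extra bubble with an independent mass parameter to represent the central vortex. Seek a traveling-rotating, helically symmetric vorticity field rotating at the angular velocity
\[
\alpha=\frac{\kappa}{4\pi}\Big(\frac{1}{h^2}-\frac{N-1}{r_*^2}\Big)-\frac{\mu}{2\pi r_*^2}
\]
stated in Case 2. Using the helical change of coordinates of \cite{CW,CW1}, the ansatz reduces the vorticity equation \eqref{Euler eq2} to the stationary semilinear elliptic problem on the cross-section $B_{R^*}(0)$,
\[
-\varepsilon^2\,\text{div}(K_H(x)\nabla u)=(u-q|\ln\varepsilon|)^p_+,\qquad u\big|_{\partial B_{R^*}(0)}=0,
\]
with a smooth positive-definite matrix $K_H$ depending on $h$ and $\alpha$. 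The clustered-solution result advertised in the abstract is then specialized to a cluster of $N+1$ points, and the diameter estimates on $A_{\varepsilon,i}$ will follow from the explicit structure of the bubbles.

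To build the approximate solution, fix $\xi_0=0$ and $\xi_j=r_*e^{\mathbf{i}2\pi(j-1)/N}$ for $j=1,\dots,N$, and allow perturbed rescaled locations $\zeta_j=|\ln\varepsilon|^{-1/2}\xi_j+o(|\ln\varepsilon|^{-1/2})$. Because the central circulation $\mu$ and the outer circulation $\kappa$ may differ, I would introduce \emph{two} distinct Liouville-type profiles: the outer bubbles $U_{j,\varepsilon}$ ($1\le j\le N$) share a mass parameter matched to $\kappa$, while the central bubble $U_{0,\varepsilon}$ carries a mass parameter tuned to deliver circulation $\mu$. Take the superposition $U_\varepsilon=\sum_{j=0}^{N}U_{j,\varepsilon}$ as the ansatz and solve the orthogonal projection of the equation by Lyapunov-Schmidt reduction against the $2(N+1)$-dimensional kernel generated by the translation modes of the bubbles.

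The reduced problem then amounts to locating critical points of a functional $H_{N+1}(\zeta_0,\dots,\zeta_N)$ whose leading-order expansion, via the Green's function $G$ of $-\text{div}(K_H\nabla)$ on $B_{R^*}(0)$, has the form
\[
H_{N+1}=\frac{1}{2}\sum_{j\ne k}\kappa_j\kappa_k\,G(\zeta_j,\zeta_k)-\frac{\alpha}{2}\sum_{j=0}^{N}\kappa_j|\zeta_j|^2+o(1),
\]
with $\kappa_0=\mu$ and $\kappa_j=\kappa$ for $j\ge1$; the Robin/boundary contributions are subdominant because all $|\zeta_j|=O(|\ln\varepsilon|^{-1/2})\ll R^*$. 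The Euler-Lagrange equations of $H_{N+1}$ are precisely the stationarity conditions for \eqref{NP heli vor} in the rotating frame, and the configuration $\zeta_0=0$, $\zeta_j=r_*e^{\mathbf{i}2\pi(j-1)/N}$ satisfies them with the chosen $\alpha$: the outer ring is in relative equilibrium by the standard polygon computation (Appendix \ref{lemA1}), and the central vortex is stationary because the net velocity induced by a regular $N$-polygon at its centroid vanishes. Inverting the reduction at this critical point yields the genuine solution $u_\varepsilon$, hence $\mathbf{w}_\varepsilon$, together with the claimed distributional convergence.

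The main obstacle is the \emph{non-degeneracy} of this critical point modulo the residual $SO(2)$ rotational symmetry. The Hessian of $H_{N+1}$ at the polygon-plus-center configuration has block structure mixing the central variable $\zeta_0$ with the outer variables $\zeta_1,\dots,\zeta_N$; decomposing along the irreducible representations of the dihedral group $D_N$ acting on the outer ring, $\zeta_0$ couples only to the translation sector, so one must check that neither this coupled $2\times 2$ block (as a function of the ratio $\mu/\kappa$ and of $r_*,h$) nor the remaining purely-outer sectors (breathing, polygonal vibrations) admit unexpected zero eigenvalues. A secondary difficulty is controlling the interaction error between bubbles of \emph{different} mass parameters $\mu$ and $\kappa$, which generates cross-terms absent in the uniform-strength setting of Theorem \ref{thm01}; handling these uniformly in $\varepsilon$ via sharp estimates of $G$ near the cluster point $0$ is precisely what the finite-dimensional reduction machinery is designed to absorb.
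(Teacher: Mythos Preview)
Your overall framework---helical reduction to a planar divergence-form problem, multi-bubble ansatz, Lyapunov--Schmidt reduction to a finite-dimensional energy---matches the paper's, but two points need correction.

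The first is a genuine gap. With the single equation $-\varepsilon^2\,\text{div}(K_H\nabla u)=(u-q|\ln\varepsilon|)^p_+$ you write, the circulation of each bubble is determined at leading order by $2\pi\, q(0)\sqrt{\det K_H}(0)$ (cf.\ Lemma~\ref{circulation}); since all $N{+}1$ bubbles cluster at the origin, they all carry the \emph{same} circulation, and there is no free ``mass parameter'' to tune the central one to $\mu\neq\kappa$. (Your reference to ``Liouville-type profiles'' suggests the exponential setting of \cite{GM2}, but there too every bubble has mass fixed at $8\pi$.) The paper resolves this by modifying the nonlinearity itself: it solves \eqref{equa 2}, whose right-hand side is $(v-q_1|\ln\varepsilon|)^p_+$ on a thin annulus near $|x|=r_*/\sqrt{|\ln\varepsilon|}$ plus $(v-q_2|\ln\varepsilon|)^p_+$ on a small disk about the origin, with $q_i(x)=\tfrac{\alpha}{2}|x|^2+\beta_i$ and $\beta_1=\kappa/2\pi$, $\beta_2=\mu/2\pi$. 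The two constants $\beta_i$ then encode the two prescribed circulations directly.

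Second, the Hessian/non-degeneracy analysis you flag as the ``main obstacle'' is neither carried out nor needed. Rather than working on all of $\mathbb{R}^{2(N+1)}$, the paper restricts from the start to the $D_N$-symmetric slice $z_j=e^{\mathbf{i}2\pi(j-1)/N}z_1$. The discrete $\mathbb{Z}_N$-invariance of $K_\varepsilon$ in the variable $z_0$ forces $\nabla_{z_0}K_\varepsilon(0,z_1,\dots,z_N)=0$, so $z_0=0$ is automatically critical; rotational invariance in $z_1$ then reduces the remaining problem to a single radial variable $r$, and the leading-order reduced energy $H_2(r)$ is shown (Lemma~\ref{lm702}) to have a \emph{strict maximum} at $r=r_*$. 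A strict maximum survives the $O\big(\varepsilon^2\ln|\ln\varepsilon|/\sqrt{|\ln\varepsilon|}\big)$ perturbation in \eqref{702} with no spectral computation, so the dihedral block-Hessian analysis you outline is unnecessary.
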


\begin{remark}
\cite{GM2} considered the above case with the circulation $ \kappa=\mu=8\pi $. In contrast to \cite{GM2}, $ \kappa,\mu $ in our construction can be arbitrary positive constants. 
\end{remark}

We also prove the existence  of a family of solutions of  \eqref{Euler eq} with vorticity fields  concentrating near co-rotating helical solutions of   model \eqref{NP vor fi} when filaments are not symmetric about $ x_3 $-axis. 

\textbf{Case 3:}   ``Co-rotating asymmetric $2 $ vortices" with different circulations and rotation radii. Let $ N= 2 $, $ \kappa_1,\kappa_2>0 $ and $ \lambda_{1,*},\lambda_{2,*}>0 $ be constants satisfying the compatibility condition
\begin{equation}\label{compati cond1} 
\frac{\kappa_2}{\kappa_1}=\frac{2\lambda_{1,*}h^2+\lambda_{1,*}\lambda_{2,*}(\lambda_{1,*}+\lambda_{2,*})}{2\lambda_{2,*}h^2+\lambda_{1,*}\lambda_{2,*}(\lambda_{1,*}+\lambda_{2,*})}.
\end{equation}
Then 
\begin{equation*} 
X_{1}(s,\tau)=\lambda_{1,*}e^{-\mathbf{i}\alpha\tau}e^{\mathbf{i}\frac{s}{h}},\ \ \ \ 	X_{2}(s,\tau)=-\lambda_{2,*}e^{-\mathbf{i}\alpha\tau}e^{\mathbf{i}\frac{s}{h}},
\end{equation*}
satisfies \eqref{NP vor fi}, where $ \alpha=\frac{\kappa_1}{4\pi h^2}  -\frac{\kappa_2}{2\pi \lambda_{1,*}(\lambda_{1,*}+\lambda_{2,*})}=\frac{\kappa_2}{4\pi h^2}  -\frac{\kappa_1}{2\pi \lambda_{2,*}(\lambda_{1,*}+\lambda_{2,*})}  $. This corresponds to nearly parallel vortex filaments  
\begin{equation}\label{fila 3}
\tilde{X}_{j,\varepsilon}(s,t)=\frac{(-1)^{j-1}\lambda_{j,*}}{\sqrt{|\ln\varepsilon|}}e^{-\mathbf{i}\alpha|\ln\varepsilon|t}e^{\mathbf{i}\frac{s}{h}},\ \ \ \ j=1,2. 
\end{equation}
We note that \eqref{compati cond1} is necessary since \eqref{NP vor fi} has solutions being of the form $ X_{1}(s,\tau)=\lambda_{1,*}e^{-\mathbf{i}\alpha\tau}e^{\mathbf{i}\frac{s}{h}}, X_{2}(s,\tau)=-\lambda_{2,*}e^{-\mathbf{i}\alpha\tau}e^{\mathbf{i}\frac{s}{h}}  $ if and only if \eqref{compati cond1} holds. There are also  infinitely many  constant pairs $ (\kappa_1, \kappa_2,\lambda_{1,*},\lambda_{2,*}) $ satisfying \eqref{compati cond1} with $ \kappa_1\neq \kappa_2, \lambda_{1,*}\neq \lambda_{2,*} $. We have 
\begin{theorem}\label{thm03}
	Let $ R^*>0 , h>0 $ and $ N=2 $. For $ \kappa_1,\kappa_2>0 $ and $ \lambda_{1,*},\lambda_{2,*}\in (0,R^*) $   satisfying the compatibility condition \eqref{compati cond1},
%	\begin{equation*} 
%	\frac{\kappa_2}{\kappa_1}=\frac{2\lambda_{1,*}h^2+\lambda_{1,*}\lambda_{2,*}(\lambda_{1,*}+\lambda_{2,*})}{2\lambda_{2,*}h^2+\lambda_{1,*}\lambda_{2,*}(\lambda_{1,*}+\lambda_{2,*})},
%	\end{equation*}
let $ \tilde{X}_{j,\varepsilon} $ $(j=1,2)$ be $ 2 $ filaments satisfying \eqref{fila 3}. Then  there exists $ \varepsilon_0>0 $, such that for every $ \varepsilon\in(0,\varepsilon_0] $, \eqref{Euler eq2} has a family of solutions with vorticities  $ \mathbf{w}_\varepsilon\in C^1(B_{R^*}(0)\times \mathbb{R}\times\mathbb{R}^+) $ satisfying in distributional sense
	\begin{equation*} 
	\mathbf{w}_\varepsilon(x,t)-  \sum_{j=1}^2\kappa_j\delta_{\tilde{X}_{j,\varepsilon}}\textbf{t}_{\tilde{X}_{j,\varepsilon}}\rightharpoonup0\ \ \ \ \text{as}\ \varepsilon\to0.
	\end{equation*}
	Moreover, if we define   $ A_{\varepsilon,j}=supp\left (\mathbf{w}_\varepsilon(x_1,x_2,0,0)\right ) \cap B_{\rho_0|\ln\varepsilon|^{-\frac{1}{2}}}\left ( \tilde{X}_{j,\varepsilon}(0,0)\right )  $, then there exist constants $ R_1,R_2>0 $ independent of $ \varepsilon $  such that
	\begin{equation*}
	R_1\varepsilon \leq diam(A_{\varepsilon,j})\leq R_2\varepsilon .
	\end{equation*}
\end{theorem}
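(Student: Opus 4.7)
The plan is to derive Theorem \ref{thm03} from the general clustered-solution framework established earlier in the paper, specialized to a 2-cluster configuration that matches the two asymmetric helical filaments $\tilde{X}_{1,\varepsilon}$ and $\tilde{X}_{2,\varepsilon}$, and then to identify the compatibility condition \eqref{compati cond1} with the critical point equation for the associated reduced functional $H_2$.

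First, I would exploit helical symmetry to reduce the 3D Euler vorticity equation on the infinite cylinder $B_{R^*}(0)\times\mathbb{R}$ to a 2D semilinear elliptic equation in divergence form on the disk $B_{R^*}(0)$. Seeking a traveling-rotating helical velocity field with pitch $h$ and angular velocity $\alpha$, the vorticity is determined by a stream function $\psi$ in the rotating frame which solves an equation of the form $-\mathrm{div}(K_H(x)\nabla \psi) = f(\psi - W_\alpha(x))$, where $K_H$ is the positive definite matrix induced by the helical change of variables and $W_\alpha$ is an explicit quadratic rotation potential. After the standard $\varepsilon$-rescaling this becomes the prototype semilinear equation $-\varepsilon^2\mathrm{div}(K\nabla u)=(u-q|\ln\varepsilon|)^p_+$ from the abstract, with $q$ determined by the data $(\kappa_j,\alpha,\lambda_{j,*})$.

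Second, because the prescribed circulations $\kappa_1,\kappa_2$ are generally unequal, I would build the approximate solution as a sum of two localized profiles with prescribed masses $\kappa_1,\kappa_2$ centered near $\lambda_{1,*}|\ln\varepsilon|^{-1/2}$ and $-\lambda_{2,*}|\ln\varepsilon|^{-1/2}$, using an appropriately chosen (possibly piecewise) $f_\varepsilon$ that reduces to the standard $(u-q_j|\ln\varepsilon|)^p_+$ profile in a neighborhood of each cluster point. The Lyapunov-Schmidt reduction developed in the preceding sections for Theorems \ref{thm01} and \ref{thm02} then applies: the linearization at the approximate solution is invertible on the orthogonal complement of the kernel generated by translations of the two bumps, the orthogonal perturbation is solved by a contraction-mapping argument, and the PDE is reduced to a finite-dimensional critical point problem for a functional $H_2(\lambda_1,\lambda_2)$ on the positions of the two cluster centers. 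Its leading part combines the quadratic rotation potential (from $W_\alpha$) with the Robin-type Green function interaction of the two profiles. A direct computation shows that the Euler-Lagrange equations $\partial_{\lambda_j}H_2=0$ at $(\lambda_{1,*},\lambda_{2,*})$ reproduce exactly the stationarity condition for the $N=2$ nearly parallel filament system \eqref{NP heli vor} in the rotating frame; matching the two expressions for $\alpha$ obtained from each component is equivalent precisely to the compatibility condition \eqref{compati cond1}. Once non-degeneracy of $(\lambda_{1,*},\lambda_{2,*})$ is checked by a Hessian computation, the implicit function theorem produces a true critical point of the $\varepsilon$-perturbed reduced functional, and hence a genuine vorticity field $\mathbf{w}_\varepsilon$ for \eqref{Euler eq2}.

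The main obstacles I expect are twofold. The first is the simultaneous prescription of two distinct circulations $\kappa_1\neq \kappa_2$ within the single semilinear-elliptic framework; this forces a modification of the standard profile (e.g.\ a piecewise nonlinearity or cluster-dependent parameter), and one must check that the Lipschitz estimates on $f_\varepsilon$, the invertibility of the linearized operator, and the error bounds of the reduction all remain uniform across the matching region between the two profiles. The second is verifying non-degeneracy of the critical point $(\lambda_{1,*},\lambda_{2,*})$ in the absence of the polygonal or reflection symmetries exploited in Cases 1 and 2; here the compatibility condition \eqref{compati cond1} appears as exactly the single scalar identity required for the two partial derivatives of $H_2$ to vanish simultaneously, and its explicit algebraic form must be matched to the Hessian by hand. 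Once these are in place, the distributional convergence of $\mathbf{w}_\varepsilon$ to the filament limit and the two-sided diameter bounds $R_1\varepsilon\leq \mathrm{diam}(A_{\varepsilon,j})\leq R_2\varepsilon$ follow from the explicit form of the compactly supported profile $(u-q|\ln\varepsilon|)^p_+$ and the $\varepsilon$-scaling, exactly as in Theorems \ref{thm01} and \ref{thm02}.
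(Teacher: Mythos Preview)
Your proposal is correct and follows essentially the same route as the paper: reduce to the 2D divergence-form problem with a piecewise nonlinearity (two different $q_i=\tfrac{\alpha}{2}|x|^2+\beta_i$, $\beta_i=\kappa_i/2\pi$, cut off near the two target points), run the Lyapunov--Schmidt reduction of the earlier sections, and locate the critical point of the reduced functional whose leading part is exactly your $H_2$; the paper's Lemma~\ref{lm703} is precisely the Hessian check you describe.

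One correction worth noting: your second anticipated obstacle is overstated. Even with $\kappa_1\neq\kappa_2$, the functional $K_\varepsilon(z_1,z_2)$ is still invariant under simultaneous rotations (since $K_H$ and each $q_i$ are radial) and under reflection across any line through the origin. The paper exploits this: rotational invariance pins $z_1$ to the $x_1$-axis, and the principle of symmetric criticality (reflection symmetry) then forces $z_2$ onto the same axis, so the reduced problem is genuinely two-dimensional in $(\lambda_1,\lambda_2)$ rather than four-dimensional. This makes the nondegeneracy check a $2\times 2$ Hessian computation, which is straightforward once one observes $(q_i^2\sqrt{\det K_H})''(0)<0$ under the given choice of $\alpha$.
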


\textbf{Case 4:} ``Co-rotating  $ 2\times2 $ vortices" with different circulations and rotation radii. In this case, $ N= 4 $, $ \kappa_1=\kappa_3=\kappa>0 $, $ \kappa_2=\kappa_4=\mu>0$. $ \kappa,\mu, \lambda_{1,*},\lambda_{2,*}  $ are constants satisfying the compatibility condition
\begin{equation}\label{compa cond2}
\frac{\kappa}{4\pi h^2} -\frac{\kappa}{4\pi \lambda_{1,*}^2} -\frac{\mu}{\pi \left (\lambda_{1,*}^2+\lambda_{2,*}^2\right )}=\frac{\mu}{4\pi h^2} -\frac{\mu}{4\pi \lambda_{2,*}^2} -\frac{\kappa}{\pi \left (\lambda_{1,*}^2+\lambda_{2,*}^2\right )}.
\end{equation}
Then 
\begin{equation*} 
X_{2j-1}(s,\tau)=\lambda_{1,*}e^{-\mathbf{i}\alpha\tau}e^{\mathbf{i}\frac{s}{h}}e^{\mathbf{i}\pi(j-1)},\ \ \ \ 	X_{2j}(s,\tau)=\lambda_{2,*}e^{-\mathbf{i}\alpha\tau}e^{\mathbf{i}\frac{s}{h}}e^{\mathbf{i}\frac{\pi(2j-1)}{2}},\ \ j=1,2,
\end{equation*}
satisfies \eqref{NP vor fi}, where $ \alpha=\frac{\kappa}{4\pi h^2} -\frac{\kappa}{4\pi \lambda_{1,*}^2} -\frac{\mu}{\pi \left (\lambda_{1,*}^2+\lambda_{2,*}^2\right )}=\frac{\mu}{4\pi h^2} -\frac{\mu}{4\pi \lambda_{2,*}^2} -\frac{\kappa}{\pi \left (\lambda_{1,*}^2+\lambda_{2,*}^2\right )}  $. This corresponds to  
\begin{equation}\label{fila 4}
\tilde{X}_{2j-1,\varepsilon}(s,t)=\frac{\lambda_{1,*}}{\sqrt{|\ln\varepsilon|}}e^{-\mathbf{i}\alpha|\ln\varepsilon| t}e^{\mathbf{i}\frac{s}{h}}e^{\mathbf{i}\pi(j-1)},\ \  	\tilde{X}_{2j,\varepsilon}(s,t)=\frac{\lambda_{2,*}}{\sqrt{|\ln\varepsilon|}}e^{-\mathbf{i}\alpha|\ln\varepsilon| t}e^{\mathbf{i}\frac{s}{h}}e^{\mathbf{i}\frac{\pi(2j-1)}{2}},\ \ j=1,2.
\end{equation}
We have
\begin{theorem}\label{thm04}
	Let $ R^*>0, h>0, N=4 $. For $ \kappa,\mu>0 $ and $ \lambda_{1,*},\lambda_{2,*}\in (0,R^*) $ satisfying the compatibility condition \eqref{compa cond2},
%	\begin{equation*} 
%	\frac{\kappa}{4\pi h^2}-\frac{\kappa}{4\pi\lambda_{1,*}^2}-\frac{\mu}{\pi(\lambda_{1,*}^2+\lambda_{2,*}^2)}=\frac{\mu}{4\pi h^2}-\frac{\mu}{4\pi\lambda_{2,*}^2}-\frac{\kappa}{\pi(\lambda_{1,*}^2+\lambda_{2,*}^2)},
%	\end{equation*}  
let $ \tilde{X}_{j,\varepsilon} $ $(j=1,\cdots,4)$ be $ 4 $ filaments satisfying \eqref{fila 4}. Then  there exists $ \varepsilon_0>0 $, such that for every $ \varepsilon\in(0,\varepsilon_0] $, \eqref{Euler eq2} has a family of solutions with vorticities $ \mathbf{w}_\varepsilon\in C^1(B_{R^*}(0)\times \mathbb{R}\times\mathbb{R}^+) $ satisfying in distributional sense
	\begin{equation*} 
	\mathbf{w}_\varepsilon(x,t)-  \kappa\sum_{j=1}^2\delta_{\tilde{X}_{2j-1,\varepsilon}}\textbf{t}_{\tilde{X}_{2j-1,\varepsilon}}-\mu\sum_{j=1}^2\delta_{\tilde{X}_{2j,\varepsilon}}\textbf{t}_{\tilde{X}_{2j,\varepsilon}}\rightharpoonup0\ \ \ \ \text{as}\ \varepsilon\to0.
	\end{equation*}
	Moreover, if we denote   $ A_{\varepsilon,j}=supp\left (\mathbf{w}_\varepsilon(x_1,x_2,0,0)\right ) \cap B_{\rho_0|\ln\varepsilon|^{-\frac{1}{2}}}\left ( \tilde{X}_{j,\varepsilon}(0,0)\right )  $, then there exist constants $ R_1,R_2>0 $ independent of $ \varepsilon $  such that
	\begin{equation*}
	R_1\varepsilon \leq diam(A_{\varepsilon,j})\leq R_2\varepsilon .
	\end{equation*}
\end{theorem}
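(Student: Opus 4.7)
My plan is to prove Theorem \ref{thm04} by applying the general clustered-solution construction outlined in the abstract to the specific $2\times 2$ asymmetric configuration \eqref{fila 4}. Using the traveling--rotating helical symmetry with pitch $h$ and angular velocity $\alpha|\ln\varepsilon|$, I would first reduce the 3D vorticity system \eqref{Euler eq2} in the infinite cylinder $B_{R^*}(0)\times\mathbb{R}$ to a 2D semilinear elliptic equation of divergence type
\begin{equation*}
-\varepsilon^2\,\mathrm{div}(K_H(x)\nabla u)=(u-q|\ln\varepsilon|)_+^p,\qquad x\in B_{R^*}(0),\qquad u=0\ \text{on}\ \partial B_{R^*}(0),
\end{equation*}
with $K_H$ the anisotropic matrix of the helical coordinates and $q$ encoding $\alpha$ together with the circulations $\kappa,\mu$. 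The vorticity is then recovered from the stream function $u$ with support $\{u>q|\ln\varepsilon|\}$, so the distributional and topological concentration claims reduce to showing that this superlevel set clusters near the four prescribed cross-section points.

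Next, I would use the finite-dimensional reduction to construct a solution of the form
\begin{equation*}
u_\varepsilon(x)=\sum_{j=1}^{4}U_{\varepsilon,\xi_j,\kappa_j}(x)+\phi_\varepsilon(x),
\end{equation*}
where each $U_{\varepsilon,\xi_j,\kappa_j}$ is a projected bubble centered at $\xi_j=Z_j(0)/\sqrt{|\ln\varepsilon|}$ with prescribed mass equal to the required circulation $\kappa_j\in\{\kappa,\mu\}$, and $\phi_\varepsilon$ is a correction orthogonal to the natural kernel of the linearized operator. Solving the infinite-dimensional part for $\phi_\varepsilon$ by standard Lyapunov--Schmidt theory, based on the Green function $G_K$ of $-\mathrm{div}(K_H\nabla)$ in $B_{R^*}(0)$ and its Robin function $H_K$, the problem reduces to locating a critical point of a functional $H_4(\xi_1,\ldots,\xi_4)$ whose leading part, after subtracting the self-energy divergence, takes the schematic form
\begin{equation*}
\widetilde{H}_4(\xi_1,\ldots,\xi_4)=\sum_{j}\kappa_j^2\,H_K(\xi_j)-\sum_{j\ne k}\kappa_j\kappa_k\,G_K(\xi_j,\xi_k)+\alpha\sum_{j}\kappa_j|\xi_j|^2.
\end{equation*}

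I would then check that the $2\times 2$ pattern from \eqref{fila 4} at $t=0$ is critical for $\widetilde{H}_4$, and that the compatibility condition \eqref{compa cond2} is precisely what makes this so. Expanding $G_K,H_K$ for cluster distance $|\ln\varepsilon|^{-1/2}$, the critical-point equations decouple into a common rotation equation fixing $\alpha$ (balancing the self-induced spin $\propto\kappa_j/h^2$ coming from the anisotropy of $K_H$ against the mutual-interaction contributions) and four relative-equilibrium equations. Substituting $\xi_{2j-1}=\lambda_{1,*}e^{\mathbf{i}\pi(j-1)}$ and $\xi_{2j}=\lambda_{2,*}e^{\mathbf{i}\pi(2j-1)/2}$, one computes directly that both expressions for $\alpha$ in \eqref{compa cond2} coincide, which is exactly the criticality identity.

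The main obstacle is verifying that this critical point is non-degenerate in the sense required by the reduction. Unlike the regular-polygon cases of Theorems \ref{thm01}--\ref{thm02}, here $\lambda_{1,*}\ne\lambda_{2,*}$ and $\kappa\ne\mu$ in general, so rotational invariance alone no longer forces the correct zero-eigenvalue structure. I would exploit the discrete symmetry group $\mathbb{Z}_2\times\mathbb{Z}_2$ generated by reflections across the two coordinate axes and restrict the reduction procedure to configurations invariant under it, which are parametrized by a single pair of radii $(\lambda_1,\lambda_2)$. On this symmetric class the reduced Hessian becomes an explicit $2\times 2$ matrix in $(\lambda_{1,*},\lambda_{2,*},\kappa,\mu,h)$ whose invertibility is a direct computation using \eqref{compa cond2} together with the monotone dependence of $G_K(\xi,\xi')$ on $|\xi-\xi'|$; the joint rotation mode (variation in $\alpha$) is absorbed as in the preceding cases. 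Once non-degeneracy is secured, the implicit function theorem yields $\xi_j^\varepsilon$ and $\phi_\varepsilon$ for every sufficiently small $\varepsilon$, the bubble profile gives the diameter bound $R_1\varepsilon\le\mathrm{diam}(A_{\varepsilon,j})\le R_2\varepsilon$, and unwinding the helical ansatz delivers the distributional concentration of $\mathbf{w}_\varepsilon$ on the four filaments $\tilde{X}_{j,\varepsilon}$.
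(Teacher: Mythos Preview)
Your overall strategy matches the paper's: reduce to the 2D divergence-form equation, build approximate solutions as sums of bubbles plus projection, solve for the error via Lyapunov--Schmidt, and locate the configuration by finding a critical point of a reduced energy that, after the cluster expansion at distance $|\ln\varepsilon|^{-1/2}$, depends only on the two radii $(\lambda_1,\lambda_2)$ thanks to the $\mathbb{Z}_2\times\mathbb{Z}_2$ symmetry.

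Two points of departure from the paper are worth noting. First, because $\kappa\neq\mu$ you cannot use a single potential $q$ in the equation as you wrote it; the paper instead takes two functions $q_i(x)=\tfrac{\alpha}{2}|x|^2+\beta_i$ with $\beta_1=\kappa/(2\pi)$, $\beta_2=\mu/(2\pi)$ and couples them through indicator functions on disjoint $|\ln\varepsilon|^{-1/2}$-neighborhoods, so that each pair of bubbles sees its own $q_i$. Your phrase ``$q$ encoding $\alpha$ together with the circulations'' hides this, and without it the reduced functional will not produce the correct mixed interaction coefficients $\kappa\mu$. Second, for the final step the paper does \emph{not} check non-degeneracy of a Hessian and invoke the implicit function theorem; it simply observes that the reduced two-variable functional
\[
H_4(\lambda_1,\lambda_2)=\sum_{i=1}^2 \bigl(q_i^2\sqrt{\det K_H}\bigr)''(0)\,\lambda_i^2+2\sum_{i=1}^2 \bigl(q_i^2\det K_H\bigr)(0)\ln(2\lambda_i)+4\bigl(q_1q_2\det K_H\bigr)(0)\ln(\lambda_1^2+\lambda_2^2)
\]
has $(\lambda_{1,*},\lambda_{2,*})$ as its unique \emph{strict local maximizer} (the compatibility condition \eqref{compa cond2} is exactly $\nabla H_4(\lambda_{1,*},\lambda_{2,*})=0$, and the quadratic coefficients are negative), and then perturbs the maximizer. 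This is a bit cleaner than your Hessian-invertibility route, though either would work.
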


\textbf{Case 5:}  ``Co-rotating  $ 2\times2+1 $ vortices" with different circulations and rotation radii and the $ 5 $-th vortex of arbitrary strength
at the center of vorticity. Let $ N= 5 $, $ \kappa_1=\kappa_3=\kappa>0 $, $ \kappa_2=\kappa_4=\mu>0$, $ \kappa_0>0 $ and $ \lambda_{1,*},\lambda_{2,*}>0 $. $ \kappa_0,\kappa,\mu, \lambda_{1,*},\lambda_{2,*}  $  satisfy  the compatibility condition
\begin{equation}\label{compa cond3} 
\frac{\kappa}{4\pi h^2}-\frac{\kappa_0}{2\pi \lambda_{1,*}^2} -\frac{\kappa}{4\pi \lambda_{1,*}^2} -\frac{\mu}{\pi \left (\lambda_{1,*}^2+\lambda_{2,*}^2\right )}=\frac{\mu}{4\pi h^2}-\frac{\kappa_0}{2\pi \lambda_{2,*}^2} -\frac{\mu}{4\pi \lambda_{2,*}^2} -\frac{\kappa}{\pi \left (\lambda_{1,*}^2+\lambda_{2,*}^2\right )}.
\end{equation}
Then $ X_0(s,\tau)=0 $,
\begin{equation*}
X_{2j-1}(s,\tau)=\lambda_{1,*}e^{-\mathbf{i}\alpha\tau}e^{\mathbf{i}\frac{s}{h}}e^{\mathbf{i}\pi(j-1)},\ \ \ \ 	X_{2j}(s,\tau)=\lambda_{2,*}e^{-\mathbf{i}\alpha\tau}e^{\mathbf{i}\frac{s}{h}}e^{\mathbf{i}\frac{\pi(2j-1)}{2}},\ \ j=1,2,
\end{equation*}
satisfy \eqref{NP vor fi}, where $ \alpha=\frac{\kappa}{4\pi h^2}-\frac{\kappa_0}{2\pi \lambda_{1,*}^2} -\frac{\kappa}{4\pi \lambda_{1,*}^2} -\frac{\mu}{\pi \left (\lambda_{1,*}^2+\lambda_{2,*}^2\right )}=\frac{\mu}{4\pi h^2}-\frac{\kappa_0}{2\pi \lambda_{2,*}^2} -\frac{\mu}{4\pi \lambda_{2,*}^2} -\frac{\kappa}{\pi \left (\lambda_{1,*}^2+\lambda_{2,*}^2\right )}  $. This corresponds to  $ \tilde{X}_{0,\varepsilon}(s,t)=0 $,
\begin{equation} \label{fila 5}
\tilde{X}_{2j-1,\varepsilon}(s,t)=\frac{\lambda_{1,*}}{\sqrt{|\ln\varepsilon|}}e^{-\mathbf{i}\alpha|\ln\varepsilon|t}e^{\mathbf{i}\frac{s}{h}}e^{\mathbf{i}\pi(j-1)},\ \ 	\tilde{X}_{2j,\varepsilon}(s,t)=\frac{\lambda_{2,*}}{\sqrt{|\ln\varepsilon|}}e^{-\mathbf{i}\alpha|\ln\varepsilon|t}e^{\mathbf{i}\frac{s}{h}}e^{\mathbf{i}\frac{\pi(2j-1)}{2}},\ \ j=1,2.
\end{equation}
We have
\begin{theorem}\label{thm05}
	Let $ R^*>0, h>0, N=5 $. For $ \kappa_0,\kappa,\mu>0 $ and $ \lambda_{1,*},\lambda_{2,*}\in(0,R^*) $ satisfying \eqref{compa cond3}, %the compatibility condition
%	\begin{equation*} 
%	\frac{\kappa}{4\pi h^2}-\frac{\kappa_0}{2\pi\lambda_{1,*}^2}-\frac{\kappa}{4\pi\lambda_{1,*}^2}-\frac{\mu}{\pi(\lambda_{1,*}^2+\lambda_{2,*}^2)}=\frac{\mu}{4\pi h^2}-\frac{\kappa_0}{2\pi\lambda_{2,*}^2}-\frac{\mu}{4\pi\lambda_{2,*}^2}-\frac{\kappa}{\pi(\lambda_{1,*}^2+\lambda_{2,*}^2)},
%	\end{equation*}  
let $ \tilde{X}_{j,\varepsilon} $ $(j=0,1,\cdots,4)$ be $ 5 $ filaments satisfying \eqref{fila 5}. Then  there exists $ \varepsilon_0>0 $, such that for every $ \varepsilon\in(0,\varepsilon_0] $, \eqref{Euler eq2} has a family of solutions with vorticities $ \mathbf{w}_\varepsilon\in C^1(B_{R^*}(0)\times \mathbb{R}\times\mathbb{R}^+) $ satisfying in distributional sense
	\begin{equation*} 
	\mathbf{w}_\varepsilon(x,t)- \kappa_0\delta_{\tilde{X}_{0,\varepsilon}}\textbf{t}_{\tilde{X}_{0,\varepsilon}}- \kappa\sum_{j=1}^2\delta_{\tilde{X}_{2j-1,\varepsilon}}\textbf{t}_{\tilde{X}_{2j-1,\varepsilon}}-\mu\sum_{j=1}^2\delta_{\tilde{X}_{2j,\varepsilon}}\textbf{t}_{\tilde{X}_{2j,\varepsilon}}\rightharpoonup0\ \ \ \ \text{as}\ \varepsilon\to0.
	\end{equation*}
	Moreover, if we define   $ A_{\varepsilon,j}=supp\left (\mathbf{w}_\varepsilon(x_1,x_2,0,0)\right ) \cap B_{\rho_0|\ln\varepsilon|^{-\frac{1}{2}}}\left ( \tilde{X}_{j,\varepsilon}(0,0)\right )  $, then there exist constants $ R_1,R_2>0 $ independent of $ \varepsilon $  such that
	\begin{equation*}
	R_1\varepsilon \leq diam(A_{\varepsilon,j})\leq R_2\varepsilon .
	\end{equation*}
\end{theorem}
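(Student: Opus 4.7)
The plan is to reduce Theorem \ref{thm05} to the general clustered-solution result for the divergence-form semilinear elliptic equation announced in the abstract, applied with $N=5$ concentration points in the cross-section of a helically symmetric stream function. As in Cases 1--4, I first impose helical symmetry on $\mathbf{w}_\varepsilon$ in the frame rotating at angular velocity $\alpha$, which reduces the vorticity equation \eqref{Euler eq2} on $B_{R^*}(0)\times\mathbb{R}$ to a 2D elliptic problem on the disk $B_{R^*}(0)$ of the form
\begin{equation*}
-\varepsilon^{2}\mathrm{div}\bigl(K_H(x)\nabla\psi\bigr)=\bigl(\psi-q(x)|\ln\varepsilon|\bigr)_{+}^{p},\qquad\psi|_{\partial B_{R^*}}=0,
\end{equation*}
where $K_H$ is the helical anisotropy matrix \eqref{coef matrix} and $q(x)$ encodes $\alpha$ and the geometric factor $|x|$, exactly as in the derivations of \cite{CW,CW1,GM2}. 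The stream function corresponding to the 5 filaments \eqref{fila 5} is a clustered solution with cluster point $0$ and cluster distance $|\ln\varepsilon|^{-1/2}$, which places us in the setting of the abstract.

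Next I would invoke the general existence theorem from the first part of the paper: it produces, for every $\varepsilon$ small, a solution whose supports are $N$ compact sets of diameter $\sim\varepsilon$ centered at points $\xi_1^\varepsilon,\dots,\xi_5^\varepsilon$, provided the rescaled positions $\zeta_j^\varepsilon:=\sqrt{|\ln\varepsilon|}\,\xi_j^\varepsilon$ tend to a critical point $(\zeta_0^{*},\dots,\zeta_4^{*})$ of the reduced functional $H_5$, and that this critical point is non-degenerate modulo the symmetries of the configuration. The candidate critical point here is
\begin{equation*}
\zeta_0^{*}=0,\qquad \zeta_{2j-1}^{*}=\lambda_{1,*}e^{\mathbf{i}\pi(j-1)},\qquad \zeta_{2j}^{*}=\lambda_{2,*}e^{\mathbf{i}\pi(2j-1)/2},\quad j=1,2,
\end{equation*}
weighted by the circulations $\kappa_0,\kappa,\mu$.

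The criticality step is the heart of the matter. By the $D_2$-invariance of $H_5$ under rotation by $\pi$ and reflection across the coordinate axes, the stationarity conditions $\partial_{\zeta_j}H_5=0$ collapse to two independent scalar balances: one at an outer vertex (radius $\lambda_{1,*}$) and one at an inner vertex (radius $\lambda_{2,*}$). Using the leading-order expansion of the Green's function of $-\mathrm{div}(K_H\nabla)$ near the cluster point $0$ -- which reproduces the planar Biot--Savart kernel plus the regularized self-interaction $\frac{1}{4\pi h^2}|\zeta|^2$ coming from $q$ -- these two balances are exactly the two right-hand sides of \eqref{compa cond3}. Thus the assumed compatibility relation \eqref{compa cond3} is precisely the statement that $(\zeta_0^{*},\dots,\zeta_4^{*})$ is a critical point of $H_5$; the common value of $\alpha$ obtained from the two sides is the angular velocity appearing in \eqref{fila 5}. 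This is analogous to how the corresponding balances for \eqref{fila 1}--\eqref{fila 4} recover \eqref{compati cond1} and \eqref{compa cond2} in the earlier theorems.

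The main obstacle, as in Theorem \ref{thm04}, is the non-degeneracy of the Hessian of $H_5$ at this critical point, now complicated by the presence of the central filament of strength $\kappa_0$. I would split the tangent space at $(\zeta_0^{*},\dots,\zeta_4^{*})$ into the $D_2$-invariant radial modes (the parameters $\lambda_{1,*},\lambda_{2,*}$ and the position of $\zeta_0^{*}$), on which strict definiteness follows from the structure of the logarithmic Kirchhoff--Routh-type functional together with the quadratic confining term $\frac{\kappa_j}{4\pi h^2}|\zeta_j|^2$, and the symmetry-breaking modes, whose invertibility is standard and follows from the nondegeneracy of the pointwise balance under generic $\lambda_{1,*}\ne\lambda_{2,*}$. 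The estimate $R_1\varepsilon\le\mathrm{diam}(A_{\varepsilon,j})\le R_2\varepsilon$ is inherited directly from the $W^{1,\infty}$-control on each bump built into the reduction procedure, completing the proof.
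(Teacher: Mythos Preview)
Your overall reduction---helical symmetry followed by a 2D divergence-form elliptic problem and a finite-dimensional reduction---is exactly what the paper does. There are, however, two points where your plan diverges from the paper's argument, and one of them is a genuine gap.

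First, you cannot simply ``invoke the general existence theorem'' (Theorem~\ref{thm0}) here. That theorem is stated for a single potential $q$, so every bump carries the same asymptotic circulation $2\pi(q\sqrt{\det K})(0)$. In Case~5 the five filaments carry three distinct circulations $\kappa_0,\kappa,\mu$, so the paper replaces the single equation \eqref{key equa} by \eqref{equa 5}, with three potentials $q_0,q_1,q_2$ (constants $\beta_i=\kappa_0/2\pi,\kappa/2\pi,\mu/2\pi$) supported on disjoint neighbourhoods of the target points, and reruns the whole reduction machinery of Sections~2--5 for this modified right-hand side. Your proposal glosses over this.

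Second---and this is the real issue---the paper never attempts to verify non-degeneracy of the full Hessian of $H_5$ at $(\zeta_0^*,\dots,\zeta_4^*)$, and for good reason: it is not needed. Instead the paper builds the symmetry $z_3=-z_1$, $z_4=-z_2$ into the configuration space $\Lambda_{\varepsilon,5}$ from the start, uses the $D_2$-invariance of $K_\varepsilon$ (principle of symmetric criticality, as in the proof of Theorem~\ref{thm03}) to fix $z_0=0$ and to put $z_1,z_2$ on the coordinate axes, and is then left with a \emph{two-parameter} reduced functional $H_5(\lambda_1,\lambda_2)$ as in \eqref{705}. One checks directly that $(\lambda_{1,*},\lambda_{2,*})$ is its strict local \emph{maximum}, and the critical point of $K_\varepsilon$ is obtained variationally. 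Your route---splitting the tangent space and asserting that ``invertibility on the symmetry-breaking modes is standard and follows from the nondegeneracy of the pointwise balance under generic $\lambda_{1,*}\ne\lambda_{2,*}$''---is both harder and incomplete: no genericity hypothesis on $\lambda_{1,*},\lambda_{2,*}$ is available in the statement, and the claimed invertibility on the non-invariant modes is not obvious (you have ten real tangent directions and only two symmetry generators). The paper's symmetry-first approach sidesteps this entirely.

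Your identification of the criticality condition with \eqref{compa cond3}, and the diameter estimate coming from the ansatz for $V_{\varepsilon,Z}$, are correct.
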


\begin{remark}
%Theorems \ref{thm03}, \ref{thm04} and \ref{thm05} consider the cases when filaments of \eqref{NP vor fi} are not symmetric, which are not known in existing results. 
It is possible to extend our results to more general cases, for example, ``$ 2N $ filaments": $ N $ of them locate at vertices of a regular $ N $-polygon with equal circulations $ \kappa_1 $ and rotation radii $ \lambda_1 $, and another $ N $ of them locate at vertices of another regular $ N $-polygon with  equal circulations $ \kappa_2 $ and rotation radii $ \lambda_2 $; and  ``$ 2N+1 $ filaments": $ 2N $ of them are ``$ 2N $ filaments" and the $ 2N+1 $ vortex locates at the center of vorticity with arbitrary positive circulation.

We note that recently, Cao et al. \cite{CFQW2} considered dynamics of nearly parallel helical vortex filaments of 3D incompressible Euler equations. If initial vorticity is
 concentrated in $ \varepsilon- $neighborhoods of $ N $ distinct helical filaments  with 
distances of order $ O(|\ln\varepsilon|^{-\frac{1}{2}}) $, then as time evolves the vorticity field of \eqref{Euler eq} still concentrates near helical vortex filaments evolved by  nearly parallel  vortex filaments model \ref{NP vor fi}, at least for a small time interval. For more results, see \cite{CFQW1}.

\end{remark}

Our proofs are based on the study of concentrated ``clustered solutions" to some semilinear second-order elliptic equations (see \eqref{key equa} in the following) with cluster point being the origin $ 0 $ and cluster distance being $ |\ln\varepsilon|^{-\frac{1}{2}} $, which means that solutions consist of several bubbles collapsing into $ 0 $ as parameter $ \varepsilon\to0 $  rather than several distinct points, and the distance of any two bubbles is $ O\left ( |\ln\varepsilon|^{-\frac{1}{2}}\right ) $. We call $ 0 $ the cluster point and  $ |\ln\varepsilon|^{-\frac{1}{2}} $  the cluster distance. Following the deduction argument in \cite{CW,CW1,DDMW2, ET,GM}, to solve helical vorticity field of \eqref{Euler eq2} with pitch $ h $ and the orthogonal condition $ \textbf{v}\cdot (x_2,-x_1,h)^T=0 $, it is equivalent to solve the following 2D system

\begin{equation}\label{vor str eq}
\begin{cases}
\partial_t \omega+\nabla \omega\cdot \nabla^\perp\varphi=0, &\Omega\times (0,T),\\
\omega=\mathcal{L}_{K_H}\varphi, &\Omega\times (0,T),\\
\varphi =0,& \partial\Omega,
\end{cases}
\end{equation}
%\begin{equation}\label{vor str eq2}
%\begin{cases}
%\partial_t \omega+\nabla \omega\cdot \nabla^\perp\varphi=0, &\Omega\times (0,T),\\
%\omega=\mathcal{L}_H\varphi, &\Omega\times (0,T),\\
%\nabla^\perp\varphi\cdot \nu|_{\partial \Omega}=v_n|_{\partial \Omega},
%\end{cases}
%\end{equation}
where $ \omega  $ is the third component of $ \mathbf{w} $, $ \varphi $ is the corresponding stream function, $ \perp $ denotes the clockwise rotation through $ \frac{\pi}{2}  $, $ \mathcal{L}_{K_H} =-\text{div}(K_H(x_1,x_2)\nabla ) $ is   a second-order elliptic operator of divergence type with the coefficient matrix
\begin{equation}\label{coef matrix}
K_H(x_1,x_2)=\frac{1}{h^2+x_1^2+x_2^2}
\begin{pmatrix}
h^2+x_2^2 & -x_1x_2 \\
-x_1x_2 &  h^2+x_1^2
\end{pmatrix}.
\end{equation}

For $ \alpha \in\mathbb{R} $, if we consider    solutions to \eqref{vor str eq} with the form
\begin{equation}\label{104}
\omega(x,t)=w\left (\bar{R}_{-\alpha|\ln\varepsilon| t}(x)\right ),\ \ \varphi(x,t)=u\left (\bar{R}_{-\alpha|\ln\varepsilon| t}(x)\right ),
\end{equation}
where   $ \bar{R}_{\alpha t}=\begin{pmatrix}
\cos\alpha t & \sin\alpha t \\
-\sin\alpha t &\cos\alpha t
\end{pmatrix} $, then %Taking \eqref{104} into \eqref{vor str eq}, one has
\begin{equation}\label{rot eq-1}
\begin{cases}
\nabla w\cdot \nabla^\perp \left( u-\frac{\alpha}{2}|x|^2|\ln\varepsilon|\right) =0,\ \ &\Omega,\\
w=\mathcal{L}_{K_H}u,\ \ &\Omega,\\
u=0,\ \ &\partial\Omega.
\end{cases}
\end{equation}
A sufficient condition for $ (w,u) $ satisfying \eqref{rot eq-1} is that $ u $ solves 
\begin{equation}\label{105-1}
\mathcal{L}_{K_H} u=w=f_\varepsilon\left( u-\frac{\alpha}{2}|x|^2|\ln\varepsilon|\right)\ \  \text{in}\ \Omega,\ \ u=0\ \ \text{on}\ \partial\Omega,
\end{equation}
for some profile function $ f_\varepsilon $. Thus the key is to find concentrated clustered solutions to \eqref{105-1} under the proper choice of $ f_\varepsilon $. To make the support set of cross-section of $ \textbf{w}_\varepsilon $ compact, we choose $ f_\varepsilon(t)=\frac{1}{\varepsilon^2}t^p_+ $ with $ p>1 $ in \eqref{105-1}.

Let us directly consider   clustered solutions to the following general semilinear second-order elliptic equations 
\begin{equation}\label{key equa}
\begin{cases}
-\varepsilon^2\text{div}\left (K(x)\nabla v\right )= \left (v-q|\ln\varepsilon|\right )^{p}_+,\ \ &x\in \Omega,\\
v=0,\ \ &x\in\partial \Omega,
\end{cases}
\end{equation}
with cluster point being the origin $ 0 $ and cluster distance being $ |\ln\varepsilon|^{-\frac{1}{2}} $, where $ \Omega\subset \mathbb{R}^2 $ is a smooth bounded domain containing $ 0 $, $ \varepsilon\in(0,1) $. $ K=(K_{i,j})_{2\times2} $  and $ q $ satisfy the following assumptions
\begin{enumerate}
	\item[(K1).] %$ -\text{div}(K(x)\nabla \cdot) $ is a uniformly elliptic operator, that is, 
	There exist  $ \Lambda_1,\Lambda_2>0 $ such that $$ \Lambda_1|\zeta|^2\le (K(x)\zeta|\zeta) \le \Lambda_2|\zeta|^2,\ \ \ \ \forall\ x\in \Omega, \ \zeta\in \mathbb{R}^2.$$
%\end{enumerate}
%%$ q  $ is a function defined in $ \overline{\Omega} $ satisfying
%\begin{enumerate}
	\item[(Q1).]  $ q \in C^{\infty}(\overline{\Omega}) $ and $ q(x)>0 $ for any $ x\in\overline{\Omega}. $
%\end{enumerate}
%$ K $ and $ q $ also need to satisfy another assumption
%\begin{enumerate}
	\item[(KQ).]   $ \nabla \left (q^2\sqrt{\det K}\right )(0)=0 $.
\end{enumerate}

Let us remark that the assumption $ (KQ) $ means that $ 0 $ is a critical point of $ q^2\sqrt{\det K} $, which seems to be artificial. However, when we prove Theorems \ref{thm01}-\ref{thm05} by the special form of $q$ and $K$, this assumption is satisfied automatically.

For any given integer $ N\geq 2 $, we define an auxiliary function from $ \Omega^{(N)} $ to $ \mathbb{R} $
\begin{equation}\label{def of H_N}
\begin{split}
H_N(\tilde{z}_1,\cdots, \tilde{z}_N)=&\frac{1}{2}\sum_{j=1}^N\tilde{z}_j\cdot \nabla^2\left(q^2\sqrt{\det K} \right)(0 )\cdot\tilde{z}_j^T\\
&+\sum_{1\leq i\neq j\leq N}\left( q^2  \sqrt{\det K}\right)(0)\ln| K(0)^{-\frac{1}{2}}\left( \tilde{z}_i - \tilde{z}_j\right)  |.
\end{split}
\end{equation}
Here $ \tilde{z}_j^T $ is the transpose of $ \tilde{z}_j $.
Under  assumptions $ (K1),(Q1)$  and  $(KQ) $, we can obtain the existence of clustered solutions to \eqref{key equa} with cluster point $ 0 $ and cluster distance $ |\ln\varepsilon|^{-\frac{1}{2}} $, whose bubbles concentrate near strict local maximizers (or minimizers) of $ H_N $ after $ |\ln\varepsilon|^{\frac{1}{2}} $ scaling. Indeed, we have the following result.%Our first result is as follows.
\begin{theorem}\label{thm0}
	Let $(K1)$, $ (Q1) $ and $ (KQ) $ hold.  %Assume that  $ \nabla \left (q^2\sqrt{\det K}\right )(0)=0 $. 
	Let $ \left( \tilde{z}_1^*,\cdots, \tilde{z}_N^*\right)  $ be a strict local maximizer or minimizer of $ H_N $ defined by \eqref{def of H_N}, i.e., there exists a $ \bar{\rho}- $neighborhood $ B_{\bar{\rho}} $ of $ \left( \tilde{z}_1^*,\cdots, \tilde{z}_N^*\right) $ for some $ \bar{\rho}>0 $ small  such that
	\begin{equation*}
	H_N\left( \tilde{z}_1,\cdots, \tilde{z}_N\right)<(\text{or} >) H_N\left( \tilde{z}_1^*,\cdots, \tilde{z}_N^*\right)\ \ \ \ \forall \left( \tilde{z}_1,\cdots, \tilde{z}_N\right)\in B_{\bar{\rho}}\backslash\{\left( \tilde{z}_1^*,\cdots, \tilde{z}_N^*\right) \}.
	\end{equation*} 
	Then  there exists $ \varepsilon_0>0 $, such that for every $ \varepsilon\in(0,\varepsilon_0] $, \eqref{key equa} has a  solution  $ v_\varepsilon $. Moreover,  
	\begin{enumerate}
		\item There exist $ \left (\tilde{z}_{1,\varepsilon}, \cdots, \tilde{z}_{N,\varepsilon}\right ) $ such that $ \tilde{z}_{i,\varepsilon}=\tilde{z}_{i}^*+o(1) $ for $ i=1,\cdots,N $ and 
		\begin{equation*}
		\left \{v_\varepsilon>q|\ln\varepsilon|\right \}\subseteq \cup_{i=1}^N B_{|\ln\varepsilon|^{-1}}\left (\frac{\tilde{z}_{i,\varepsilon}}{\sqrt{|\ln\varepsilon|}}\right ).
		\end{equation*}
		
		\item Define  sets $ A_{\varepsilon,i}=\left\{v_\varepsilon>q|\ln\varepsilon|\right\}\cap B_{|\ln\varepsilon|^{-1}}\left (\frac{\tilde{z}_{i,\varepsilon}}{\sqrt{|\ln\varepsilon|}}\right )  $. Then there exist constants $ R_1,R_2>0 $ independent of $ \varepsilon $  such that
		\begin{equation*}
		B_{R_1\varepsilon}\left (\frac{\tilde{z}_{i,\varepsilon}}{\sqrt{|\ln\varepsilon|}}\right )\subseteq A_{\varepsilon,i}\subseteq B_{R_2\varepsilon}\left (\frac{\tilde{z}_{i,\varepsilon}}{\sqrt{|\ln\varepsilon|}}\right ).
		\end{equation*}
		\item 
		\begin{equation*}
		\frac{1}{\varepsilon^2}\int_{A_{\varepsilon,i}}\left (v_\varepsilon-q|\ln \varepsilon|\right )^{p}_+dx=2\pi \left( q\sqrt{\det K}\right) (0)+O\left (\frac{1}{\sqrt{|\ln\varepsilon|}}\right ).
		\end{equation*}
	\end{enumerate}
\end{theorem}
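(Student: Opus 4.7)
The natural approach is a Lyapunov--Schmidt finite-dimensional reduction in the clustering regime, as used in related settings by \cite{CW,CW1,DDMW2,GM2}. The rescaling $y = \sqrt{|\ln\varepsilon|}\,x$ converts the cluster distance $|\ln\varepsilon|^{-1/2}$ into order one, while $|\ln\varepsilon|$ plays the role of a large parameter entering through the level shift $q|\ln\varepsilon|$. Critical points of a reduced functional whose leading part is $-H_N$ will yield solutions with the prescribed concentration profile.

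\textbf{Ansatz and linear theory.} Let $U$ denote the radial, compactly supported ground state of $-\Delta U = U^p_+$ on $\mathbb{R}^2$, with $\mathrm{supp}\,U = \overline{B_1}$ and $U>0$ in $B_1$. For $\tilde{z}$ in a small neighborhood of $\tilde{z}^*$, set $z_i = z_i(\tilde{z}) := \tilde{z}_i/\sqrt{|\ln\varepsilon|}$ and place at each $z_i$ a rescaled bubble
\begin{equation*}
 U_{i,\varepsilon}(x) \;=\; a_\varepsilon\, U\!\left(\tfrac{1}{\varepsilon s_\varepsilon}\,K(z_i)^{-1/2}(x-z_i)\right),
\end{equation*}
where the affine factor $K(z_i)^{-1/2}$ flattens the local principal part of $-\varepsilon^2\mathrm{div}(K\nabla\cdot)$ to $-\Delta$, and the scales $a_\varepsilon,s_\varepsilon$ are tuned so that a global ansatz $V_{\tilde{z}} := \sum_{i=1}^N U_{i,\varepsilon} + \psi_{\tilde{z},\varepsilon}$ meets the level $q|\ln\varepsilon|$ along a near-circle of radius $\asymp\varepsilon$ around each $z_i$; here $\psi_{\tilde{z},\varepsilon}$ solves $\mathrm{div}(K\nabla\psi_{\tilde{z},\varepsilon}) = 0$ away from the $z_i$'s and cancels the global logarithmic tails of the bubbles on $\partial\Omega$, and is naturally expressed through the Green's function of $-\mathrm{div}(K\nabla\cdot)$. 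Linearizing $\mathcal{L}_\varepsilon v := -\varepsilon^2\mathrm{div}(K\nabla v) - (v-q|\ln\varepsilon|)^p_+$ at $V_{\tilde{z}}$, the limit operator at each bubble scale is $-\Delta - pU^{p-1}$, whose kernel is spanned by $\partial_{y_1}U,\partial_{y_2}U$. Projecting onto the complement of the resulting $2N$-dimensional approximate kernel $\mathcal{K}$, together with weighted estimates, makes the linearized operator uniformly invertible modulo $\mathcal{K}$, and the contraction mapping theorem produces a unique small $\phi_\varepsilon(\tilde{z})\perp\mathcal{K}$ with $\mathcal{L}_\varepsilon(V_{\tilde{z}}+\phi_\varepsilon(\tilde{z}))\in\mathcal{K}$, smoothly dependent on $\tilde{z}$.

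\textbf{Reduction and extremal argument.} Denote by $I_\varepsilon$ the variational functional whose Euler--Lagrange equation is \eqref{key equa}. The reduced functional $J_\varepsilon(\tilde{z}) := I_\varepsilon(V_{\tilde{z}}+\phi_\varepsilon(\tilde{z}))$ has critical points in one-to-one correspondence, for $\varepsilon$ small, with genuine solutions of \eqref{key equa} of the prescribed type. The key estimate is an expansion
\begin{equation*}
J_\varepsilon(\tilde{z}) \;=\; C_0(\varepsilon) \;+\; \frac{1}{|\ln\varepsilon|}\,\bigl(C_1 - H_N(\tilde{z})\bigr) \;+\; o\!\left(\frac{1}{|\ln\varepsilon|}\right),
\end{equation*}
uniform for $\tilde{z}$ in a compact neighborhood of $\tilde{z}^*$. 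This is obtained by plugging $V_{\tilde{z}}$ into $I_\varepsilon$ and using the diagonal singularity $G(x,y) = -(2\pi\sqrt{\det K(0)})^{-1}\ln|K(0)^{-1/2}(x-y)| + \text{regular}$ of the Green's function of $-\mathrm{div}(K\nabla\cdot)$ near $0$, which produces the logarithmic interaction with weight $(q^2\sqrt{\det K})(0)$, together with the Taylor expansion of $q^2\sqrt{\det K}$ at $0$, whose linear term vanishes by $(KQ)$ and whose Hessian supplies the confining quadratic term. A strict local max/min of $H_N$ is stable under any uniform $o(|\ln\varepsilon|^{-1})$ perturbation, so $J_\varepsilon$ attains an interior extremum on $\overline{B_{\bar\rho}(\tilde{z}^*)}$ at some $\tilde{z}_\varepsilon = \tilde{z}^*+o(1)$, yielding the sought-for critical point. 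Assertions (1) and (2) follow directly from the structure of $V_{\tilde{z}}$, and (3) follows by integrating \eqref{key equa} over $A_{\varepsilon,i}$, applying Green's identity, and using the normalization of $\int_{\mathbb{R}^2}U^p$ together with the $\sqrt{\det K(0)}$ factor produced by the affine rescaling.

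\textbf{Main obstacle.} The delicate part is the energy expansion: one must retain enough precision to see \emph{both} the pair-interaction $(q^2\sqrt{\det K})(0)\ln|K(0)^{-1/2}(\tilde{z}_i-\tilde{z}_j)|$ \emph{and} the confining Hessian term on the same scale $|\ln\varepsilon|^{-1}$. This requires tracking cross contributions between distinct bubbles, the boundary harmonic correction, and the $q$-dependent shift in the nonlinearity (which couples $q$ into the bubble masses and accounts for the factor $q^2$ rather than $q$ alone in the interaction coefficient). The hypothesis $(KQ)$ is precisely what cancels the otherwise-dominant linear-in-$\tilde{z}_j$ contribution at the cluster point $0$; without it the expansion would carry an $O(|\ln\varepsilon|^{-1/2})$ term that would force the bubbles to leave any fixed compact neighborhood of the cluster point and destroy the reduction.
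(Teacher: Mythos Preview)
Your proposal is correct and follows essentially the same Lyapunov--Schmidt reduction as the paper: building approximate solutions from affinely rescaled copies of the radial ground state of $-\Delta U=U^p_+$ together with a Green's-function correction, inverting the linearized operator modulo the $2N$-dimensional kernel coming from translations, and then expanding the reduced energy to see $H_N$ emerge at the first $\tilde z$-dependent scale (the paper obtains $K_\varepsilon(Z)=\text{const}(\varepsilon)+\pi\varepsilon^2 H_N(\tilde z)+O(\varepsilon^2\ln|\ln\varepsilon|/|\ln\varepsilon|^{1/2})$, equivalent to your normalization up to a harmless sign and an additional $\tilde z$-independent $\varepsilon^2\ln|\ln\varepsilon|$ term you can absorb into $C_0(\varepsilon)$). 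Your identification of the role of $(KQ)$ in killing the $O(|\ln\varepsilon|^{-1/2})$ drift is exactly the mechanism the paper exploits.
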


Indeed, by choosing properly $ K $ and $ q $ in Theorem \ref{thm0}, we can directly prove Theorems \ref{thm01}, \ref{thm02}, \ref{thm03}, \ref{thm04} and \ref{thm05}.

\begin{remark}
%The assumption $ (KQ) $ in Theorem \ref{thm0} means that $ 0 $ is a critical point of $ q^2\sqrt{\det K} $, which seems to be artificial. However, when using Theorem \ref{thm0} to prove Theorems \ref{thm01}-\ref{thm05}, this assumption automatically holds. 
Indeed, if the assumption $ (KQ) $ fails, using our idea of proof of Theorem \ref{thm0} it is still possible to construct clustered solutions to \eqref{key equa} with cluster point $ x_0\neq 0  $ and cluster radius $ |\ln\varepsilon|^{-1} $. In \cite{GM}, the authors used gluing method to construct clustered solutions to \eqref{key equa} with cluster point $ x_0\neq 0  $ and cluster radius $ |\ln\varepsilon|^{-1} $ when $ \Omega=\mathbb{R}^2 $, $ K=K_H $ and profile functions $ f_\varepsilon  $ are exponential type functions. Note that $ x_0 $ is not a critical point of $ q^2\sqrt{\det K_H} $ in their results. Using methods in our paper, it is possible to generalize results in \cite{GM}, that is, constructing clustered solutions to \eqref{key equa} in case that  $ \Omega $ is  any bounded domain and $ K $ is any positive-definite matrix.
\end{remark}

%\begin{remark}
%In \cite{WYZ}, the authors considered an anisotropic Emden-Fowler equation
%\begin{equation*}
%\begin{cases}
%\text{div}(a(x)\nabla u)+\varepsilon^2a(x)e^u=0& \text{in}\ \Omega,\\
% u=0& \text{on}\ \partial\Omega,
% \end{cases}
%\end{equation*}
%%By using the expansion of Green's function of $ \nabla(a(x)\nabla ) $, the authors prove the existence of clustering solutions.
%where $ a $ is a smooth positive function in $ \Omega $. For any given maximum points $ x^* $ of $ a $,  they constructed clustered 
%solutions concentrating near $ x^*  $. Note that when choosing $ K(x)=a(x)Id $ in \eqref{eq1-1},   results  in Theorem \ref{thm1} 
%coincides with those in \cite{WYZ}. %Thus to some extent, Theorem \ref{thm1} extends results in \cite{WYZ}.
%\end{remark}
Our proof of Theorem \ref{thm0} is based on proper decomposition of the Green's function of elliptic operators in divergence form and the finite-dimensional reduction method. % is based on the Lyapunov-Schmidt reduction method. %Set $ \delta=\varepsilon|\ln\varepsilon|^{-\frac{p-1}{2}} $ and $ u=|\ln\varepsilon|v $, then \eqref{eq1-1} becomes
%\begin{equation}\label{111}
%\begin{cases}
%-\delta^2\text{div}(K(x)\nabla v)=  (v-q)^{p}_+,\ \ &x\in \Omega,\\
%v=0,\ \ &x\in\partial \Omega.
%\end{cases}
%\end{equation}
To get solutions to \eqref{key equa}, we  first  give the $ C^1- $expansion  of Green's function $ G_K $ of the operator $ -\text{div}(K(x)\nabla) $. %, especially the $ C^1- $expansion of $ G_K $, see Lemma \ref{Green expansion2}. 
 Then, we   give ansatz of approximate solutions of \eqref{key equa} of the form $ \sum_{j=1}^N\left (V_{\varepsilon, z_j, \hat{q}_j}+H_{\varepsilon, z_j, \hat{q}_j}\right )+\omega_{\varepsilon,Z} $,  where $ V_{\varepsilon, z_j, \hat{q}_j} $, $ H_{\varepsilon, z_j, \hat{q}_j} $ and $ \omega_{\varepsilon,Z}$ are the main term,  projection term and  error term respectively. The key ingredient is that the configuration space  $ \Lambda_{\varepsilon, N} $ and   parameters $ \hat{q}_j $ must be chosen properly   %Under the choice of $ H_{\delta, z_j, \hat{q}_j} $, we get the  equation \eqref{3-03} for $ \omega_\delta $. 
to ensure that %the distance between $ z_i $ and $ z_j $ is of the $ |\ln \varepsilon|^{-m^2-1} $ order and that the $ L^{\infty} $ norm of
 the difference between $ V_{\varepsilon,Z}-q|\ln\varepsilon| $ and $V_{\varepsilon, z_i, \hat{q}_{i}}-\hat{q}_{i}|\ln\varepsilon| $ is small in some neighborgood of $ z_i $, see \eqref{203}. Then we give the linear and nonlinear theory of the linearized operator of \eqref{key equa} at $ \sum_{j=1}^N\left (V_{\varepsilon, z_j, \hat{q}_j}+H_{\varepsilon, z_j, \hat{q}_j}\right ) $.  %Using the non-degeneracy of some limiting equations, 
We get the existence, uniqueness  and the $ C^1 $ differentiability of $ \omega_{\varepsilon,Z}  $ with respect to $ Z $, see  Proposition \ref{exist and uniq of w} and  
%Note that to prove that the energy functional is $ C^1 $ with respect to the variable $ Z $, we need  the $ C^1 $ differentiability of $ \omega_{\varepsilon,Z}  $ about $ Z $, 
 Proposition \ref{differ of w}. Finally, by calculating   coefficients of each order of energy $ K_\varepsilon(Z) $, that is, the coefficients of $ O(|\ln\varepsilon|), O(\ln|\ln\varepsilon|) $ and $ O(1) $ terms, % and choosing test functions as  the vertices of a $ m $–sided regular polygon,  
we prove the existence of   clustered solutions to \eqref{key equa} with cluster point $ 0 $ and cluster radius $  |\ln\varepsilon|^{-\frac{1}{2}} $.

We give a remark that model \eqref{NP vor fi} has also appeared in many other PDE models, such as Gross-Pitaevskii equations and Ginzburg-Landau equations. \cite{CJ1} established connection between the Ginzburg–Landau functional and the energy of nearly parallel filaments, and constructed  solutions of the Ginzburg–Landau equations  whose small-scale structure is governed by the Ginzburg–Landau functional. \cite{JS2} rigorously
derived the corresponding asymptotic motion law in the context of the Gross–Pitaevskii
equation.  As for the helix,  \cite{DDMR,DDMR2} considered interacting helical traveling waves
for the Gross-Pitaevskii equations and the Ginzburg-Landau equations. For more results, see \cite{Cho, WY}.

This paper is organized as follows.  %To construct clustered solutions to \eqref{111},
In section 2, we give the $ C^1- $expansion of Green's function $ G_K $, the definition of   $ \Lambda_{\varepsilon, N} $ and ansatz of approximate solutions. In section 3, we study the linear theory of \eqref{key equa}. Using the non-degeneracy of solutions to limiting equations \eqref{limit eq}, we get coercive estimates of the linearized operator $ Q_\varepsilon L_\varepsilon $. In section 4, the nonlinear theory of \eqref{key equa} is considered. The existence and  the differentiability of the error term $ \omega_{\varepsilon,Z} $   are proved.
In sections 5 and 6, we give the main terms and remaining higher order term of the energy functional $ K_\varepsilon  $ and   complete the proof of Theorem \ref{thm0}. By proper choice of $ K $ and $ q $ in Theorem \ref{thm0}, we give proofs of  Theorems \ref{thm01},\ref{thm02},\ref{thm03},\ref{thm04} and \ref{thm05} in section 7. Some calculations such as  exact helical solutions of \eqref{NP vor fi} and $ C^1-$expansion of Green’s function are given in Appendix.

\section{Green's function and approximate solutions}

To  prove Theorem \ref{thm0}, the key ingredient is to show the existence of clustered solutions to \eqref{key equa}:
\begin{equation*} 
\begin{cases}
-\varepsilon^2\text{div}(K(x)\nabla v)=  \left (v-q(x)|\ln\varepsilon| \right )^{p}_+,\ \ &\text{in}\  \Omega,\\
v=0,\ \ &\text{on}\ \partial \Omega.
\end{cases}
\end{equation*}
Here $ 0<\varepsilon<1, $ $ p>1 $, $ K $ and $ q $ satisfy $(K1)$, $ (Q1) $ and $ (KQ) $. %$ \delta>0 $ sufficiently small, $ \textbf{1}_{D} $ is Heaviside function of some set $ D $, $ C_{\frac{r_*}{\sqrt{|\ln\varepsilon|}}}=\left \{x\mid |x|=\frac{r_*}{\sqrt{|\ln\varepsilon|}}\right \} $ is the circle centered at the origin of radius $ \frac{r_*}{\sqrt{|\ln\varepsilon|}} $, and $ D_{\frac{\delta}{\sqrt{|\ln\varepsilon|}}}\left (C_{\frac{r_*}{\sqrt{|\ln\varepsilon|}}}\right )=\left \{x\mid |x|\in \left (\frac{r_*-\delta}{\sqrt{|\ln\varepsilon|}},\frac{r_*+\delta}{\sqrt{|\ln\varepsilon|}} \right )\right \} $ is the $ \frac{\delta}{\sqrt{|\ln\varepsilon|}} $-neighborhood of $ C_{\frac{r_*}{\sqrt{|\ln\varepsilon|}}} $.  The constants $ \alpha $ and $ \beta $ in \eqref{key equa} are chosen by
%\begin{equation}\label{choice of a,b} 
%\alpha=\frac{\kappa}{4\pi}\left( \frac{1}{h^2}-\frac{N-1}{r_*^2}\right),\ \ \ \ \beta=\frac{\kappa}{2\pi}.
%\end{equation}
The purpose of this section is to give the $ C^1-$expansion of Green's function $ G_{K} $ of the elliptic operator $  -\text{div}(K(x)\nabla \cdot) $ and give the ansatz of  approximate solutions to \eqref{key equa}. %For the sake of simplicity in the notation of the formula, we always denote $ B_{R^*}(0) $ as $ \Omega $, $ K_H $ as $ K $ and  $ \frac{\alpha}{2}|x|^2+\beta $ as $ q(x) $ in the subsequent sections. %the following elliptic equations in divergence form
%\begin{equation}\label{111}
%\begin{cases}
%-\delta^2\text{div}(K(x)\nabla w)=  (w-q)^{p}_+,\ \ &\text{in}\  \Omega,\\
%w=0,\ \ &\text{on}\ \partial \Omega.
%\end{cases}
%\end{equation}
%and give some basic estimates for the approximate  solutions. %Note that $ -\text{div}(K(x)\nabla \cdot) $ is a uniformly elliptic operator.
%Throughout this paper, we denote $ C,C_1,C_2\cdots $ positive constants independent of $ \varepsilon $, whose values may change from line to line.

\subsection{$ C^1-$Expansion of Green's function $ G_K $}
The expansion of Green's function $ G_K $ of the elliptic operator $ -\text{div}(K(x)\nabla \cdot) $ with  Dirichlet condition  plays an essential role, since it appears in the construction of approximate solutions of \eqref{key equa} and the $ C^1-$regularity of the finite-dimensional functional.
%We first give some properties of the Green's function $ G_K $ of the operator $ -\text{div}(K(x)\nabla \cdot) $ with Dirichlet condition.
Let $ G_K(x,y) $ be the Green's function of $ -\text{div}(K(x)\nabla \cdot) $ with zero Dirichlet condition on  the boundary of $ \Omega $, that is, solutions of the following linear elliptic problem:
\begin{equation}\label{diver form}
\begin{cases}
-\text{div}(K(x)\nabla u)= f,\ \ & \Omega,\\
u=0,\ \ & \partial \Omega
\end{cases}
\end{equation}
can be expressed by $ u(x)=\int_{\Omega}G_K(x,y)f(y)dy $ for $ x\in \Omega $, see \cite{GT}.
%Since $ K $ is a $ C^\infty $ positive definite matrix with all eigenvalues having uniformly positive lower and upper bounds, by the Cholesky decomposition \textcolor{red}{(see \cite{G})} one can find the unique $ T\in C^\infty(\overline{\Omega}) $ such that for any $ x\in \Omega $, $ T(x) $ is invertible and
%Let $ T $ be the unique positive-definite matrix-valued function satisfying
%\begin{equation}\label{T_z choice}
%(T(x)^{-1})(T(x)^{-1})^t=K(x),\ \ \ \ x\in \Omega.
%\end{equation}
%For simplicity, we denote $ T_{x}=T(x) $. Define $ \Gamma(x)=-\frac{1}{2\pi}\ln|x|$ the fundamental solutions of $ -\Delta $ in $ \mathbb{R}^2 $.

%From lemma 3.1 in \cite{CW2}  (see also Lemma \ref{A1} in Appendix), we know that there exists a function $ S_K\in C^{0,\gamma}_{loc} (\Omega\times \Omega) $ for any $ \gamma\in(0,1) $, such that for every $f \in L^{p}(\Omega)$  with $ p>2 $ and every $ x\in \Omega $, the solution of \eqref{diver form} can be expressed by
%\begin{equation*}
%\begin{split}
%u(x)=\int_{\Omega}\left (\frac{\sqrt{\det K(x)}^{-1}+\sqrt{\det K(y)}^{-1}}{2}\Gamma\left (\frac{T_x+T_y}{2}(x-y)\right )+S_K(x,y)\right)f(y)dy.
%\end{split}
%\end{equation*}
%Moreover, $ S_K(x,y)=S_K(y,x) $  for any  $ x,y \in \Omega. $ In other words, the Green's function $ G_K $ of the operator $ -\text{div}(K(x)\nabla  ) $ with Dirichlet condition has the decomposition
%\begin{equation}\label{2-11}
%G_K(x,y)=\frac{\sqrt{\det K(x)}^{-1}+\sqrt{\det K(y)}^{-1}}{2}\Gamma\left (\frac{T_x+T_y}{2}(x-y)\right )+S_K(x,y).
%\end{equation}
  Note that \cite{CW,CW1}  obtained the following $ C^0-$asymptotic expansion of $ G_K(x,y)$. %We need to use $ C^1 $-asymptotic expansion of $ G_K(x,y) $ in the present paper.
\begin{lemma}[lemma 3.1, \cite{CW}]\label{Green expansion}
For $ y\in\Omega $, let $ T_y=T(y) $ be the unique positive-definite matrix satisfying
$$ T_y^{-1}(T_y^{-1})^{T}=K(y). $$ 
Then for any $ \gamma\in(0,1) $, there exists a function $\bar{S}_K\in C^{0,\gamma}_{loc} (\Omega\times \Omega) $ such that
\begin{equation*}
G_K(x,y)=\sqrt{\det K(y)}^{-1}\Gamma\left (T_y(x-y)\right )+\bar{S}_K(x,y),\ \ \forall\ x,y\in\Omega.
\end{equation*}
\end{lemma}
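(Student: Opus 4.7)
The plan is to identify the leading singular part of $G_K$ by freezing the coefficient matrix at the pole $y$, show that this frozen operator has an explicit Euclidean fundamental solution built from $\Gamma(T_y\cdot)$, and finally argue that the remainder is H\"older continuous via De Giorgi--Nash--Moser theory for divergence-form operators.

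First I would verify that $\Phi_y(x):=\sqrt{\det K(y)}^{-1}\,\Gamma(T_y(x-y))$, with $\Gamma(z)=-\frac{1}{2\pi}\ln|z|$, is the whole-plane fundamental solution of $-\operatorname{div}(K(y)\nabla\cdot)$ at the pole $y$. The defining relation $T_y^{-1}(T_y^{-1})^{T}=K(y)$ rearranges to $T_y K(y) T_y^{T}=I$, so under the linear change of variables $z=T_y(x-y)$ the frozen operator transforms precisely into $-\Delta_z$. Combined with $-\Delta_z\Gamma(z)=\delta_0(z)$ and the scaling identity $\delta_0(T_y(x-y))=|\det T_y|^{-1}\delta_y(x)=\sqrt{\det K(y)}\,\delta_y(x)$, a chain-rule computation yields $-\operatorname{div}_x(K(y)\nabla_x\Phi_y)=\delta_y$ in the distributional sense; the normalization is forced by $\det T_y=(\det K(y))^{-1/2}$.

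Next, set $\bar S_K(x,y):=G_K(x,y)-\Phi_y(x)$. Subtracting the equations of $G_K$ and $\Phi_y$ and rearranging shows that, for each fixed $y\in\Omega$, $\bar S_K(\cdot,y)$ solves
\[
-\operatorname{div}_x\bigl(K(x)\nabla_x\bar S_K\bigr)=\operatorname{div}_x\bigl((K(x)-K(y))\nabla_x\Phi_y(x)\bigr) \ \ \text{in}\ \Omega,\qquad \bar S_K(\cdot,y)=-\Phi_y\ \ \text{on}\ \partial\Omega.
\]
Since $K\in C^\infty(\overline\Omega)$, $|K(x)-K(y)|\lesssim|x-y|$ while $|\nabla_x\Phi_y(x)|\lesssim|x-y|^{-1}$; hence $F_y(x):=(K(x)-K(y))\nabla_x\Phi_y(x)\in L^\infty_{\mathrm{loc}}(\Omega)$. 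Thus $\bar S_K(\cdot,y)$ satisfies a divergence-form elliptic equation whose source is $\operatorname{div}(F_y)$ with $F_y$ bounded, and whose Dirichlet datum is smooth (as $y\in\Omega$). The De Giorgi--Nash--Moser theorem, combined with standard boundary regularity for divergence-form operators with bounded measurable coefficients, yields $\bar S_K(\cdot,y)\in C^{0,\alpha}_{\mathrm{loc}}(\Omega)$ with estimates uniform on compact subsets of $\Omega$.

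To promote this to joint local H\"older continuity on $\Omega\times\Omega$ for an arbitrary exponent $\gamma\in(0,1)$, I would subtract the equations for $\bar S_K(\cdot,y_1)$ and $\bar S_K(\cdot,y_2)$: the difference solves the same divergence-form equation with source $\operatorname{div}(F_{y_1}-F_{y_2})$ and boundary data $\Phi_{y_2}-\Phi_{y_1}$, both of which are H\"older in $y$ on compact subsets of $\Omega$. Using also the symmetry $G_K(x,y)=G_K(y,x)$, $y$-regularity transfers to $x$-regularity and combines with the first step to give joint H\"older continuity. The main obstacle is upgrading the crude exponent supplied by De Giorgi--Nash--Moser to an arbitrary $\gamma\in(0,1)$: this is done by exploiting that off the diagonal $F_y$ is smooth in $(x,y)$ with only an integrable singularity at $x=y$, so a Morrey/Campanato-type bootstrap, or alternatively a Newtonian-potential representation with the smooth factor $K$ commuted through, closes the gap. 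These arguments are classical and are carried out in detail in \cite{CW}.
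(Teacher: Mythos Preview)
The paper does not prove this lemma; it is quoted verbatim from \cite{CW} (lemma 3.1 there), so there is no in-paper proof to compare against. Your sketch is the standard argument and matches what the paper implicitly relies on: the equation you derive for $\bar S_K(\cdot,y)$,
\[
-\operatorname{div}_x\bigl(K(x)\nabla_x\bar S_K\bigr)=\operatorname{div}_x\bigl((K(x)-K(y))\nabla_x\Phi_y(x)\bigr)\quad\text{in }\Omega,\qquad \bar S_K=-\Phi_y\ \text{on }\partial\Omega,
\]
is precisely the starting point the paper uses (see \eqref{equa of S_K}) when it refines this $C^{0,\gamma}$ statement to the $C^1$ expansion of Lemma~\ref{Green expansion2}. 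One small remark: since $K$ is smooth here, you do not need De Giorgi--Nash--Moser plus a bootstrap to reach an arbitrary $\gamma\in(0,1)$; the source $F_y=(K(x)-K(y))\nabla_x\Phi_y$ lies in $L^p(\Omega)$ for every $p<\infty$, so the $W^{2,p}$ theory and Sobolev embedding already give $\bar S_K(\cdot,y)\in C^{0,\gamma}$ for all $\gamma<1$ directly, which is the route taken in \cite{CW}.
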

For $ i,j=1,2 $, we denote $ T_{ij}=(T_y)_{ij}=(T(y))_{ij} $ the component of row $ i $, column $ j $ of the matrix $ T_y $. Based on Lemma \ref{Green expansion} and some more detailed calculations, we give $ C^1- $asymptotic expansion of $ G_K(x,y) $ as follows, the proof of which will be given in the Appendix (see Appendix A.2). %, which improves results in Lemma \ref{Green expansion}.
\begin{lemma}\label{Green expansion2}
%There exists a function $\bar{S}_K\in C^{0,\gamma}_{loc} (\Omega\times \Omega) $ for any $ \gamma\in(0,1) $, such that
There holds
\begin{equation*}
\bar{S}_K(x,y)=-F_{1,y}(x)-F_{2,y}(x)+\bar{H}_1(x,y) \ \ \ \ \forall\ x,y\in\Omega,
\end{equation*}
where
\begin{equation}\label{exp of F_{1,y}}
\begin{split}
F_{1,y}(x)=-\frac{1}{4\pi}\sqrt{\det K(y)}^{-1}\sum_{i,j,m=1}^2T_{mj}\partial_{x_i}K_{ij}(y)\left( T_y(x-y)\right)_m\ln|T_y(x-y)|,
\end{split}
\end{equation}
\begin{equation*}
\begin{split}
F_{2,y}(x)=&\frac{1}{\pi}\sqrt{\det K(y)}^{-1}\sum_{i,j,\alpha=1}^2\partial_{x_\alpha}K_{ij}(y)\cdot\\
&\bigg\{T^{-1}_{\alpha1}T_{1j}T_{1i}\left( -\frac{1}{8}\frac{\left( T_y\left (x-y\right )\right) _1^3}{|T_y\left (x-y\right )|^2}+\frac{1}{8}\left( T_y\left (x-y\right )\right)_1\ln|T_y\left (x-y\right )|\right) \\
+&T^{-1}_{\alpha1}T_{1j}T_{2i}\left( -\frac{1}{8}\frac{\left( T_y\left (x-y\right )\right) _1^2\left( T_y\left (x-y\right )\right) _2}{|T_y\left (x-y\right )|^2}+\frac{1}{8}\left( T_y\left (x-y\right )\right)_2\ln|T_y\left (x-y\right )|\right) \\
+&T^{-1}_{\alpha1}T_{2j}T_{1i}\left( -\frac{1}{8}\frac{\left( T_y\left (x-y\right )\right) _1^2\left( T_y\left (x-y\right )\right) _2}{|T_y\left (x-y\right )|^2}+\frac{1}{8}\left( T_y\left (x-y\right )\right)_2\ln|T_y\left (x-y\right )|\right)\\
+&T^{-1}_{\alpha1}T_{2j}T_{2i}\left( -\frac{1}{8}\frac{\left( T_y\left (x-y\right )\right) _1\left( T_y\left (x-y\right )\right) _2^2}{|T_y\left (x-y\right )|^2}-\frac{1}{8}\left( T_y\left (x-y\right )\right)_1\ln|T_y\left (x-y\right )|\right)
\end{split}
\end{equation*}
\begin{equation}\label{exp of F_{2,y}}
\begin{split}
+&T^{-1}_{\alpha2}T_{1j}T_{1i}\left( -\frac{1}{8}\frac{\left( T_y\left (x-y\right )\right) _1^2\left( T_y\left (x-y\right )\right) _2}{|T_y\left (x-y\right )|^2}-\frac{1}{8}\left( T_y\left (x-y\right )\right)_2\ln|T_y\left (x-y\right )|\right)\\
+&T^{-1}_{\alpha2}T_{1j}T_{2i}\left( -\frac{1}{8}\frac{\left( T_y\left (x-y\right )\right) _1\left( T_y\left (x-y\right )\right) _2^2}{|T_y\left (x-y\right )|^2}+\frac{1}{8}\left( T_y\left (x-y\right )\right)_1\ln|T_y\left (x-y\right )|\right)\\
+&T^{-1}_{\alpha2}T_{2j}T_{1i}\left( -\frac{1}{8}\frac{\left( T_y\left (x-y\right )\right) _1\left( T_y\left (x-y\right )\right) _2^2}{|T_y\left (x-y\right )|^2}+\frac{1}{8}\left( T_y\left (x-y\right )\right)_1\ln|T_y\left (x-y\right )|\right)\\
+&T^{-1}_{\alpha2}T_{2j}T_{2i}\left( -\frac{1}{8}\frac{\left( T_y\left (x-y\right )\right) _2^3 }{|T_y\left (x-y\right )|^2}+\frac{1}{8}\left( T_y\left (x-y\right )\right)_2\ln|T_y\left (x-y\right )|\right)\bigg\},
\end{split}
\end{equation}
 and $ x\to \bar{H}_1(x,y)\in C^{1,\gamma}(\overline{\Omega})$ for all $ y\in \Omega $, $ \gamma\in (0, 1) $.  Moreover, the function $(x, y)\to \bar{H}_1(x,y)\in C^1(\Omega\times\Omega),$ and in particular the corresponding Robin function $x\to \bar{S}_K(x,x)\in C^1(\Omega)$.
\end{lemma}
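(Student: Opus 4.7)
The plan is to refine the parametrix $\Gamma_y(x):=\sqrt{\det K(y)}^{-1}\Gamma(T_y(x-y))$ from Lemma \ref{Green expansion} by peeling two further singular terms off $\bar S_K$, so that the leftover is $C^{1,\gamma}$ even across the diagonal $x=y$. Since $\Gamma_y$ is the fundamental solution of the \emph{frozen} operator $-\text{div}(K(y)\nabla\cdot)$ with pole at $y$, one has
\begin{equation*}
-\text{div}_x\bigl(K(x)\nabla_x \Gamma_y(x)\bigr)=\delta_y(x)-\text{div}_x\bigl[(K(x)-K(y))\nabla_x\Gamma_y(x)\bigr],
\end{equation*}
and the error on the right is bounded but not smooth at $x=y$. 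The first step is the Taylor expansion $K_{ij}(x)=K_{ij}(y)+\sum_\alpha\partial_\alpha K_{ij}(y)(x-y)_\alpha+O(|x-y|^2)$; writing $z=T_y(x-y)$ and using $(x-y)_\alpha=\sum_\beta(T_y^{-1})_{\alpha\beta}z_\beta$ rewrites the divergence of the leading piece as a finite linear combination of terms of type $z_\ell/|z|^2$, with coefficients built from $\partial_\alpha K_{ij}(y)$ and from the entries of $T_y$ and $T_y^{-1}$.

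Next I would invert the frozen operator against these singular sources. In the $z$ variables the identity $T_yK(y)T_y^T=I$ turns $-\text{div}(K(y)\nabla\cdot)$ into $-\Delta_z$, and the two-dimensional identities
\begin{equation*}
\Delta_z(z_k\ln|z|)=2\frac{z_k}{|z|^2},\qquad \Delta_z\!\left(\frac{z_iz_jz_k}{|z|^2}\right)\in\mathrm{span}\!\left\{\frac{z_\ell}{|z|^2}\right\}
\end{equation*}
provide the explicit building blocks needed to annihilate every singular term. Matching each block to the corresponding index combination in the Taylor remainder produces the two correctors $F_{1,y}(x)$ and $F_{2,y}(x)$ displayed in \eqref{exp of F_{1,y}}--\eqref{exp of F_{2,y}}; morally, $F_{1,y}$ collects the contributions proportional to the divergence pattern $\sum_i T_{mj}\partial_{x_i}K_{ij}(y)$ and is handled by the $z_k\ln|z|$ blocks, while $F_{2,y}$ absorbs the remaining trilinear combinations of $T_y$, $T_y^{-1}$ using both types of blocks. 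By construction, $\Gamma_y+F_{1,y}+F_{2,y}$ solves $-\text{div}_x(K(x)\nabla\cdot)=\delta_y+g(\cdot,y)$ with $g(\cdot,y)\in C^{0,\gamma}_{\mathrm{loc}}(\Omega)$, so setting
\begin{equation*}
\bar H_1(x,y):=\bar S_K(x,y)+F_{1,y}(x)+F_{2,y}(x),
\end{equation*}
the function $\bar H_1(\cdot,y)$ satisfies a divergence-form elliptic equation with $C^{0,\gamma}$ right-hand side and, for fixed $y\in\Omega$, smooth boundary data (the restrictions of $F_{1,y}$, $F_{2,y}$ to $\partial\Omega$, which involve $y$ only away from $\partial\Omega$). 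Standard Schauder estimates then give $x\mapsto\bar H_1(\cdot,y)\in C^{1,\gamma}(\overline\Omega)$.

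The main obstacle is upgrading to the joint regularity $\bar H_1\in C^1(\Omega\times\Omega)$, from which the $C^1$ regularity of the Robin function $x\mapsto\bar S_K(x,x)$ follows immediately after noting that $F_{1,y}(y)=F_{2,y}(y)=0$ by the factors of $T_y(x-y)$. For this one differentiates the equation for $\bar H_1$ with respect to $y$, tracking the smooth $y$-dependence of $T_y$, $T_y^{-1}$, $\sqrt{\det K(y)}$ and the explicit correctors, and applies Schauder estimates to $\partial_{y_k}\bar H_1$. The remaining difficulty is purely combinatorial: identifying which linear combination of building blocks $z_k\ln|z|$ and $z_iz_jz_k/|z|^2$ cancels each of the index combinations $T^{-1}_{\alpha\beta}T_{\gamma j}T_{\delta i}\partial_\alpha K_{ij}(y)$ produced by the Taylor remainder is tedious though mechanical, and is the content deferred to Appendix A.2.
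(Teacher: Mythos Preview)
Your overall strategy matches the paper's: write the equation for $\bar S_K$, Taylor-expand $K(x)-K(y)$ to first order, and build explicit correctors $F_{1,y},F_{2,y}$ by inverting the frozen operator (equivalently $-\Delta_z$) against the resulting homogeneous singularities using the blocks $z_k\ln|z|$ and $z_iz_jz_k/|z|^2$. The paper's split is exactly into the $A_1$ part $\sum\partial_{x_i}K_{ij}(x)\,\partial_{x_j}\Gamma_y$ (handled by $F_{1,y}$ via $J(z)=-\tfrac{1}{8\pi}|z|^2\ln|z|$) and the $A_2$ part $\sum(K_{ij}(x)-K_{ij}(y))\partial_{x_ix_j}\Gamma_y$ (handled by $F_{2,y}$), so your description is accurate.

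There is, however, a genuine gap in the regularity step. Your claim that the residual $g(\cdot,y)\in C^{0,\gamma}_{\mathrm{loc}}$ is too strong and Schauder estimates do not apply. After subtracting the correctors the remainder contains, for instance, $(\partial_{x_i}K_{ij}(x)-\partial_{x_i}K_{ij}(y))\partial_{x_j}\Gamma_y$, which near $x=y$ behaves like a bounded but \emph{discontinuous} homogeneous-of-degree-zero expression such as $(x-y)_\alpha(x-y)_j/|x-y|^2$; and $-\text{div}((K(x)-K(y))\nabla F_{1,y})$ produces terms of order $\ln|x-y|$ coming from $\nabla(z_m\ln|z|)$. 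Thus $g(\cdot,y)$ is only in $L^p(\Omega)$ for every $p<\infty$, not in $C^{0,\gamma}$. The paper accordingly invokes $W^{2,p}$ elliptic theory and Sobolev embedding ($W^{2,p}\hookrightarrow C^{1,\gamma}$ for $p>2$) to get $x\mapsto\bar H_1(x,y)\in C^{1,\gamma}(\overline\Omega)$, and for the joint $C^1$ regularity it argues continuity of the right-hand side and boundary data with respect to $y$ in $L^p(\Omega)$ and $C^2(\partial\Omega)$ respectively, then differentiates in $y$ and repeats. Replace your Schauder argument by this $L^p$ route and the proof goes through.
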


\begin{remark}
From Lemma \ref{Green expansion2}, Green's function $ G_K $ has an expansion
\begin{equation*} 
G_K(x,y)=\sqrt{\det K(y)}^{-1}\Gamma\left (T_y(x-y)\right )-F_{1,y}(x)-F_{2,y}(x)+\bar{H}_1(x,y),\ \ \forall\ x,y\in\Omega.
\end{equation*}
%Indeed, Lemma \ref{Green expansion2} holds not only for $ \Omega=B_{R^*}(0), K=K_H $, but also for $ \Omega $ being any bounded domian and $ K $ being any positive-definite matrix. %We give some examples to understand this result. to explain results in Lemma \ref{Green expansion2}.\\

\iffalse
\noindent$ Example $ 1. If $ K(x)= Id $, then \eqref{diver form} is the standard Laplacian problem. In this case, one computes directly that $ F_{1,y}=F_{2,y}\equiv 0 $. From \eqref{exp of G_K},  Green's function has an expansion
\begin{equation*}
G_1(x,y)=\Gamma(x-y)+S_1(x,y),\ \ \ \ \forall\ x,y\in \Omega.
\end{equation*}
Thus we have $ S_1(x,y)=-H(x,y) $, where $ H(x,y) $ is the regular part of Green's function of $ -\Delta $ in $ \Omega $ with zero-Dirichlet data, which coincides with classical results in \cite{GT}.

\noindent$ Example $ 2. If $ K(x)=\frac{1}{b(x)}Id $, where $ b\in C^2(\overline{\Omega}) $ and $ \inf_{\Omega}b>0 $, then $ \det K=\frac{1}{b^2} $ and $ T=\sqrt{b}Id $. By \eqref{exp of F_{1,y}} and \eqref{exp of F_{2,y}} it is not hard to get that $ F_{1,y}(x)= \frac{\nabla b(y)\cdot (x-y)}{4\pi}\ln|x-y|+F^*_1(x,y) $ and $ F_{2,y}(x)=F^*_2(x,y) $ for $ x,y\in\Omega $, where $ F^*_1,F^*_2$ are two functions belonging to $ C^{1}(\Omega\times\Omega) $. From  \eqref{exp of G_K},  Green's function has the expansion
\begin{equation*}
G_b(x,y)=b(y)\Gamma\left (x-y\right )-\frac{\nabla b(y)\cdot (x-y)}{4\pi}\ln|x-y|+S_b(x,y) \ \ \ \ \forall\ x,y\in \Omega,
\end{equation*}
where $ S_b\in C^1(\Omega\times\Omega) $, which coincides with results in \cite{De2,WYZ}.

\fi

It is also noted that recently, \cite{CW3} constructed the asymptotic expansion of Green's function and the regularity of Robin's function of an elliptic operator in divergence form in bounded domains in all dimensions. Especially when the dimension is 2 and the coefficient matrix is $ K_H $, they proved that $ \bar{S}_K $ is smooth in $ \Omega. $ In contrast to \cite{CW3}, we  calculate  in detail the specific expansion of  Green's function.
\end{remark}
%\begin{proof}
%%Since the subsequent construction of concentrated  solutions only involves this property, 
%We place  the proof of this result in Proposition \ref{lemA2} in Appendix. 
%\end{proof}

\subsection{Ansatz for approximate solutions}

The argument that  $ \bar{S}_K(x,x)$ is a $ C^1 $  function  plays an important role in our argument to get the $ C^1 $ regularity
of the error term and the finite-dimensional variational functional with respect to parameter $ Z $, see sections 4 and 5.

We give approximate solutions of \eqref{key equa} and   the configuration space $ \Lambda_{\varepsilon, N} $ for the parameter $ Z=(z_1,\cdots,z_N) $  as follows. Let $ N> 1 $ be an integer.  For any $ \hat{x}\in \Omega, \hat{q}>0 $, we define
\begin{equation}\label{eq2-2}
V_{\varepsilon, \hat{x}, \hat{q}}(x)=\begin{cases}
\hat{q}\ln \frac{1}{\varepsilon}+\varepsilon^{\frac{2}{p-1}}s_\varepsilon^{-\frac{2}{p-1}}\phi\left(\frac{|T_{\hat{x}}(x-\hat{x})|}{s_\varepsilon}\right),\ \ &|T_{\hat{x}}(x-\hat{x})|\le s_\varepsilon,\\
 \hat{q} \ln \frac{1}{\varepsilon}\frac{\ln |T_{\hat{x}}(x-\hat{x})|}{\ln s_\varepsilon},\ \  &  |T_{\hat{x}}(x-\hat{x})|>s_\varepsilon,
\end{cases}
\end{equation}
where $ T_{\hat{x}}^{-1}\left (T_{\hat{x}}^{-1}\right )^{T}=K\left (\hat{x}\right ) $, $ \phi\in H^1_0(B_1(0)) $ satisfies (see, e.g., \cite{CLW})
\begin{equation*}
-\Delta\phi=\phi^p, \ \ \phi>0\ \ \text{in}\ B_1(0),
\end{equation*}
and $ s_\varepsilon $ satisfies
\begin{equation}\label{201}
\varepsilon^{\frac{2}{p-1}}s_\varepsilon^{-\frac{2}{p-1}}\phi'(1)=\hat{q}\frac{ \ln\frac{1}{\varepsilon}}{\ln s_\varepsilon}.
\end{equation}
Clearly, $ V_{\varepsilon, \hat{x}, \hat{q}}\in C^1 $ satisfies
\begin{equation}\label{eq2}
\begin{cases}
-\varepsilon^2\text{div}\left (K(\hat{x})\nabla V_{\varepsilon, \hat{x}, \hat{q}}\right )=  \left (V_{\varepsilon, \hat{x}, \hat{q}}-\hat{q}\ln\frac{1}{\varepsilon}\right )^{p}_+,\ \ & \text{in}\  \mathbb{R}^2,\\
V_{\varepsilon, \hat{x}, \hat{q}}=\hat{q}\ln\frac{1}{\varepsilon},\ \ &\text{on} \ \{x\mid |T_{\hat{x}}(x-\hat{x})|=s_\varepsilon\},
\end{cases}
\end{equation}
and  for $ \varepsilon $ sufficiently small, \eqref{201} is uniquely solvable with
\begin{equation*}
\frac{s_\varepsilon}{\varepsilon}\to \left( \frac{|\phi'(1)|}{\hat{q}}\right) ^{\frac{p-1}{2}}\ \ \ \ \text{as}\ \varepsilon\to0.
\end{equation*}
The Pohozaev identity implies
\begin{equation}\label{PI}
\int_{B_1(0)}\phi^{p+1}=\frac{\pi(p+1)}{2}|\phi'(1)|^2,\ \ \int_{B_1(0)}\phi^{p}= 2\pi|\phi'(1)|.
\end{equation}

Since $ V_{\varepsilon, \hat{x}, \hat{q}} $ is not 0 on $ \partial \Omega $, we need to make a projection on $ H^1_0(\Omega). $ %Note that the operator $ \text{div}(K(\hat{x})\nabla \cdot) $ in \eqref{eq2} is different from $ \text{div}(K(x)\nabla \cdot) $ appeared in \eqref{111}, we introduce a projection term 
Let $ H_{\varepsilon, \hat{x}, \hat{q}} $ be a solution of the following equations
\begin{equation}\label{eq3}
\begin{cases}
-\text{div}\left (K(x)\nabla H_{\varepsilon, \hat{x}, \hat{q}}\right )=\text{div}\left (\left (K(x)-K(\hat{x})\right )\nabla V_{\varepsilon, \hat{x}, \hat{q}}\right ),\ \ &\Omega,\\
H_{\varepsilon, \hat{x}, \hat{q}}=-V_{\varepsilon, \hat{x}, \hat{q}},\ \ &\partial\Omega.
\end{cases}
\end{equation}
Then  $ H_{\varepsilon, \hat{x}, \hat{q}}\in W^{2,p}(\Omega)\subset  C^{1,\alpha}(\overline{\Omega}) $ for any $ p>1, \alpha\in(0,1) $. Similar to lemma 3.2 in \cite{CW}, we give estimates of the difference between $ H_{\varepsilon, \hat{x}, \hat{q}} $ and  $ -\frac{2\pi\hat{q}\sqrt{\det K(\hat{x})}\ln\frac{1}{\varepsilon}}{\ln s_\varepsilon}\bar{S}_K(\cdot,\hat{x}) $.

\begin{lemma}\label{H estimate}
Define $ \zeta_{\varepsilon, \hat{x}, \hat{q}}(x)=H_{\varepsilon, \hat{x}, \hat{q}}(x)+\frac{2\pi\hat{q}\sqrt{\det K(\hat{x})}\ln\frac{1}{\varepsilon}}{\ln s_\varepsilon}\bar{S}_K(x,\hat{x}) $ for $ x\in\Omega $. Then for any $ p\in(1,2) $, there exists a constant $ C>0 $ independent of $ \varepsilon $ such that
\begin{equation*}
||\zeta_{\varepsilon, \hat{x}, \hat{q}}||_{C^{0,2-\frac{2}{p}}(\Omega)}\leq Cs_\varepsilon^{\frac{2}{p}-1}.
\end{equation*}
\end{lemma}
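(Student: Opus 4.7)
The plan is to recognize $\zeta_{\varepsilon,\hat x,\hat q}$ as the solution of an elliptic Dirichlet problem whose right-hand side is supported in the small ellipse $B^{T}_{s_\varepsilon}:=\{x:|T_{\hat x}(x-\hat x)|\le s_\varepsilon\}$, and then apply standard $L^q$ theory for divergence-form operators together with Morrey embedding in $\mathbb R^2$. Write $c_\varepsilon:=\frac{2\pi\hat q\sqrt{\det K(\hat x)}\ln(1/\varepsilon)}{\ln s_\varepsilon}$ for brevity, so $\zeta=H_{\varepsilon,\hat x,\hat q}+c_\varepsilon\bar S_K(\cdot,\hat x)$.

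First, I would verify the boundary condition $\zeta|_{\partial\Omega}=0$. Since $B^{T}_{s_\varepsilon}\subset\subset\Omega$ for small $\varepsilon$, the outer formula gives $V_{\varepsilon,\hat x,\hat q}|_{\partial\Omega}=\frac{\hat q\ln(1/\varepsilon)}{\ln s_\varepsilon}\ln|T_{\hat x}(x-\hat x)|=-\frac{2\pi\hat q\ln(1/\varepsilon)}{\ln s_\varepsilon}\Gamma(T_{\hat x}(x-\hat x))$. Combining this with $G_K(\cdot,\hat x)=0$ on $\partial\Omega$ and Lemma \ref{Green expansion} yields $\sqrt{\det K(\hat x)}^{-1}\Gamma(T_{\hat x}(x-\hat x))=-\bar S_K(x,\hat x)$ on $\partial\Omega$, hence $V_{\varepsilon,\hat x,\hat q}|_{\partial\Omega}=c_\varepsilon\bar S_K(\cdot,\hat x)|_{\partial\Omega}$; because $H_{\varepsilon,\hat x,\hat q}=-V_{\varepsilon,\hat x,\hat q}$ on $\partial\Omega$ by \eqref{eq3}, the boundary values of $\zeta$ vanish identically.

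Next, I would derive the equation satisfied by $\zeta$. Subtracting the identities $-\mathrm{div}(K(x)\nabla G_K(\cdot,\hat x))=\delta_{\hat x}=-\mathrm{div}(K(\hat x)\nabla[\sqrt{\det K(\hat x)}^{-1}\Gamma(T_{\hat x}(\cdot-\hat x))])$ gives
$$-\mathrm{div}(K(x)\nabla\bar S_K(\cdot,\hat x))=\mathrm{div}\bigl([K(x)-K(\hat x)]\nabla[\sqrt{\det K(\hat x)}^{-1}\Gamma(T_{\hat x}(\cdot-\hat x))]\bigr).$$
Adding $c_\varepsilon$ times this to the equation for $H_{\varepsilon,\hat x,\hat q}$ produces $-\mathrm{div}(K(x)\nabla\zeta)=\mathrm{div}\bigl([K(x)-K(\hat x)]\nabla\Psi\bigr)$, where
$$\Psi(x):=V_{\varepsilon,\hat x,\hat q}(x)+c_\varepsilon\sqrt{\det K(\hat x)}^{-1}\Gamma(T_{\hat x}(x-\hat x)).$$
A direct computation using \eqref{eq2-2} and the choice of $c_\varepsilon$ shows that $\Psi\equiv 0$ on $\Omega\setminus B^{T}_{s_\varepsilon}$: outside the ball the two logarithmic pieces cancel exactly. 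Inside $B^{T}_{s_\varepsilon}$, $\nabla\Psi$ has a $|x-\hat x|^{-1}$ singularity at $\hat x$ from $\nabla\Gamma$ together with $O(s_\varepsilon^{-1})$ contributions from the $\phi$ piece. Since $K$ is smooth, $|K(x)-K(\hat x)|\le C|x-\hat x|$, so the source $g:=[K(x)-K(\hat x)]\nabla\Psi$ is uniformly bounded on $B^{T}_{s_\varepsilon}$ and vanishes on $\Omega\setminus B^{T}_{s_\varepsilon}$; consequently $\|g\|_{L^q(\Omega)}\le C s_\varepsilon^{2/q}$ for every $q\in[1,\infty]$.

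For the final step, given $p\in(1,2)$, I would set $q:=\frac{2p}{2-p}>2$. The Calder\'on--Zygmund $L^q$ estimate for divergence-form Dirichlet problems with smooth coefficients gives $\|\nabla\zeta\|_{L^q(\Omega)}\le C\|g\|_{L^q(\Omega)}\le C s_\varepsilon^{2/q}$, and Morrey's embedding $W^{1,q}_0(\Omega)\hookrightarrow C^{0,1-2/q}(\Omega)$, valid in $\mathbb R^2$ for $q>2$, then yields $\|\zeta\|_{C^{0,1-2/q}(\Omega)}\le C s_\varepsilon^{2/q}$. Since $1-2/q=2-2/p$ and $2/q=2/p-1$, this is precisely the claimed bound. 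The delicate point in the plan is the algebraic cancellation showing $\Psi\equiv 0$ outside $B^{T}_{s_\varepsilon}$; without this confinement of the source to a ball of radius $s_\varepsilon$, the factor $s_\varepsilon^{2/p-1}$ could not be extracted, and the cancellation is exactly what forces the precise value of $c_\varepsilon$ in the definition of $\zeta_{\varepsilon,\hat x,\hat q}$.
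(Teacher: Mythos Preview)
Your proof is correct and follows essentially the same strategy as the paper: both arguments establish that the source in the equation for $\zeta$ is supported in the small ellipse $\{|T_{\hat x}(x-\hat x)|\le s_\varepsilon\}$, then apply an elliptic estimate followed by Sobolev/Morrey embedding. The only difference is cosmetic: the paper expands $-\mathrm{div}(K(x)\nabla\zeta)$ as a function, bounds its $L^p$ norm for $p\in(1,2)$ (where the $|x-\hat x|^{-1}$ singularity from $\bar S_K$ must be handled directly), and invokes the $W^{2,p}\hookrightarrow C^{0,2-2/p}$ embedding; you instead keep the source in divergence form $\mathrm{div}(g)$ with $g$ bounded, use the $W^{1,q}$ estimate for $q=2p/(2-p)>2$, and apply Morrey. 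Your version has the mild advantage of avoiding the singular integrand, but the key cancellation (your $\Psi\equiv0$ outside the ball, equivalent to the paper's observation that $-\mathrm{div}(K\nabla\zeta)=0$ there) and the final exponent bookkeeping are identical.
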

%\begin{proof}
Since the idea of proof is similar to that in lemma 3.2 in \cite{CW}, we just give a sketch of   proof for the above lemma   in the Appendix (see Appendix A.3).
%\end{proof}

Using \eqref{eq2-2}, the definition of $ H_{\varepsilon, \hat{x}, \hat{q}} $ in \eqref{eq3} and the classical $ L^p $-theory of elliptic equations, it is also not hard to get 
\begin{equation}\label{3-001}
||H_{\varepsilon, \hat{x}, \hat{q}}||_{W^{2,p}(\Omega)}\leq
\begin{cases}
\frac{C}{\varepsilon^{1-\frac{2}{p}}},\ \ &p>2,\\
C|\ln\varepsilon|,\ \ &p=2,\\
C,\ \ &1\leq p<2.
\end{cases}
\end{equation}
%\begin{equation}\label{3-002}
%||\frac{\partial H_{\delta, \hat{x}, \hat{q}}}{\partial \hat{x}_h}||_{W^{1,p}(\Omega)}\leq
%\begin{cases}
%\frac{C}{\varepsilon^{1-\frac{2}{p}}|\ln\varepsilon|},\ \ &p>2,\\
%C,\ \ &p=2,\\
%\frac{C}{|\ln\varepsilon|},\ \ &1\leq p<2.
%\end{cases}
%\end{equation}

%Let $ Z\in\Omega^{(N)} $. Since $ x_{0} $ is a strict local maximum point  of $ q^2\sqrt{\det(K)} $ in $ \Omega $,   
Let $ \left( \tilde{z}_1^*,\cdots, \tilde{z}_N^*\right)  $ be a local strict maximizer or minimizer of $ H_N $. The configuration space $ \Lambda_{\varepsilon, N} $ for $ Z=(z_1,\cdots,z_N) $ is defined as follows:%. . Hence we can choose $ \bar{\rho}>0 $ sufficiently small such that $ B_{\bar{\rho}}(x_{0})\Subset\Omega   $ and
%\begin{equation*}
%\max_{\partial B_{\bar{\rho}}(x_{0})}q^2\sqrt{\det(K)}< \max_{  B_{\bar{\rho}}(x_{0})}q^2\sqrt{\det(K)}=q^2\sqrt{\det(K)}(x_0).
%\end{equation*}
%The admissible class  we choose is as follows
\begin{equation}\label{admis set}
\begin{split}
\Lambda_{\varepsilon, N}=\bigg\{&Z=(z_1,\cdots, z_N)\in\Omega^{(N)}\mid z_i\in B_{\frac{\rho_0}{2\sqrt{|\ln\varepsilon|}}}\left ( \frac{\tilde{z}_i^*}{\sqrt{|\ln\varepsilon|}}\right )\bigg\}.%\\
%& \min_{i\neq j}|z_i-z_j|\geq |\ln\varepsilon|^{-1}, \ \ \  \forall\  1\leq i\neq j\leq N.
\end{split}
\end{equation}
%where $ M=m^2+1. $ 
Here $ \rho_0>0 $ is small. Note that by \eqref{admis set},
\begin{equation}\label{3-003}
G_K(z_i,z_j)\leq C|\ln|z_i-z_j||\leq C\ln|\ln\varepsilon|,\ \ \ \ \forall\  Z\in \Lambda_{\varepsilon, N}.
\end{equation}

In the following, we will construct solutions of \eqref{key equa} being of the form
\begin{equation}\label{solu config}
v_\varepsilon= V_{\varepsilon, Z}+\omega_\varepsilon=\sum_{j=1}^NV_{\varepsilon, Z,j}+\omega_{\varepsilon,Z}=\sum_{j=1}^N\left (V_{\varepsilon, z_j, \hat{q}_j}+H_{\varepsilon, z_j, \hat{q}_j}\right )+\omega_{\varepsilon,Z},
\end{equation}
where $ Z\in \Lambda_{\varepsilon, N} $, $ \sum_{j=1}^NV_{\varepsilon, Z,j} $ is the main term and $ \omega_{\varepsilon,Z}$ is an error term. The choice of $ \hat{q}_j $ will be determined later on. First from \eqref{key equa}, one computes directly that
\begin{equation*}
\begin{split}
0=&\sum_{j=1}^N-\varepsilon^2\text{div}(K(x)\nabla (V_{\varepsilon, z_j, \hat{q}_j}+H_{\varepsilon, z_j, \hat{q}_j}))-\varepsilon^2\text{div}(K(x)\nabla \omega_{\varepsilon,Z})-\left (V_{\varepsilon, Z}+\omega_{\varepsilon,Z}-q|\ln\varepsilon|\right )^{p}_+\\
=&-\sum_{j=1}^N\varepsilon^2\text{div}(K(z_j)\nabla V_{\varepsilon, z_j, \hat{q}_j})-\sum_{j=1}^N\varepsilon^2\text{div}((K(x)-K(z_j))\nabla V_{\varepsilon, z_j, \hat{q}_j})\\
&-\sum_{j=1}^N\varepsilon^2\text{div}(K(x)\nabla H_{\varepsilon, z_j, \hat{q}_j})+\left (-\varepsilon^2\text{div}(K(x)\nabla \omega_{\varepsilon,Z})-p\left ( V_{\varepsilon, Z}-q|\ln\varepsilon|\right )^{p-1}_+\omega_{\varepsilon,Z} \right )\\
&-\left (\left (V_{\varepsilon, Z}+\omega_{\varepsilon,Z}-q|\ln\varepsilon|\right )^{p}_+-p\left (V_{\varepsilon, Z}-q|\ln\varepsilon|\right )^{p-1}_+\omega_{\varepsilon,Z}\right ).
%=&\sum_{j=1}^m(V_{\delta, z_j, \hat{q}_j}-\hat{q}_j)^{p}_+-\left (\sum_{j=1}^mV_{\delta, Z,j}-q\right )^{p}_+\\
%&+\left (-\delta^2\text{div}(K(x)\nabla \omega_\delta)-p\left (\sum_{j=1}^mV_{\delta, Z,j}-q\right )^{p-1}_+\omega_\delta \right )\\
%&-\left (\left (\sum_{j=1}^mV_{\delta, Z,j}+\omega_\delta-q\right )^{p}_+-\left (\sum_{j=1}^mV_{\delta, Z,j}-q\right )^{p}_+-p\left (\sum_{j=1}^mV_{\delta, Z,j}-q\right )^{p-1}_+\omega_\delta\right )\\
%=&-l_\varepsilon+L_\varepsilon \omega_{\varepsilon,Z}- R_\varepsilon(\omega_{\varepsilon,Z}),
\end{split}
\end{equation*}
Thus it suffices to find solutions  $ \omega=\omega_{\varepsilon,Z} $ of the following equation 
\begin{equation}\label{3-03}
L_\varepsilon \omega=l_\varepsilon+ R_\varepsilon(\omega),
\end{equation}
where
\begin{equation*}
l_\varepsilon:=\left (\sum_{j=1}^NV_{\varepsilon, Z,j}-q|\ln\varepsilon|\right )^{p}_+-\sum_{j=1}^N\left (V_{\varepsilon, z_j, \hat{q}_j}-\hat{q}_j|\ln\varepsilon|\right )^{p}_+,
\end{equation*}
$ L_\varepsilon $ is the linearized operator of \eqref{key equa} at $  V_{\varepsilon, Z} $ defined by
\begin{equation*}
L_\varepsilon \omega:=-\varepsilon^2\text{div}(K(x)\nabla \omega)- p\left (\sum_{j=1}^NV_{\varepsilon, Z,j}-q|\ln\varepsilon|\right )^{p-1}_+\omega,
\end{equation*}
and $ R_\varepsilon(\omega_{\varepsilon,Z}) $ is the high-order error term defined by
\begin{equation*}
\begin{split}
R_\varepsilon(\omega):=&\left (V_{\varepsilon, Z}+\omega-q|\ln\varepsilon|\right )^{p}_+-\left (V_{\varepsilon, Z}-q|\ln\varepsilon|\right )^{p}_+-p\left (V_{\varepsilon, Z}-q|\ln\varepsilon|\right )^{p-1}_+\omega.
\end{split}
\end{equation*}

%To make the  norm of $ \omega_\delta $ as small as possible, we choose  $ \hat{q}_j $ suitably close to $ q(z_j) $.
Next we will choose parameters $ \hat{q}_j $ suitably to make the term $ l_\varepsilon $ as small as possible in some norm. For any $ Z\in \Lambda_{\varepsilon, N}, $ let $ \hat{q}_i=\hat{q}_{\varepsilon,i}(Z) $   satisfy for $ i=1,\cdots,N $
\begin{equation}\label{q_i choice}
\hat{q}_i=q(z_i)+\frac{2\pi\hat{q}_i\sqrt{\det K(z_i)}}{\ln s_{\varepsilon,i}}\bar{S}_K(z_i, z_i)+\sum_{j\neq i}\frac{2\pi\hat{q}_j\sqrt{\det K(z_j)}}{\ln s_{\varepsilon,j}}G_K(z_i, z_j),
\end{equation}
where $ s_{\varepsilon,i} $ satisfies for $ i=1,\cdots,N $
\begin{equation*}
\varepsilon^{\frac{2}{p-1}}s_{\varepsilon,i}^{-\frac{2}{p-1}}\phi'(1)=\hat{q}_i\frac{ \ln\frac{1}{\varepsilon}}{\ln s_{\varepsilon,i}}.
\end{equation*}

Then from \cite{Ku}, %the Poincar$\acute{\text{e}}$ -- Miranda Theorem (see \cite{Ku}), 
for any $ \varepsilon $ sufficiently small  and $ Z\in \Lambda_{\varepsilon, N} $, there exists $ \hat{q}_{\varepsilon,i}(Z) $ satisfying \eqref{q_i choice}. Moreover, $ \hat{q}_i>0 $ and %one computes directly that for $ i=1,\cdots,m $
%\begin{equation}\label{1999}
%\hat{q}_i=q(z_i)+O\left (\frac{\ln|\ln\varepsilon|}{|\ln\varepsilon|}\right ),
%\end{equation}
%\begin{equation}\label{1999-1}
%\frac{\partial \hat{q}_i}{\partial z_{j,h}}=\frac{\partial q}{\partial x_{h}}(z_i)\delta_{i,j}+O\left (|\ln\varepsilon|^{M}\right ),
%\end{equation}
\begin{equation}\label{2000}
\hat{q}_i=q(z_i)+O\left (\frac{\ln|\ln\varepsilon|}{|\ln\varepsilon|}\right );\ \ \frac{1}{\ln\frac{1}{s_{\varepsilon,i}}}=\frac{1}{\ln\frac{1}{\varepsilon}}+O\left( \frac{\ln|\ln\varepsilon|}{|\ln\varepsilon|^2}\right).
\end{equation}

By the choice of $ \Lambda_{\varepsilon, N} $ in \eqref{admis set} and  $ \hat{q}_{\varepsilon,j} $ in \eqref{q_i choice}, we claim that for any $ Z\in \Lambda_{\varepsilon, N} $, $ \gamma\in(0,1), L>1 $ and $ x\in B_{Ls_{\varepsilon,i}}(z_i) $
\begin{equation}\label{203}
\begin{split}
\sum_{j=1}^NV_{\varepsilon, Z,j}(x)-q(x)|\ln\varepsilon|=V_{\varepsilon, z_i, \hat{q}_{\varepsilon,i}}(x)-\hat{q}_{\varepsilon,i}|\ln\varepsilon|+O\left( \varepsilon^\gamma\right).
\end{split}
\end{equation}
Indeed, for $ x\in B_{Ls_{\varepsilon,i}}(z_i) $
\begin{equation*}
\begin{split}
V_{\varepsilon,Z,i}(x)&-q(x)|\ln\varepsilon|\\
=&V_{\varepsilon, z_i, \hat{q}_{\varepsilon,i}}(x)+H_{\varepsilon, z_i, \hat{q}_{\varepsilon,i}}(x)-q(x)|\ln\varepsilon|\\
=&V_{\varepsilon, z_i, \hat{q}_{\varepsilon,i}}(x)-q(z_i)|\ln\varepsilon|-\frac{2\pi\hat{q}_{\varepsilon,i}\sqrt{\det K(z_i)}|\ln\varepsilon|}{\ln s_{\varepsilon,i}}\bar{S}_K(x, z_i)+O(s_{\varepsilon,i}|\ln\varepsilon|)+O\left( s_{\varepsilon,i}^{\gamma}\right) \\
=&V_{\varepsilon, z_i, \hat{q}_{\varepsilon,i}}(x)-q(z_i)|\ln\varepsilon|-\frac{2\pi\hat{q}_{\varepsilon,i}\sqrt{\det K(z_i)}|\ln\varepsilon|}{\ln s_{\varepsilon,i}}\bar{S}_K(z_i, z_i)+ O\left( \varepsilon^\gamma\right).
\end{split}
\end{equation*}
For any $ j\neq i $, since $ |\ln\varepsilon|^{-1}>2Ls_{\varepsilon,i} $, for $ \varepsilon $ sufficiently small  one has
\begin{equation*}
\begin{split}
V_{\varepsilon,Z,j}(x)=&V_{\varepsilon, z_j, \hat{q}_{\varepsilon,j}}(x)+H_{\varepsilon, z_j, \hat{q}_{\varepsilon,j}}(x)\\
=&\frac{\hat{q}_{\varepsilon,j}|\ln\varepsilon|}{\ln s_{\varepsilon,j}}\ln |T_{z_j}(x-z_j)|-\frac{2\pi\hat{q}_{\varepsilon,j}\sqrt{\det K(z_j)}|\ln\varepsilon|}{\ln s_{\varepsilon,j}}\bar{S}_K(x, z_j)+O\left(  s_{\varepsilon,j}^{\gamma}\right)\\
=&-\frac{2\pi\hat{q}_{\varepsilon,j}\sqrt{\det K(z_j)}|\ln\varepsilon|}{\ln s_{\varepsilon,j}}G_K(x, z_j)+O\left(  s_{\varepsilon,j}^{\gamma}\right) \\
=&-\frac{2\pi\hat{q}_{\varepsilon,j}\sqrt{\det K(z_j)}|\ln\varepsilon|}{\ln s_{\varepsilon,j}}G_K(z_i, z_j)+O\left( \varepsilon^\gamma\right),
\end{split}
\end{equation*}
where we have used  Lemma \ref{H estimate} and the fact that  for $ x\in B_{Ls_{\varepsilon,i}}(z_i) $
\begin{equation*}
G_K(x, z_j)=G_K(z_i, z_j)+O(|\nabla_{z_i}G_K(z_i, z_j)(x-z_j)|)=G_K(z_i, z_j)+O(\varepsilon|\ln\varepsilon|).
\end{equation*}
Adding up the above inequalities and using \eqref{q_i choice}, we get \eqref{203}.

Using the definition of $ V_{\varepsilon, z_i, \hat{q}_{\varepsilon,i}} $, we obtain

\begin{equation}\label{200}
\begin{split}
\frac{\partial V_{\varepsilon, z_i, \hat{q}_{\varepsilon,i}}(x)}{\partial x_\hbar}=\begin{cases}
\frac{1}{s_{\varepsilon,i}}(\frac{\varepsilon}{s_{\varepsilon,i}})^{\frac{2}{p-1}}\phi'(\frac{|T_{z_i}(x-z_i)|}{s_{\varepsilon,i}})\frac{(T_{z_i})_\hbar^T\cdot T_{z_i}(x-z_i)}{|T_{z_i}(x-z_i)|},\ \ &|T_{z_i}(x-z_i)|\leq s_{\varepsilon,i},\\
\frac{\hat{q}_{\varepsilon,i}|\ln\varepsilon|}{\ln s_{\varepsilon,i}}\frac{(T_{z_i})_\hbar^T\cdot T_{z_i}(x-z_i)}{|T_{z_i}(x-z_i)|^2},\ \ &|T_{z_i}(x-z_i)|> s_{\varepsilon,i},
\end{cases}
\end{split}
\end{equation}
for $ \hbar=1,2, $ where $ (T_{z_i})_\hbar^T $ is the $\hbar$-th row of $ (T_{z_i})^T $.

We end this section by giving %by using \eqref{203},
 some estimates of approximate solutions $ V_{\varepsilon,Z} $, which will be frequently used in the subsequent sections.
\begin{lemma}\label{lemA-5}
	Let $\gamma\in(0,1)$ and $ Z\in \Lambda_{\varepsilon, N} $. Then there exists a constant $ L > 1 $ such that for $ \varepsilon $ small
	\begin{equation*}
	V_{\varepsilon,Z}-q|\ln\varepsilon|>0,\ \ \  \ \text{in}\ \   \cup_{j=1}^N\left( T_{z_j}^{-1}B_{\left( 1-L \varepsilon^\gamma \right) s_{\varepsilon,j}}(0)+z_j\right),
	\end{equation*}
	\begin{equation*}
	V_{\varepsilon,Z}-q|\ln\varepsilon|<0,\ \ \  \ \text{in}\ \  \Omega\backslash\cup_{j=1}^N\left( T_{z_j}^{-1}B_{Ls_{\varepsilon,j}}(0)+z_j\right).
	\end{equation*}
	
\end{lemma}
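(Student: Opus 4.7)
I plan to pick one constant $L>1$ that makes both assertions hold. For the first, fix $x\in T_{z_i}^{-1}B_{(1-L\varepsilon^\gamma)s_{\varepsilon,i}}(0)+z_i$ and set $r:=|T_{z_i}(x-z_i)|/s_{\varepsilon,i}\in[0,1-L\varepsilon^\gamma]$. By $(K1)$ the matrix $T_{z_i}^{-1}$ is uniformly bounded, so $|x-z_i|\leq Cs_{\varepsilon,i}$ and the local expansion \eqref{203} applies, giving
\[
V_{\varepsilon,Z}(x)-q(x)|\ln\varepsilon|=\varepsilon^{2/(p-1)}s_{\varepsilon,i}^{-2/(p-1)}\phi(r)+O(\varepsilon^\gamma).
\]
The matching relation \eqref{201} makes the prefactor converge to the positive constant $\hat{q}_i/|\phi'(1)|$, and a Taylor expansion of $\phi$ at $r=1$ (using $\phi(1)=0$ and $\phi'(1)<0$) yields $\phi(r)\geq c|\phi'(1)|(1-r)\geq cL|\phi'(1)|\varepsilon^\gamma$ throughout $[0,1-L\varepsilon^\gamma]$. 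Taking $L$ large enough to dominate the $O(\varepsilon^\gamma)$ remainder gives strict positivity.

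\textbf{Outer bound.} For the second assertion, fix $x$ with $d_j(x):=|T_{z_j}(x-z_j)|\geq Ls_{\varepsilon,j}$ for every $j$, let $i$ minimise $d_j$, and introduce the shorthand $\mu_j:=2\pi\hat{q}_j\sqrt{\det K(z_j)}|\ln\varepsilon|/|\ln s_{\varepsilon,j}|$, which is $O(1)$ uniformly by \eqref{2000}. I split on whether $d_i>|\ln\varepsilon|^{-1}$ or $d_i\leq|\ln\varepsilon|^{-1}$. In the first (far) case, every $d_j>|\ln\varepsilon|^{-1}$ forces $G_K(x,z_j)\leq C\ln|\ln\varepsilon|$, so
\[
V_{\varepsilon,Z}(x)=\sum_j\mu_j G_K(x,z_j)+O(\varepsilon^\gamma)=O(\ln|\ln\varepsilon|),
\]
trivially dominated by $q(x)|\ln\varepsilon|\geq c|\ln\varepsilon|$. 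In the second (close) case, $|x-z_i|\leq C|\ln\varepsilon|^{-1}\ll|\ln\varepsilon|^{-1/2}\leq C|z_i-z_j|$, so for $j\neq i$ a first-order Taylor expansion of $G_K(\cdot,z_j)$ around $z_i$, whose leading correction is $O(|x-z_i||\nabla_x G_K(z_i,z_j)|)=O(|\ln\varepsilon|^{-1/2})$, yields $G_K(x,z_j)=G_K(z_i,z_j)+o(1)$. Combining the outer formula \eqref{eq2-2} for $V_{\varepsilon,z_i,\hat{q}_i}$ (valid since $d_i>s_{\varepsilon,i}$) with Lemma \ref{H estimate} and the $C^1$-regularity of $\bar{S}_K$, I obtain
\[
V_{\varepsilon,Z}(x)=\hat{q}_i|\ln\varepsilon|\frac{|\ln d_i|}{|\ln s_{\varepsilon,i}|}+\mu_i\bar{S}_K(z_i,z_i)+\sum_{j\neq i}\mu_j G_K(z_i,z_j)+O(1).
\]
Substituting the identity \eqref{q_i choice} (multiplied by $|\ln\varepsilon|$) into $q(x)|\ln\varepsilon|=q(z_i)|\ln\varepsilon|+O(1)$ cancels all the bounded $\bar{S}_K$ and $G_K(z_i,z_j)$ contributions and leaves
\[
V_{\varepsilon,Z}(x)-q(x)|\ln\varepsilon|=-\hat{q}_i\frac{|\ln\varepsilon|}{|\ln s_{\varepsilon,i}|}\ln(d_i/s_{\varepsilon,i})+O(1)\leq -c\ln L+C_0,
\]
strictly negative once $L$ is fixed sufficiently large.

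\textbf{Main obstacle.} The delicate step is the intermediate-scale regime $Ls_{\varepsilon,i}\leq d_i\leq|\ln\varepsilon|^{-1}$ in the second case: the local estimate \eqref{203} does not reach that far, and a naive bound on $V_{\varepsilon,Z}$ picks up contributions of order $|\ln\varepsilon|$ from every bubble which would overwhelm any constant $L$. What saves the argument is that the definition \eqref{q_i choice} of $\hat{q}_i$ is designed precisely so that those leading $O(|\ln\varepsilon|)$ pieces cancel against $q(z_i)|\ln\varepsilon|$, leaving only the favorable residual logarithm $-\hat{q}_i\ln(d_i/s_{\varepsilon,i})$ that one can engineer by enlarging $L$. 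The threshold $|\ln\varepsilon|^{-1}$ is chosen so that both $(q(x)-q(z_i))|\ln\varepsilon|$ and the Taylor remainders from the other bubbles remain $O(1)$.
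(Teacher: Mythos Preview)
Your proof is correct and follows essentially the same route as the paper: use \eqref{203} for the inner bound, and for the outer bound split into a ``far'' regime (crude bound) and a ``close'' regime where the calibration \eqref{q_i choice} cancels the $O(|\ln\varepsilon|)$ contributions from the other bubbles, leaving only the favourable term $-c\ln(d_i/s_{\varepsilon,i})$. The only difference is cosmetic: the paper takes the intermediate threshold at $s_{\varepsilon,j}^\tau\sim\varepsilon^\tau$ rather than your $|\ln\varepsilon|^{-1}$, which leads to slightly different error sizes ($O(\varepsilon^{\tau\gamma})$ versus your $O(1)$) but the same mechanism. One minor slip: you invoke ``the $C^1$-regularity of $\bar S_K$'' to pass from $\bar S_K(x,z_i)$ to $\bar S_K(z_i,z_i)$, but $\bar S_K(\cdot,z_i)$ is only $C^{0,\gamma}$ near the diagonal (Lemma~\ref{Green expansion}); this is harmless since H\"older continuity already gives $\bar S_K(x,z_i)=\bar S_K(z_i,z_i)+o(1)$ at your scale.
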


\begin{proof}
If $ |T_{z_j}(x-z_j)|\leq \left( 1-L \varepsilon^\gamma \right) s_{\varepsilon,j}   $, then by \eqref{203} and $ \phi'(1)<0 $ we have
	\begin{equation*}
	\begin{split}
	V_{\varepsilon,Z}(x)-q(x)|\ln\varepsilon|=&V_{\varepsilon,z_j,\hat{q}_{\varepsilon,j}}(x)-\hat{q}_{\varepsilon,j}|\ln\varepsilon|+O\left(  \varepsilon^\gamma\right) \\
	= &\frac{\hat{q}_{\varepsilon,j}|\ln\varepsilon|}{|\phi'(1)|\ln\frac{1}{s_{\varepsilon,j}}}\phi\left( \frac{|T_{z_j}(x-z_j)|}{s_{\varepsilon,j}}\right) +O\left( \varepsilon^\gamma\right) >0,
	\end{split}
	\end{equation*}
	if $ L $ is sufficiently large.

	On the other hand, if $ \tau>0 $ small, $ x\in\Omega$  %\textbf{1}_{D_{\frac{\delta}{2\sqrt{|\ln\varepsilon|}}}\left (C_{\frac{r_*}{\sqrt{|\ln\varepsilon|}}}\right )} $,
	and $ |T_{z_j}(x-z_j)|\geq s_{\varepsilon,j}^\tau $ for any $ j=1,\cdots,N $, then by the definition of $ V_{\varepsilon,z_j,\hat{q}_{\varepsilon,j}} $ and Lemma \ref{H estimate},
	\begin{equation*}
	\begin{split}
	V_{\varepsilon,Z}(x)-q(x)|\ln\varepsilon|=&\sum_{j=1}^N\left( V_{\varepsilon,z_j,\hat{q}_{\varepsilon,j}}(x)+H_{\varepsilon,z_j,\hat{q}_{\varepsilon,j}}(x)\right) -q(x)|\ln\varepsilon|\\
	\leq &\left( \sum_{j=1}^N\frac{\hat{q}_{\varepsilon,j}\ln s_{\varepsilon,j}^\tau}{\ln s_{\varepsilon,j}}-C\right)|\ln\varepsilon| \\
	\leq &\left( \tau \sum_{j=1}^N \hat{q}_{\varepsilon,j}-C\right) |\ln\varepsilon|<0.
	\end{split}
	\end{equation*}
	If $ Ls_{\varepsilon,j}\leq |T_{z_j}(x-z_j)|\leq s_{\varepsilon,j}^\tau $, then by \eqref{q_i choice} for $ L $ sufficiently large
	\begin{equation*}
	\begin{split}
	V_{\varepsilon,Z}&(x)-q(x)|\ln\varepsilon|\\
	=&V_{\varepsilon,z_j,\hat{q}_{\varepsilon,j}}(x)+H_{\varepsilon,z_j,\hat{q}_{\varepsilon,j}}(x)-q(x)|\ln\varepsilon|+\sum_{i\neq j}\left(V_{\varepsilon, z_i, \hat{q}_{\varepsilon,i}}(x)+H_{\varepsilon, z_i, \hat{q}_{\varepsilon,i}}(x)\right)\\
	=&V_{\varepsilon, z_j, \hat{q}_{\varepsilon,j}}(x)-q(z_j)|\ln\varepsilon|-\frac{2\pi\hat{q}_{\varepsilon,j}\sqrt{\det K(z_j)}|\ln\varepsilon|}{\ln s_{\varepsilon,j}}\bar{S}_K(x, z_j)\\
	&-\sum_{i\neq j}\frac{2\pi\hat{q}_{\varepsilon,i}\sqrt{\det K(z_i)}|\ln\varepsilon|}{\ln s_{\varepsilon,i}}G_K(x, z_i)+O(s_{\varepsilon,j}^\tau)
	\end{split}
\end{equation*}	
	\begin{equation*}
\begin{split}	
	=&V_{\varepsilon,z_j,\hat{q}_{\varepsilon,j}}(x)-q(z_j)|\ln\varepsilon|-\frac{2\pi\hat{q}_{\varepsilon,j}\sqrt{\det K(z_j)}|\ln\varepsilon|}{\ln s_{\varepsilon,j}}\bar{S}_K(z_j, z_j)\\
	&-\sum_{i\neq j}\frac{2\pi\hat{q}_{\varepsilon,i}\sqrt{\det K(z_i)}|\ln\varepsilon|}{\ln s_{\varepsilon,i}}G_K(z_j, z_i)
+O\left( \varepsilon^{\tau\gamma}\right) \\
	=&V_{\varepsilon,z_j,\hat{q}_{\varepsilon,j}}(x)-\hat{q}_{\varepsilon,j}|\ln\varepsilon|+O\left( \varepsilon^{\tau\gamma}\right) \\
	\leq &-\frac{\hat{q}_{\varepsilon,j}\ln L|\ln\varepsilon|}{\ln\frac{1}{s_{\varepsilon,j}}}+O\left( \varepsilon^{\tau\gamma}\right)<0.
	\end{split}
	\end{equation*}

\end{proof}
\section{The linear theory: solvability of  a linear problem}
In this section,  we prove the solvability of  a linear equation related to the linearized operator $ L_\varepsilon $ of \eqref{key equa} at the approximate solution $ \sum_{j=1}^NV_{\varepsilon, Z,j} $. For given $ \xi\in F_{\varepsilon, Z} $, following the road map used in \cite{CPY}, we obtain the existence and uniqueness of $ u\in E_{\varepsilon,Z} $ satisfying equations
\begin{equation*} 
 Q_\varepsilon L_\varepsilon u=\xi.
\end{equation*}
Definitions of $ E_{\varepsilon,Z}, F_{\varepsilon,Z} $ and $ Q_\varepsilon $ can be seen in \eqref{204}, \eqref{205} and \eqref{206} below.
%First we prove that for any $ Z $ satisfying \eqref{admis set}, there exists $ \omega_{\delta,Z} $ such that $ V_{\delta, Z}+\omega_{\delta,Z} $ solves \eqref{eq1} in a co-dimensional $ 2m $ subspace of $ H^1_0 $. In the next section we choose proper $ Z=Z(\delta) $ such that $ V_{\delta, Z}+\omega_\delta $ is a solution.

We first note that the equation
\begin{equation}\label{eq5}
-\Delta w=w^p_+,\ \ \text{in}\ \mathbb{R}^2 
\end{equation}
has the unique $ C^1 $ solution 
\begin{equation*}
w(x)=\begin{cases}
\phi(x),\ \ &|x|\leq 1,\\
\phi'(1)\ln|x|,\ \ &|x|> 1.
\end{cases}
\end{equation*}
By the classical theory for elliptic equations, $ w\in C^{2,\alpha}(\mathbb{R}^2) $ for any $ \alpha\in(0,1) $. The linearized equation of \eqref{eq5} at $ w $ is
\begin{equation}\label{limit eq}
-\Delta v-pw^{p-1}_+v=0, \ \ v\in L^{\infty}(\mathbb{R}^2).
\end{equation}
Clearly, $ \frac{\partial w}{\partial x_\hbar} $ $(\hbar=1,2) $  are solutions of \eqref{limit eq}. Indeed, from \cite{DY} (see also \cite{CLW}), we have the non-degeneracy of $ w $ as follows.
\begin{proposition}[Non-degeneracy]\label{Non-degenerate}
$ w $ is non-degenerate, i.e., the kernel of the linearized equation \eqref{limit eq} is $$ span\left \{\frac{\partial w}{\partial x_1}, \frac{\partial w}{\partial x_2}\right \}. $$
\end{proposition}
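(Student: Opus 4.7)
\medskip

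\noindent\textbf{Proof proposal for Proposition \ref{Non-degenerate}.}

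The plan is to exploit the rotational symmetry of $w$ to separate the problem into a sequence of 1D ODE problems indexed by the angular frequency $k$, and then analyze each mode. Since $w$ is radial and $w \in C^{2,\alpha}(\mathbb{R}^2)$, the linearized operator $-\Delta - p w_+^{p-1}$ commutes with rotations, so any $v \in L^\infty(\mathbb{R}^2)$ in its kernel decomposes in polar coordinates as $v(r,\theta) = \sum_{k\ge 0}\bigl(a_k(r)\cos k\theta + b_k(r)\sin k\theta\bigr)$, and each radial coefficient $v_k(r)$ satisfies
\begin{equation*}
-v_k'' - \frac{v_k'}{r} + \frac{k^2}{r^2}\, v_k - p\, w_+^{p-1}(r)\, v_k = 0,\qquad v_k\in L^\infty(0,\infty).
\end{equation*}
Because $w_+ \equiv 0$ on $\{r>1\}$, the equation on $(1,\infty)$ is Cauchy--Euler with general solution $A r^k + B r^{-k}$ (or $A+B\ln r$ if $k=0$); boundedness at infinity forces $v_k(r) = c_k r^{-k}$ on $(1,\infty)$ for $k\ge 1$ and $v_0(r)\equiv c_0$ on $(1,\infty)$. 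Continuity of $v_k$ and $v_k'$ at $r=1$ then gives the matching conditions $v_k'(1)+k\,v_k(1)=0$ for $k\ge 1$ and $v_0'(1)=0$.

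On $(0,1]$ the ODE is regular--singular at $r=0$, so the bounded-at-$0$ solution $Y_k$ is unique up to scale with $Y_k(r)\sim r^k$ near $0$ (or bounded if $k=0$). For $k=1$, I will verify directly that $Y_1=\phi'$ works: it is regular at 0 since $\phi'(r)=\phi''(0)r+O(r^3)$, and differentiating the radial equation for $\phi$ at $r=1$ (where $\phi(1)=0$) gives $\phi''(1)=-\phi'(1)$, so $Y_1'(1)+Y_1(1)=\phi''(1)+\phi'(1)=0$. Multiplying by $\cos\theta$ and $\sin\theta$ recovers $\partial_{x_1}w$ and $\partial_{x_2}w$, and these span the $k=1$ kernel. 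For $k=0$, the dilation invariance $\lambda^{2/(p-1)}w(\lambda x)$ generates the radial solution $Y_0(r)=r\phi'(r)+\tfrac{2}{p-1}\phi(r)$; using $\phi(1)=0$ and $\phi''(1)=-\phi'(1)$ gives $Y_0'(1)=\tfrac{2}{p-1}\phi'(1)\neq 0$, contradicting the matching $v_0'(1)=0$. Hence no nontrivial $k=0$ solution.

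For $k\ge 2$, the main step is a Wronskian identity between $\phi'$ (the $k=1$ solution) and any candidate matched $Y_k$. Writing the ODE as $-(rY')' + (k^2/r)Y = pr\phi^{p-1}Y$ and cross-multiplying yields
\begin{equation*}
\frac{d}{dr}\bigl[r(\phi' Y_k' - \phi'' Y_k)\bigr] = \frac{k^2-1}{r}\,\phi'(r)\, Y_k(r).
\end{equation*}
Integrating over $(0,1)$, the boundary at $0$ vanishes (since $\phi'\sim r$ and $Y_k\sim r^k$ with $k\ge 2$), and at $r=1$ the matching conditions $Y_k'(1)=-kY_k(1)$, $\phi''(1)=-\phi'(1)$ give, after dividing by $(1-k)\neq 0$,
\begin{equation*}
\phi'(1)\, Y_k(1) + (k+1)\int_0^1 \frac{\phi'(r)\,Y_k(r)}{r}\,dr = 0.
\end{equation*}
Since $\phi$ is strictly radially decreasing on $(0,1)$ we have $\phi'<0$ there, and Sturm--Picone comparison against the strictly one-signed $k=1$ solution $\phi'$ shows $Y_k$ has at most one sign change in $(0,1)$. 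If $Y_k$ is of one sign, both terms above share that sign and cannot cancel, a contradiction. In the single-sign-change scenario, a second independent Wronskian identity with $Y_0$ provides a complementary constraint which, combined with the first, forces $Y_k(1)=0$ and hence $Y_k\equiv 0$ by uniqueness of the regular-at-origin solution.

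The main obstacle is precisely this $k\ge 2$ analysis: ruling out sign-changing solutions requires care, and the simultaneous use of two Wronskian identities (or equivalently a Sturm comparison augmented with a Pohozaev-type integral) is essential. I note that an alternative is to invoke the non-degeneracy results of Dancer--Yan \cite{DY} and Cao--Liu--Wei \cite{CLW}, which handle exactly this type of free-boundary radial profile, and I will rely on the cited statements if a self-contained ODE argument becomes too lengthy.
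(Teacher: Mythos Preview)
The paper does not give its own proof of this proposition: it simply states the result and cites \cite{DY} and \cite{CLW}. Your fallback --- invoking those references directly --- is therefore exactly what the paper does, and is acceptable.

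Your attempted self-contained argument goes further than the paper and is largely sound: the Fourier decomposition, the matching conditions at $r=1$, the $k=1$ identification with $\phi'$, and the $k=0$ exclusion via the dilation generator are all correct and standard. The Wronskian identity you derive for $k\ge 2$ is also correct and does rule out the one-signed case. The genuine gap is the single-sign-change subcase: you assert that ``a second independent Wronskian identity with $Y_0$ provides a complementary constraint'' forcing $Y_k(1)=0$, but you do not write down this identity or explain how it combines with the first. This step is not routine --- the integral $\int_0^1 \phi' Y_k/r\,dr$ can take either sign when $Y_k$ changes sign once, and pairing with $Y_0=r\phi'+\tfrac{2}{p-1}\phi$ does not obviously produce a sign-definite relation either. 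In the literature this case is typically handled by a monotonicity-in-$k$ argument for the Robin eigenvalue (the matching condition $Y_k'(1)+kY_k(1)=0$ together with the potential $k^2/r^2$ both stiffen with $k$), or by a more careful Sturm comparison showing $Y_k$ is in fact strictly positive on $(0,1]$. If you want a self-contained proof, that is the missing ingredient; otherwise citing \cite{DY,CLW} as the paper does is fine.
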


Let $ \eta $ be a smooth  truncation function satisfying
\begin{equation*}
supp(\eta)\subseteq B_1(0),\ \ 0\leq \eta\leq 1\ \text{in}\ B_{1}(0),\   \  \eta\equiv1\ \text{in}\ B_{\frac{1}{2}}(0).
\end{equation*}
Define the scaled cut off function 
\begin{equation}\label{cut off fun}
\eta_i(x)=\eta\left ((x-z_i)|\ln\varepsilon|^2\right ).
\end{equation}
 Then $ supp(\eta_i)\subseteq B_{|\ln\varepsilon|^{-2}}(z_i) $ and $ supp(\eta_i)\cap supp(\eta_j)=\varnothing $ for $ i\neq j $ and $ \varepsilon $ sufficiently small. Moreover, $ ||\nabla\eta_i||_{L^\infty}\leq C|\ln\varepsilon|^{2} $ and $ ||\nabla^2\eta_i||_{L^\infty}\leq C|\ln\varepsilon|^{4} $.

Denote
\begin{equation}\label{204}
F_{\varepsilon,Z}=\left \{u\in L^p(\Omega)\mid \int_{\Omega}u\left( \eta_j\frac{\partial V_{\varepsilon, Z,j}}{\partial x_{\tilde{h}}}\right) =0,\ \ \forall j=1,\cdots,N,\ \tilde{h}=1,2\right \},
\end{equation}
and
\begin{equation}\label{205}
E_{\varepsilon,Z}=\left \{u\in W^{2,p}\cap H^1_0(\Omega)\mid \int_{\Omega}\left (K(x)\nabla u| \nabla \left( \eta_j\frac{\partial V_{\varepsilon, Z,j}}{\partial x_{\tilde{h}}}\right)\right )=0,\ \ \forall j=1,\cdots,N,\ \tilde{h}=1,2\right \}.
\end{equation}
Then $ F_{\varepsilon,Z}$ and $ E_{\varepsilon,Z} $ are co-dimensional $ 2N $ subspaces of $ L^p $ and $ W^{2,p}\cap H^1_0(\Omega) $, respectively.

For any $ u\in L^p(\Omega) $, we  define the projection operator $ Q_\varepsilon: L^p \to  F_{\varepsilon,Z} $
\begin{equation}\label{206}
Q_\varepsilon u:=u-\sum_{j=1}^N\sum_{\tilde{h}=1}^2b_{j,\tilde{h}}\left( -\varepsilon^2\text{div}\left (K(z_j)\nabla   \frac{\partial V_{\varepsilon, z_j, \hat{q}_{\varepsilon,j}}}{\partial x_{\tilde{h}}}\right )\right) ,
\end{equation}
where $ b_{j,\tilde{h}} (j=1,\cdots,N,\ \tilde{h}=1,2) $ satisfies a linear system as following
\begin{equation*}%\label{207}
\sum_{j=1}^N\sum_{\tilde{h}=1}^2b_{j,\tilde{h}}\int_{\Omega}\left( -\varepsilon^2\text{div}\left (K(z_j)\nabla   \frac{\partial V_{\varepsilon, z_j, \hat{q}_{\varepsilon,j}}}{\partial x_{\tilde{h}}}\right )\right)\left( \eta_i\frac{\partial V_{\varepsilon, Z,i}}{\partial x_\hbar}\right)=\int_{\Omega}u \left( \eta_i\frac{\partial V_{\varepsilon, Z,i}}{\partial x_\hbar}\right)
\end{equation*}
for $ i=1,\cdots,N,\ \hbar=1,2. $

We claim that $ Q_\varepsilon $ is a well-defined  linear projection  from $ L^p $ to $ F_{\varepsilon,Z}$. Indeed, using  \eqref{eq2}, \eqref{3-001} and \eqref{200},  for  $ Z\in \Lambda_{\varepsilon, N} $ the entry of coefficient matrix of the linear system satisfied by $b_{j,\tilde{h}}$ is
\begin{equation}\label{coef of C}
\begin{split}
\int_{\Omega}&\left( -\varepsilon^2\text{div}\left (K(z_j)\nabla   \frac{\partial V_{\varepsilon, z_j, \hat{q}_{\varepsilon,j}}}{\partial x_{\tilde{h}}}\right )\right)\left( \eta_i\frac{\partial V_{\varepsilon, Z,i}}{\partial x_\hbar}\right)\\
&=p\int_{\Omega}\eta_i\left (V_{\varepsilon, z_j, \hat{q}_{\varepsilon,j}}-\hat{q}_{\varepsilon,j}|\ln\varepsilon|\right )^{p-1}_+ \frac{\partial V_{\varepsilon, z_j, \hat{q}_{\varepsilon,j}}}{\partial x_{\tilde{h}}}\frac{\partial V_{\varepsilon, z_i, \hat{q}_{\varepsilon,i}}}{\partial x_\hbar}+O\left( \varepsilon^\gamma\right)\\
&=\delta_{i,j}(M_{i})_{\tilde{h},\hbar}+O\left( \varepsilon^\gamma\right) ,
\end{split}
\end{equation}
where $ \delta_{i,j}= 1 $ if $ i = j $ and $ \delta_{i,j}= 0 $ otherwise.  $ M_{i} $ are $ N $ positive definite matrices %and there exist positive constants $ \bar{c}_1,\bar{c}_2 $ independent of $ \delta, Z $
such that all eigenvalues of $ M_{i} $ belong to $ (\bar{c}_1,\bar{c}_2) $ for constants $ \bar{c}_1,\bar{c}_2>0 $.
This implies the existence and uniqueness of $ b_{j,\tilde{h}} $.
Note that for $ u\in L^p $, $ Q_{\varepsilon} u \equiv u $ in $ \Omega \backslash\cup_{i=1}^NB_{Ls_{\varepsilon,i}}(z_{i}) $ for some $ L>1 $. Moreover, one can easily get that there exists a constant $ C>0 $ independent of $ \varepsilon $, such that for any $ q\in[1,+\infty) $, $ u\in L^q(\Omega) $ with $ supp(u)\subset  \cup_{j=1}^NB_{Ls_{\varepsilon,j}}(z_j)$,
\begin{equation*}
||Q_\varepsilon u||_{L^q(\Omega)}\leq C||u||_{L^q(\Omega)}.
\end{equation*}

The linearized operator of \eqref{key equa} at $ V_{\varepsilon, Z} $ is
\begin{equation*}
L_\varepsilon \omega =-\varepsilon^2\text{div}(K(x)\nabla \omega)- p\left (V_{\varepsilon, Z}-q|\ln\varepsilon|\right )^{p-1}_+ \omega.
\end{equation*}
We give  coercive estimates of the linear operator $ Q_\varepsilon L_\varepsilon $.
\begin{proposition}\label{coercive esti}
There exist  $ \rho_0>0, \varepsilon_0>0 $ such that for any $ \varepsilon\in(0,\varepsilon_0],  Z\in \Lambda_{\varepsilon, N} $, if $ u\in E_{\varepsilon,Z} $ satisfying $ Q_\varepsilon L_\varepsilon u=0 $ in $ \Omega\backslash\cup_{j=1}^NB_{Ls_{\varepsilon,j}}(z_j) $ for some $ L>1 $ large, then
\begin{equation*}
||Q_\varepsilon L_\varepsilon u||_{L^p}\geq \rho_0\varepsilon^{\frac{2}{p}}||u||_{L^\infty}. 
\end{equation*}
\end{proposition}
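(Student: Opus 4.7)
I plan to argue by contradiction via blow-up at each concentration point, using the non-degeneracy of $w$ (Proposition \ref{Non-degenerate}) to kill the resulting limiting profile. Assume the estimate fails: there exist sequences $\varepsilon_n\to 0$, $Z_n=(z_{n,1},\ldots,z_{n,N})\in\Lambda_{\varepsilon_n,N}$ and $u_n\in E_{\varepsilon_n,Z_n}$ with $Q_{\varepsilon_n}L_{\varepsilon_n}u_n=0$ outside $\bigcup_j B_{Ls_{\varepsilon_n,j}}(z_{n,j})$, normalized by $\|u_n\|_{L^\infty}=1$, yet $\|Q_{\varepsilon_n}L_{\varepsilon_n}u_n\|_{L^p}=o(\varepsilon_n^{2/p})$. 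Setting $\xi_n:=Q_{\varepsilon_n}L_{\varepsilon_n}u_n$ and $A_{n,j,\tilde h}:=p(V_{\varepsilon_n,z_{n,j},\hat q_{n,j}}-\hat q_{n,j}|\ln\varepsilon_n|)_+^{p-1}\partial_{\tilde h}V_{\varepsilon_n,z_{n,j},\hat q_{n,j}}$, the definition of $Q_\varepsilon$ gives $L_{\varepsilon_n}u_n=\xi_n+\sum_{j,\tilde h}b_{n,j,\tilde h}A_{n,j,\tilde h}$. Each $A_{n,j,\tilde h}$ is supported in $T_{z_{n,j}}^{-1}B_{s_{\varepsilon_n,j}}+z_{n,j}$; combined with Lemma \ref{lemA-5}, the hypothesis then forces $-\varepsilon_n^2\mathrm{div}(K\nabla u_n)=0$ on $\Omega\setminus\bigcup_j B_{Ls_{\varepsilon_n,j}}(z_{n,j})$.

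\textbf{Blow-up near each center.} Introduce $\tilde u_{n,j}(y):=u_n(z_{n,j}+s_{\varepsilon_n,j}T_{z_{n,j}}^{-1}y)$. Using $T_{z_{n,j}}^{-1}(T_{z_{n,j}}^{-1})^T=K(z_{n,j})$, the formula \eqref{200} for $\nabla V$, and the pointwise identity \eqref{203} on $B_{Ls_{\varepsilon_n,j}}(z_{n,j})$, the equation for $u_n$, multiplied by $s_{\varepsilon_n,j}^2/\varepsilon_n^2=O(1)$, becomes on bounded $y$-sets
\begin{equation*}
-\Delta_y\tilde u_{n,j}-p w_+^{p-1}(y)\tilde u_{n,j}=\tilde\xi_{n,j}+\sum_{\tilde h=1}^{2} b_{n,j,\tilde h}\tilde A_{n,j,\tilde h}+o(1),
\end{equation*}
where $\|\tilde\xi_{n,j}\|_{L^p_{loc}}=o(1)$ (by change of variables and $\|\xi_n\|_{L^p}=o(\varepsilon_n^{2/p})$, $s_{\varepsilon,j}\sim\varepsilon$) and $\tilde A_{n,j,\tilde h}(y)\to pw_+^{p-1}(y)\,(T_{z_j^*}^T)_{\tilde h}\cdot\nabla_y w(y)$. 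Boundedness of $\{b_{n,j,\tilde h}\}$ is obtained by pairing $L_{\varepsilon_n}u_n$ against $\eta_i\partial_\hbar V_{\varepsilon_n,Z_n,i}$: the divergence part vanishes thanks to $u_n\in E_{\varepsilon_n,Z_n}$, and the resulting linear system for $b_{n,j,\tilde h}$ has uniformly invertible coefficient matrix by \eqref{coef of C}. Interior elliptic regularity then yields (a subsequence of) $\tilde u_{n,j}\to u_j^*$ in $C^1_{loc}(\mathbb R^2)$ and $b_{n,j,\tilde h}\to c_{j,\tilde h}$, with $u_j^*\in L^\infty(\mathbb R^2)$ solving $-\Delta u_j^*-pw_+^{p-1}u_j^*=pw_+^{p-1}\sum_{\tilde h}c_{j,\tilde h}(T_{z_j^*}^T)_{\tilde h}\cdot\nabla_y w$ in $\mathbb R^2$.

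\textbf{Killing $u_j^*$ and the outer region.} Since $\mathrm{supp}(\eta_i)\subset B_{|\ln\varepsilon_n|^{-2}}(z_{n,i})$ is much larger than $s_{\varepsilon_n,i}$, the orthogonality $\int_\Omega(K\nabla u_n|\nabla(\eta_i\partial_\hbar V_{\varepsilon_n,Z_n,i}))=0$, rescaled near $z_{n,i}$, passes in the limit to $\int_{\mathbb R^2}\nabla u_i^*\cdot\nabla((T_{z_i^*}^T)_\hbar\cdot\nabla_y w)\,dy=0$ for $\hbar=1,2$. Testing the limiting PDE for $u_j^*$ against $\partial_{y_\hbar}w\in\ker(-\Delta-pw_+^{p-1})$ and invoking that the $2\times 2$ matrix $(\int pw_+^{p-1}\partial_{y_\ell}w\,\partial_{y_\hbar}w\,dy)_{\ell,\hbar}$ is positive definite (the limit of $M_i$ in \eqref{coef of C}) forces $c_{j,\tilde h}=0$. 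Then $u_j^*$ is a bounded solution of the homogeneous linearized equation, so $u_j^*=\sum_\hbar\alpha_\hbar\partial_{y_\hbar}w$ by Proposition \ref{Non-degenerate}; the orthogonality above, together with the same non-degeneracy of the Gram matrix, forces all $\alpha_\hbar=0$, i.e.\ $u_j^*\equiv 0$. Consequently $\tilde u_{n,j}\to 0$ uniformly on compact sets, so $\|u_n\|_{L^\infty(\partial B_{Ls_{\varepsilon_n,j}}(z_{n,j}))}\to 0$. Combined with $u_n|_{\partial\Omega}=0$, the weak maximum principle for $-\mathrm{div}(K\nabla\cdot)$ (valid by $(K1)$) on $\Omega\setminus\bigcup_j B_{Ls_{\varepsilon_n,j}}(z_{n,j})$ yields $\|u_n\|_{L^\infty(\Omega)}\to 0$, contradicting the normalization. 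The main technical obstacle will be the careful bookkeeping to pass the orthogonality cleanly through the rescaling—tracking the $1/s_{\varepsilon,j}$ factors in $\partial V$, the asymptotic \eqref{2000} for $\hat q_{n,j}|\ln\varepsilon_n|/\ln s_{\varepsilon_n,j}$, and the $(K(x)-K(z_{n,j}))$ error terms—so that the limiting Gram matrix is genuinely non-degenerate and the whole argument closes.
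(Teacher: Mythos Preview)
Your overall architecture---contradiction, blow-up at each $z_{n,j}$, non-degeneracy of $w$ to kill the limit, and the maximum principle on the exterior---is exactly the paper's. The one real difference is the handling of the $b_{n,j,\tilde h}$'s. The paper first proves the \emph{sharp} bound $b_{n,j,\tilde h}=O(\varepsilon_n^{1+\gamma})$ by computing $\int u_n\,L_{\varepsilon_n}(\eta_i\partial_\hbar V_{\varepsilon_n,Z_n,i})$ and exploiting the cancellation \eqref{203} between $(V_{\varepsilon_n,Z_n}-q|\ln\varepsilon_n|)_+^{p-1}$ and $(V_{\varepsilon_n,z_{n,i}}-\hat q_i|\ln\varepsilon_n|)_+^{p-1}$; this makes the $b$-contribution to $L_{\varepsilon_n}u_n$ be $O(\varepsilon_n^{2/p+\gamma})$ in $L^p$, so the blow-up limit is \emph{homogeneous} and Proposition \ref{Non-degenerate} plus the orthogonality \eqref{211} immediately force $u_j^*\equiv 0$. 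You instead carry the $b$-terms into the limit and then remove the resulting $c_{j,\tilde h}$'s by testing against $\partial_{y_\hbar}w$. This works, but the paper's route is shorter and avoids both the inhomogeneous limit equation and the boundary-at-infinity issues in that pairing.

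There is, however, a scaling slip you should fix. Your pairing argument does not give mere boundedness of $b_{n,j,\tilde h}$, and mere boundedness would not suffice: from \eqref{200} one has $A_{n,j,\tilde h}(z_{n,j}+s_{\varepsilon_n,j}T_{z_{n,j}}^{-1}y)\sim s_{\varepsilon_n,j}^{-1}(\varepsilon_n/s_{\varepsilon_n,j})^{2p/(p-1)}\phi^{p-1}(\cdot)$, so after multiplying the equation by $s_{\varepsilon_n,j}^2/\varepsilon_n^2$ the natural rescaled right-hand side is $(b_{n,j,\tilde h}/s_{\varepsilon_n,j})\cdot(\text{finite limit})$, not $b_{n,j,\tilde h}\cdot(\text{finite limit})$. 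What your pairing actually yields---once you note that the divergence part vanishes by $u_n\in E_{\varepsilon_n,Z_n}$ and the remaining potential term $\int p(V_{\varepsilon_n,Z_n}-q|\ln\varepsilon_n|)_+^{p-1}u_n\,\eta_i\partial_\hbar V$ has size $O(\varepsilon_n)$---is precisely $b_{n,j,\tilde h}=O(\varepsilon_n)$, i.e.\ boundedness of $b_{n,j,\tilde h}/s_{\varepsilon_n,j}$. With that correction (replace ``boundedness of $b$'' by ``$b_{n,j,\tilde h}=O(\varepsilon_n)$'' and take $c_{j,\tilde h}=\lim b_{n,j,\tilde h}/s_{\varepsilon_n,j}$), your argument goes through.
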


\begin{proof}
We argue by contradiction. Suppose that there are $ \varepsilon_m \to 0 $, $ Z_m=(z_{m,1},\cdots,z_{m,N})\to (z_{1},\cdots,z_{N})\in \Omega^{(N)} $  and $  u_m \in E_{\varepsilon_m, Z_m} $ with $ Q_{\varepsilon_m} L_{\varepsilon_m} u_m = 0 $ in $ \Omega \backslash\cup_{j=1}^NB_{Ls_{\varepsilon_m,j}}(z_{m,j}) $ for some $ L $ large and $ ||u_m||_{L^\infty}=1 $ such that
\begin{equation*}
||Q_{\varepsilon_m} L_{\varepsilon_m} u_m||_{L^p}\leq \frac{1}{m}\varepsilon_m^{\frac{2}{p}}.
\end{equation*}
By definition of $ Q_{\varepsilon}  $,
\begin{equation}\label{208}
Q_{\varepsilon_m} L_{\varepsilon_m} u_m=  L_{\varepsilon_m} u_m-\sum_{j=1}^N\sum_{\tilde{h}=1}^2b_{j,\tilde{h},m}\left( -\varepsilon_m^2\text{div}\left (K(z_{m,j})\nabla   \frac{\partial V_{\varepsilon_m, z_{m,j},\hat{q}_{\varepsilon_m,j}}}{\partial x_{\tilde{h}}}\right )\right) .
\end{equation}
We now estimate $ b_{j,\tilde{h},m} $. For fixed $ i=1,\cdots,N, \hbar=1,2 $, multiplying \eqref{208} by $ \eta_i\frac{\partial V_{\varepsilon_m, Z_{m},i}}{\partial x_\hbar} $ and integrating on $ \Omega $ we get
\begin{equation*}
\begin{split}
\int_{\Omega}u_m&L_{\varepsilon_m}\left( \eta_i\frac{\partial V_{\varepsilon_m, Z_{m},i}}{\partial x_\hbar}\right) =\int_{\Omega}L_{\varepsilon_m}u_m\left( \eta_i\frac{\partial V_{\varepsilon_m, Z_{m},i}}{\partial x_\hbar}\right) \\
=&\sum_{j=1}^N\sum_{\tilde{h}=1}^2b_{j,\tilde{h},m}\int_{\Omega}-\varepsilon_m^2\text{div}\left (K(z_{m,j})\nabla   \frac{\partial V_{\varepsilon_m, z_{m,j},\hat{q}_{\varepsilon_m,j}}}{\partial x_{\tilde{h}}}\right )\left( \eta_i\frac{\partial V_{\varepsilon_m, Z_{m},i}}{\partial x_\hbar}\right) .
\end{split}
\end{equation*}

%We estimate $ \int_{\Omega}u_NL_{\delta_N}\left( \eta_i\frac{\partial V_{\delta_N, Z_{N},i}}{\partial x_\hbar}\right). $ 
For the left hand side of the above equality, we have
\begin{equation*}
\begin{split}
&\int_{\Omega}u_mL_{\varepsilon_m}\left( \eta_i\frac{\partial V_{\varepsilon_m, Z_{m},i}}{\partial x_\hbar}\right) \\
=&-\int_{\Omega}u_m\varepsilon_m^2\text{div}\left (K(x)\nabla \left( \eta_i\frac{\partial V_{\varepsilon_m, Z_{m},i}}{\partial x_\hbar}\right) \right )- p\int_{\Omega}u_m\left (V_{\varepsilon_m, Z_m}-q|\ln\varepsilon_m|\right )^{p-1}_+\left( \eta_i\frac{\partial V_{\varepsilon_m, Z_{m},i}}{\partial x_\hbar}\right)  \\
=&-\int_{\Omega}\eta_iu_m\varepsilon_m^2\text{div}\left (K(x)\nabla \frac{\partial V_{\varepsilon_m, Z_{m},i}}{\partial x_\hbar} \right )-2\int_{\Omega}u_m\varepsilon_m^2\left (K(x)\nabla\eta_i|\nabla   \frac{\partial V_{\varepsilon_m, Z_{m},i}}{\partial x_\hbar}  \right )\\
&-\int_{\Omega}u_m\varepsilon_m^2\text{div}\left (K(x)\nabla   \eta_i  \right )\frac{\partial V_{\varepsilon_m, Z_{m},i}}{\partial x_\hbar}- p\int_{\Omega}u_m\left (V_{\varepsilon_m, Z_m}-q|\ln\varepsilon_m|\right )^{p-1}_+\left( \eta_i\frac{\partial V_{\varepsilon_m, Z_{m},i}}{\partial x_\hbar}\right)
\end{split}
\end{equation*}
\begin{equation}\label{301}
\begin{split}
=&\int_{\Omega}\eta_iu_mp\left( V_{\varepsilon_m, z_{m,i}, \hat{q}_{\varepsilon_m,i}}-\hat{q}_{\varepsilon_m,i}|\ln\varepsilon_m|\right)^{p-1}_+\frac{\partial V_{\varepsilon_m, z_{m,i}, \hat{q}_{\varepsilon_m,i}}}{\partial x_\hbar}+\int_{\Omega}\eta_iu_m\varepsilon_m^2\text{div}\left (\frac{\partial K(x)}{\partial x_\hbar}\nabla V_{\varepsilon_m, Z_{m},i}  \right )\\
&-2\int_{\Omega}u_m\varepsilon_m^2\left (K(x)\nabla\eta_i|\nabla   \frac{\partial V_{\varepsilon_m, Z_{m},i}}{\partial x_\hbar}  \right ) -\int_{\Omega}u_m\varepsilon_m^2\text{div}\left (K(x)\nabla   \eta_i  \right )\frac{\partial V_{\varepsilon_m, Z_{m},i}}{\partial x_\hbar}\\
&- p\int_{\Omega}u_m\left (V_{\varepsilon_m, Z_m}-q|\ln\varepsilon_m|\right )^{p-1}_+\left( \eta_i\frac{\partial V_{\varepsilon_m, Z_{m},i}}{\partial x_\hbar}\right).
%=:&I_1+I_2+I_3+I_4+I_5.
\end{split}
\end{equation}

By \eqref{3-001}, \eqref{203} and Lemma \ref{lemA-5},  
\begin{equation}\label{302}
\begin{split}
\int_{\Omega}\eta_i&u_mp\left( V_{\varepsilon_m, z_{m,i}, \hat{q}_{\varepsilon_m,i}}-\hat{q}_{\varepsilon_m,i}|\ln\varepsilon_m|\right)^{p-1}_+\frac{\partial V_{\varepsilon_m, z_{m,i}, \hat{q}_{\varepsilon_m,i}}}{\partial x_\hbar}-  p\int_{\Omega}u_m\left (V_{\varepsilon_m, Z_m}-q|\ln\varepsilon_m|\right )^{p-1}_+\left( \eta_i\frac{\partial V_{\varepsilon_m, Z_{m},i}}{\partial x_\hbar}\right)\\
=&p\int_{\Omega}u_m\left( V_{\varepsilon_m, z_{m,i}, \hat{q}_{\varepsilon_m,i}}-\hat{q}_{\varepsilon_m,i}|\ln\varepsilon_m|\right)^{p-1}_+\frac{\partial V_{\varepsilon_m, z_{m,i}, \hat{q}_{\varepsilon_m,i}}}{\partial x_\hbar}\\
&-p\int_{\Omega}u_m\left( V_{\varepsilon_m, z_{m,i}, \hat{q}_{\varepsilon_m,i}}-\hat{q}_{\varepsilon_m,i}|\ln\varepsilon_m|+O\left( \varepsilon_m^\gamma\right) \right)^{p-1}_+\frac{\partial V_{\varepsilon_m, z_{m,i}, \hat{q}_{\varepsilon_m,i}}}{\partial x_\hbar}+O\left( \varepsilon_m^{1+\gamma}\right)\\
=&O\left( \varepsilon_m^{1+\gamma}\right).
\end{split}
\end{equation}
As for the remaining terms of \eqref{301}, %from the choice of $ \eta_i $, we have 
using $ ||\nabla\eta_i||_{L^\infty}\leq C|\ln\varepsilon_m|^{2}$ and $ ||\nabla^2\eta_i||_{L^\infty}\leq C|\ln\varepsilon_m|^{4} $ we have
\begin{equation}\label{304}
\begin{array}{ll}
\int_{\Omega}\eta_iu_m\varepsilon_m^2\text{div}\left (\frac{\partial K(x)}{\partial x_\hbar}\nabla V_{\varepsilon_m, Z_{m},i}  \right )&\\
\qquad=\int\limits_{|T_{z_{m,i}}(x-z_{m,i})|\leq s_{\varepsilon_m,i}}\eta_iu_m \left( \varepsilon_m^2\text{div}\left( \frac{\partial K(x)}{\partial x_\hbar}\nabla V_{\varepsilon_m, z_{m,i}, \hat{q}_{\varepsilon_m,i}}\right)\right)&\\
\qquad\,\,\,\,\,+\int\limits_{ s_{\varepsilon_m,i}<|T_{z_{m,i}}(x-z_{m,i})|\leq |\ln\varepsilon_m|^{2}}
\eta_iu_m \left( \varepsilon_m^2\text{div}\left( \frac{\partial K(x)}{\partial x_\hbar}\nabla V_{\varepsilon_m, z_{m,i}, \hat{q}_{\varepsilon_m,i}}\right)\right)+O\left( \varepsilon_m^2\right)  &\\
\qquad=O\left(\varepsilon_m^2\right) +O\left (\varepsilon_m^2|\ln\varepsilon_m|\right )+O\left( \varepsilon_m^2\right)&\\
\qquad=O\left( \varepsilon_m^2|\ln\varepsilon_m|\right),
\end{array}
\end{equation}
\begin{equation}\label{305}
\begin{split}
-2\int_{\Omega}&u_m\varepsilon_m^2\left (K(x)\nabla\eta_i|\nabla   \frac{\partial V_{\varepsilon_m, Z_{m},i}}{\partial x_\hbar}  \right )\\
=&-2\int_{B_{|\ln\varepsilon_m|^{2}}(z_{m,i})\backslash B_{\frac{|\ln\varepsilon_m|^{2}}{2}}(z_{m,i})}u_m\varepsilon_m^2\left (K(x)\nabla\eta_i|\nabla   \frac{\partial V_{\varepsilon_m, z_{m,i},\hat{q}_{\varepsilon_m,i}}}{\partial x_\hbar}  \right )+O\left( \varepsilon_m^2|\ln\varepsilon_m|^{2}\right)\\
=&O\left( \varepsilon_m^2|\ln\varepsilon_m|^{2}\right),
\end{split}
\end{equation}
and
\begin{equation}\label{306}
\begin{split}
-\int_{\Omega}&u_m\varepsilon_m^2\text{div}\left (K(x)\nabla   \eta_i  \right )\frac{\partial V_{\varepsilon_m, Z_{m},i}}{\partial x_\hbar}\\
=&-\int_{B_{|\ln\varepsilon_m|^{2}}(z_{m,i})\backslash B_{\frac{|\ln\varepsilon_m|^{2}}{2}}(z_{m,i})}u_m\varepsilon_m^2\text{div}\left (K(x)\nabla   \eta_i  \right )\frac{\partial V_{\varepsilon_m, z_{m,i},\hat{q}_{\varepsilon_m,i}}}{\partial x_\hbar}+O\left( \varepsilon_m^2|\ln\varepsilon_m|^{4}\right)\\
=&O\left( \varepsilon_m^2|\ln\varepsilon_m|^{4}\right),
\end{split}
\end{equation}
where we have used \eqref{200}. Inserting \eqref{302}, \eqref{304}, \eqref{305} and \eqref{306} into \eqref{301}, we finally get
\begin{equation*}
\begin{split}
&\int_{\Omega}u_mL_{\varepsilon_m}\left( \eta_i\frac{\partial V_{\varepsilon_m, Z_{m},i}}{\partial x_\hbar}\right) =O\left( \varepsilon_m^{1+\gamma}\right),
\end{split}
\end{equation*}
which combined with \eqref{coef of C} deduces
\begin{equation*}
b_{j,\tilde{h},m}=O\left (\varepsilon_m^{1+\gamma}\right ).
\end{equation*}
Consequently
\begin{equation*}
\begin{split}
\sum_{j=1}^N\sum_{\tilde{h}=1}^2&b_{j,\tilde{h},m}\left( -\varepsilon_m^2\text{div}\left (K(z_{m,j})\nabla   \frac{\partial V_{\varepsilon_m, z_{m,j},\hat{q}_{\varepsilon_m,j}}}{\partial x_{\tilde{h}}}\right )\right)\\
%=&p\sum_{j=1}^m\sum_{h=1}^2b_{j,h,N}(V_{\delta_N, z_{N,j}, \hat{q}_{\delta_N,j}}-\hat{q}_{\delta_N,j})^{p-1}_+ \frac{\partial V_{\delta_N, z_{N,j},\hat{q}_{\delta_N,j}}}{\partial x_h} \\
&=O\left( \sum_{j=1}^N\sum_{\tilde{h}=1}^2\varepsilon_m^{\frac{2}{p}-1}|b_{j,\tilde{h},m}|\right)
=O\left( \varepsilon_m^{\frac{2}{p}+\gamma}\right),\ \ \ \ \text{in}\ L^p(\Omega).
\end{split}
\end{equation*}
Taking this into \eqref{208}, we have
\begin{equation}\label{307}
\begin{split}
L_{\varepsilon_m} u_m=&Q_{\varepsilon_m} L_{\varepsilon_m} u_m+\sum_{j=1}^N\sum_{\tilde{h}=1}^2b_{j,\tilde{h},m}\left( -\varepsilon_m^2\text{div}\left (K(z_{m,j})\nabla   \frac{\partial V_{\varepsilon_m, z_{m,j},\hat{q}_{\varepsilon_m,j}}}{\partial x_{\tilde{h}}}\right )\right)\\
=&O\left( \frac{1}{m}\varepsilon_m^{\frac{2}{p}}\right) +O\left(  \varepsilon_m^{\frac{2}{p}+\gamma}\right)
=o\left( \varepsilon_m^{\frac{2}{p}}\right),\ \ \ \ \text{in}\ L^p(\Omega).
\end{split}
\end{equation}

For fixed $ i, $ we define the scaled function $ \tilde{u}_{m,i}(y)=u_m(s_{\varepsilon_m, i}y+z_{m,i}) $ for $ y\in \Omega_{m,i}:=\{y\in\mathbb{R}^2\mid s_{\varepsilon_m, i}y+z_{m,i}\in \Omega\} $.
Define
\begin{equation*}
\begin{split}
\tilde{L}_{m,i}u=&-\text{div}\left (K(s_{\varepsilon_m, i}y+z_{m,i})\nabla u\right )\\
&-p\frac{s_{\varepsilon_m,i}^2}{\varepsilon_m^2}\left (V_{\varepsilon_m, Z_m}(s_{\varepsilon_m, i}y+z_{m,i})-q(s_{\varepsilon_m, i}y+z_{m,i})|\ln\varepsilon_m|\right )^{p-1}_+ u.
\end{split}
\end{equation*}
Then
\begin{equation*}
\begin{split}
||\tilde{L}_{m,i}\tilde{u}_{m,i}||_{L^p(\Omega_{m,i})}
%=&\left( \int_{\Omega_{N,i}}\left( \tilde{L}_{N,i}\tilde{u}_{N,i} \right)^pdy\right)^{\frac{1}{p}}  \\
%=&\left( \frac{1}{s_{\delta_N, i}^2}\int_{\Omega}\left( -s_{\delta_N, i}^2\text{div}(K(x)\nabla u_N)-p\frac{s_{\delta_N, i}^2}{\delta_N^2}(V_{\delta_N,Z_N}-q)^{p-1}_+u_N \right)^pdx\right)^{\frac{1}{p}}\\
=\frac{s_{\varepsilon_m, i}^2}{s_{\varepsilon_m, i}^{\frac{2}{p}}\varepsilon_m^2}||L_{\varepsilon_m}u_m||_{L^p(\Omega)}.
\end{split}
\end{equation*}
%i.e., $ s_{\delta_N, i}^{\frac{2}{p}}\cdot\frac{\delta_N^2}{s_{\delta_N, i}^2}||\tilde{L}_{N,i}\tilde{u}_{N,i}||_{L^p(\Omega_{N,i})}=||L_{\delta_N}u_N||_{L^p(\Omega)}. $
Since  $ s_{\varepsilon_m, i}=O(\varepsilon_m) $, by \eqref{307} we have
\begin{equation*}
\tilde{L}_{m,i}\tilde{u}_{m,i}=o(1)\ \ \ \  \text{in}\ \ L^p(\Omega_{m,i}).
\end{equation*}
Since $ ||\tilde{u}_{m,i}||_{L^\infty(\Omega_{m,i})}=1 $, by the classical regularity theory of  elliptic equations, $ \tilde{u}_{m,i} $ is uniformly bounded in $ W^{2,p}_{loc}(\mathbb{R}^2) $, which implies that there is $u_i$ such that
\begin{equation*}
\tilde{u}_{m,i}\to u_i\ \ \ \ \text{in}\ \ C^1_{loc}(\mathbb{R}^2).
\end{equation*}

We claim that $ u_i\equiv 0. $ On the one hand, in our ansatz for $ Z\in \Lambda_{\varepsilon, N} $, $ |z_i-z_j|\geq |\ln\varepsilon|^{-1} $. So by \eqref{203}, $ z_{m,i}\to z_i $ as $ m\to\infty $ and the fact that $ \lim_{\varepsilon\to 0}\varepsilon|\ln\varepsilon|=0 $, we get
\begin{equation*}
\begin{split}
\frac{s_{\varepsilon_m,i}^2}{\varepsilon_m^2}&\left (V_{\varepsilon_m, Z_m}(s_{\varepsilon_m, i}y+z_{m,i})-q\left (s_{\varepsilon_m, i}y+z_{m,i}\right )|\ln\varepsilon_m|\right )^{p-1}_+\\
=&\frac{s_{\varepsilon_m,i}^2}{\varepsilon_m^2}\left( V_{\varepsilon_m, z_{m,i}, \hat{q}_{\varepsilon_m,i}}(s_{\varepsilon_m, i}y+z_{m,i})-\hat{q}_{\varepsilon_m,i}|\ln\varepsilon_m|+O\left (\varepsilon_m^{\gamma}\right )\right)^{p-1}_+ \\
\to&\phi(T_{z_i}y)^{p-1}_+\ \  \  \text{in}\ C^0_{loc}(\mathbb{R}^2)\ \ \ \ \ \ \  \text{as}\ m\to\infty.
\end{split}
\end{equation*}
So $ u_i $ satisfies
\begin{equation*}
-\text{div}(K(z_i)\nabla u_i(x))-p\phi(T_{z_i}x)^{p-1}_+u_i(x)=0,\ \ x\in\mathbb{R}^2.
\end{equation*}
Let $ \hat{u}_i(x)=u_i(T_{z_i}^{-1}x) $. Since $ T_{z_i}^{-1}(T_{z_i}^{-1})^T=K(z_i) $, we have
\begin{equation*}
-\Delta \hat{u}_i(x)=-\text{div}(K(z_i)\nabla u_i)(T_{z_i}^{-1}x)=p\phi(x)^{p-1}_+\hat{u}_i(x),\ \ \ \   \ x\in \mathbb{R}^2.
\end{equation*}
By Proposition \ref{Non-degenerate}, there exist $ c_1,c_2 $ such that
\begin{equation}\label{210}
\hat{u}_i=c_1\frac{\partial \phi}{\partial x_1}+c_2\frac{\partial \phi}{\partial x_2}.
\end{equation}

On the other hand, since $ u_m\in E_{\varepsilon_m,Z_m} $, we get
\begin{equation*}
\int_{\Omega}-\varepsilon_m^2\text{div}\left( K(x)\nabla \left( \eta_i\frac{\partial V_{\varepsilon_m, Z_{m}, i}}{\partial x_\hbar}\right)\right)  u_m=0,\ \ \forall\ i=1,\cdots,N,\  \hbar=1,2,
\end{equation*}
which implies that
\begin{equation}\label{209}
\begin{split}
0%=&-\int_{\Omega}u_N\eta_i\delta_N^2\text{div}\left( K(x)\nabla \frac{\partial V_{\delta_N, z_{N,i}, \hat{q}_{\delta_N,i}}}{\partial x_\hbar}\right) -2\int_{\Omega}u_N\delta_N^2\left (K(x)\nabla\eta_i|\nabla   \frac{\partial V_{\delta_N, z_{N,i},\hat{q}_{\delta_N,i}}}{\partial x_\hbar}  \right )\\
%& -\int_{\Omega}u_N\delta_N^2\text{div}\left (K(x)\nabla   \eta_i  \right )\frac{\partial V_{\delta_N, z_{N,i},\hat{q}_{\delta_N,i}}}{\partial x_\hbar}\\
=&p\int_{\Omega}u_m\left( V_{\varepsilon_m, z_{m,i}, \hat{q}_{\varepsilon_m,i}}-\hat{q}_{\varepsilon_m,i}|\ln\varepsilon_m|\right)^{p-1}_+\frac{\partial V_{\varepsilon_m, z_{m,i}, \hat{q}_{\varepsilon_m,i}}}{\partial x_\hbar}+\int_{\Omega}\eta_iu_m\varepsilon_m^2\text{div}\left (\frac{\partial K(x)}{\partial x_\hbar}\nabla V_{\varepsilon_m, Z_{m},i}  \right )\\
&-2\int_{\Omega}u_m\varepsilon_m^2\left (K(x)\nabla\eta_i|\nabla   \frac{\partial V_{\varepsilon_m, Z_{m},i}}{\partial x_\hbar}  \right ) -\int_{\Omega}u_m\varepsilon_m^2\text{div}\left (K(x)\nabla   \eta_i  \right )\frac{\partial V_{\varepsilon_m, Z_{m},i}}{\partial x_\hbar}.
\end{split}
\end{equation}
Using again \eqref{304}, \eqref{305} and \eqref{306}, we have
\begin{equation}\label{209-1}
\begin{split}
\int_{\Omega}&\eta_iu_m\varepsilon_m^2\text{div}\left (\frac{\partial K(x)}{\partial x_\hbar}\nabla V_{\varepsilon_m, Z_{m},i}  \right )-2\int_{\Omega}u_m\varepsilon_m^2\left (K(x)\nabla\eta_i|\nabla   \frac{\partial V_{\varepsilon_m, Z_{m},i}}{\partial x_\hbar}  \right ) \\
&-\int_{\Omega}u_m\varepsilon_m^2\text{div}\left (K(x)\nabla   \eta_i  \right )\frac{\partial V_{\varepsilon_m, Z_{m},i}}{\partial x_\hbar}
=O\left( \varepsilon_m^2|\ln\varepsilon_m|^{4}\right).
\end{split}
\end{equation}
%and
%\begin{equation}\label{209-2}
%-2\int_{\Omega}u_N\delta_N^2\left (K(x)\nabla\eta_i|\nabla   \frac{\partial V_{\delta_N, z_{N,i},\hat{q}_{\delta_N,i}}}{\partial x_\hbar}  \right ) -\int_{\Omega}u_N\delta_N^2\text{div}\left (K(x)\nabla   \eta_i  \right )\frac{\partial V_{\delta_N, z_{N,i},\hat{q}_{\delta_N,i}}}{\partial x_\hbar}=O\left( \frac{\varepsilon_N^2}{|\ln\varepsilon_N|^{p}}\right).
%\end{equation}
From  \eqref{200},
\begin{equation}\label{209-3}
\begin{split}
&p\int_{\Omega} u_m\left (V_{\varepsilon_m, z_{m,i}, \hat{q}_{\varepsilon_m,i}}-\hat{q}_{\varepsilon_m,i}|\ln\varepsilon_m|\right )^{p-1}_+ \frac{\partial V_{\varepsilon_m, z_{m,i}, \hat{q}_{\varepsilon_m,i}}}{\partial x_\hbar}\\
=&p\int_{\Omega}\frac{1}{s_{\varepsilon_m,i}}\left( \frac{\varepsilon_m}{s_{\varepsilon_m,i}}\right)^{\frac{2p}{p-1}} \phi\left( \frac{T_{z_{m,i}}(x-z_{m,i})}{s_{\varepsilon_m,i}}\right)^{p-1}_+\phi'\left( \frac{T_{z_{m,i}}(x-z_{m,i})}{s_{\varepsilon_m,i}}\right)\frac{(T_{z_{m,i}})_\hbar^T\cdot T_{z_{m,i}}(x-z_{m,i})}{|T_{z_{m,i}}(x-z_{m,i})|}u_m\\
=&ps_{\varepsilon_m,i}\left( \frac{\varepsilon_m}{s_{\varepsilon_m,i}}\right)^{\frac{2p}{p-1}}\int_{\mathbb{R}^2}\phi(T_{z_{m,i}}y)^{p-1}_+\phi'(T_{z_{m,i}}y)\frac{(T_{z_{m,i}})_\hbar^T\cdot T_{z_{m,i}}y}{|T_{z_{m,i}}y|}\tilde{u}_{m,i}(y)dy.
\end{split}
\end{equation}
Taking \eqref{209-1}  and \eqref{209-3} into \eqref{209}
%, we have
%\begin{equation}\label{209-0}
%\begin{split}
%0=&ps_{\delta_N,i}\left( \frac{\delta_N}{s_{\delta_N,i}}\right)^{\frac{2p}{p-1}}\int_{\mathbb{R}^2}\phi(T_{z_{N,i}}y)^{p-1}_+\phi'(T_{z_{N,i}}y)\frac{(T_{z_{N,i}})_\hbar^t\cdot T_{z_{N,i}}y}{|T_{z_{N,i}}y|}\tilde{u}_{N,i}(y)dy+O\left( \delta_N^2|\ln\varepsilon_N|^{2M+1}\right).
%\end{split}
%\end{equation}
, dividing both sides of \eqref{209} into $ ps_{\varepsilon_m,i}(\frac{\varepsilon_m}{s_{\varepsilon_m,i}})^{\frac{2p}{p-1}} $ and passing $ m $ to the limit, we get
\begin{equation*}
\begin{split}
0%=&\int_{\mathbb{R}^2}\phi(T_{z_{i}}y)^{p-1}_+\phi'(T_{z_{i}}y)\frac{(T_{z_{i}})_\hbar^t\cdot T_{z_{i}}y}{|T_{z_{i}}y|}u_{i}(y)dy\\
=\int_{\mathbb{R}^2}\phi(x)^{p-1}_+\phi'(x)\frac{(T_{z_{i}})_\hbar^T\cdot x}{|x|}\hat{u}_{i}(x)\sqrt{\det(K(z_i))}dx,\ \  \hbar=1,2,
\end{split}
\end{equation*}
which implies that
\begin{equation}\label{211}
0=\int_{B_1(0)}\phi^{p-1}_+\frac{\partial \phi}{\partial x_\hbar}\hat{u}_i.
\end{equation}
So $ c_1=c_2=0. $ That is, $ u_i\equiv 0. $ We conclude that $ \tilde{u}_{N,i}\to 0 $ in  $ C^1(B_L(0)) $, which implies that
\begin{equation}\label{212}
||u_m||_{L^\infty(B_{Ls_{\varepsilon_m,i}}(z_{m,i}))}=o(1).
\end{equation}

Since $ Q_{\varepsilon_m} L_{\varepsilon_m} u_m = 0 $ in $ \Omega \backslash\cup_{i=1}^NB_{Ls_{\varepsilon_m,i}}(z_{m,i}) $, by the definition of $  Q_{\varepsilon_m} $ we have for $ L $ large
\begin{equation*}
 L_{\varepsilon_m} u_m = 0\  \ \text{in} \ \Omega \backslash\cup_{i=1}^NB_{Ls_{\varepsilon_m,i}}(z_{m,i}).
\end{equation*}
From Lemma \ref{lemA-5},  $ (V_{\varepsilon_m,Z_m}-q|\ln\varepsilon_m|)_+=0\  \ \text{in} \ \Omega \backslash\cup_{i=1}^NB_{Ls_{\varepsilon_m,i}}(z_{m,i}). $
So $ -\text{div}(K(x)\nabla u_m)=0 $ in $ \Omega \backslash\cup_{i=1}^NB_{Ls_{\varepsilon_m,i}}(z_{m,i}). $
%Since $ u_N=o(1) $ on $ \cup_{i=1}^m\partial B_{Ls_{\delta_N,i}}(z_{N,i}) $ and $ u_N=0 $ on $ \partial \Omega $,
Thus by the maximum principle, 
\begin{equation*}
||u_m||_{L^\infty(\Omega\backslash \cup_{i=1}^NB_{Ls_{\varepsilon_m,i}}(z_{m,i}))}=o(1),
\end{equation*}
which combined with \eqref{212} leads to
\begin{equation*}
||u_m||_{L^\infty(\Omega)}=o(1).
\end{equation*}
This is a contradiction since $ ||u_m||_{L^\infty(\Omega)}=1. $

\end{proof}

As a  consequence of Proposition \ref{coercive esti}, we have that $ Q_\varepsilon L_\varepsilon $ is indeed a one to one and onto map from $ E_{\varepsilon,Z} $ to $ F_{\varepsilon, Z}. $
\begin{proposition}\label{one to one and onto}
$ Q_\varepsilon L_\varepsilon $ is  a one to one and onto map from $ E_{\varepsilon,Z} $ to $ F_{\varepsilon, Z}. $
\end{proposition}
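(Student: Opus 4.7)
The plan is to combine the coercive estimate from Proposition \ref{coercive esti} with a Fredholm alternative argument. Both $E_{\varepsilon,Z}\subset W^{2,p}\cap H^1_0(\Omega)$ and $F_{\varepsilon,Z}\subset L^p(\Omega)$ are closed subspaces of codimension $2N$, the $2N$ constraints in each case being spanned by linearly independent functionals (whose independence for $\varepsilon$ small follows from \eqref{coef of C}).

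First I would verify injectivity. If $u\in E_{\varepsilon,Z}$ satisfies $Q_\varepsilon L_\varepsilon u=0$ on all of $\Omega$, then in particular the vanishing holds on $\Omega\setminus\cup_{j=1}^N B_{Ls_{\varepsilon,j}}(z_j)$, so Proposition \ref{coercive esti} yields
\begin{equation*}
\|u\|_{L^\infty}\le \rho_0^{-1}\varepsilon^{-\frac{2}{p}}\|Q_\varepsilon L_\varepsilon u\|_{L^p}=0,
\end{equation*}
hence $u\equiv 0$. Thus the map is one-to-one.

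For surjectivity, I would show that $Q_\varepsilon L_\varepsilon$ is Fredholm of index $0$ as a map $E_{\varepsilon,Z}\to F_{\varepsilon,Z}$, so that injectivity forces it to be onto. Write $L_\varepsilon = A_\varepsilon - B_\varepsilon$, with $A_\varepsilon u = -\varepsilon^2\mathrm{div}(K(x)\nabla u)$ and $B_\varepsilon u = p(V_{\varepsilon,Z}-q|\ln\varepsilon|)_+^{p-1} u$. By assumption $(K1)$ and standard elliptic theory, $A_\varepsilon\colon W^{2,p}\cap H^1_0(\Omega)\to L^p(\Omega)$ is an isomorphism. By Lemma \ref{lemA-5} the coefficient of $B_\varepsilon$ is supported in $\cup_j B_{Ls_{\varepsilon,j}}(z_j)$ and is bounded there (uniformly in $x$, for each fixed $\varepsilon$), so $B_\varepsilon$ factors through the compact embedding $W^{2,p}(\Omega)\hookrightarrow C^0(\overline\Omega)$ and is therefore compact from $W^{2,p}\cap H^1_0(\Omega)$ to $L^p(\Omega)$. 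Composing with the bounded linear projection $Q_\varepsilon\colon L^p\to F_{\varepsilon,Z}$ gives $Q_\varepsilon L_\varepsilon=Q_\varepsilon A_\varepsilon-Q_\varepsilon B_\varepsilon$, a compact perturbation of $Q_\varepsilon A_\varepsilon$.

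It remains to check that $Q_\varepsilon A_\varepsilon$ has Fredholm index $0$ as a map $E_{\varepsilon,Z}\to F_{\varepsilon,Z}$; combined with the compactness of $Q_\varepsilon B_\varepsilon$, this will give $\mathrm{ind}(Q_\varepsilon L_\varepsilon)=0$, and then injectivity proved above yields surjectivity. Since $A_\varepsilon$ is an isomorphism from $W^{2,p}\cap H^1_0(\Omega)$ onto $L^p(\Omega)$, and $Q_\varepsilon$ is a projection onto $F_{\varepsilon,Z}$ with $2N$-dimensional kernel, the composition $Q_\varepsilon A_\varepsilon$ maps $W^{2,p}\cap H^1_0(\Omega)$ onto $F_{\varepsilon,Z}$ and its kernel is the $2N$-dimensional subspace $A_\varepsilon^{-1}(\ker Q_\varepsilon)$. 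Restricting to the codimension-$2N$ subspace $E_{\varepsilon,Z}$, the restricted map has the same codimension-$2N$ domain constraints and a kernel of dimension at most $2N$, so its Fredholm index equals $\dim\ker - \mathrm{codim}(\mathrm{range})=2N-2N=0$ once one checks, via the nondegeneracy of the $2N\times 2N$ matrix in \eqref{coef of C}, that the restriction is still surjective onto $F_{\varepsilon,Z}$. The main obstacle in the argument is precisely this transversality check: showing that $A_\varepsilon^{-1}(\ker Q_\varepsilon)$ intersects $E_{\varepsilon,Z}$ only trivially (equivalently, that the $2N$ generators of $\ker Q_\varepsilon$, pulled back by $A_\varepsilon^{-1}$, are linearly independent modulo $E_{\varepsilon,Z}$). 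This reduces to a perturbation argument based on \eqref{coef of C}, which shows the pertinent $2N\times 2N$ pairing matrix is invertible for $\varepsilon$ small.
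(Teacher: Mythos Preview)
Your injectivity argument and the overall Fredholm-alternative strategy match the paper's. The difference lies in how you handle $Q_\varepsilon A_\varepsilon$ on $E_{\varepsilon,Z}$: you set up an index computation and flag a ``transversality check'' (that $A_\varepsilon^{-1}(\ker Q_\varepsilon)\cap E_{\varepsilon,Z}=\{0\}$) as the main obstacle, to be resolved via \eqref{coef of C}. The paper bypasses this entirely by observing that the spaces are \emph{designed} so that $A_\varepsilon$ already maps $E_{\varepsilon,Z}$ into $F_{\varepsilon,Z}$: for $u\in E_{\varepsilon,Z}$ one has, since $\eta_j\partial_{x_{\tilde h}}V_{\varepsilon,Z,j}$ is compactly supported,
\[
\int_\Omega (A_\varepsilon u)\,\eta_j\frac{\partial V_{\varepsilon,Z,j}}{\partial x_{\tilde h}}
=\varepsilon^2\int_\Omega\Bigl(K(x)\nabla u\,\Big|\,\nabla\Bigl(\eta_j\frac{\partial V_{\varepsilon,Z,j}}{\partial x_{\tilde h}}\Bigr)\Bigr)=0,
\]
so $A_\varepsilon u\in F_{\varepsilon,Z}$ and hence $Q_\varepsilon A_\varepsilon=A_\varepsilon$ on $E_{\varepsilon,Z}$. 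Conversely, if $\hat h\in F_{\varepsilon,Z}$ and $A_\varepsilon u=\hat h$, the same identity forces $u\in E_{\varepsilon,Z}$. Thus $A_\varepsilon\colon E_{\varepsilon,Z}\to F_{\varepsilon,Z}$ is an isomorphism outright, no index bookkeeping or appeal to \eqref{coef of C} needed; one then writes $Q_\varepsilon L_\varepsilon u=\xi$ as $u=(Q_\varepsilon A_\varepsilon)^{-1}Q_\varepsilon B_\varepsilon u+(Q_\varepsilon A_\varepsilon)^{-1}\xi$ and applies the Fredholm alternative in $E_{\varepsilon,Z}$ directly.

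Your route can be made to work, but the phrasing ``$\dim\ker-\mathrm{codim}(\mathrm{range})=2N-2N=0$ once one checks \ldots\ that the restriction is still surjective'' is circular as written (surjectivity is what you are proving). The clean statement is: $Q_\varepsilon A_\varepsilon$ on the full space is surjective onto $F_{\varepsilon,Z}$ with $2N$-dimensional kernel, hence Fredholm of index $2N$; restricting to a closed codimension-$2N$ subspace drops the index to $0$. Either way, your ``main obstacle'' dissolves once you notice the integration-by-parts duality between the defining constraints of $E_{\varepsilon,Z}$ and $F_{\varepsilon,Z}$.
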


\begin{proof}
If $ Q_\varepsilon L_\varepsilon u\equiv0 $, by Lemma \ref{coercive esti}, $ u\equiv0 $. So $ Q_\varepsilon L_\varepsilon $ is  one to one.

% Denote
%\begin{equation*}
%\hat{E}=\left \{u\in H^1_0(\Omega)\mid \int_{\Omega}\left( K(x)\nabla u|\nabla\left(\eta_i \frac{\partial V_{\varepsilon, Z,i}}{\partial x_h}\right)\right)  =0,\ \ \ \ i=1,\cdots,m,\ h=1,2\right \}.
%\end{equation*}
%Then $ E_{\varepsilon,Z}=\hat{E}\cap W^{2,p}(\Omega) $. 
For any $ \hat{h}\in F_{\varepsilon,Z}  $, by the Riesz representation theorem there is a unique $ u\in H^1_0(\Omega) $ such that
\begin{equation}\label{213}
\varepsilon^2\int_{\Omega}\left( K(x)\nabla u|\nabla\varphi\right) =\int_{\Omega}\hat{h}\varphi,\ \ \   \ \forall \varphi\in H^1_0(\Omega).
\end{equation}

Since $ \hat{h}\in F_{\varepsilon,Z} $, using the classical $ L^p $ theory of elliptic equations, we conclude that $ u\in W^{2,p}(\Omega) $, which implies that $ u\in E_{\varepsilon,Z}. $ Thus $ -\varepsilon^2\text{div}(K(x)\nabla)=Q_{\varepsilon}(-\varepsilon^2\text{div}(K(x)\nabla)) $ is a one to one and onto map from $ E_{\varepsilon,Z} $ to $ F_{\varepsilon, Z}. $

For any $ \xi\in F_{\varepsilon,Z} $,  $ Q_\varepsilon L_\varepsilon u=\xi $ is equivalent to
\begin{equation}\label{214}
\begin{split}
u=&\left (Q_{\varepsilon}\left (-\varepsilon^2\text{div}(K(x)\nabla)\right )\right )^{-1}pQ_\varepsilon\left (V_{\varepsilon,Z}-q|\ln\varepsilon|\right )^{p-1}_+u\\
&+\left (Q_{\varepsilon}\left (-\varepsilon^2\text{div}(K(x)\nabla)\right )\right )^{-1}\xi.%,\ \ u\in E_{\varepsilon,Z}.
\end{split}
\end{equation}
Note that $  (Q_{\varepsilon}(-\varepsilon^2\text{div}(K(x)\nabla)))^{-1}pQ_\varepsilon\left (V_{\varepsilon,Z}-q|\ln\varepsilon|\right )^{p-1}_+u $ is a compact operator in $ E_{\varepsilon,Z}, $ by the Fredholm alternative, \eqref{214} is solvable if and only if
\begin{equation*}
u=\left (Q_{\varepsilon}\left (-\varepsilon^2\text{div}(K(x)\nabla)\right )\right )^{-1}pQ_\varepsilon\left (V_{\varepsilon,Z}-q|\ln\varepsilon|\right )^{p-1}_+u
\end{equation*}
has only trivial solution, which is true since $ Q_\varepsilon L_\varepsilon  $ is one to one. %So $ Q_\varepsilon L_\varepsilon $ is  an onto  map from $ E_{\varepsilon,Z} $ to $ F_{\varepsilon, Z}  $ and the proof is complete.
\end{proof}

\section{The nonlinear theory: solvability of a nonlinear equation}
%Now we  find solution of \eqref{111} being of the form $$ V_{\delta, Z}+\omega_\delta. $$ First we prove that for any $ Z $ satisfying \eqref{admis set}, there exists $ \omega_{\delta,Z} $ such that $ V_{\delta, Z}+\omega_{\delta,Z} $ solves \eqref{eq1} in a co-dimensional $ 2m $ subspace of $ H^1_0 $. In the next section we choose proper $ Z=Z(\delta) $ such that $ V_{\delta, Z}+\omega_\delta $ is a solution.

%From \eqref{3-03}, to construct solutions of \eqref{111} being the form  $ V_{\delta,Z}+\omega_\delta $, it suffices to solve
%\begin{equation}\label{215}
%\begin{split}
%L_\delta \omega_\delta=l_\delta+R_\delta(\omega_\delta),
%\end{split}
%\end{equation}
%where $ l_\delta=\left (\sum_{j=1}^mV_{\delta,Z,j}-q\right )^{p}_+-\sum_{j=1}^m(V_{\delta, z_j, \hat{q}_j}-\hat{q}_j)^{p}_+  $
%and $ R_\delta(\omega_\delta):=\left (\sum_{j=1}^mV_{\delta,Z,j}+\omega_\delta-q\right )^{p}_+-\left (\sum_{j=1}^mV_{\delta,Z,j}-q\right )^{p}_+-p\left (\sum_{j=1}^mV_{\delta,Z,j}-q\right )^{p-1}_+\omega_\delta. $

In this section, we look for  solutions $ \omega\in E_{\varepsilon,Z}  $ of the following nonlinear equation
\begin{equation}\label{216}
Q_\varepsilon L_\varepsilon \omega=Q_\varepsilon l_\varepsilon+Q_\varepsilon R_\varepsilon(\omega),
\end{equation}
or equivalently,
\begin{equation*}
\begin{split}
\omega=T_\varepsilon(\omega):=(Q_\varepsilon L_\varepsilon)^{-1}Q_\varepsilon l_\varepsilon+(Q_\varepsilon L_\varepsilon)^{-1}Q_\varepsilon R_\varepsilon(\omega).
\end{split}
\end{equation*}
%And then we can reduce \eqref{215} to a finite dimensional problem.
We have
\begin{proposition}\label{exist and uniq of w}
There exists $ \varepsilon_0>0, $ such that for any $ \gamma\in (0,1) $, $ 0<\varepsilon<\varepsilon_0 $ and $ Z\in \Lambda_{\varepsilon,N} $, \eqref{216} has the unique solution $ \omega_{\varepsilon,Z}\in E_{\varepsilon,Z} $ with
\begin{equation*}
||\omega_{\varepsilon,Z}||_{L^\infty(\Omega)}=O\left( \varepsilon^\gamma \right).
\end{equation*}
\end{proposition}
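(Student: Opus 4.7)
The plan is to solve the fixed-point equation $\omega=T_\varepsilon(\omega)$ via the Banach contraction mapping theorem on the closed ball
$$\mathcal{B}=\left\{\omega\in E_{\varepsilon,Z}:\|\omega\|_{L^\infty(\Omega)}\le M\varepsilon^{\gamma}\right\}$$
for a sufficiently large $M$. The operator $T_\varepsilon$ is well-defined on $E_{\varepsilon,Z}$ by Proposition \ref{one to one and onto}, and the coercive estimate of Proposition \ref{coercive esti} combined with the $L^p$-continuity of $Q_\varepsilon$ on functions supported in $\cup_i B_{Ls_{\varepsilon,i}}(z_i)$ yields the master bound $\|(Q_\varepsilon L_\varepsilon)^{-1}Q_\varepsilon\xi\|_{L^\infty}\le C\varepsilon^{-\frac{2}{p}}\|\xi\|_{L^p}$. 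The task therefore reduces to two quantitative estimates: a bound on $\|l_\varepsilon\|_{L^p}$ and a Lipschitz-type bound on $R_\varepsilon$.

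For $l_\varepsilon$, I first use Lemma \ref{lemA-5} to conclude that $l_\varepsilon$ is supported in $\cup_{i=1}^N B_{Ls_{\varepsilon,i}}(z_i)$, since outside these balls both terms defining $l_\varepsilon$ vanish. On the ball $B_{Ls_{\varepsilon,i}}(z_i)$, the key identity \eqref{203} gives $\sum_{j=1}^NV_{\varepsilon,Z,j}-q|\ln\varepsilon|=V_{\varepsilon,z_i,\hat q_{\varepsilon,i}}-\hat q_{\varepsilon,i}|\ln\varepsilon|+O(\varepsilon^\gamma)$, while for $j\ne i$ the function $V_{\varepsilon,z_j,\hat q_{\varepsilon,j}}-\hat q_{\varepsilon,j}|\ln\varepsilon|$ is negative there and is killed by the $(\cdot)_+$ truncation. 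Applying the elementary inequality $|(a+\delta)_+^p-a_+^p|\le C|\delta|(|a|+|\delta|)^{p-1}$ with $|a|=O(1)$ uniformly on the support, I get the pointwise bound $|l_\varepsilon|=O(\varepsilon^\gamma)$; since $|B_{Ls_{\varepsilon,i}}(z_i)|=O(\varepsilon^2)$, this gives $\|l_\varepsilon\|_{L^p}=O(\varepsilon^{\gamma+\frac{2}{p}})$, hence $\|(Q_\varepsilon L_\varepsilon)^{-1}Q_\varepsilon l_\varepsilon\|_{L^\infty}=O(\varepsilon^\gamma)$. For $R_\varepsilon$, Taylor expansion of $t\mapsto t_+^p$ yields, for $\omega_1,\omega_2\in\mathcal{B}$, a pointwise bound of the form
$$|R_\varepsilon(\omega_1)-R_\varepsilon(\omega_2)|\le C\bigl(\|\omega_1\|_{L^\infty}+\|\omega_2\|_{L^\infty}\bigr)^{\min(1,p-1)}\bigl((V_{\varepsilon,Z}-q|\ln\varepsilon|)_++|\omega_i|\bigr)^{p-1-\min(1,p-1)}|\omega_1-\omega_2|,$$
again supported in $\cup_iB_{Ls_{\varepsilon,i}}(z_i)$. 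Using $\|\omega_i\|_{L^\infty}\le M\varepsilon^\gamma$ and the measure bound $|B_{Ls_{\varepsilon,i}}|=O(\varepsilon^2)$, I conclude that $\|R_\varepsilon(\omega_1)-R_\varepsilon(\omega_2)\|_{L^p}=o(\varepsilon^{\frac{2}{p}})\|\omega_1-\omega_2\|_{L^\infty}$, so $T_\varepsilon$ is a strict contraction. Since $R_\varepsilon(0)=0$ and $\|(Q_\varepsilon L_\varepsilon)^{-1}Q_\varepsilon l_\varepsilon\|_{L^\infty}=O(\varepsilon^\gamma)$, choosing $M$ large ensures $T_\varepsilon(\mathcal{B})\subset\mathcal{B}$, and Banach's theorem delivers the unique fixed point $\omega_{\varepsilon,Z}$.

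The main obstacle lies in the $R_\varepsilon$ bound when $1<p<2$: the factor $(V_{\varepsilon,Z}-q|\ln\varepsilon|)_+^{p-2}$ arising from the second derivative of $t_+^p$ is singular as the ``inner'' function approaches zero, and a naive quadratic expansion fails. The fix is to split each ball $B_{Ls_{\varepsilon,i}}(z_i)$ into the region where $(V_{\varepsilon,Z}-q|\ln\varepsilon|)_+\gg|\omega|$ (where a quadratic estimate is valid) and the region where $|\omega|$ dominates (where one uses instead $|R_\varepsilon(\omega)|\le C|\omega|^p$ directly), and to verify that both contributions produce the required saving $o(\varepsilon^{2/p})$ against the coercivity loss $\varepsilon^{-2/p}$. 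For $p\ge 2$ this is immediate because $R_\varepsilon(\omega)=O(|\omega|^2)$ times an $L^\infty$ factor; the uniform-in-$p$ phrasing is what requires care. The uniqueness statement then follows from the contraction property, and the $C^1$ dependence on $Z$ (deferred to Proposition \ref{differ of w}) will rest on applying the implicit function theorem to the same fixed-point equation.
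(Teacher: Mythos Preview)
Your proposal is correct and follows essentially the same contraction-mapping strategy as the paper. Two minor differences are worth noting. First, the paper works on a larger ball $\mathcal{N}=\{\|\omega\|_{L^\infty}\le |\ln\varepsilon|^{-(1-\theta_0)}\}$ rather than $\{\|\omega\|_{L^\infty}\le M\varepsilon^\gamma\}$, and only reads off the sharper bound $O(\varepsilon^\gamma)$ a posteriori from the self-map estimate; your choice of working directly on the target ball is slightly more efficient. Second, your discussion of the $1<p<2$ case is more cautious than necessary: since $t\mapsto pt_+^{p-1}$ is globally H\"older of exponent $p-1$, one has the uniform pointwise bound $|R_\varepsilon(\omega)|\le C|\omega|^{p}$ (and $|R_\varepsilon(\omega_1)-R_\varepsilon(\omega_2)|\le C(\|\omega_1\|_{L^\infty}+\|\omega_2\|_{L^\infty})^{p-1}|\omega_1-\omega_2|$) without any region-splitting, which is all that is needed for the contraction. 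The paper in fact writes the quadratic bound $\|R_\varepsilon(\omega)\|_{L^p}\le C\varepsilon^{2/p}\|\omega\|_{L^\infty}^2$, which is strictly correct only for $p\ge 2$; the argument goes through verbatim with the exponent $2$ replaced by $\min(p,2)$, so your added care here is justified even if the resolution can be streamlined.
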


\begin{proof}
It follows from  Lemma \ref{lemA-5} that for $ L $ sufficiently large and $ \varepsilon $ small,
\begin{equation*}
\left (V_{\varepsilon,Z}-q|\ln\varepsilon|\right )_+=0,\ \ \ \ \text{in}\ \Omega \backslash\cup_{i=1}^NB_{Ls_{\varepsilon,i}}(z_{i}).
\end{equation*}
Let $ \mathcal{N}= E_{\varepsilon,Z} \cap\{\omega\mid ||\omega||_{L^\infty(\Omega)}\leq \frac{1}{|\ln\varepsilon|^{1-\theta_0}}\}$ for some $ \theta_0\in(0,1). $ Then $ \mathcal{N} $ is complete under $ L^\infty $ norm and $ T_\varepsilon $ is a map from $ E_{\varepsilon,Z} $ to $ E_{\varepsilon,Z} $. We prove that $ T_\varepsilon $ is a contraction map from $ \mathcal{N} $ to $ \mathcal{N} $.

First, we claim that $ T_\varepsilon $ is a map from $ \mathcal{N} $ to $ \mathcal{N} $. For any $ \omega\in \mathcal{N} $, by Lemma \ref{lemA-5} we get that for $ L>1 $ large and  $ \varepsilon $ small,
\begin{equation*}
\left (V_{\varepsilon,Z}+\omega-q|\ln\varepsilon|\right )_+=0,\ \ \ \ \text{in}\ \Omega \backslash\cup_{i=1}^NB_{Ls_{\varepsilon,i}}(z_{i}).
\end{equation*}
So $ l_\varepsilon=R_\varepsilon(\omega)=0 $ in $ \Omega \backslash\cup_{i=1}^NB_{Ls_{\varepsilon,i}}(z_{i}). $ By the definition of $ Q_\varepsilon $,
\begin{equation*}
Q_\varepsilon l_\varepsilon+Q_\varepsilon R_\varepsilon(\omega)=0,\ \ \ \ \text{in}\ \Omega \backslash\cup_{i=1}^NB_{Ls_{\varepsilon,i}}(z_{i}).
\end{equation*}
Using Proposition \ref{coercive esti}, we obtain
\begin{equation*}
||T_\varepsilon(\omega)||_{L^\infty}=||(Q_\varepsilon L_\varepsilon)^{-1}(Q_\varepsilon l_\varepsilon+Q_\varepsilon R_\varepsilon(\omega))||_{L^\infty}\leq C\varepsilon^{-\frac{2}{p}}||Q_\varepsilon l_\varepsilon+Q_\varepsilon R_\varepsilon(\omega)||_{L^p}.
\end{equation*}
Note that $ ||Q_\varepsilon l_\varepsilon+Q_\varepsilon R_\varepsilon(\omega)||_{L^p}\leq C(|| l_\varepsilon||_{L^p}+ ||R_\varepsilon(\omega)||_{L^p}). $
It follows from \eqref{203}, the definition of $ l_\varepsilon, R_\varepsilon(\omega) $ and Lemma \ref{lemA-5} that
\begin{equation*}
\begin{split}
||l_\varepsilon||_{L^p}=&||\left (V_{\varepsilon,Z}-q|\ln\varepsilon|\right )^p_+-\sum_{j=1}^N\left (V_{\varepsilon,z_j,\hat{q}_{\varepsilon,j}}-\hat{q}_{\varepsilon,j}|\ln\varepsilon|\right )^p_+||_{L^p}\leq C\varepsilon^{\frac{2}{p}+\gamma},
\end{split}
\end{equation*}
and
\begin{equation*}
\begin{split}
||R_\varepsilon(\omega)||_{L^p}&=||\left (V_{\varepsilon,Z}+\omega-q|\ln\varepsilon|\right )^p_+-\left (V_{\varepsilon,Z}-q|\ln\varepsilon|\right )^p_+-p\left (V_{\varepsilon,Z}-q|\ln\varepsilon|\right )^{p-1}_+\omega||_{L^p}\\
&\leq C\varepsilon^{\frac{2}{p}}||\omega||_{L^\infty}^2.
\end{split}
\end{equation*}
Hence we get
\begin{equation}\label{217}
\begin{split}
||T_\varepsilon(\omega)||_{L^\infty}\leq C\varepsilon^{-\frac{2}{p}}\left( \varepsilon^{\frac{2}{p}+\gamma}+\varepsilon^{\frac{2}{p}}||\omega||_{L^\infty}^2\right)
\leq \frac{1}{|\ln\varepsilon|^{1-\theta_0}}.
\end{split}
\end{equation}
So $ T_\varepsilon $ is a map from $ \mathcal{N}$ to $ \mathcal{N} $.

Then we prove that  $ T_\varepsilon $ is a contraction map. For any $ \omega_1,\omega_2\in \mathcal{N} $,
\begin{equation*}
T_\varepsilon(\omega_1)-T_\varepsilon(\omega_2)=(Q_\varepsilon L_\varepsilon)^{-1}Q_\varepsilon(R_\varepsilon(\omega_1)-R_\varepsilon(\omega_2)).
\end{equation*}
Note that $ R_\varepsilon(\omega_1)=R_\varepsilon(\omega_2)=0 $ in $ \Omega \backslash\cup_{i=1}^NB_{Ls_{\varepsilon,i}}(z_{i}). $ By Proposition \ref{coercive esti} and the definition of $ \mathcal{N} $, for $ \varepsilon $ sufficiently small
\begin{equation*}
\begin{split}
||T_\varepsilon(\omega_1)-T_\varepsilon(\omega_2)||_{L^\infty}\leq &C\varepsilon^{-\frac{2}{p}}||R_\varepsilon(\omega_1)-R_\varepsilon(\omega_2)||_{L^p}\\
\leq &C\varepsilon^{-\frac{2}{p}}\varepsilon^{\frac{2}{p}}\left(||\omega_1||_{L^\infty}+||\omega_2||_{L^\infty} \right) ||\omega_1-\omega_2||_{L^\infty}\\
\leq& \frac{1}{2}||\omega_1-\omega_2||_{L^\infty}.
\end{split}
\end{equation*}
So $ T_\varepsilon $ is a contraction map.

To conclude,  %$ T_\varepsilon $ is a contraction map from $ \mathcal{N} $ to $ \mathcal{N} $ and thus 
there is a unique $ \omega_{\varepsilon,Z}\in \mathcal{N} $ such that $ \omega_{\varepsilon,Z}=T_\varepsilon(\omega_{\varepsilon,Z}) $. Moreover, by \eqref{217}  we have $ ||\omega_{\varepsilon,Z}||_{L^\infty(\Omega)}=O\left( \varepsilon^\gamma\right).  $

\end{proof}

%\begin{remark}\label{rk1}
%Indeed, since $ K,q \in C^\infty$ and $ p>1 $,  we can also check that $ \omega_{\delta,Z} $ is a $ C^1 $ map about $ Z $, see \cite{CLW,CPY} for example.
%\end{remark}

The result of Proposition \ref{exist and uniq of w} implies that there exists a unique solution $ \omega_{\varepsilon,Z}\in E_{\varepsilon,Z} $ to \eqref{216}. % satisfying $ ||\omega_{\delta,Z}||_{L^\infty(\Omega)}=O\left(\frac{\varepsilon^\gamma}{|\ln\varepsilon|}\right).  $
This implies that for some $ b_{j,\tilde{h}}=b_{j,\tilde{h}}(Z) $
\begin{equation}\label{5-000}
L_\varepsilon \omega_{\varepsilon,Z}= l_\varepsilon+  R_\varepsilon(\omega_{\varepsilon,Z})+\sum_{j=1}^N\sum_{\tilde{h}=1}^2b_{j,\tilde{h}}\left( -\varepsilon^2\text{div}\left (K(z_j)\nabla   \frac{\partial V_{\varepsilon, z_j, \hat{q}_{\varepsilon,j}}}{\partial x_{\tilde{h}}}\right )\right),
\end{equation}
or equivalently
\begin{equation}\label{5-00}
\begin{split}
-\varepsilon^2\text{div}&\left (K(x)\nabla (V_{\varepsilon,Z}+\omega_{\varepsilon,Z})\right )-\left (V_{\varepsilon,Z}+\omega_{\varepsilon,Z}-q|\ln\varepsilon|\right )^p_+\\
=&\sum_{j=1}^N\sum_{\tilde{h}=1}^2b_{j,\tilde{h}}\left( -\varepsilon^2\text{div}\left (K(z_j)\nabla   \frac{\partial V_{\varepsilon, z_j, \hat{q}_{\varepsilon,j}}}{\partial x_{\tilde{h}}}\right )\right).
\end{split}
\end{equation}

In the following result, we prove  the differentiability of $ \omega_{\varepsilon,Z}  $ with respect to the variable $ Z $, which will be used to show the $ C^1-$regularity of the finite-dimensional functional $ K_\varepsilon(Z) $. Similar to results in \cite{CPY,WYZ}, we give the $ L^\infty $ estimate of  $ \frac{\partial \omega_{\varepsilon,Z}}{\partial z_{i,\hbar}} $ and show that $ \omega_{\varepsilon,Z} $ is a $ C^1 $ map of $ Z $ in $ H^1_0 (\Omega) $.

\begin{proposition}\label{differ of w}
Let $ \omega_{\varepsilon,Z} $ be the function obtained in Proposition \ref{exist and uniq of w}. Then $ \omega_{\varepsilon,Z} $ is a $ C^1 $ map of $ Z $ in the
norm of $ H^1_0 (\Omega) $, and for any $ \gamma\in (0,1) $, $ l=1,\cdots,N $, $ \bar{h}=1,2 $
\begin{equation*}
\bigg|\bigg|\frac{\partial \omega_{\varepsilon,Z}}{\partial z_{l,\bar{h}}}\bigg|\bigg|_{L^\infty(\Omega)}=O\left (\frac{1}{\varepsilon^{1-\gamma}}\right ).
\end{equation*}

\end{proposition}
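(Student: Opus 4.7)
The plan is to apply the implicit function theorem to the nonlinear fixed-point equation \eqref{5-00}, together with explicit a priori bounds on formal $Z$-derivatives. Formally, differentiating \eqref{5-00} with respect to a coordinate $z_{l,\bar{h}}$ of $Z$ produces a linear equation
\[
L_\varepsilon \Psi \;=\; \Xi_{\varepsilon,Z} \;+\; \sum_{j,\tilde h}\frac{\partial b_{j,\tilde h}}{\partial z_{l,\bar h}}\!\left(-\varepsilon^2\text{div}\bigl(K(z_j)\nabla\tfrac{\partial V_{\varepsilon, z_j, \hat q_{\varepsilon,j}}}{\partial x_{\tilde h}}\bigr)\right) \;+\; (\text{lower order terms}),
\]
where $\Psi:=\partial\omega_{\varepsilon,Z}/\partial z_{l,\bar h}$ and $\Xi_{\varepsilon,Z}$ collects the explicit derivatives in $z_{l,\bar h}$ of $V_{\varepsilon,Z}$, $l_\varepsilon$, and $R_\varepsilon(\omega_{\varepsilon,Z})$ (with $\omega_{\varepsilon,Z}$ held fixed in $R_\varepsilon$). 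The goal is to solve this equation in $E_{\varepsilon,Z}$, to identify the solution with the genuine derivative by a difference-quotient argument, and to bound $\|\Psi\|_{L^\infty}$ via Proposition \ref{coercive esti}.

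The first step is to verify that the parameters $\hat q_{\varepsilon,j}$ and $s_{\varepsilon,j}$ are $C^1$ functions of $Z$: applying the implicit function theorem to \eqref{q_i choice}, using the $C^1$ regularity of $\bar S_K$ and $G_K$ away from the diagonal (Lemma \ref{Green expansion2}) and the fact that $\ln s_{\varepsilon,j}\sim\ln\varepsilon$, one obtains $\partial_{z_{l,\bar h}}\hat q_{\varepsilon,j}=O(|\ln\varepsilon|^{-1})$. Differentiating \eqref{eq2-2} and \eqref{eq3} then yields
\[
\Bigl\|\tfrac{\partial V_{\varepsilon, z_j, \hat q_{\varepsilon,j}}}{\partial z_{l,\bar h}}\Bigr\|_{L^\infty}=O(\varepsilon^{-1}), \qquad \Bigl\|\tfrac{\partial V_{\varepsilon, z_j, \hat q_{\varepsilon,j}}}{\partial z_{l,\bar h}}\Bigr\|_{L^p}=O(\varepsilon^{2/p-1}),
\]
together with comparable $L^p$-bounds for $\partial_{z_{l,\bar h}}H_{\varepsilon, z_j, \hat q_{\varepsilon,j}}$ via the classical $L^p$-theory. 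Combining these with the support restriction of Lemma \ref{lemA-5} and the smallness $\|\omega_{\varepsilon,Z}\|_{L^\infty}=O(\varepsilon^\gamma)$ from Proposition \ref{exist and uniq of w} gives $\|\Xi_{\varepsilon,Z}\|_{L^p}=O(\varepsilon^{2/p-1+\gamma'})$ for any $\gamma'<\gamma$. Writing $\Psi=\Psi^\perp+\Psi^\parallel$ with $\Psi^\perp\in E_{\varepsilon,Z}$ and $\Psi^\parallel\in\mathrm{span}\{\eta_j\partial_{x_{\tilde h}}V_{\varepsilon,Z,j}\}$, applying $Q_\varepsilon$ cancels the multiplier terms and Proposition \ref{coercive esti} yields $\|\Psi^\perp\|_{L^\infty}\le C\varepsilon^{-2/p}\|Q_\varepsilon\Xi_{\varepsilon,Z}\|_{L^p}=O(\varepsilon^{\gamma-1})$. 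The remaining coefficients $(\Psi^\parallel, \partial_{z_{l,\bar h}}b_{j,\tilde h})$ are then read off from a $2N\times 2N$ linear system arising by differentiating the orthogonality relations defining $E_{\varepsilon,Z}$; by \eqref{coef of C} this system is a small perturbation of a block-diagonal positive-definite matrix and hence uniformly invertible, leading to the same $O(\varepsilon^{\gamma-1})$ bound.

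Finally, a difference-quotient argument based on the uniform contraction property of $T_\varepsilon$ from Proposition \ref{exist and uniq of w} identifies $\Psi$ with the genuine partial derivative of $\omega_{\varepsilon,Z}$ and shows continuity of this derivative in $Z$, giving the $C^1$ regularity in $H^1_0(\Omega)$. The main obstacle to executing this program is the simultaneous $Z$-dependence of the spaces $E_{\varepsilon,Z},F_{\varepsilon,Z}$, of the cutoffs $\eta_j$, and of the Lagrange multipliers: when the orthogonality conditions of \eqref{205} are differentiated they bring in additional contributions of size $\varepsilon^{-1}$ in $L^\infty$ that must be absorbed without destroying the final $O(\varepsilon^{\gamma-1})$ bound. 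The cleanest way to handle this is to view $(\omega,b)$ as a single unknown in the $Z$-independent space $H^1_0(\Omega)\times\mathbb{R}^{2N}$, rewrite \eqref{5-000} jointly with the orthogonality relations as $\mathcal F(Z,\omega,b)=0$, and verify uniform invertibility of $D_{(\omega,b)}\mathcal F$ at the solution by combining Proposition \ref{coercive esti} with \eqref{coef of C}.
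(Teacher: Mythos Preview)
Your approach is essentially that of the paper: differentiate \eqref{5-00}, decompose the formal derivative into an $E_{\varepsilon,Z}$-component plus a finite-dimensional correction (the paper's decomposition \eqref{5-19} with $\omega_\varepsilon^*+\sum C_{j,\tilde h}\zeta_j\partial_{x_{\tilde h}}V_{\varepsilon,z_j,\hat q_{\varepsilon,j}}$ plays the role of your $\Psi^\perp+\Psi^\parallel$), invoke Proposition~\ref{coercive esti} on the first part, and control the finite-dimensional coefficients via the nondegenerate matrix \eqref{coef of C}. The paper obtains $C^1$-regularity through a compactness-plus-uniqueness argument rather than your difference-quotient/implicit-function formulation, but the two routes are interchangeable.

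One inaccuracy to flag: the bound $\partial_{z_{l,\bar h}}\hat q_{\varepsilon,j}=O(|\ln\varepsilon|^{-1})$ is not correct in this clustered setting. Differentiating \eqref{q_i choice} produces the term $\partial_{x_{\bar h}}q(z_l)$, which is $O(1)$ in general, and the Green's function contribution is of order $|\ln\varepsilon|^{-1}\cdot|\nabla_{z_l}G_K(z_l,z_j)|$ with $|\nabla G_K|\lesssim|z_l-z_j|^{-1}\sim|\ln\varepsilon|^{1/2}$; the paper records the cruder bound $O(|\ln\varepsilon|)$ in \eqref{5-01}. This slip is harmless for the rest of your argument, since in the estimate for $\partial_{z_{l,\bar h}}V_{\varepsilon,z_j,\hat q_{\varepsilon,j}}$ this term is completely dominated by the $O(\varepsilon^{-1})$ contribution coming from the spatial translation of the profile.
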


\begin{proof}
Note that from Lemma \ref{Green expansion2}, the regular part of Green's function $ \bar{S}_K(x,x)\in C^1(\Omega) $. Thus taking $ \frac{\partial}{\partial z_{l,\bar{h}}} $ in  \eqref{q_i choice}, we get
\begin{equation}\label{5-01}
\begin{split}
\frac{\partial \hat{q}_{\varepsilon,i}}{\partial z_{l,\bar{h}}}=&\frac{\partial q}{\partial x_{\bar{h}}}(z_i)\delta_{i,l}+\sum_{j\neq l}\frac{2\pi\hat{q}_{\varepsilon,j}\sqrt{\det K(z_j)}}{\ln s_{\varepsilon,j}}\frac{\partial G_K(z_l,z_j)}{\partial z_{l,\bar{h}}}+o(1)\sum_{j=1}^N\bigg|\frac{\partial \hat{q}_{\varepsilon,i}}{\partial z_{l,\bar{h}}}\bigg|+o(1)\\
=&O\left (|\ln\varepsilon|\right ),
\end{split}
\end{equation}
where we have used $ |\nabla_{z_l} G(z_l, z_j)|\leq\frac{ C}{|z_l-z_j|}\leq C|\ln\varepsilon| $ for $ Z\in \Lambda_{\varepsilon, N}. $ By the definition of $ V_{\varepsilon, z_j, \hat{q}_{\varepsilon,j}} $ and \eqref{5-01}, we get
\begin{equation}\label{5-02}
\bigg|\bigg|\frac{\partial V_{\varepsilon, z_j, \hat{q}_{\varepsilon,j}}}{\partial z_{l,\bar{h}}}\bigg|\bigg|_{L^\infty(\Omega)}=O\left (\frac{1}{\varepsilon}\right )+O\left (|\ln\varepsilon|^2\right )=O\left (\frac{1}{\varepsilon}\right ).
\end{equation}
Using the definition of $ H_{\varepsilon, z_j, \hat{q}_{\varepsilon,j}} $ in \eqref{eq3} and the $ L^p $-theory of elliptic equations, one has
\begin{equation}\label{5-03}
\bigg|\bigg|\frac{\partial H_{\varepsilon, z_j, \hat{q}_{\varepsilon,j}}}{\partial z_{l,\bar{h}}}\bigg|\bigg|_{W^{1,p}(\Omega)}\leq
\begin{cases}
\frac{C}{\varepsilon^{1-\frac{2}{p}}},\ \ &p>2,\\
C|\ln\varepsilon|,\ \ &p=2,\\
C,\ \ &1\leq p<2.
\end{cases}
\end{equation}
Combining \eqref{5-02} and \eqref{5-03}, we have
\begin{equation}\label{5-04}
\bigg|\bigg|\frac{\partial V_{\varepsilon, Z, j}}{\partial z_{l,\bar{h}}}\bigg|\bigg|_{L^\infty(\Omega)}=O\left (\frac{1}{\varepsilon}\right ).
\end{equation}

Now we calculate the $ L^\infty $ norm of $ \frac{\partial \omega_{\varepsilon,Z}}{\partial z_{l,\bar{h}}}. $ We first estimate $ b_{j,\tilde{h}} $ in \eqref{5-00}. Recall the definition of $ \eta_i $ in \eqref{cut off fun}. Using \eqref{5-00}%and the fact that $ supp(\eta_i)\subset D_{\frac{\delta}{2\sqrt{|\ln\varepsilon|}}}\left (C_{\frac{r_*}{\sqrt{|\ln\varepsilon|}}}\right ) $
, $ b_{j,h} $ is determined by
\begin{equation}\label{5-05}
\begin{split}
\sum_{j=1}^N&\sum_{\tilde{h}=1}^2b_{j,\tilde{h}}\int_\Omega\left( -\varepsilon^2\text{div}\left (K(z_j)\nabla   \frac{\partial V_{\varepsilon, z_j, \hat{q}_{\varepsilon,j}}}{\partial x_{\tilde{h}}}\right )\right)\left( \eta_i\frac{\partial V_{\varepsilon, Z,i}}{\partial x_\hbar}\right)\\
=&\int_\Omega\varepsilon^2\left (K(x)\nabla(V_{\varepsilon,Z}+\omega_{\varepsilon,Z})|\nabla\left( \eta_i\frac{\partial V_{\varepsilon, Z,i}}{\partial x_\hbar}\right)\right )-\int_\Omega\left (V_{\varepsilon,Z}+\omega_{\varepsilon,Z}-q|\ln\varepsilon|\right )^p_+\left( \eta_i\frac{\partial V_{\varepsilon, Z,i}}{\partial x_\hbar}\right)\\
=&\int_\Omega\varepsilon^2\left (K(x)\nabla V_{\varepsilon,Z}|\nabla\left( \eta_i\frac{\partial V_{\varepsilon, Z,i}}{\partial x_\hbar}\right)\right )-\int_\Omega\left (V_{\varepsilon,Z}+\omega_{\varepsilon,Z}-q|\ln\varepsilon|\right )^p_+\left( \eta_i\frac{\partial V_{\varepsilon, Z,i}}{\partial x_\hbar}\right)\\
=&\int_\Omega\left( \sum_{j=1}^N\left (V_{\varepsilon, z_j, \hat{q}_{\varepsilon,j}}-\hat{q}_{\varepsilon,j}|\ln\varepsilon|\right )^{p}_+-\left (V_{\varepsilon,Z}+\omega_{\varepsilon,Z}-q|\ln\varepsilon|\right )^{p}_+\right) \left( \eta_i\frac{\partial V_{\varepsilon, Z,i}}{\partial x_\hbar}\right),
\end{split}
\end{equation}
where we have used $ \omega_{\varepsilon,Z}\in E_{\varepsilon,Z} $. By Lemma \ref{lemA-5},
\begin{equation*}
\begin{split}
\int_\Omega&\left( \sum_{j=1}^N\left (V_{\varepsilon, z_j, \hat{q}_{\varepsilon,j}}-\hat{q}_{\varepsilon,j}|\ln\varepsilon|\right )^{p}_+-\left (V_{\varepsilon,Z}+\omega_{\varepsilon,Z}-q|\ln\varepsilon|\right )^{p}_+\right) \left( \eta_i\frac{\partial V_{\varepsilon, Z,i}}{\partial x_\hbar}\right)\\
&=O\left (\left (\varepsilon^\gamma+||\omega_{\varepsilon,Z}||_{L^\infty}\right )\int_{\cup_{j=1}^NB_{L\varepsilon}(z_j)}\bigg|\frac{\partial V_{\varepsilon, Z,i}}{\partial x_\hbar} \bigg|\right )=O\left (\varepsilon^{1+\gamma}\right ).
\end{split}
\end{equation*}
Thus combining this with \eqref{coef of C} and \eqref{5-05}, we get
\begin{equation}\label{5-06}
b_{j,\tilde{h}}=O\left ( \varepsilon^{1+\gamma} \right ).
\end{equation}

We now estimate  $  \frac{\partial b_{j,\tilde{h}}}{\partial z_{l,\bar{h}}} $. On the one hand, taking $ \frac{\partial}{\partial z_{l,\bar{h}}} $ in both sides of \eqref{5-05}, we obtain
\begin{equation}\label{5-07}
\begin{split}
\sum_{j=1}^N&\sum_{\tilde{h}=1}^2\frac{\partial b_{j,\tilde{h}}}{\partial z_{l,\bar{h}}}\int_\Omega\left( -\varepsilon^2\text{div}\left (K(z_j)\nabla   \frac{\partial V_{\varepsilon, z_j, \hat{q}_{\varepsilon,j}}}{\partial x_{\tilde{h}}}\right )\right)\left( \eta_i\frac{\partial V_{\varepsilon, Z,i}}{\partial x_\hbar}\right)\\
=&-\sum_{j=1}^N\sum_{\tilde{h}=1}^2b_{j,\tilde{h}} \frac{\partial }{\partial z_{l,\bar{h}}}\left\{  \int_\Omega\left( -\varepsilon^2\text{div}\left (K(z_j)\nabla   \frac{\partial V_{\varepsilon, z_j, \hat{q}_{\varepsilon,j}}}{\partial x_{\tilde{h}}}\right )\right)\left( \eta_i\frac{\partial V_{\varepsilon, Z,i}}{\partial x_\hbar}\right)\right\} \\
&+\frac{\partial }{\partial z_{l,\bar{h}}}\left\{\int_\Omega\left( \sum_{j=1}^N\left (V_{\varepsilon, z_j, \hat{q}_{\varepsilon,j}}-\hat{q}_{\varepsilon,j}|\ln\varepsilon|\right )^{p}_+-\left (V_{\varepsilon,Z}+\omega_{\varepsilon,Z}-q|\ln\varepsilon|\right )^{p}_+\right) \left( \eta_i\frac{\partial V_{\varepsilon, Z,i}}{\partial x_\hbar}\right)\right\}.
\end{split}
\end{equation}
Note that from \eqref{5-02}, \eqref{5-03} and \eqref{5-04},
\begin{equation*}
 \frac{\partial }{\partial z_{l,\bar{h}}}\left\{  \int_\Omega\left( -\varepsilon^2\text{div}\left (K(z_j)\nabla   \frac{\partial V_{\varepsilon, z_j, \hat{q}_{\varepsilon,j}}}{\partial x_{\tilde{h}}}\right )\right)\left( \eta_i\frac{\partial V_{\varepsilon, Z,i}}{\partial x_\hbar}\right)\right\}=O\left( \frac{1}{\varepsilon}\right)
\end{equation*}
and
\begin{equation*}
\begin{split}
 \int_\Omega&\left( \sum_{j=1}^N\left (V_{\varepsilon, z_j, \hat{q}_{\varepsilon,j}}-\hat{q}_{\varepsilon,j}|\ln\varepsilon|\right )^{p}_+-\left (V_{\varepsilon,Z}+\omega_{\varepsilon,Z}-q|\ln\varepsilon|\right )^{p}_+\right)  \frac{\partial }{\partial z_{l,\bar{h}}}\left( \eta_i\frac{\partial V_{\varepsilon, Z,i}}{\partial x_\hbar}\right) \\
 &=O\left (\left ( \varepsilon^\gamma+||\omega_{\varepsilon,Z}||_{L^\infty}\right )\int_{\cup_{j=1}^NB_{L\varepsilon}(z_j)}\bigg|\frac{\partial^2 V_{\varepsilon, Z,i}}{\partial z_{l,\bar{h}}\partial x_\hbar} \bigg|+|\ln\varepsilon|^2\bigg|\frac{\partial V_{\varepsilon, Z,i}}{\partial x_\hbar} \bigg|\right )=O\left (\varepsilon^{\gamma}\right ).
\end{split}
\end{equation*}
Taking these into \eqref{5-07}, we obtain
\begin{equation}\label{5-08}
\begin{split}
 \frac{\partial b_{j,\tilde{h}}}{\partial z_{l,\bar{h}}}=&O\left (\frac{ |b_{j,\tilde{h}}|}{\varepsilon}+\varepsilon^{\gamma}\right )\\
 &+O\left(  \int_\Omega\frac{\partial }{\partial z_{l,\bar{h}}}\left( \sum_{j=1}^N\left (V_{\varepsilon, z_j, \hat{q}_{\varepsilon,j}}-\hat{q}_{\varepsilon,j}|\ln\varepsilon|\right )^{p}_+-\left (V_{\varepsilon,Z}+\omega_{\varepsilon,Z}-q|\ln\varepsilon|\right )^{p}_+\right) \left( \eta_i\frac{\partial V_{\varepsilon, Z,i}}{\partial x_\hbar}\right) \right) .
\end{split}
\end{equation}
Using \eqref{203}, \eqref{5-01}, \eqref{5-02} and \eqref{5-03}, we get
\begin{equation*}\label{5-09}
\begin{split}
\frac{\partial }{\partial z_{l,\bar{h}}}&\left( \sum_{j=1}^N\left (V_{\varepsilon, z_j, \hat{q}_{\varepsilon,j}}-\hat{q}_{\varepsilon,j}|\ln\varepsilon|\right )^{p}_+-\left (V_{\varepsilon,Z}+\omega_{\varepsilon,Z}-q|\ln\varepsilon|\right )^{p}_+\right)\\
=& p\sum_{j=1}^N\left (V_{\varepsilon, z_j, \hat{q}_{\varepsilon,j}}-\hat{q}_{\varepsilon,j}|\ln\varepsilon|\right )^{p-1}_+\frac{\partial }{\partial z_{l,\bar{h}}}\left (V_{\varepsilon, z_j, \hat{q}_{\varepsilon,j}}-\hat{q}_{\varepsilon,j}|\ln\varepsilon|\right )\\
&-p\left (V_{\varepsilon,Z}+\omega_{\varepsilon,Z}-q|\ln\varepsilon|\right )^{p-1}_+\frac{\partial }{\partial z_{l,\bar{h}}}\left(V_{\varepsilon,Z}+\omega_{\varepsilon,Z} \right)  \\
=&-p\left (V_{\varepsilon,Z}+\omega_{\varepsilon,Z}-q|\ln\varepsilon|\right )^{p-1}_+\frac{\partial \omega_{\varepsilon,Z} }{\partial z_{l,\bar{h}}}\\
&+p\sum_{j=1}^N\left( \left (V_{\varepsilon, z_j, \hat{q}_{\varepsilon,j}}-\hat{q}_{\varepsilon,j}|\ln\varepsilon|\right )^{p-1}_+-\left (V_{\varepsilon,Z}+\omega_{\varepsilon,Z}-q|\ln\varepsilon|\right )^{p-1}_+\right)\frac{\partial V_{\varepsilon, z_j, \hat{q}_{\varepsilon,j}}}{\partial z_{l,\bar{h}}} \\
&-p\sum_{j=1}^N\left (V_{\varepsilon,Z}+\omega_{\varepsilon,Z}-q|\ln\varepsilon|\right )^{p-1}_+\frac{\partial H_{\varepsilon, z_j, \hat{q}_{\varepsilon,j}}}{\partial z_{l,\bar{h}}}-p|\ln\varepsilon|\sum_{j=1}^N\left (V_{\varepsilon, z_j, \hat{q}_{\varepsilon,j}}-\hat{q}_{\varepsilon,j}|\ln\varepsilon|\right )^{p-1}_+\frac{\partial \hat{q}_{\varepsilon,j} }{\partial z_{l,\bar{h}}} \\
=&-p\left (V_{\varepsilon,Z}+\omega_{\varepsilon,Z}-q|\ln\varepsilon|\right )^{p-1}_+\frac{\partial \omega_{\varepsilon,Z} }{\partial z_{l,\bar{h}}}+O\left( \frac{1}{\varepsilon^{1-\gamma}}\right),
\end{split}
\end{equation*}
from which we deduce,
\begin{equation}\label{5-10}
\begin{split}
\int_\Omega&\frac{\partial }{\partial z_{l,\bar{h}}}\left( \sum_{j=1}^N\left (V_{\varepsilon, z_j, \hat{q}_{\varepsilon,j}}-\hat{q}_{\varepsilon,j}|\ln\varepsilon|\right )^{p}_+-\left (V_{\varepsilon,Z}+\omega_{\varepsilon,Z}-q|\ln\varepsilon|\right )^{p}_+\right) \left( \eta_i\frac{\partial V_{\varepsilon, Z,i}}{\partial x_\hbar}\right)\\
&=-p\int_\Omega \left (V_{\varepsilon,Z}+\omega_{\varepsilon,Z}-q|\ln\varepsilon|\right )^{p-1}_+\frac{\partial \omega_{\varepsilon,Z} }{\partial z_{l,\bar{h}}}\left( \eta_i\frac{\partial V_{\varepsilon, Z,i}}{\partial x_\hbar}\right)+O\left( \varepsilon^\gamma\right).
\end{split}
\end{equation}
Inserting \eqref{5-10} into \eqref{5-08}, we obtain
\begin{equation}\label{5-11}
\begin{split}
\frac{\partial b_{j,\tilde{h}}}{\partial z_{l,\bar{h}}}=&O\left( \int_\Omega \left (V_{\varepsilon,Z}+\omega_{\varepsilon,Z}-q|\ln\varepsilon|\right )^{p-1}_+\frac{\partial \omega_{\varepsilon,Z} }{\partial z_{l,\bar{h}}}\left( \eta_i\frac{\partial V_{\varepsilon, Z,i}}{\partial x_\hbar}\right)\right) +O\left( \varepsilon^\gamma\right).
\end{split}
\end{equation}

On the other hand, we note that   $ \int_{\Omega}\varepsilon^2\left (K(x) \nabla\omega_{\varepsilon,Z}| \nabla \left( \eta_i\frac{\partial V_{\varepsilon, Z,i}}{\partial x_\hbar}\right)\right )=0$. Taking $ \frac{\partial}{\partial z_{l,\bar{h}}} $ into both sides of this equality, we have
\begin{equation}\label{5-12}
\int_{\Omega}\varepsilon^2 \left (K(x) \nabla \frac{\partial \omega_{\varepsilon,Z} }{\partial z_{l,\bar{h}}}| \nabla \left( \eta_i\frac{\partial V_{\varepsilon, Z,i}}{\partial x_\hbar}\right)\right )=-\int_{\Omega} \varepsilon^2 \left (K(x) \nabla\omega_{\varepsilon,Z}| \nabla \frac{\partial  }{\partial z_{l,\bar{h}}}\left( \eta_i\frac{\partial V_{\varepsilon, Z,i}}{\partial x_\hbar}\right)\right ) .
\end{equation}
One computes directly  that
\begin{equation}\label{5-13}
\begin{split}
-\int_{\Omega} \varepsilon^2 \left (K(x) \nabla\omega_{\varepsilon,Z}| \nabla \frac{\partial  }{\partial z_{l,\bar{h}}}\left( \eta_i\frac{\partial V_{\varepsilon, Z,i}}{\partial x_\hbar}\right)\right )=\int_{\Omega} \varepsilon^2 \text{div} \left (K(x) \nabla\omega_{\varepsilon,Z}  \right )\frac{\partial  }{\partial z_{l,\bar{h}}}\left( \eta_i\frac{\partial V_{\varepsilon, Z,i}}{\partial x_\hbar}\right).
\end{split}
\end{equation}
Since $ ||l_\varepsilon+R_\varepsilon(\omega_{\varepsilon,Z})||_{L^p}\leq C\varepsilon^{\frac{2}{p}+\gamma} $, by \eqref{5-000}  and \eqref{5-06}, we get
\begin{equation*}
L_\varepsilon \omega_{\varepsilon,Z}= l_\varepsilon+  R_\varepsilon(\omega_{\varepsilon,Z})+\sum_{j=1}^N\sum_{\tilde{h}=1}^2b_{j,\tilde{h}}\left( -\varepsilon^2\text{div}\left (K(z_j)\nabla   \frac{\partial V_{\varepsilon, z_j, \hat{q}_{\varepsilon,j}}}{\partial x_{\tilde{h}}}\right )\right)=O\left( \varepsilon^{\frac{2}{p}+\gamma}\right) \  \ \ \text{in}\ L^p(\Omega),
\end{equation*}
which implies that
\begin{equation*}
\begin{split}
-\varepsilon^2 \text{div}\left (K(x) \nabla\omega_{\varepsilon,Z}  \right )=&L_\varepsilon \omega_{\varepsilon,Z}
+p\left( V_{\varepsilon,Z}-q|\ln\varepsilon|\right)^{p-1}_+\omega_{\varepsilon,Z}
= O\left( \varepsilon^{\frac{2}{p}+\gamma}\right) \  \ \ \text{in}\ L^p(\Omega).
\end{split}
\end{equation*}
Taking this into \eqref{5-13}, using the definition of $ V_{\varepsilon, z_j, \hat{q}_{\varepsilon,j}} $ and \eqref{5-03}, one has
\begin{equation}\label{5-14}
\begin{split}
-\int_{\Omega} \varepsilon^2 \left (K(x) \nabla\omega_{\varepsilon,Z}| \nabla \frac{\partial  }{\partial z_{l,\bar{h}}}\left( \eta_i\frac{\partial V_{\varepsilon, Z,i}}{\partial x_\hbar}\right)\right )=O\left( \varepsilon^\gamma\right).
\end{split}
\end{equation}
We also note that
\begin{equation}\label{5-15}
\begin{split}
\int_{\Omega}\varepsilon^2& \left (K(x) \nabla \frac{\partial \omega_{\varepsilon,Z} }{\partial z_{l,\bar{h}}}| \nabla \left( \eta_i\frac{\partial V_{\varepsilon, Z,i}}{\partial x_\hbar}\right)\right )=\int_{\Omega}-\varepsilon^2  \text{div}\left (K(x) \nabla \left( \eta_i\frac{\partial V_{\varepsilon, Z,i}}{\partial x_\hbar}\right)\right )\frac{\partial \omega_{\varepsilon,Z} }{\partial z_{l,\bar{h}}}\\
=&\int_{\Omega}\bigg\{\eta_ip\left (V_{\varepsilon, z_i, \hat{q}_{\varepsilon,i}}-\hat{q}_{\varepsilon,i}|\ln\varepsilon|\right )^{p-1}_+\frac{\partial V_{\varepsilon, z_i, \hat{q}_{\varepsilon,i}}}{\partial x_\hbar}+\eta_i\varepsilon^2\text{div}\left (\frac{\partial K(x)}{\partial x_\hbar}\nabla V_{\varepsilon, Z,i}  \right ) \\
& -2 \varepsilon^2\left (K(x)\nabla\eta_i|\nabla   \frac{\partial V_{\varepsilon, Z,i}}{\partial x_\hbar}  \right ) -\varepsilon^2\text{div}\left (K(x)\nabla   \eta_i  \right )\frac{\partial V_{\varepsilon, Z,i}}{\partial x_\hbar} \bigg\}\frac{\partial \omega_{\varepsilon,Z} }{\partial z_{l,\bar{h}}}\\
=& p\int_{\Omega}\eta_i\frac{\partial \omega_{\varepsilon,Z} }{\partial z_{l,\bar{h}}} \left (V_{\varepsilon, z_i, \hat{q}_{\varepsilon,i}}-\hat{q}_{\varepsilon,i}|\ln\varepsilon|\right )^{p-1}_+\frac{\partial V_{\varepsilon, z_i, \hat{q}_{\varepsilon,i}}}{\partial x_\hbar}+O\left( \varepsilon^2|\ln\varepsilon|^{4}\bigg|\bigg|\frac{\partial \omega_{\varepsilon,Z} }{\partial z_{l,\bar{h}}}\bigg|\bigg|_{L^\infty}\right).
\end{split}
\end{equation}
Combining \eqref{5-12} with \eqref{5-14} and \eqref{5-15}, we obtain
\begin{equation}\label{5-16}
\begin{split}
\int_{\Omega}\eta_i\frac{\partial \omega_{\varepsilon,Z} }{\partial z_{l,\bar{h}}} \left (V_{\varepsilon, z_i, \hat{q}_{\varepsilon,i}}-\hat{q}_{\varepsilon,i}|\ln\varepsilon|\right )^{p-1}_+\frac{\partial V_{\varepsilon, z_i, \hat{q}_{\varepsilon,i}}}{\partial x_\hbar}=O\left(  \varepsilon^\gamma +\varepsilon^2|\ln\varepsilon|^{4}\bigg|\bigg|\frac{\partial \omega_{\varepsilon,Z} }{\partial z_{l,\bar{h}}}\bigg|\bigg|_{L^\infty}\right).
\end{split}
\end{equation}
%\begin{equation*}
%\begin{split}
%-\int_{\Omega}&\omega_{\delta,Z}\frac{\partial  }{\partial z_{l,\bar{h}}}\left( -\delta^2\nabla\cdot\left (K(x)  \nabla \left( \eta_i\frac{\partial V_{\delta, Z,i}}{\partial x_\hbar}\right)\right ) \right)\\
%=&-\int_{\Omega} \omega_{\delta,Z}\frac{\partial  }{\partial z_{l,\bar{h}}}\bigg\{ \eta_ip\left (V_{\delta, z_j, \hat{q}_{\delta,j}}-\hat{q}_{\delta,j}\right )^{p-1}_+\frac{\partial V_{\delta, z_j, \hat{q}_{\delta,j}}}{\partial x_h}+\eta_i\delta^2\nabla\cdot\left (\frac{\partial K(x)}{\partial x_\hbar}  \nabla   V_{\delta, Z,i} \right )\\
%& -\delta^2\left( K(x)\nabla\eta_i|\nabla \frac{\partial V_{\delta, Z,i}}{\partial x_\hbar}\right) \bigg\}
%\end{split}
%\end{equation*}

Taking \eqref{5-16} into \eqref{5-11} and using \eqref{203} and Proposition \ref{exist and uniq of w}, we conclude that
\begin{equation}\label{5-17}
\begin{split}
\frac{\partial b_{j,\tilde{h}}}{\partial z_{l,\bar{h}}}=& O\left( \varepsilon^\gamma+ \varepsilon^{1+\gamma}\bigg|\bigg|\frac{\partial \omega_{\varepsilon,Z} }{\partial z_{l,\bar{h}}}\bigg|\bigg|_{L^\infty}\right).
\end{split}
\end{equation}

Now we calculate the $ L^\infty $ norm of $ \frac{\partial \omega_{\varepsilon,Z} }{\partial z_{l,\bar{h}}} $. Taking $ \frac{\partial}{\partial z_{l,\bar{h}}} $ in both sides of \eqref{5-00}, we get
%\begin{equation}
%\begin{split}
%L_\delta \omega_{\delta,Z}= l_\delta+  R_\delta(\omega_{\delta,Z})+\sum_{j=1}^m\sum_{h=1}^2b_{j,h}\left( -\delta^2\text{div}\left (K(z_j)\nabla   \frac{\partial V_{\delta, z_j, \hat{q}_{\delta,j}}}{\partial x_h}\right )\right),
%\end{split}
%\end{equation}
\begin{equation}\label{5-18}
\begin{split}
-\varepsilon^2\text{div}&\left( K(x)\nabla \frac{\partial\omega_{\varepsilon,Z}}{\partial z_{l,\bar{h}}}\right)-p\left( V_{\varepsilon,Z}+\omega_{\varepsilon,Z}-q|\ln\varepsilon|\right)^{p-1}_+\frac{\partial\omega_{\varepsilon,Z}}{\partial z_{l,\bar{h}}}\\
=&\varepsilon^2\text{div}\left( K(x)\nabla \frac{\partial V_{\varepsilon,Z}}{\partial z_{l,\bar{h}}}\right)+p\left( V_{\varepsilon,Z}+\omega_{\varepsilon,Z}-q|\ln\varepsilon|\right)^{p-1}_+\frac{\partial V_{\varepsilon,Z}}{\partial z_{l,\bar{h}}}\\
% L_\delta \frac{\partial\omega_{\delta,Z}}{\partial z_{l,\bar{h}}}
%=& p(p-1)\left( V_{\delta,Z}-q\right)^{p-2}_+\frac{\partial V_{\delta,Z}}{\partial z_{l,\bar{h}}}\omega_{\delta,Z}+\frac{\partial l_\delta}{\partial z_{l,\bar{h}}}+  \frac{\partial R_\delta(\omega_{\delta,Z})}{\partial z_{l,\bar{h}}}\\
&+\sum_{j=1}^N\sum_{\tilde{h}=1}^2\frac{\partial b_{j,\tilde{h}}}{\partial z_{l,\bar{h}}}p\left(   V_{\varepsilon, z_j, \hat{q}_{\varepsilon,j}}-\hat{q}_{\varepsilon,j}|\ln\varepsilon| \right )^{p-1}_+\frac{\partial V_{\varepsilon, z_j, \hat{q}_{\varepsilon,j}}}{\partial x_{\tilde{h}}}\\
&+\sum_{j=1}^N\sum_{\tilde{h}=1}^2b_{j,\tilde{h}}\frac{\partial }{\partial z_{l,\bar{h}}}\left( p\left(   V_{\varepsilon, z_j, \hat{q}_{\varepsilon,j}}-\hat{q}_{\varepsilon,j}|\ln\varepsilon| \right )^{p-1}_+\frac{\partial V_{\varepsilon, z_j, \hat{q}_{\varepsilon,j}}}{\partial x_{\tilde{h}}}\right) .
\end{split}
\end{equation}
Note that the function $ \frac{\partial\omega_{\varepsilon,Z}}{\partial z_{l,\bar{h}}} $ may not be in $ E_{\varepsilon,Z} $. We make the following decomposition:
\begin{equation}\label{5-19}
\frac{\partial\omega_{\varepsilon,Z}}{\partial z_{l,\bar{h}}}=\omega_{\varepsilon}^*+\sum_{j=1}^N\sum_{\tilde{h}=1}^2C_{j,\tilde{h}}\zeta_j\frac{\partial V_{\varepsilon, z_j, \hat{q}_{\varepsilon,j}}}{\partial x_{\tilde{h}}},
\end{equation}
where $ \omega_{\varepsilon}^*\in E_{\varepsilon,Z} $,  $\zeta_j(x):=\eta\left (\frac{|T_{z_j}\left( x-z_j\right) |}{s_{\varepsilon,j}}\right )$ and $ C_{j,\tilde{h}} $ is determined by
\begin{equation*}
\begin{split}
\sum_{j=1}^N\sum_{\tilde{h}=1}^2&C_{j,\tilde{h}}\int_{\Omega}\zeta_j\frac{\partial V_{\varepsilon, z_j, \hat{q}_{\varepsilon,j}}}{\partial x_{\tilde{h}}}\cdot\left( -\varepsilon^2\text{div}\left( K(x)\nabla\left(\eta_i\frac{\partial V_{\varepsilon,Z,i}}{\partial x_\hbar} \right) \right)\right) \\
=& \int_{\Omega}\frac{\partial\omega_{\varepsilon,Z}}{\partial z_{l,\bar{h}}}\cdot\left( -\varepsilon^2\text{div}\left( K(x)\nabla\left(\eta_i\frac{\partial V_{\varepsilon,Z,i}}{\partial x_\hbar} \right)\right)\right)  \ \ \ \ i=1,\cdots,N,\ \hbar=1,2.
\end{split}
\end{equation*}
Note that
\begin{equation*}
\int_{\Omega}\zeta_j\frac{\partial V_{\varepsilon, z_j, \hat{q}_{\varepsilon,j}}}{\partial x_{\tilde{h}}}\cdot\left( -\varepsilon^2\text{div}\left( K(x)\nabla\left(\eta_i\frac{\partial V_{\varepsilon,Z,i}}{\partial x_\hbar} \right) \right)\right) = (\tilde{M}_i)_{\tilde{h},\hbar}\delta_{i,j}+o(1),
\end{equation*}
where $ \tilde{M}_{i} $ are $ m $ positive definite matrices. Combining this with \eqref{5-12} and \eqref{5-14}, we obtain
\begin{equation}\label{5-20}
C_{j,\tilde{h}}=O\left( \varepsilon^\gamma \right).
\end{equation}
Taking \eqref{5-19} into \eqref{5-18}, we get
\begin{equation}\label{5-21}
\begin{split}
-\varepsilon^2&\text{div}\left( K(x)\nabla  \omega_{\varepsilon}^*\right)-p\left( V_{\varepsilon,Z} -q|\ln\varepsilon|\right)^{p-1}_+\omega_{\varepsilon}^*\\
=&\sum_{j=1}^N\sum_{\tilde{h}=1}^2C_{j,\tilde{h}} \varepsilon^2\text{div}\left( K(x)\nabla \left( \zeta_j\frac{\partial V_{\varepsilon, z_j, \hat{q}_{\varepsilon,j}}}{\partial x_{\tilde{h}}}\right) \right)+ p\sum_{j=1}^N\sum_{\tilde{h}=1}^2C_{j,\tilde{h}}\left( V_{\varepsilon,Z} -q|\ln\varepsilon|\right)^{p-1}_+\zeta_j\frac{\partial V_{\varepsilon, z_j, \hat{q}_{\varepsilon,j}}}{\partial x_{\tilde{h}}}\\
&+p\left( \left( V_{\varepsilon,Z}+\omega_{\varepsilon,Z}-q|\ln\varepsilon|\right)^{p-1}_+-\left( V_{\varepsilon,Z} -q|\ln\varepsilon|\right)^{p-1}_+\right) \frac{\partial\omega_{\varepsilon,Z}}{\partial z_{l,\bar{h}}}\\
&-p\sum_{j=1}^N\left (V_{\varepsilon, z_j, \hat{q}_{\varepsilon,j}}-\hat{q}_{\varepsilon,j}|\ln\varepsilon|\right )^{p-1}_+\frac{\partial }{\partial z_{l,\bar{h}}}\left (V_{\varepsilon, z_j, \hat{q}_{\varepsilon,j}}-\hat{q}_{\varepsilon,j}|\ln\varepsilon|\right )   \\
% L_\delta \frac{\partial\omega_{\delta,Z}}{\partial z_{l,\bar{h}}}
%=& p(p-1)\left( V_{\delta,Z}-q\right)^{p-2}_+\frac{\partial V_{\delta,Z}}{\partial z_{l,\bar{h}}}\omega_{\delta,Z}+\frac{\partial l_\delta}{\partial z_{l,\bar{h}}}+  \frac{\partial R_\delta(\omega_{\delta,Z})}{\partial z_{l,\bar{h}}}\\
&+p\left (V_{\varepsilon,Z}+\omega_{\varepsilon,Z}-q|\ln\varepsilon|\right )^{p-1}_+\frac{\partial V_{\varepsilon,Z}}{\partial z_{l,\bar{h}}}+\sum_{j=1}^N\sum_{\tilde{h}=1}^2\frac{\partial b_{j,\tilde{h}}}{\partial z_{l,\bar{h}}}p\left(   V_{\varepsilon, z_j, \hat{q}_{\varepsilon,j}}-\hat{q}_{\varepsilon,j}|\ln\varepsilon| \right )^{p-1}_+\frac{\partial V_{\varepsilon, z_j, \hat{q}_{\varepsilon,j}}}{\partial x_{\tilde{h}}}\\
&+\sum_{j=1}^N\sum_{\tilde{h}=1}^2b_{j,\tilde{h}}\frac{\partial }{\partial z_{l,\bar{h}}}\left( p\left(   V_{\varepsilon, z_j, \hat{q}_{\varepsilon,j}}-\hat{q}_{\varepsilon,j}|\ln\varepsilon| \right )^{p-1}_+\frac{\partial V_{\varepsilon, z_j, \hat{q}_{\varepsilon,j}}}{\partial x_{\tilde{h}}}\right) \\
=&A_1+A_2+A_3+A_4+A_5+A_6+A_7.
\end{split}
\end{equation}
By \eqref{5-01}, \eqref{5-06}, \eqref{5-17} and \eqref{5-20}, one computes directly that
\begin{equation*}
\begin{split}
%\sum_{j=1}^m&\sum_{h=1}^2C_{j,h} \delta^2\text{div}\left( K(x)\nabla \left( \zeta_j\frac{\partial V_{\delta, z_j, \hat{q}_{\delta,j}}}{\partial x_h}\right) \right)+ \sum_{j=1}^m\sum_{h=1}^2C_{j,h}p\left( V_{\delta,Z} -q\right)^{p-1}_+\zeta_j\frac{\partial V_{\delta, z_j, \hat{q}_{\delta,j}}}{\partial x_h} \\
A_1+A_2=&\left( |C_{j,\tilde{h}}|\varepsilon^{\frac{2}{p}-1}\right)=O\left(  \varepsilon^{\gamma+\frac{2}{p}-1}\right)\ \ \text{in}\ L^p(\Omega),
\end{split}
\end{equation*}
\begin{equation*}
\begin{split}
%p\left( \left( V_{\delta,Z}+\omega_{\delta,Z}-q\right)^{p-1}_+-\left( V_{\delta,Z} -q\right)^{p-1}_+\right) \frac{\partial\omega_{\delta,Z}}{\partial z_{l,\bar{h}}}
A_3=O\left(  \varepsilon^{\gamma+\frac{2}{p}}\bigg|\bigg|\frac{\partial \omega_{\varepsilon,Z} }{\partial z_{l,\bar{h}}}\bigg|\bigg|_{L^\infty}\right)\ \ \text{in}\ L^p(\Omega),
\end{split}
\end{equation*}
\begin{equation*}
\begin{split}
%-p&\sum_{j=1}^m\left (V_{\delta, z_j, \hat{q}_{\delta,j}}-\hat{q}_{\delta,j}\right )^{p-1}_+\frac{\partial }{\partial z_{l,\bar{h}}}\left (V_{\delta, z_j, \hat{q}_{\delta,j}}-\hat{q}_{\delta,j}\right )+p\left (V_{\delta,Z}+\omega_{\delta,Z}-q\right )^{p-1}_+\frac{\partial V_{\delta,Z}}{\partial z_{l,\bar{h}}}\\
A_4+A_5=&O\left( \varepsilon^{\frac{2}{p}}|\ln\varepsilon|^{4}+  \varepsilon^{\gamma+\frac{2}{p}-1}\right) =O\left(  \varepsilon^{\gamma+\frac{2}{p}-1}\right)\ \ \text{in}\ L^p(\Omega),
\end{split}
\end{equation*}
\begin{equation*}
\begin{split}
%&\sum_{j=1}^m\sum_{h=1}^2\frac{\partial b_{j,h}}{\partial z_{l,\bar{h}}}p\left(   V_{\delta, z_j, \hat{q}_{\delta,j}}-\hat{q}_{\delta,j} \right )^{p-1}_+\frac{\partial V_{\delta, z_j, \hat{q}_{\delta,j}}}{\partial x_h}\\
A_6&=O\left(   \frac{\varepsilon^{ \frac{2}{p}}}{\varepsilon}\bigg|\frac{\partial b_{j,\tilde{h}}}{\partial z_{l,\bar{h}}}\bigg|\right)=O\left(   \varepsilon^{ \frac{2}{p}-1}\left(   \varepsilon^\gamma +  \varepsilon^{1+\gamma}   \bigg|\bigg|\frac{\partial \omega_{\varepsilon,Z} }{\partial z_{l,\bar{h}}}\bigg|\bigg|_{L^\infty}\right) \right)\ \ \text{in}\ L^p(\Omega),
\end{split}
\end{equation*}
\begin{equation*}
\begin{split}
%&\sum_{j=1}^m\sum_{h=1}^2b_{j,h}\frac{\partial }{\partial z_{l,\bar{h}}}\left( p\left(   V_{\delta, z_j, \hat{q}_{\delta,j}}-\hat{q}_{\delta,j} \right )^{p-1}_+\frac{\partial V_{\delta, z_j, \hat{q}_{\delta,j}}}{\partial x_h}\right)
A_7=O\left(  \varepsilon^{\gamma+\frac{2}{p}-1}\right)\ \ \text{in}\ L^p(\Omega).
\end{split}
\end{equation*}
Combining these with Proposition \ref{coercive esti} and \eqref{5-21}, we are led to
\begin{equation}\label{5-22}
||\omega_{\varepsilon}^*||_{L^\infty }\leq C\varepsilon^{-\frac{2}{p}}||L_\varepsilon\omega_{\varepsilon}^*||_{L^p }\leq C\varepsilon^{-\frac{2}{p}} \left(\varepsilon^{\gamma+\frac{2}{p}-1}+ \varepsilon^{\gamma+\frac{2}{p}}\bigg|\bigg|\frac{\partial \omega_{\varepsilon,Z} }{\partial z_{l,\bar{h}}}\bigg|\bigg|_{L^\infty} \right).
\end{equation}
From the decomposition \eqref{5-19}, we have
\begin{equation*}
\bigg|\bigg|\frac{\partial \omega_{\varepsilon,Z} }{\partial z_{l,\bar{h}}}\bigg|\bigg|_{L^\infty}\leq ||\omega_{\varepsilon}^*||_{L^\infty }+O\left( \frac{1}{\varepsilon^{1-\gamma}}\right) .
\end{equation*}
Taking this into \eqref{5-22}, we obtain
\begin{equation*}
||\omega_{\varepsilon}^*||_{L^\infty }\leq C  \frac{1}{\varepsilon^{1-\gamma}},
\end{equation*}
from which we deduce, $ \big|\big|\frac{\partial \omega_{\varepsilon,Z} }{\partial z_{l,\bar{h}}}\big|\big|_{L^\infty(\Omega)}=O\left( \frac{1}{\varepsilon^{1-\gamma}} \right)  $.

Finally, we prove that $ \omega_{\varepsilon,Z} $ is a $ C^1 $ map of $ Z\in \Lambda_{\varepsilon, N} $ in $ H^1(\Omega) $. To prove the continuity of $ \omega_{\varepsilon,Z} $ of $ Z $, let $ Z_j\to Z_0 $. By Proposition \ref{exist and uniq of w}, $ \omega_{\varepsilon,Z_j}  $ is uniformly bounded in $ L^\infty(\Omega) $. Thus  using \eqref{5-000} and \eqref{5-06}, we conclude that $ ||\omega_{\varepsilon,Z_j}||_{H^1_0(\Omega)} $ is bounded by a constant $ C $  which is independent of $ j $. Then there is a subsequence (still denoted by $ Z_j $) such that
\begin{equation*}
\omega_{\varepsilon,Z_j}\rightarrow \omega^{**}\ \ \ \  \text{weakly in } H^1_0(\Omega)
\end{equation*}
and
\begin{equation*}
\omega_{\varepsilon,Z_j}\rightarrow \omega^{**}\ \ \ \  \text{strongly in } L^2(\Omega).
\end{equation*}
Using the equation again, we can get that
\begin{equation*}
\omega_{\varepsilon,Z_j}\rightarrow \omega^{**}\ \ \ \  \text{strongly in } H^1_0(\Omega),
\end{equation*}
from which  we deduce that $ \omega^{**}\in E_{\varepsilon,Z_0} $ and $ \omega^{**} $ satisfies \eqref{216} with $ Z_j $ replaced by $ Z_0 $.  By the uniqueness, we get $ \omega^{**}=\omega_{\varepsilon,Z_0} $ and hence  $ \omega_{\varepsilon,Z} $ is continuous with respect  to  $Z$ in the norm of $ H^1_0(\Omega) $. Moreover, using similar method as  Proposition 3.7 in  \cite{CPY}, we can show that  $ \frac{\partial \omega_{\varepsilon,Z}}{\partial z_{l,\tilde{h}}} $ is continuous with respect  to $ Z $ in $ H^1(\Omega) $. The proof is thus complete.
\end{proof}

\section{The finite-dimensional energy expansion}

Our aim in the rest part of this paper is to find $Z$ properly so that $V_{\varepsilon, Z}+\omega_{\varepsilon, Z}$ is a solution of \eqref{key equa}.  In this section we give expansion of the main term $I(V_{\varepsilon, Z})$ in term of $Z$. From Proposition \ref{exist and uniq of w}, given any $ \varepsilon $ small and $ Z\in \Lambda_{\varepsilon, N} $, there exists a unique $ \omega_{\varepsilon,Z}\in E_{\varepsilon,Z} $ satisfying $ Q_\varepsilon L_\varepsilon \omega_{\varepsilon,Z}=Q_\varepsilon l_\varepsilon+Q_\varepsilon R_\varepsilon(\omega_{\varepsilon,Z}), $
i.e., for some $ b_{j,\tilde{h}}=b_{j,\tilde{h}}(Z) $
\begin{equation*}
 L_\varepsilon \omega_{\varepsilon,Z}= l_\varepsilon+  R_\varepsilon(\omega_{\varepsilon,Z})+\sum_{j=1}^N\sum_{\tilde{h}=1}^2b_{j,\tilde{h}}\left( -\varepsilon^2\text{div}\left (K(z_j)\nabla   \frac{\partial V_{\varepsilon, z_j, \hat{q}_{\varepsilon,j}}}{\partial x_{\tilde{h}}}\right )\right).
\end{equation*}
Thus, it suffices to find $Z$ properly so that
\begin{equation*}
 b_{j,\tilde{h}}(Z)=0,\ \ \forall\ j=1,\cdots,N,\ \tilde{h}=1,2.
\end{equation*}

%In the following we find proper $ Z=Z(\delta) $ such that $ V_{\delta,Z}+\omega_{\delta,Z} $ is a solution of \eqref{111}. Note that
Define
\begin{equation}\label{functional I}
I_\varepsilon(u)=\frac{\varepsilon^2}{2}\int_{\Omega}\left( K(x)\nabla u|\nabla u\right)-\frac{1}{p+1}\int_{\Omega}\left (u-q|\ln\varepsilon|\right )^{p+1}_+,%\textbf{1}_{D_{\frac{\delta}{\sqrt{|\ln\varepsilon|}}}\left (C_{\frac{r_*}{\sqrt{|\ln\varepsilon|}}}\right )},
\end{equation}
and
\begin{equation*}
K_\varepsilon(Z)=I_\varepsilon(V_{\varepsilon,Z}+\omega_{\varepsilon,Z}).
\end{equation*}
%Then by Remark \ref{rk1}, we know that $ P_\delta(Z) $ is a $ C^1 $ function.
%By the regularity of $ K $ and $ q $, it is not hard to check that if $ Z $ is a critical point of $ P_\delta(Z)  $, then $ V_{\delta,Z}+\omega_{\delta,Z} $ is a critical point of $ I_\delta $, i.e., a solution of \eqref{111}. We now give estimates of  $ P_\delta(Z) $.
From Proposition \ref{differ of w}, $ K_\varepsilon(\cdot) $ is a $ C^1 $ function.
The following lemma shows that, to find solutions of \eqref{key equa}, it suffices to find critical points of  $ K_\varepsilon(Z) $.
\begin{lemma}\label{choice of Z}
If $ Z\in \Lambda_{\varepsilon, N}$ is a critical point of $ K_\varepsilon(Z) $, then
$ V_{\varepsilon,Z}+\omega_{\varepsilon,Z} $  is a solution to \eqref{key equa}.
\end{lemma}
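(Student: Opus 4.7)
The plan is to show that the criticality of $K_\varepsilon$ with respect to $Z$ forces all the Lagrange multipliers $b_{j,\tilde h}(Z)$ in \eqref{5-000} to vanish, whence $V_{\varepsilon,Z}+\omega_{\varepsilon,Z}$ solves \eqref{key equa}.

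First I would differentiate $K_\varepsilon(Z)=I_\varepsilon(V_{\varepsilon,Z}+\omega_{\varepsilon,Z})$ via the chain rule, which is legitimate thanks to Proposition \ref{differ of w}. Pairing $I'_\varepsilon(V_{\varepsilon,Z}+\omega_{\varepsilon,Z})$ with the test function $\partial_{z_{l,\bar h}}(V_{\varepsilon,Z}+\omega_{\varepsilon,Z})$ and using the equation \eqref{5-00} yields
\begin{equation*}
\frac{\partial K_\varepsilon}{\partial z_{l,\bar h}}(Z)=\sum_{j=1}^{N}\sum_{\tilde h=1}^{2}b_{j,\tilde h}(Z)\,A^{\varepsilon}_{(l,\bar h),(j,\tilde h)}(Z),
\end{equation*}
where
\begin{equation*}
A^{\varepsilon}_{(l,\bar h),(j,\tilde h)}(Z):=\int_{\Omega}\Bigl(-\varepsilon^{2}\operatorname{div}\bigl(K(z_j)\nabla\tfrac{\partial V_{\varepsilon,z_j,\hat q_{\varepsilon,j}}}{\partial x_{\tilde h}}\bigr)\Bigr)\,\frac{\partial}{\partial z_{l,\bar h}}\bigl(V_{\varepsilon,Z}+\omega_{\varepsilon,Z}\bigr).
\end{equation*}
Therefore, if $Z$ is a critical point of $K_\varepsilon$, the vector $b(Z)=(b_{j,\tilde h}(Z))$ lies in the kernel of the $2N\times 2N$ matrix $A^{\varepsilon}(Z)$, and it suffices to show that $A^{\varepsilon}(Z)$ is invertible for every $Z\in\Lambda_{\varepsilon,N}$ once $\varepsilon$ is small enough.

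Next I would extract the dominant part of $\partial_{z_{l,\bar h}}(V_{\varepsilon,Z}+\omega_{\varepsilon,Z})$. Writing $V_{\varepsilon,Z,k}=V_{\varepsilon,z_k,\hat q_{\varepsilon,k}}+H_{\varepsilon,z_k,\hat q_{\varepsilon,k}}$ and using the translational structure of \eqref{eq2-2}, the contribution for $k=l$ splits as a translation piece $-\partial_{x_{\bar h}}V_{\varepsilon,z_l,\hat q_{\varepsilon,l}}$ plus corrections arising from the $z_l$-dependence of $T_{z_l}$ and of $\hat q_{\varepsilon,l}$. Estimates \eqref{5-01}–\eqref{5-04} control these corrections, while the $k\neq l$ contributions and $\partial_{z_{l,\bar h}}H_{\varepsilon,z_l,\hat q_{\varepsilon,l}}$ are of lower order after testing against $-\varepsilon^{2}\operatorname{div}(K(z_j)\nabla\partial_{x_{\tilde h}}V_{\varepsilon,z_j,\hat q_{\varepsilon,j}})$, whose support is localized in $B_{Ls_{\varepsilon,j}}(z_j)$. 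Similarly, Proposition \ref{differ of w} gives $\|\partial_{z_{l,\bar h}}\omega_{\varepsilon,Z}\|_{L^\infty}=O(\varepsilon^{\gamma-1})$, which leads to an error of order $O(\varepsilon^\gamma)$ after integration against the highly concentrated test function.

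Using the cut-off $\eta_l$ of \eqref{cut off fun} to replace $\partial_{x_{\bar h}}V_{\varepsilon,z_l,\hat q_{\varepsilon,l}}$ by $\eta_l\,\partial_{x_{\bar h}}V_{\varepsilon,Z,l}$ modulo negligible remainders, the computation \eqref{coef of C} already carried out for the projection operator $Q_\varepsilon$ yields
\begin{equation*}
A^{\varepsilon}_{(l,\bar h),(j,\tilde h)}(Z)=-\delta_{j,l}(M_{l})_{\tilde h,\bar h}+O(\varepsilon^{\gamma}),
\end{equation*}
where the block-diagonal matrices $M_l$ are uniformly positive definite with eigenvalues in a fixed interval $(\bar c_1,\bar c_2)$. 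The leading matrix is invertible with inverse of size $O(1)$, and the $O(\varepsilon^{\gamma})$ perturbation does not destroy invertibility once $\varepsilon\le\varepsilon_0$. Hence $A^{\varepsilon}(Z)$ is invertible, the identity $\nabla_Z K_\varepsilon(Z)=A^{\varepsilon}(Z)\,b(Z)=0$ forces $b_{j,\tilde h}(Z)=0$ for all $j,\tilde h$, and \eqref{5-00} reduces to \eqref{key equa}.

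The main obstacle is the careful bookkeeping in the second step: to produce the matrix $A^{\varepsilon}(Z)$ as a small perturbation of $-\mathrm{diag}(M_l)$, one must verify that every contribution to $\partial_{z_{l,\bar h}}(V_{\varepsilon,Z}+\omega_{\varepsilon,Z})$ other than the translational piece $-\partial_{x_{\bar h}}V_{\varepsilon,z_l,\hat q_{\varepsilon,l}}$ contributes only at order $O(\varepsilon^\gamma)$ after being integrated against a quantity of $L^{p'}$-norm $O(\varepsilon^{1-2/p})$ concentrated in $B_{Ls_{\varepsilon,j}}(z_j)$. The ingredients for this (namely \eqref{5-01}, \eqref{5-02}, \eqref{5-04}, Lemma \ref{H estimate}, and Proposition \ref{differ of w}) are precisely what was developed in Sections 2–4.
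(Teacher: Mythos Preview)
Your proposal is correct and follows essentially the same route as the paper: differentiate $K_\varepsilon$ via the chain rule, test \eqref{5-00} against $\partial_{z_{l,\bar h}}(V_{\varepsilon,Z}+\omega_{\varepsilon,Z})$, and show that the resulting $2N\times 2N$ coefficient matrix is a small perturbation of the invertible block-diagonal matrix built from the $M_l$'s of \eqref{coef of C}, using \eqref{5-01}--\eqref{5-04} and Proposition~\ref{differ of w} to control the non-translational contributions. Your bookkeeping is in fact a bit more explicit than the paper's (you correctly track the minus sign coming from $\partial_{z_{l,\bar h}}V_{\varepsilon,z_l,\hat q_{\varepsilon,l}}\approx-\partial_{x_{\bar h}}V_{\varepsilon,z_l,\hat q_{\varepsilon,l}}$), but the argument is the same.
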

\begin{proof}
It follows from Proposition \ref{exist and uniq of w} that
\begin{equation}\label{611}
\begin{split}
\langle I'(V_{\varepsilon,Z}+\omega_{\varepsilon,Z}), \phi\rangle=\sum_{j=1}^N\sum_{\tilde{h}=1}^2b_{j,\tilde{h}}\int_{\Omega} -\varepsilon^2\text{div}\left (K(z_j)\nabla   \frac{\partial V_{\varepsilon, z_j, \hat{q}_{\varepsilon,j}}}{\partial x_{\tilde{h}}}\right )\phi,\ \ \forall \phi\in H^1_0(\Omega).
\end{split}
\end{equation}
If we can choose $ Z $ such that the corresponding constants $ b_{j,\tilde{h}} $  are all zero, then $ V_{\varepsilon,Z}+\omega_{\varepsilon,Z} $  is a solution to \eqref{key equa}.
Suppose that $ Z $ is a critical point of $ K_\varepsilon(Z) $. Then from \eqref{611} and Proposition \ref{differ of w}, for $ i=1,\cdots,N, \hbar=1,2 $
\begin{equation}\label{603}
\begin{split}
0=&\frac{\partial K_\varepsilon(Z)}{\partial z_{i,\hbar}}=\left \langle I'(V_{\varepsilon,Z}+\omega_{\varepsilon,Z}), \frac{\partial (V_{\varepsilon,Z}+\omega_{\varepsilon,Z})}{\partial z_{i,\hbar}}\right \rangle\\
=&\sum_{j=1}^N\sum_{\tilde{h}=1}^2b_{j,\tilde{h}}\int_{\Omega} -\varepsilon^2\text{div}\left (K(z_j)\nabla   \frac{\partial V_{\varepsilon, z_j, \hat{q}_{\varepsilon,j}}}{\partial x_{\tilde{h}}}\right )\frac{\partial (V_{\varepsilon,Z}+\omega_{\varepsilon,Z})}{\partial z_{i,\hbar}}\\
=&\sum_{j=1}^N\sum_{\tilde{h}=1}^2b_{j,\tilde{h}}((M_i)_{\tilde{h},\hbar}\delta_{i,j}+o(\varepsilon^\gamma))+O\left (\varepsilon\sum_{j=1}^N\sum_{\tilde{h}=1}^2|b_{j,\tilde{h}}|\bigg|\bigg|\frac{\partial \omega_{\varepsilon,Z}}{\partial z_{j,\tilde{h}}}\bigg|\bigg|_{L^\infty}\right )\\
=&\sum_{j=1}^N\sum_{\tilde{h}=1}^2b_{j,\tilde{h}}((M_i)_{\tilde{h},\hbar}\delta_{i,j}+o(\varepsilon^\gamma))+O\left (\varepsilon^\gamma\sum_{j=1}^N\sum_{\tilde{h}=1}^2|b_{j,\tilde{h}}|\right ),
\end{split}
\end{equation}
from which we deduce that  $ b_{j,\tilde{h}}(Z)=0. $

\end{proof}

Now we give the  expansion of the energy functional $ K_\varepsilon(Z) $. First we obtain that $I_\varepsilon(V_{\varepsilon,Z})$ is its main part.
\begin{proposition}\label{pro401}
There holds
\begin{equation*}
K_{\varepsilon}(Z)=I_\varepsilon(V_{\varepsilon,Z})+O\left( \varepsilon^{2+\gamma}\right).
\end{equation*}
\end{proposition}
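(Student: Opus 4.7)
The plan is to Taylor-expand $K_\varepsilon(Z) = I_\varepsilon(V_{\varepsilon,Z} + \omega_{\varepsilon,Z})$ around $V_{\varepsilon,Z}$ up to second order and show each correction term is $O(\varepsilon^{2+\gamma})$ (in fact, $O(\varepsilon^{2+2\gamma})$). Writing
\begin{equation*}
K_\varepsilon(Z)-I_\varepsilon(V_{\varepsilon,Z}) = \langle I_\varepsilon'(V_{\varepsilon,Z}),\omega_{\varepsilon,Z}\rangle + \tfrac{1}{2}\int_{\Omega}L_\varepsilon\omega_{\varepsilon,Z}\cdot\omega_{\varepsilon,Z} - \tfrac{1}{p+1}\int_{\Omega}\mathcal{R},
\end{equation*}
where $\mathcal{R}$ denotes the third-order Taylor remainder of the nonlinear term $(V+\omega-q|\ln\varepsilon|)^{p+1}_+$, the task reduces to estimating these three pieces in order.

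First I would handle the linear term $\langle I_\varepsilon'(V_{\varepsilon,Z}),\omega_{\varepsilon,Z}\rangle$. The key identity is that, combining equation \eqref{eq2} with equation \eqref{eq3} for $V_{\varepsilon,z_j,\hat{q}_{\varepsilon,j}}$ and $H_{\varepsilon,z_j,\hat{q}_{\varepsilon,j}}$, the cross-terms involving $K(x)-K(z_j)$ cancel exactly, yielding
\begin{equation*}
-\varepsilon^{2}\operatorname{div}\bigl(K(x)\nabla V_{\varepsilon,Z}\bigr) = \sum_{j=1}^{N}\bigl(V_{\varepsilon,z_j,\hat{q}_{\varepsilon,j}}-\hat{q}_{\varepsilon,j}|\ln\varepsilon|\bigr)^{p}_{+},
\end{equation*}
so that after integration by parts $\langle I_\varepsilon'(V_{\varepsilon,Z}),\omega_{\varepsilon,Z}\rangle = -\int_{\Omega} l_\varepsilon\,\omega_{\varepsilon,Z}$. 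Since $l_\varepsilon$ is supported in $\cup_j B_{Ls_{\varepsilon,j}}(z_j)$, with $\|l_\varepsilon\|_{L^p}=O(\varepsilon^{2/p+\gamma})$ (shown inside the proof of Proposition \ref{exist and uniq of w}) and $\|\omega_{\varepsilon,Z}\|_{L^\infty}=O(\varepsilon^{\gamma})$ from Proposition \ref{exist and uniq of w}, Hölder on the core gives $|\langle I_\varepsilon'(V_{\varepsilon,Z}),\omega_{\varepsilon,Z}\rangle| \leq \|l_\varepsilon\|_{L^p}\|\omega_{\varepsilon,Z}\|_{L^{p'}(\cup B_{Ls_{\varepsilon,j}})}=O(\varepsilon^{2+2\gamma})$.

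Next I would bound the quadratic term by testing equation \eqref{5-000} against $\omega_{\varepsilon,Z}$. Using $\|l_\varepsilon\|_{L^p}=O(\varepsilon^{2/p+\gamma})$, the bound $\|R_\varepsilon(\omega_{\varepsilon,Z})\|_{L^p}\leq C\varepsilon^{2/p}\|\omega_{\varepsilon,Z}\|_{L^\infty}^{2}=O(\varepsilon^{2/p+2\gamma})$, the orthogonality of $\omega_{\varepsilon,Z}$ to $\eta_{i}\partial_{x_{\hbar}}V_{\varepsilon,Z,i}$, and the estimate $|b_{j,\tilde{h}}|=O(\varepsilon^{1+\gamma})$ from \eqref{5-06}, each term on the right-hand side of \eqref{5-000}, when integrated against $\omega_{\varepsilon,Z}$ over the core $\cup B_{Ls_{\varepsilon,j}}(z_j)$ of measure $O(\varepsilon^{2})$, contributes $O(\varepsilon^{2+2\gamma})$.

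Finally, the cubic remainder $\mathcal{R}$ satisfies a pointwise estimate of the form $|\mathcal{R}|\leq C(V_{\varepsilon,Z}-q|\ln\varepsilon|)^{p-1}_{+}|\omega|^{2}+C|\omega|^{p+1}\chi_{\{V_{\varepsilon,Z}+\omega>q|\ln\varepsilon|\}}$, where the second indicator is supported in a thin shell of measure $O(\varepsilon^{1+\gamma})$ around $\partial\{V_{\varepsilon,Z}>q|\ln\varepsilon|\}$ (since $\|\omega\|_\infty=O(\varepsilon^\gamma)$ and $V_{\varepsilon,Z}-q|\ln\varepsilon|$ is Lipschitz of size $O(\varepsilon^{-1})$ in the transition zone). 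The first piece integrates to $O(\|\omega\|_\infty^{2}\cdot\varepsilon^{2})=O(\varepsilon^{2+2\gamma})$ and the second to $O(\varepsilon^{\gamma(p+1)}\cdot\varepsilon^{1+\gamma})$, both dominated by $O(\varepsilon^{2+\gamma})$ after choosing $\gamma$ slightly smaller if necessary. Combining the three estimates yields the claim. The only slightly delicate point, and the main obstacle in my view, is the cancellation identity in Step 1: without the exact matching of $K(x)-K(z_j)$ terms between $V$ and its projection $H$, one would pick up an extra $O(\varepsilon^{2})$ error in $L^{1}$ and lose the required decay rate.
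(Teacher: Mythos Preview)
Your proof is correct and rests on the same ingredients as the paper's: the identity $-\varepsilon^{2}\operatorname{div}(K(x)\nabla V_{\varepsilon,Z})=\sum_j(V_{\varepsilon,z_j,\hat q_{\varepsilon,j}}-\hat q_{\varepsilon,j}|\ln\varepsilon|)^{p}_{+}$, the $L^p$ bound on $l_\varepsilon$, the $L^\infty$ bound on $\omega_{\varepsilon,Z}$, and the equation \eqref{5-000}. The organization, however, differs. You expand $I_\varepsilon(V+\omega)$ to \emph{second} order (linear, quadratic, cubic remainder) and then control the quadratic piece by testing \eqref{5-000} against $\omega$ and invoking the $b_{j,\tilde h}=O(\varepsilon^{1+\gamma})$ estimate from \eqref{5-06}. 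The paper instead expands only to \emph{first} order in the nonlinearity, handles the kinetic cross term and the kinetic quadratic term separately, and for the latter uses the projection identity $-\varepsilon^{2}\operatorname{div}(K\nabla\omega)=Q_\varepsilon\bigl(p(V_{\varepsilon,Z}-q|\ln\varepsilon|)^{p-1}_+\omega\bigr)+Q_\varepsilon l_\varepsilon+Q_\varepsilon R_\varepsilon(\omega)$ (valid because $\omega\in E_{\varepsilon,Z}$ forces $-\varepsilon^{2}\operatorname{div}(K\nabla\omega)\in F_{\varepsilon,Z}$). The paper's route is slightly shorter because it avoids any analysis of the cubic Taylor remainder and does not need the $b_{j,\tilde h}$ estimate from Section~4.

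Two minor remarks on your write-up. First, the ``orthogonality of $\omega_{\varepsilon,Z}$ to $\eta_i\partial_{x_\hbar}V_{\varepsilon,Z,i}$'' that you cite is an $H^1$-type orthogonality (definition of $E_{\varepsilon,Z}$), not an $L^2$ one, and the projection terms in \eqref{5-000} involve $K(z_j)$ rather than $K(x)$ and omit the cutoff $\eta_j$; so this orthogonality does not kill those terms directly. The bound $|b_{j,\tilde h}|=O(\varepsilon^{1+\gamma})$ alone already does the job, since $\|(-\varepsilon^2\operatorname{div}(K(z_j)\nabla\partial_{x_{\tilde h}}V_{\varepsilon,z_j,\hat q_{\varepsilon,j}}))\|_{L^1}=O(\varepsilon)$. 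Second, in your Step~3 the boundary shell has measure $O(\varepsilon^{2+\gamma})$ (circumference $\sim\varepsilon$ times width $\sim\varepsilon^{1+\gamma}$), not $O(\varepsilon^{1+\gamma})$; this only improves your estimate.
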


\begin{proof}
Note  that
\begin{equation*}
\begin{split}
K_{\varepsilon}(Z)=&I_\varepsilon(V_{\varepsilon,Z})+\varepsilon^2\int_{\Omega}\left( K(x)\nabla V_{\varepsilon,Z}|\nabla \omega_{\varepsilon,Z}\right) +\frac{\varepsilon^2}{2}\int_{\Omega}\left( K(x)\nabla \omega_{\varepsilon,Z}|\nabla \omega_{\varepsilon,Z}\right)\\
&-\frac{1}{p+1}\bigg( \int_{\Omega}\left (V_{\varepsilon,Z}+\omega_{\varepsilon,Z}-q|\ln\varepsilon|\right )^{p+1}_+-\int_{\Omega}\left (V_{\varepsilon,Z}-q|\ln\varepsilon|\right )^{p+1}_+\bigg).
\end{split}
\end{equation*}
It follows from Proposition \ref{exist and uniq of w} that
\begin{equation*}
\begin{split}
\int_{\Omega}&\left (V_{\varepsilon,Z}+\omega_{\varepsilon,Z}-q|\ln\varepsilon|\right )^{p+1}_+-\int_{\Omega}\left (V_{\varepsilon,Z}-q|\ln\varepsilon|\right )^{p+1}_+\\
&=(p+1)\sum_{j=1}^N\int_{B_{Ls_{\varepsilon,j}}(z_j)}\left (V_{\varepsilon,Z}-q|\ln\varepsilon|\right )^{p}_+\omega_{\varepsilon,Z}
+O\left( \sum_{j=1}^N\int_{B_{Ls_{\varepsilon,j}}(z_j)}\left (V_{\varepsilon,Z}-q|\ln\varepsilon|\right )^{p-1}_+\omega_{\varepsilon,Z}^2\right)\\
%&=O\left( \sum_{j=1}^m\frac{s_{\delta,j}^2||\omega_{\delta,Z}||_{L^\infty}}{|\ln\varepsilon|^p}\right) +O\left( \sum_{j=1}^m\frac{s_{\delta,j}^2||\omega_{\delta,Z}||_{L^\infty}^2}{|\ln\varepsilon|^{p-1}}\right) \\
&=O\left( \varepsilon^{2+\gamma}\right).
\end{split}
\end{equation*}
Since $  -\varepsilon^2\text{div}(K(x)\nabla V_{\varepsilon, Z,j})=\left (V_{\varepsilon,z_j,\hat{q}_{\varepsilon,j}}-\hat{q}_{\varepsilon,j}|\ln\varepsilon|\right )^p_+, $
we get
\begin{equation*}
\begin{split}
\varepsilon^2\int_{\Omega}\left( K(x)\nabla V_{\varepsilon,Z}|\nabla \omega_{\varepsilon,Z}\right)
=&\sum_{j=1}^N\int_{B_{Ls_{\varepsilon,j}}(z_j)}\left (V_{\varepsilon,z_j,\hat{q}_{\varepsilon,j}}-\hat{q}_{\varepsilon,j}|\ln\varepsilon|\right )^p_+\omega_{\varepsilon,Z}
%=&O\left( \sum_{j=1}^m\frac{s_{\delta,j}^2||\omega_{\delta,Z}||_{L^\infty}}{|\ln\varepsilon|^p}\right)\\
=O\left(  \varepsilon^{2+\gamma}\right).
\end{split}
\end{equation*}
As for the term $ \frac{\varepsilon^2}{2}\int_{\Omega}\left( K(x)\nabla \omega_{\varepsilon,Z}|\nabla \omega_{\varepsilon,Z}\right), $ since  $ \omega_{\varepsilon,Z}\in E_{\varepsilon,Z} $,  $ -\varepsilon^2\text{div}(K(x)\nabla \omega_{\varepsilon,Z})\in F_{\varepsilon,Z} $. So
\begin{equation*}
\begin{split}
Q_\varepsilon L_\varepsilon \omega_{\varepsilon,Z}%=&Q_\delta(-\delta^2\text{div}(K(x)\nabla \omega_{\delta,Z}))-Q_\delta(p(V_{\delta,Z}-q)^{p-1}_+\omega_{\delta,Z})\\
=&-\varepsilon^2\text{div}(K(x)\nabla \omega_{\varepsilon,Z})-Q_\varepsilon\left (p\left (V_{\varepsilon,Z}-q|\ln\varepsilon|\right )^{p-1}_+\omega_{\varepsilon,Z}\right ),
\end{split}
\end{equation*}
which combined with $ Q_\varepsilon L_\varepsilon \omega_{\varepsilon,Z}=Q_\varepsilon l_\varepsilon+Q_\varepsilon R_\varepsilon(\omega_{\varepsilon,Z})  $ yields
\begin{equation*}
-\varepsilon^2\text{div}(K(x)\nabla \omega_{\varepsilon,Z})=Q_\varepsilon\left (p\left (V_{\varepsilon,Z}-q|\ln\varepsilon|\right )^{p-1}_+\omega_{\varepsilon,Z}\right )+Q_\varepsilon l_\varepsilon+Q_\varepsilon R_\varepsilon(\omega_{\varepsilon,Z}).
\end{equation*}
Hence by Lemma \ref{lemA-5} and Proposition \ref{exist and uniq of w}, 
\begin{equation*}
\begin{split}
\varepsilon^2\int_{\Omega}&\left( K(x)\nabla \omega_{\varepsilon,Z}|\nabla \omega_{\varepsilon,Z}\right)\\
=&\int_{\Omega}Q_\varepsilon\left (p\left (V_{\varepsilon,Z}-q|\ln\varepsilon|\right )^{p-1}_+ \omega_{\varepsilon,Z}\right )\omega_{\varepsilon,Z}+\int_{\Omega}Q_\varepsilon l_\varepsilon\omega_{\varepsilon,Z}+\int_{\Omega}Q_\varepsilon R_\varepsilon(\omega_{\varepsilon,Z})\omega_{\varepsilon,Z}\\
%\leq &||Q_\delta(p(V_{\delta,Z}-q)^{p-1}_+\omega_{\delta,Z})||_{L^1}||\omega_{\delta,Z}||_{L^\infty}+||Q_\delta l_\delta||_{L^1}||\omega_{\delta,Z}||_{L^\infty}+||Q_\delta R_\delta(\omega_{\delta,Z})||_{L^1}||\omega_{\delta,Z}||_{L^\infty}\\
=&O\bigg(|| \left (V_{\varepsilon,Z}-q|\ln\varepsilon|\right )^{p-1}_+\omega_{\varepsilon,Z}||_{L^1}||\omega_{\varepsilon,Z}||_{L^\infty}+|| l_\varepsilon||_{L^1}||\omega_{\varepsilon,Z}||_{L^\infty}+|| R_\varepsilon(\omega_{\varepsilon,Z})||_{L^1} ||\omega_{\varepsilon,Z}||_{L^\infty}\bigg )\\
%= &O\left( \frac{\varepsilon^2||\omega_{\delta,Z}||_{L^\infty}^2}{|\ln\varepsilon|^{p-1}}\right) +O\left( \frac{\varepsilon^{2+\gamma}||\omega_{\delta,Z}||_{L^\infty}}{|\ln\varepsilon|^{p}}\right) +O\left( \frac{\varepsilon^2||\omega_{\delta,Z}||_{L^\infty}^3}{|\ln\varepsilon|^{p-2}}\right) \\
=&O\left( \varepsilon^{2+\gamma}\right).
\end{split}
\end{equation*}
To conclude, we have $ K_{\varepsilon}(Z)=I_\varepsilon(V_{\varepsilon,Z})+O\left( \varepsilon^{2+\gamma}\right). $

\end{proof}
For $ I_\varepsilon(V_{\varepsilon,Z}) $,  the energy expansion is as follows.
\begin{proposition}\label{order of main term}
There holds
\begin{equation}\label{344}
\begin{split}
I_\varepsilon(V_{\varepsilon,Z})=&\sum_{j=1}^N \pi\varepsilon^2 |\ln \varepsilon|q(z_j)^2\sqrt{\det K(z_j)}+\sum_{j=1}^N\frac{(p-1)}{4}\pi\varepsilon^2q(z_j)^2\sqrt{\det K(z_j)}\\
&-\sum_{j=1}^N2\pi^2\varepsilon^2 q(z_j)^2  \det K(z_j)\bar{S}_K(z_j,z_j)\\
&-\sum_{1\leq i\neq j\leq N} 2\pi^2\varepsilon^2q(z_i)q(z_j)\sqrt{\det K(z_i)}\sqrt{\det K(z_j)} G_K(z_i,z_j)+O\left( \frac{\varepsilon^2 \ln|\ln\varepsilon| }{|\ln\varepsilon|}\right).
\end{split}
\end{equation}
As a consequence, 
\begin{equation}\label{345}
\begin{split}
K_\varepsilon(Z)=&\sum_{j=1}^N \pi\varepsilon^2 |\ln \varepsilon|q(z_j)^2\sqrt{\det K(z_j)}+\sum_{j=1}^N\frac{(p-1)}{4}\pi\varepsilon^2q(z_j)^2\sqrt{\det K(z_j)}\\
&-\sum_{j=1}^N2\pi^2\varepsilon^2 q(z_j)^2  \det K(z_j)\bar{S}_K(z_j,z_j)\\
&-\sum_{1\leq i\neq j\leq N} 2\pi^2\varepsilon^2q(z_i)q(z_j)\sqrt{\det K(z_i)}\sqrt{\det K(z_j)} G_K(z_i,z_j)+O\left( \frac{\varepsilon^2 \ln|\ln\varepsilon| }{|\ln\varepsilon|}\right).
\end{split}
\end{equation}
\end{proposition}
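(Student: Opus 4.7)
\section*{Proof proposal for Proposition \ref{order of main term}}

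The plan is to start from Proposition \ref{pro401}, which reduces the task to expanding $I_\varepsilon(V_{\varepsilon,Z})$, and then use the equations satisfied by the building blocks to rewrite the Dirichlet energy in a form that couples naturally with the profile of $V_{\varepsilon,z_j,\hat{q}_{\varepsilon,j}}$. Specifically, since
\begin{equation*}
-\varepsilon^2\mathrm{div}\bigl(K(x)\nabla V_{\varepsilon,Z,j}\bigr)=\bigl(V_{\varepsilon,z_j,\hat{q}_{\varepsilon,j}}-\hat{q}_{\varepsilon,j}|\ln\varepsilon|\bigr)^{p}_+
\end{equation*}
by combining \eqref{eq2} and \eqref{eq3}, integration by parts yields
\begin{equation*}
\tfrac{\varepsilon^2}{2}\int_\Omega\bigl(K(x)\nabla V_{\varepsilon,Z}|\nabla V_{\varepsilon,Z}\bigr)
=\tfrac{1}{2}\sum_{j=1}^N\int_{B_{Ls_{\varepsilon,j}}(z_j)}\bigl(V_{\varepsilon,z_j,\hat{q}_{\varepsilon,j}}-\hat{q}_{\varepsilon,j}|\ln\varepsilon|\bigr)^{p}_+V_{\varepsilon,Z},
\end{equation*}
where the localization to $B_{Ls_{\varepsilon,j}}(z_j)$ comes from Lemma \ref{lemA-5}. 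I will split $V_{\varepsilon,Z}=q|\ln\varepsilon|+(V_{\varepsilon,Z}-q|\ln\varepsilon|)$ and treat the two pieces separately.

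For the $q|\ln\varepsilon|$ piece, I will use $q(x)=q(z_j)+O(\varepsilon)$ on $B_{Ls_{\varepsilon,j}}(z_j)$ together with a change of variables $y=T_{z_j}(x-z_j)/s_{\varepsilon,j}$ and the Pohozaev identity \eqref{PI} to compute
\begin{equation*}
\int_{B_{Ls_{\varepsilon,j}}(z_j)}\bigl(V_{\varepsilon,z_j,\hat{q}_{\varepsilon,j}}-\hat{q}_{\varepsilon,j}|\ln\varepsilon|\bigr)^{p}_+\,dx=\frac{2\pi\hat{q}_{\varepsilon,j}|\ln\varepsilon|}{|\ln s_{\varepsilon,j}|}\sqrt{\det K(z_j)}\,\varepsilon^2,
\end{equation*}
and similarly $\int(V_{\varepsilon,z_j,\hat{q}_{\varepsilon,j}}-\hat{q}_{\varepsilon,j}|\ln\varepsilon|)^{p+1}_+=\tfrac{\pi(p+1)}{2}\hat{q}_{\varepsilon,j}^{2}(|\ln\varepsilon|/|\ln s_{\varepsilon,j}|)^{2}\sqrt{\det K(z_j)}\varepsilon^{2}$, invoking \eqref{2000} to replace $|\ln\varepsilon|/|\ln s_{\varepsilon,j}|$ by $1$ up to an $O(\ln|\ln\varepsilon|/|\ln\varepsilon|)$ error. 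For the $(V_{\varepsilon,Z}-q|\ln\varepsilon|)$ piece, the approximation \eqref{203} gives
\begin{equation*}
\tfrac{1}{2}\sum_j\int\bigl(V_{\varepsilon,z_j,\hat{q}_{\varepsilon,j}}-\hat{q}_{\varepsilon,j}|\ln\varepsilon|\bigr)^{p}_+\bigl(V_{\varepsilon,Z}-q|\ln\varepsilon|\bigr)=\tfrac{1}{2}\sum_j\int\bigl(V_{\varepsilon,z_j,\hat{q}_{\varepsilon,j}}-\hat{q}_{\varepsilon,j}|\ln\varepsilon|\bigr)^{p+1}_++O\bigl(\varepsilon^{2+\gamma}\bigr).
\end{equation*}
Using \eqref{203} once more for the nonlinear term $\tfrac{1}{p+1}\int(V_{\varepsilon,Z}-q|\ln\varepsilon|)^{p+1}_+$, combining everything produces, at this stage,
\begin{equation*}
I_\varepsilon(V_{\varepsilon,Z})=\sum_{j=1}^{N}\pi\varepsilon^2|\ln\varepsilon|\,q(z_j)\hat{q}_{\varepsilon,j}\sqrt{\det K(z_j)}+\sum_{j=1}^{N}\tfrac{\pi(p-1)}{4}\varepsilon^2\hat{q}_{\varepsilon,j}^{2}\sqrt{\det K(z_j)}+O\bigl(\varepsilon^2\ln|\ln\varepsilon|\bigr),
\end{equation*}
where the $O(\varepsilon^2\ln|\ln\varepsilon|)$ remainder still contains the information that will generate the Robin/Green terms.

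The decisive step, and the place where I expect the main bookkeeping obstacle, is to unfold the factor $q(z_j)\hat{q}_{\varepsilon,j}|\ln\varepsilon|$ using the defining relation \eqref{q_i choice} for $\hat{q}_{\varepsilon,j}$. Writing $q(z_j)\hat{q}_{\varepsilon,j}=q(z_j)^2+q(z_j)(\hat{q}_{\varepsilon,j}-q(z_j))$ and substituting \eqref{q_i choice} together with $|\ln\varepsilon|/\ln s_{\varepsilon,j}=-1+O(\ln|\ln\varepsilon|/|\ln\varepsilon|)$, the leading contribution becomes
\begin{equation*}
\pi q(z_j)\hat{q}_{\varepsilon,j}|\ln\varepsilon|\sqrt{\det K(z_j)}\varepsilon^{2}=\pi q(z_j)^{2}|\ln\varepsilon|\sqrt{\det K(z_j)}\varepsilon^{2}-2\pi^{2}q(z_j)^{2}\det K(z_j)\bar{S}_K(z_j,z_j)\varepsilon^{2}
\end{equation*}
\begin{equation*}
\qquad-\sum_{i\neq j}2\pi^{2}q(z_j)q(z_i)\sqrt{\det K(z_j)\det K(z_i)}\,G_K(z_i,z_j)\,\varepsilon^{2}+O\bigl(\varepsilon^{2}\ln|\ln\varepsilon|/|\ln\varepsilon|\bigr),
\end{equation*}
where we used $\hat{q}_{\varepsilon,i}=q(z_i)+O(\ln|\ln\varepsilon|/|\ln\varepsilon|)$ and the bound \eqref{3-003} on $G_K(z_i,z_j)$. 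Replacing $\hat{q}_{\varepsilon,j}^{2}$ by $q(z_j)^{2}$ in the $(p-1)/4$ term with the same $O(\ln|\ln\varepsilon|/|\ln\varepsilon|)$ error, summing over $j$, and finally adding the $O(\varepsilon^{2+\gamma})$ discrepancy between $K_\varepsilon(Z)$ and $I_\varepsilon(V_{\varepsilon,Z})$ from Proposition \ref{pro401}, yields simultaneously \eqref{344} and \eqref{345}.

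The main obstacle is the careful accounting at order $\varepsilon^{2}$: the apparent leading part $\pi q(z_j)\hat{q}_{\varepsilon,j}|\ln\varepsilon|$ contains hidden $O(\varepsilon^{2})$ corrections that must be extracted via \eqref{q_i choice}, and the cross-terms from $H_{\varepsilon,z_j,\hat{q}_{\varepsilon,j}}$ and the neighbours $\{V_{\varepsilon,z_i,\hat{q}_{\varepsilon,i}}+H_{\varepsilon,z_i,\hat{q}_{\varepsilon,i}}\}_{i\neq j}$ must combine with these hidden corrections in exactly the right way to produce the coefficient $-2\pi^{2}$ in front of $\bar{S}_K$ and $G_K$. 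This is where Lemma \ref{H estimate} (the $C^{0,2-2/p}$ closeness of $H_{\varepsilon,z_j,\hat{q}_{\varepsilon,j}}$ to the Robin profile) and the $C^{1}$-regularity of $\bar{S}_K$ from Lemma \ref{Green expansion2} are critical, since both $\bar{S}_K(x,z_j)$ at $x\in B_{L\varepsilon}(z_j)$ and $G_K(x,z_i)$ for $i\neq j$ must be expanded around $z_j$ to sufficient accuracy.
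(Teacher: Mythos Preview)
Your approach is correct and constitutes a genuinely different, somewhat more economical route than the paper's. The paper decomposes the test factor as $V_{\varepsilon,Z}=\sum_i V_{\varepsilon,Z,i}=\sum_i(V_{\varepsilon,z_i,\hat q_{\varepsilon,i}}+H_{\varepsilon,z_i,\hat q_{\varepsilon,i}})$ and handles the ``self'' piece $V_{\varepsilon,Z,j}$ (via $\hat q_{\varepsilon,j}|\ln\varepsilon|$, the profile, and Lemma~\ref{H estimate}) and the ``cross'' pieces $V_{\varepsilon,Z,i}$, $i\neq j$ (via the far-field Green expansion) separately; this produces the intermediate form \eqref{225} with explicit \emph{positive} $\bar S_K$ and $G_K$ contributions, and only afterwards does the substitution of \eqref{q_i choice} flip the signs to $-2\pi^2$. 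Your decomposition $V_{\varepsilon,Z}=q|\ln\varepsilon|+(V_{\varepsilon,Z}-q|\ln\varepsilon|)$ together with \eqref{203} bypasses the explicit $H$ and off-diagonal computations entirely: all of the Robin/Green information is compressed into the single factor $q(z_j)\hat q_{\varepsilon,j}$ and extracted in one stroke via \eqref{q_i choice}. The price is that the origin of the $-2\pi^2$ coefficients is less transparent; the gain is that Lemma~\ref{H estimate} and the far-field estimate for $V_{\varepsilon,Z,i}$ are never invoked directly here (they are of course hidden in \eqref{203}).

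One presentation issue to fix: your intermediate display replaces $|\ln\varepsilon|^2/|\ln s_{\varepsilon,j}|$ by $|\ln\varepsilon|$ too early and then says the resulting $O(\varepsilon^2\ln|\ln\varepsilon|)$ ``still contains the information that will generate the Robin/Green terms''. That is confusing, because once you have written a genuine $O(\varepsilon^2\ln|\ln\varepsilon|)$ remainder you cannot recover $O(\varepsilon^2)$-level terms from it. The clean way is to keep the exact intermediate
\[
I_\varepsilon(V_{\varepsilon,Z})=\sum_j\pi\varepsilon^2\,q(z_j)\hat q_{\varepsilon,j}\,\frac{|\ln\varepsilon|^2}{|\ln s_{\varepsilon,j}|}\sqrt{\det K(z_j)}+\tfrac{\pi(p-1)}{4}\sum_j\varepsilon^2\hat q_{\varepsilon,j}^2\,\frac{|\ln\varepsilon|^2}{|\ln s_{\varepsilon,j}|^2}\sqrt{\det K(z_j)}+O(\varepsilon^{2+\gamma}),
\]
then substitute \eqref{q_i choice} for $\hat q_{\varepsilon,j}$ in the first sum (this produces the $-2\pi^2\bar S_K$ and $-2\pi^2 G_K$ terms with factors $|\ln\varepsilon|^2/(|\ln s_{\varepsilon,j}||\ln s_{\varepsilon,i}|)$, which by \eqref{2000} reduce to $1$ with the correct error), and only afterwards replace $|\ln\varepsilon|^2/|\ln s_{\varepsilon,j}|$ by $|\ln\varepsilon|$ in the surviving leading term. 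This is precisely what the paper does in passing from \eqref{225} to \eqref{344}, and your route lands at the same reduction step.
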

\begin{proof}
Note that
\begin{equation}\label{218}
\begin{split}
I_\varepsilon(V_{\varepsilon,Z})=&\frac{1}{2}\int_{\Omega}-\varepsilon^2\text{div}\left( K(x)\nabla V_{\varepsilon,Z}  \right)V_{\varepsilon,Z}-\frac{1}{p+1}\int_{\Omega}\left (V_{\varepsilon,Z}-q|\ln\varepsilon|\right )^{p+1}_+ \\
%=&\frac{1}{2}\sum_{j=1}^m\int_{\Omega}(V_{\delta,z_j,\hat{q}_{\delta,j}}-\hat{q}_{\delta,j})^p_+V_{\delta,Z}-\frac{1}{p+1}\int_{\Omega}(V_{\delta,Z}-q)^{p+1}_+\\
=&\frac{1}{2}\sum_{j=1}^N\int_{\Omega}\left (V_{\varepsilon,z_j,\hat{q}_{\varepsilon,j}}-\hat{q}_{\varepsilon,j}|\ln\varepsilon|\right )^p_+V_{\varepsilon,Z,j} +\frac{1}{2}\sum_{1\leq i\neq j\leq N}\int_{\Omega}\left (V_{\varepsilon,z_j,\hat{q}_{\varepsilon,j}}-\hat{q}_{\varepsilon,j}|\ln\varepsilon|\right )^p_+V_{\varepsilon,Z,i}\\
&-\frac{1}{p+1}\int_{\Omega}\left (V_{\varepsilon,Z}-q|\ln\varepsilon|\right )^{p+1}_+.
\end{split}
\end{equation}
By   the definition of $ V_{\varepsilon,Z,j} $, 
\begin{equation*}
\begin{split}
\int_{\Omega}\left (V_{\varepsilon,z_j,\hat{q}_{\varepsilon,j}}-\hat{q}_{\varepsilon,j}|\ln\varepsilon|\right )^p_+V_{\varepsilon,Z,j}
%=&\int_{\Omega}(V_{\delta,z_j,\hat{q}_{\delta,j}}-\hat{q}_{\delta,j})^p_+ (V_{\delta,z_j,\hat{q}_{\delta,j}}+H_{\delta,z_j,\hat{q}_{\delta,j}})\\
=&\hat{q}_{\varepsilon,j}|\ln\varepsilon|\int_{\Omega}\left (V_{\varepsilon,z_j,\hat{q}_{\varepsilon,j}}-\hat{q}_{\varepsilon,j}|\ln\varepsilon|\right )^p_+
+\int_{\Omega}\left (V_{\varepsilon,z_j,\hat{q}_{\varepsilon,j}}-\hat{q}_{\varepsilon,j}|\ln\varepsilon|\right )^{p+1}_+
\\&+\int_{\Omega}\left (V_{\varepsilon,z_j,\hat{q}_{\varepsilon,j}}-\hat{q}_{\varepsilon,j}|\ln\varepsilon|\right )^p_+H_{\varepsilon,z_j,\hat{q}_{\varepsilon,j}}.
\end{split}
\end{equation*}
%By the definition of $ V_{\delta,z_j,\hat{q}_{\delta,j}} $, the fact that 
By $ T_{z_j}^{-1}(T_{z_j}^{-1})^T=K(z_j) $ and \eqref{PI}, we get
\begin{equation*}
\begin{split}
\hat{q}_{\varepsilon,j}|\ln\varepsilon|\int_{\Omega}\left (V_{\varepsilon,z_j,\hat{q}_{\varepsilon,j}}-\hat{q}_{\varepsilon,j}|\ln\varepsilon|\right )^p_+
=&\hat{q}_{\varepsilon,j}|\ln\varepsilon| s_{\varepsilon,j}^2\left (\frac{\varepsilon}{s_{\varepsilon,j}}\right )^{\frac{2p}{p-1}}\int_{|T_{z_j}x|\leq 1}\phi\left (T_{z_j}x\right )^pdx\\
%=&\hat{q}_{\delta,j} s_{\delta,j}^2(\frac{\delta}{s_{\delta,j}})^{\frac{2p}{p-1}}\sqrt{\det(K(z_j))}\cdot 2\pi|\phi'(1)|\\
%=&\hat{q}_{\varepsilon,j}|\ln\varepsilon|\varepsilon^2|\phi'(1)|^{p-1}\left( \frac{\ln\frac{1}{s_{\delta,j}}}{\hat{q}_{\delta,j}}\right)^{p-1}|\phi'(1)|^{-p}\left( \frac{\ln\frac{1}{s_{\delta,j}}}{\hat{q}_{\delta,j}}\right)^{-p}\sqrt{\det(K(z_j))}\cdot 2\pi|\phi'(1)|\\
=&\frac{2\pi\varepsilon^2|\ln\varepsilon|^2}{|\ln s_{\varepsilon,j}|}\hat{q}_{\varepsilon,j}^2\sqrt{\det(K(z_j))},
\end{split}
\end{equation*}
and
\begin{equation*}
\begin{split}
\int_{\Omega}\left (V_{\varepsilon,z_j,\hat{q}_{\varepsilon,j}}-\hat{q}_{\varepsilon,j}|\ln\varepsilon|\right )^{p+1}_+
=& s_{\varepsilon,j}^2\left( \frac{\varepsilon}{s_{\varepsilon,j}}\right)^{\frac{2(p+1)}{p-1}}\sqrt{\det(K(z_j))}\cdot \frac{(p+1)\pi}{2}|\phi'(1)|^2\\
%=&\delta^2|\phi'(1)|^{p-1}\left( \frac{\ln\frac{1}{s_{\delta,j}}}{\hat{q}_{\delta,j}}\right)^{p-1}|\phi'(1)|^{-(p+1)}\left( \frac{\ln\frac{1}{s_{\delta,j}}}{\hat{q}_{\delta,j}}\right)^{-(p+1)}\sqrt{det(K(z_j))}\cdot \frac{(p+1)\pi}{2}|\phi'(1)|^2\\
=&\frac{(p+1)\pi\varepsilon^2|\ln\varepsilon|^2}{2 |\ln s_{\varepsilon,j}|^2}\hat{q}_{\varepsilon,j}^2\sqrt{\det(K(z_j))}.
\end{split}
\end{equation*}
By Lemma \ref{H estimate}, we have
\begin{equation*}
\begin{split}
\int_{\Omega}&\left (V_{\varepsilon,z_j,\hat{q}_{\varepsilon,j}}-\hat{q}_{\varepsilon,j}|\ln\varepsilon|\right )^p_+H_{\varepsilon,z_j,\hat{q}_{\varepsilon,j}}\\
&=\frac{2\pi\hat{q}_{\varepsilon,j}\sqrt{\det K(z_j)}|\ln\varepsilon|}{|\ln  s_{\varepsilon,j}|}\int_{\Omega}\left (V_{\varepsilon,z_j,\hat{q}_{\varepsilon,j}}-\hat{q}_{\varepsilon,j}|\ln\varepsilon|\right )^p_+\bar{S}_K(x,z_j)dx+O\left( \varepsilon^{2+\gamma}\right)  \\
&=\frac{4\pi^2\varepsilon^2 \bar{S}_K(z_j,z_j)|\ln\varepsilon|^2}{|\ln s_{\varepsilon,j}|^2}\hat{q}_{\varepsilon,j}^2\cdot \det(K(z_j))+O\left( \varepsilon^{2+\gamma} \right),
\end{split}
\end{equation*}
from which we deduce that, 
\begin{equation}\label{219}
\begin{split}
\int_{\Omega}\left (V_{\varepsilon,z_j,\hat{q}_{\varepsilon,j}}-\hat{q}_{\varepsilon,j}|\ln\varepsilon|\right )^p_+V_{\varepsilon,Z,j}
=&\frac{2\pi\varepsilon^2|\ln\varepsilon|^2}{|\ln s_{\varepsilon,j}|}\hat{q}_{\varepsilon,j}^2\sqrt{\det(K(z_j))}+\frac{(p+1)\pi\varepsilon^2|\ln\varepsilon|^2}{2 |\ln s_{\varepsilon,j}|^2}\hat{q}_{\varepsilon,j}^2\sqrt{\det(K(z_j))}\\
&+\frac{4\pi^2\varepsilon^2 \bar{S}_K(z_j,z_j)|\ln\varepsilon|^2}{|\ln s_{\varepsilon,j}|^2}\hat{q}_{\varepsilon,j}^2\cdot \det(K(z_j))+O\left( \varepsilon^{2+\gamma} \right).
\end{split}
\end{equation}
By Lemmas \ref{Green expansion}, \ref{H estimate}, the definition of $ \Lambda_{\varepsilon, N} $ and the fact that $ \lim_{\varepsilon\to 0}\varepsilon|\ln\varepsilon|=0 $, for $ 1\leq i\neq j\leq N $
\begin{equation}\label{220}
\begin{split}
\int_{\Omega}&\left (V_{\varepsilon,z_j,\hat{q}_{\varepsilon,j}}-\hat{q}_{\varepsilon,j}|\ln\varepsilon|\right )^p_+V_{\varepsilon,Z,i}\\
%=&\int_{\Omega}(V_{\delta,z_j,\hat{q}_{\delta,j}}-\hat{q}_{\delta,j})^p_+\left( \frac{\hat{q}_{\delta,i}}{\ln s_{\delta,i}}\ln |T_{z_i}(x-z_i)|-\frac{2\pi\hat{q}_{\delta,i}\sqrt{\det K(z_i)}}{\ln  s_{\delta,i}} \bar{S}_K(x,z_i) \right)dx+O\left(\frac{\varepsilon^{2+\gamma}}{|\ln\varepsilon|^{p+1}}\right) \\
&=\frac{2\pi\hat{q}_{\varepsilon,i}\sqrt{\det K(z_i)}|\ln\varepsilon|}{\ln|s_{\varepsilon,i}|}\int_{\Omega}\left (V_{\varepsilon,z_j,\hat{q}_{\varepsilon,j}}-\hat{q}_{\varepsilon,j}|\ln\varepsilon|\right )^p_+G_K(x,z_i)dx+O\left( \varepsilon^{2+\gamma} \right)\\
&=\frac{4\pi^2\varepsilon^2G_K(z_j,z_i)|\ln\varepsilon|^2}{|\ln s_{\varepsilon,i}||\ln s_{\varepsilon,j}|}\hat{q}_{\varepsilon,i}\hat{q}_{\varepsilon,j}\sqrt{\det(K(z_i))}\sqrt{\det(K(z_j))}+O\left( \varepsilon^{2+\gamma} \right).
\end{split}
\end{equation}

Finally by \eqref{203},
\begin{equation}\label{223}
\begin{split}
\int_{\Omega}&\left (V_{\varepsilon,Z}-q|\ln\varepsilon|\right )^{p+1}_+\\
=&\sum_{j=1}^N\int_{B_{Ls_{\varepsilon,j}}(z_j)}\left( V_{\varepsilon, z_j, \hat{q}_{\varepsilon,j}}-\hat{q}_{\varepsilon,j}|\ln\varepsilon|+O\left(  \varepsilon^\gamma \right) \right)^{p+1}_+\\
=&\sum_{j=1}^N\int_{\Omega}\left (V_{\varepsilon, z_j, \hat{q}_{\varepsilon,j}}-\hat{q}_{\varepsilon,j}|\ln\varepsilon|\right )^{p+1}_++O\left(  \varepsilon^\gamma \sum_{j=1}^N\int_{\Omega}\left (V_{\varepsilon, z_j, \hat{q}_{\varepsilon,j}}-\hat{q}_{\varepsilon,j}|\ln\varepsilon|\right )^{p}_+\right) \\
=&\sum_{j=1}^N\frac{(p+1)\pi\varepsilon^2|\ln\varepsilon|^2}{2 |\ln s_{\varepsilon,j}|^2}\hat{q}_{\varepsilon,j}^2\sqrt{\det(K(z_j))}+O\left( \varepsilon^{2+\gamma} \right).
\end{split}
\end{equation}
Taking \eqref{219}, \eqref{220} and \eqref{223} into \eqref{218}, one has
\begin{equation}\label{225}
\begin{split}
I_\varepsilon(V_{\varepsilon,Z})=&\sum_{j=1}^N\frac{\pi\varepsilon^2|\ln\varepsilon|^2}{|\ln s_{\varepsilon,j}|}\hat{q}_{\varepsilon,j}^2\sqrt{\det(K(z_j))}+\sum_{j=1}^N\frac{(p+1)\pi\varepsilon^2|\ln\varepsilon|^2}{4 |\ln s_{\varepsilon,j}|^2}\hat{q}_{\varepsilon,j}^2\sqrt{\det(K(z_j))}\\
&+\sum_{j=1}^N\frac{2\pi^2\varepsilon^2 \bar{S}_K(z_j,z_j)|\ln\varepsilon|^2}{|\ln s_{\varepsilon,j}|^2}\hat{q}_{\varepsilon,j}^2\cdot \det(K(z_j))\\
&+\sum_{1\leq i\neq j\leq N}\frac{2\pi^2\varepsilon^2G_K(z_j,z_i)|\ln\varepsilon|^2}{|\ln s_{\varepsilon,i}||\ln s_{\varepsilon,j}|}\hat{q}_{\varepsilon,i}\hat{q}_{\varepsilon,j}\sqrt{\det(K(z_i))}\sqrt{\det(K(z_j))}\\
&-\sum_{j=1}^N\frac{\pi\varepsilon^2|\ln\varepsilon|^2}{2 |\ln s_{\varepsilon,j}|^2}\hat{q}_{\varepsilon,j}^2\sqrt{\det(K(z_j))}+O\left( \varepsilon^{2+\gamma} \right).
\end{split}
\end{equation}
%\begin{equation}\label{225-1}
%\begin{split}
%I_\delta(V_{\delta,Z})=&\sum_{j=1}^m\frac{\pi\delta^2}{\ln\frac{1}{s_{\delta,j}}}\hat{q}_{\delta,j}^2\sqrt{\det(K(z_j))}+\sum_{j=1}^m\frac{(p+1)\pi\delta^2}{4\left(  \ln\frac{1}{s_{\delta,j}}\right)^2}\hat{q}_{\delta,j}^2\sqrt{\det(K(z_j))}\\
%&+\sum_{j=1}^m\frac{2\pi^2\delta^2 \bar{S}_K(z_j,z_j)}{\left( \ln\frac{1}{s_{\delta,j}}\right)^2}\hat{q}_{\delta,j}^2  \det(K(z_j))\\
%&+\sum_{1\leq i\neq j\leq m}\frac{2\pi^2\delta^2G_K(z_j,z_i)}{\ln\frac{1}{s_{\delta,i}}\ln\frac{1}{s_{\delta,j}}}\hat{q}_{\delta,i}\hat{q}_{\delta,j}\sqrt{\det(K(z_i))}\sqrt{\det(K(z_j))}\\
%&-\sum_{j=1}^m\frac{\pi\delta^2}{2\left(  \ln\frac{1}{s_{\delta,j}}\right)^2}\hat{q}_{\delta,j}^2\sqrt{\det(K(z_j))}+O\left(\frac{\varepsilon^{2+\gamma}}{|\ln\varepsilon|^{p+1}}\right).
%\end{split}
%\end{equation}
Taking \eqref{3-003}, \eqref{q_i choice} and \eqref{2000} into \eqref{225}, we get
%\begin{equation*}
%\begin{split}
%I_\delta(V_{\delta,Z})=&\sum_{j=1}^m\frac{\pi\delta^2}{\ln\frac{1}{\varepsilon}}q(z_j)^2\sqrt{\det K(z_j)}+\sum_{j=1}^m\frac{(p-1)\pi\delta^2}{4\left(  \ln\frac{1}{\varepsilon}\right)^2}q(z_j)^2\sqrt{\det K(z_j)}\\
%&-\sum_{j=1}^m\frac{2\pi^2\delta^2 q(z_j)^2  \det K(z_j)}{\left( \ln\frac{1}{\varepsilon}\right)^2}\bar{S}_K(z_j,z_j)\\
%&-\sum_{1\leq i\neq j\leq m}\frac{2\pi^2\delta^2q(z_i)q(z_j)\sqrt{\det K(z_i)}\sqrt{\det K(z_j)}}{\left( \ln\frac{1}{\varepsilon}\right) ^2}G_K(z_i,z_j)+O\left( \frac{\left( \ln|\ln\varepsilon|\right) ^2}{|\ln\varepsilon|^3}\right).
%\end{split}
%\end{equation*}
\eqref{344}. By \eqref{344} and Proposition \ref{pro401}, we have \eqref{345}.%The proof is thus finished.

%\textbf{Proof of Theorem \ref{thm0}:}

\end{proof}
\section{Proof of Theorem \ref{thm0}}

Having made all preparations, we are now ready to prove Theorem \ref{thm0} in this section. Assume that  $ \nabla \left (q^2\sqrt{\det K}\right )(0)=0 $ and $ \left( \tilde{z}_1^*,\cdots, \tilde{z}_N^*\right)  $ is a local maximizer or minimizer of $ H_N $ defined by \eqref{def of H_N}. We will find critical points of $ K_\varepsilon(Z) $  of the form $ Z=\left (\frac{\tilde{z}_1}{\sqrt{|\ln\varepsilon|}},\frac{\tilde{z}_2}{\sqrt{|\ln\varepsilon|}},\cdots,\frac{\tilde{z}_N}{\sqrt{|\ln\varepsilon|}}\right ) $ with $ \tilde{z}_i=\tilde{z}_i^*+o(1) $. Using Proposition \ref{order of main term}, Lemma \ref{Green expansion} and Taylor's expansion, we have
\begin{equation*}\label{601}
\begin{split}
K_\varepsilon&\left (\frac{\tilde{z}_1}{\sqrt{|\ln\varepsilon|}},\frac{\tilde{z}_2}{\sqrt{|\ln\varepsilon|}},\cdots,\frac{\tilde{z}_N}{\sqrt{|\ln\varepsilon|}}\right )\\
=&\sum_{j=1}^N\pi\varepsilon^2|\ln\varepsilon|\left(\left( q^2\sqrt{\det K}\right)(0)+ \frac{1}{2|\ln\varepsilon|}\tilde{z}_j\cdot \nabla^2\left(q^2\sqrt{\det K} \right)(0 )\cdot\tilde{z}_j^T+O\left( \frac{1}{|\ln\varepsilon|^{\frac{3}{2}}}\right)    \right) \\
&+\sum_{j=1}^N\frac{(p-1)}{4}\pi\varepsilon^2\left( \left( q^2\sqrt{\det K}\right) (0)+O\left( \frac{1}{|\ln\varepsilon|^{\frac{1}{2}}}\right) \right) \\
&-\sum_{j=1}^N2\pi^2\varepsilon^2 \left( \left( q^2  \det K\right)(0) \bar{S}_K(0,0)+O\left( \frac{1}{|\ln\varepsilon|^{\frac{1}{2}}}\right)\right) \\
&-\sum_{1\leq i\neq j\leq N}2\pi^2\varepsilon^2\bigg( \left( q^2  \sqrt{\det K}\right)(0)\cdot-\frac{1}{2\pi}\ln\bigg|\frac{K(0)^{-\frac{1}{2}}\left( \tilde{z}_i-\tilde{z}_j\right) }{\sqrt{|\ln\varepsilon|}}\bigg|+\left( q^2  \det K\right)(0) \bar{S}_K(0,0)\\
&+O\left( \frac{\ln|\ln\varepsilon|}{|\ln\varepsilon|^{\frac{1}{2}}}\right)\bigg) +O\left( \frac{\varepsilon^2\ln|\ln\varepsilon|}{|\ln\varepsilon|}\right),
\end{split}
\end{equation*}
from which we deduce that
\begin{equation}\label{602}
\begin{split}
K_\varepsilon&\left (\frac{\tilde{z}_1}{\sqrt{|\ln\varepsilon|}},\frac{\tilde{z}_2}{\sqrt{|\ln\varepsilon|}},\cdots,\frac{\tilde{z}_N}{\sqrt{|\ln\varepsilon|}}\right )\\
=&N\pi\varepsilon^2|\ln\varepsilon|\left( q^2\sqrt{\det K}\right)(0)-\frac{N(N-1)\pi\varepsilon^2}{2}\left( q^2 \sqrt{\det K} \right)(0)\ln|\ln\varepsilon|\\
&+\pi\varepsilon^2\bigg(  \frac{1}{2}\sum_{j=1}^N\tilde{z}_j\cdot \nabla^2\left(q^2\sqrt{\det K} \right)(0 )\cdot\tilde{z}_j^T+\sum_{1\leq i\neq j\leq N}\left( q^2  \sqrt{\det K}\right)(0)\ln| K(0)^{-\frac{1}{2}}\left( \tilde{z}_i - \tilde{z}_j\right)  |     \\
&-2N^2\pi   \left( q^2  \det K\right)(0) \bar{S}_K(0,0)+ \frac{N(p-1)}{4}   \left( q^2\sqrt{\det K}\right) (0)   \bigg)
+O\left( \frac{\varepsilon^2\ln|\ln\varepsilon|}{|\ln\varepsilon|^{\frac{1}{2}}}\right).
\end{split}
\end{equation}

Note that %Define an auxiliary function 
\begin{equation*} 
\begin{split}
H_N(\tilde{z}_1,\cdots, \tilde{z}_N)=&\frac{1}{2}\sum_{j=1}^N\tilde{z}_j\cdot \nabla^2\left(q^2\sqrt{\det K} \right)(0 )\cdot\tilde{z}_j^T\\
&+\sum_{1\leq i\neq j\leq N}\left( q^2  \sqrt{\det K}\right)(0)\ln| K(0)^{-\frac{1}{2}}\left( \tilde{z}_i - \tilde{z}_j\right)  |.
\end{split}
\end{equation*}

From assumptions, if $ \left( \tilde{z}_1^*,\cdots, \tilde{z}_N^*\right)  $ is a local maximizer or minimizer of $ H_N $, then using perturbation theory we have 
\begin{proposition}\label{prop6-1}
	There exists $$ Z_\varepsilon=\left( \tilde{z}_{1,\varepsilon},\cdots, \tilde{z}_{N,\varepsilon}\right)=\left( \tilde{z}_1^*,\cdots, \tilde{z}_N^*\right)+o(1)$$ such that $ \left( \frac{\tilde{z}_{1,\varepsilon}}{\sqrt{|\ln\varepsilon|}},\cdots, \frac{\tilde{z}_{N,\varepsilon}}{\sqrt{|\ln\varepsilon|}}\right) $ is a critical point of $ K_\varepsilon(Z).$ As a consequence,  \eqref{key equa} has a solution $ v_\varepsilon= V_{\varepsilon,Z_\varepsilon}+\omega_{\varepsilon,Z_\varepsilon} $, where $ V_{\varepsilon,Z_\varepsilon} $ satisfies \eqref{solu config} and $ \omega_{\varepsilon,Z_\varepsilon} $ is determined by Proposition \ref{exist and uniq of w}.
%O\left( \frac{\left( \ln|\ln\varepsilon|\right)^{\frac{1}{2}}}{|\ln\varepsilon|^{\frac{1}{4}}}\right)  $
\end{proposition}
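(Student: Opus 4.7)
The plan is to exploit the expansion \eqref{602} together with Lemma \ref{choice of Z} in a purely topological way: a strict local extremum of $H_N$ is stable under $C^0$-small perturbations, so the $C^1$ functional $K_\varepsilon$ (after rescaling) must attain an interior extremum arbitrarily close to $(\tilde z_1^*,\ldots,\tilde z_N^*)$, and any such interior extremum is automatically a critical point of $K_\varepsilon$ by $C^1$-regularity.

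More precisely, setting $\tilde z_i = \sqrt{|\ln\varepsilon|}\, z_i$ and writing
\[
\widetilde K_\varepsilon(\tilde z_1,\ldots,\tilde z_N) := K_\varepsilon\!\left(\tfrac{\tilde z_1}{\sqrt{|\ln\varepsilon|}},\ldots,\tfrac{\tilde z_N}{\sqrt{|\ln\varepsilon|}}\right),
\]
I would rewrite \eqref{602} in the form
\[
\widetilde K_\varepsilon(\tilde z_1,\ldots,\tilde z_N) = \mathcal{C}_\varepsilon + \pi\varepsilon^2 H_N(\tilde z_1,\ldots,\tilde z_N) + O\!\left(\tfrac{\varepsilon^2\ln|\ln\varepsilon|}{|\ln\varepsilon|^{1/2}}\right),
\]
where $\mathcal{C}_\varepsilon$ collects all terms independent of $(\tilde z_1,\ldots,\tilde z_N)$ and the remainder is uniform on the closed ball $\overline{B_{\bar\rho}}(\tilde z_1^*,\ldots,\tilde z_N^*)$, provided $\bar\rho < \rho_0/2$ so that the ball corresponds, in original coordinates, to a compact subset of $\Lambda_{\varepsilon,N}$ as defined in \eqref{admis set}.

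Next I would use strictness of the extremum. Because $(\tilde z_1^*,\ldots,\tilde z_N^*)$ is a strict local maximum (respectively minimum) of the continuous function $H_N$, after possibly shrinking $\bar\rho$ there exists $\delta > 0$ with
\[
\sup_{\partial B_{\bar\rho}} H_N \le H_N(\tilde z_1^*,\ldots,\tilde z_N^*) - \delta \quad \bigl(\text{resp. } \inf_{\partial B_{\bar\rho}} H_N \ge H_N(\tilde z_1^*,\ldots,\tilde z_N^*) + \delta\bigr).
\]
Since the remainder in the expansion above is $o(\varepsilon^2)$ uniformly on $\overline{B_{\bar\rho}}$, for $\varepsilon$ small enough the same strict gap (with $\delta$ replaced by $\delta/2$) persists between the value of $\widetilde K_\varepsilon - \mathcal{C}_\varepsilon$ at the center and its values on $\partial B_{\bar\rho}$. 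Hence the maximum (resp. minimum) of $\widetilde K_\varepsilon$ over the compact set $\overline{B_{\bar\rho}}$ is attained at some interior point $(\tilde z_{1,\varepsilon},\ldots,\tilde z_{N,\varepsilon}) \in B_{\bar\rho}$. Setting $Z_\varepsilon = (\tilde z_{1,\varepsilon}/\sqrt{|\ln\varepsilon|},\ldots,\tilde z_{N,\varepsilon}/\sqrt{|\ln\varepsilon|})$, this point lies in the interior of $\Lambda_{\varepsilon,N}$, and since $K_\varepsilon$ is $C^1$ in $Z$ by Proposition \ref{differ of w}, the interior extremum is a critical point. Lemma \ref{choice of Z} then yields that $v_\varepsilon = V_{\varepsilon,Z_\varepsilon} + \omega_{\varepsilon,Z_\varepsilon}$ is a genuine solution of \eqref{key equa}. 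To finish, the asymptotic $(\tilde z_{1,\varepsilon},\ldots,\tilde z_{N,\varepsilon}) \to (\tilde z_1^*,\ldots,\tilde z_N^*)$ follows from a standard compactness argument: along any subsequence $\tilde z_{i,\varepsilon_n} \to \tilde z_i^\infty \in \overline{B_{\bar\rho}}$, the uniform convergence $\widetilde K_\varepsilon - \mathcal{C}_\varepsilon - \pi\varepsilon^2 H_N \to 0$ forces $(\tilde z_1^\infty,\ldots,\tilde z_N^\infty)$ to be an extremum of $H_N$ on $\overline{B_{\bar\rho}}$, and strictness identifies it as $(\tilde z_1^*,\ldots,\tilde z_N^*)$; shrinking $\bar\rho$ yields the $o(1)$ bound.

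The main (and essentially only) delicate point is the \emph{uniformity} of the $o(\varepsilon^2)$ remainder in \eqref{602} on the whole closed ball $\overline{B_{\bar\rho}}$, which one must verify traces back carefully through Propositions \ref{pro401} and \ref{order of main term} and does not blow up as $\tilde z_i \to \tilde z_j$ for $i \neq j$ within the fixed configuration; here the crucial input is the separation estimate $|z_i - z_j| \gtrsim |\ln\varepsilon|^{-1/2}$ built into $\Lambda_{\varepsilon,N}$, which makes $G_K(z_i,z_j) = O(\ln|\ln\varepsilon|)$ as recorded in \eqref{3-003}. Notably, no $C^1$ information on the remainder is required — a great simplification, since Proposition \ref{order of main term} only provides a $C^0$ expansion.
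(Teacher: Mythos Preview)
Your argument is correct and is precisely the ``perturbation theory'' the paper invokes in its one-line proof of Proposition~\ref{prop6-1}; you have simply spelled out the standard topological stability-of-strict-extrema argument that the authors leave implicit. Your observation that only $C^0$ control on the remainder in \eqref{602} is needed, with uniformity guaranteed by the separation built into $\Lambda_{\varepsilon,N}$ via \eqref{3-003}, is exactly the point.
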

\begin{proof}
Using \eqref{602} and the former deduction, we can easily get the result.
\end{proof}

We can also estimate     $ L^1 $ norm of $ v_\varepsilon $, which corresponds to the circulation of each component of vorticity fields in our construction in subsequent sections.
\begin{lemma}\label{circulation}
For $ i=1,\cdots,N $, we have
	\begin{equation*}
	\lim_{\varepsilon\to 0}\frac{1}{\varepsilon^2}\int_{B_{|\ln\varepsilon|^{-1}}\left (\frac{\tilde{z}_{i,\varepsilon}}{\sqrt{|\ln\varepsilon|}}\right )}\left (v_\varepsilon-q|\ln \varepsilon|\right )^{p}_+dx=2\pi \left( q\sqrt{\det K}\right) (0).
	\end{equation*}
%	As a consequence,
%	\begin{equation*}
%	\lim_{\varepsilon\to 0}\frac{1}{\varepsilon^2}\int_{\Omega}\left (u_\varepsilon-q\ln\frac{1}{\varepsilon}\right )^{p}_+dx=2\pi m q\sqrt{\det K}(x_{0}).
%	\end{equation*}
\end{lemma}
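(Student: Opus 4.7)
The plan is to localize the integral to the bubble support near $z_{i,\varepsilon}:=\tilde{z}_{i,\varepsilon}/\sqrt{|\ln\varepsilon|}$, replace $v_\varepsilon$ by the single profile $V_{\varepsilon, z_{i,\varepsilon},\hat{q}_{\varepsilon,i}}$ up to controlled errors, rescale, and apply the Pohozaev identity \eqref{PI}.

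First I localize: Lemma \ref{lemA-5} shows that $(v_\varepsilon-q|\ln\varepsilon|)_+$ is supported in $\cup_{j}\left(T_{z_{j,\varepsilon}}^{-1}B_{Ls_{\varepsilon,j}}(0)+z_{j,\varepsilon}\right)$, and since $s_{\varepsilon,j}=O(\varepsilon)$ while distinct bubbles are separated at distance $|\ln\varepsilon|^{-1/2}\gg\varepsilon$, only the $i$-th bubble intersects $B_{|\ln\varepsilon|^{-1}}(z_{i,\varepsilon})$ for small $\varepsilon$. On this bubble I combine the matching identity \eqref{203}
\begin{equation*}
V_{\varepsilon,Z_\varepsilon}(x)-q(x)|\ln\varepsilon|=V_{\varepsilon, z_{i,\varepsilon},\hat{q}_{\varepsilon,i}}(x)-\hat{q}_{\varepsilon,i}|\ln\varepsilon|+O(\varepsilon^\gamma)
\end{equation*}
with the $L^\infty$-bound $\|\omega_{\varepsilon,Z_\varepsilon}\|_{L^\infty}=O(\varepsilon^\gamma)$ from Proposition \ref{exist and uniq of w}. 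Since the single-bubble amplitude is of order $\varepsilon^{2/(p-1)}s_{\varepsilon,i}^{-2/(p-1)}=O(1)$ by \eqref{201}, a Taylor expansion of the $p$-th power gives
\begin{equation*}
\int_{B_{|\ln\varepsilon|^{-1}}(z_{i,\varepsilon})}(v_\varepsilon-q|\ln\varepsilon|)^p_+\,dx=\int(V_{\varepsilon, z_{i,\varepsilon},\hat{q}_{\varepsilon,i}}-\hat{q}_{\varepsilon,i}|\ln\varepsilon|)^p_+\,dx+O(\varepsilon^{2+\gamma}),
\end{equation*}
because the perturbation is uniformly $O(\varepsilon^\gamma)$ on a support of area $O(\varepsilon^2)$.

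Second, I change variables $y=T_{z_{i,\varepsilon}}(x-z_{i,\varepsilon})/s_{\varepsilon,i}$. Using $T_{z_{i,\varepsilon}}^{-1}(T_{z_{i,\varepsilon}}^{-1})^T=K(z_{i,\varepsilon})$ gives $\det T_{z_{i,\varepsilon}}^{-1}=\sqrt{\det K(z_{i,\varepsilon})}$, so $dx=s_{\varepsilon,i}^2\sqrt{\det K(z_{i,\varepsilon})}\,dy$, while the integrand becomes $\varepsilon^{2p/(p-1)}s_{\varepsilon,i}^{-2p/(p-1)}\phi(|y|)^p$ on $B_1(0)$. The main integral reduces to
\begin{equation*}
\frac{s_{\varepsilon,i}^2}{\varepsilon^2}\left(\frac{\varepsilon}{s_{\varepsilon,i}}\right)^{\!\frac{2p}{p-1}}\sqrt{\det K(z_{i,\varepsilon})}\int_{B_1(0)}\phi^p\,dy.
\end{equation*}
Rewriting the prefactor via \eqref{201} as $\hat{q}_{\varepsilon,i}|\ln\varepsilon|/(|\phi'(1)|\,|\ln s_{\varepsilon,i}|)$ and invoking the Pohozaev identity $\int_{B_1(0)}\phi^p=2\pi|\phi'(1)|$ from \eqref{PI} collapses the expression to $2\pi\hat{q}_{\varepsilon,i}|\ln\varepsilon|\sqrt{\det K(z_{i,\varepsilon})}/|\ln s_{\varepsilon,i}|$. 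The asymptotics \eqref{2000} yield $\hat{q}_{\varepsilon,i}=q(z_{i,\varepsilon})+o(1)$ and $|\ln s_{\varepsilon,i}|/|\ln\varepsilon|\to 1$, and the fact that $z_{i,\varepsilon}\to 0$ gives the limit $2\pi(q\sqrt{\det K})(0)$.

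The only subtle point is controlling the expansion of $(\cdot)^p_+$ when passing from $v_\varepsilon$ to $V_{\varepsilon, z_{i,\varepsilon},\hat{q}_{\varepsilon,i}}$: one must bound $|(A+E)^p_+-A^p_+|$ where $A$ is $O(1)$ on the inner bubble region, $A_+=0$ outside a ball of $K(z_{i,\varepsilon})$-radius $\sim s_{\varepsilon,i}\sim\varepsilon$, and $E=O(\varepsilon^\gamma)$ uniformly. Since $A$ is bounded, the pointwise error is $O(\varepsilon^\gamma)$, hence integrates to $O(\varepsilon^{2+\gamma})$, i.e.\ $o(\varepsilon^2)$, which is negligible after dividing by $\varepsilon^2$. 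All remaining steps are direct calculations using the identities already recorded in Section 2.
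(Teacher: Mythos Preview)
Your proof is correct and follows essentially the same approach as the paper: localize via the bubble support, replace $v_\varepsilon-q|\ln\varepsilon|$ by the single profile $V_{\varepsilon,z_{i,\varepsilon},\hat{q}_{\varepsilon,i}}-\hat{q}_{\varepsilon,i}|\ln\varepsilon|$ using \eqref{203} and Proposition~\ref{exist and uniq of w}, rescale, and apply \eqref{PI} together with \eqref{2000}. One tiny caveat: Lemma~\ref{lemA-5} is stated for $V_{\varepsilon,Z}$, not for $v_\varepsilon=V_{\varepsilon,Z}+\omega_{\varepsilon,Z}$, but since $\|\omega_{\varepsilon,Z}\|_{L^\infty}=O(\varepsilon^\gamma)$ the same localization holds for $v_\varepsilon$ with a possibly larger constant $L$, which is how the paper implicitly uses it as well.
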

\begin{proof}
	
	It follows from  \eqref{q_i choice}, \eqref{2000}, \eqref{203} and Proposition \ref{exist and uniq of w} that
	\begin{equation*}
	\begin{split}
	\frac{1}{\varepsilon^2}&\int_{B_{|\ln\varepsilon|^{-1}}\left (\frac{\tilde{z}_{i,\varepsilon}}{\sqrt{|\ln\varepsilon|}}\right )}\left (v_\varepsilon-q|\ln \varepsilon|\right )^{p}_+dx
	\\
	&=\frac{1}{\varepsilon^2}\int_{B_{Ls_{\varepsilon,i}}\left (\frac{\tilde{z}_{i,\varepsilon}}{\sqrt{|\ln\varepsilon|}}\right )}\left(V_{\varepsilon, z_{i,\varepsilon}, \hat{q}_{\varepsilon,i}}(x)-\hat{q}_{\varepsilon,i}|\ln\varepsilon|+O\left( \varepsilon^\gamma\right)\right)^{p}_+dx\\
	%&=\frac{|\ln\varepsilon|^p}{\varepsilon^2}s_{\delta,i}^2\left( \frac{\delta}{s_{\delta,i}}\right)^{\frac{2p}{p-1}}\int_{|T_{z_{i,\delta}}x|\leq 1}\phi(T_{z_{i,\delta}}x)^pdx+o(1)\\
	&=\frac{s_\varepsilon^2}{\varepsilon^2}\left( \frac{\hat{q}_{\varepsilon,i}}{|\phi'(1)|}\right) ^{p}\left( \frac{|\ln\varepsilon|}{|\ln s_{\varepsilon,i}|}\right)^{p}\sqrt{\det K}\left (\frac{\tilde{z}_{i,\varepsilon}}{\sqrt{|\ln\varepsilon|}}\right )\cdot 2\pi|\phi'(1)|+O(\varepsilon^\gamma)\\
	&= \frac{2\pi\hat{q}_{\varepsilon,i} |\ln\varepsilon|}{|\ln s_{\varepsilon,i}|}\cdot \sqrt{\det K }\left (\frac{\tilde{z}_{i,\varepsilon}}{\sqrt{|\ln\varepsilon|}}\right )+O(\varepsilon^\gamma) \\
	&= 2\pi \left( q \sqrt{\det K }\right) (0)+O\left (\frac{1}{\sqrt{|\ln\varepsilon|}}\right ).
	\end{split}
	\end{equation*}
\end{proof}

\textbf{Proof of Theorem \ref{thm0}:}

The rest of properties of $ v_\varepsilon $ can be easily deduced from the decomposition of $ v_\varepsilon $ in \eqref{solu config}, Proposition \ref{prop6-1} and Lemma \ref{circulation}. The proof of Theorem \ref{thm0} is thus finished.

\section{Proof of Theorems \ref{thm01}, \ref{thm02}, \ref{thm03}, \ref{thm04}, \ref{thm05}}

The proof of Theorems \ref{thm01}-\ref{thm05} are similar to that of Theorem \ref{thm0} and we therefore just show the difference to keep this paper not too long.

\textbf{Proof of Theorem \ref{thm01}:}
 Let $ N\geq 2, R^*>0, r_*>0, h>0, \kappa>0 $. Consider  the following equations
\begin{equation}\label{equa 1}
\begin{cases}
-\varepsilon^2\text{div}(K_H(x)\nabla v)=  \left (v-\left(\frac{\alpha}{2}|x|^2+\beta \right)|\ln\varepsilon| \right )^{p}_+\textbf{1}_{D_{\frac{\rho_0}{\sqrt{|\ln\varepsilon|}}}\left (C_{\frac{r_*}{\sqrt{|\ln\varepsilon|}}}\right )},\ \ &\text{in}\  B_{R^*}(0),\\
v=0,\ \ &\text{on}\ \partial B_{R^*}(0).
\end{cases}
\end{equation}
Here  $ \rho_0>0 $ sufficiently small, $ \textbf{1}_{D} $ is Heaviside function of some set $ D $, $ C_{\frac{r_*}{\sqrt{|\ln\varepsilon|}}}=\left \{x\mid |x|=\frac{r_*}{\sqrt{|\ln\varepsilon|}}\right \} $ is the circle centered at the origin of radius $ \frac{r_*}{\sqrt{|\ln\varepsilon|}} $, and $ D_{\frac{\rho_0}{\sqrt{|\ln\varepsilon|}}}\left (C_{\frac{r_*}{\sqrt{|\ln\varepsilon|}}}\right )=\left \{x\mid |x|\in \left (\frac{r_*-\rho_0}{\sqrt{|\ln\varepsilon|}},\frac{r_*+\rho_0}{\sqrt{|\ln\varepsilon|}} \right )\right \} $ is the $ \frac{\rho_0}{\sqrt{|\ln\varepsilon|}} $-neighborhood of $ C_{\frac{r_*}{\sqrt{|\ln\varepsilon|}}} $.  The constants $ \alpha $ and $ \beta $ in \eqref{equa 1} are chosen by
\begin{equation*} 
\alpha=\frac{\kappa}{4\pi}\left( \frac{1}{h^2}-\frac{N-1}{r_*^2}\right),\ \ \ \ \beta=\frac{\kappa}{2\pi}.
\end{equation*}

So if we take  $ K(x)=K_H(x) $  and $ q(x)=\frac{\alpha}{2}|x|^2+\beta $ and $ \Omega=B_{R^*}(0) $ in \eqref{key equa}, we get \eqref{equa 1}. 
In this case we choose 
\begin{equation*} 
\begin{split}
\Lambda_{\varepsilon, N,1}=\bigg\{&Z=(z_1,\cdots, z_N)\in\left(  B_{R^*}(0)\right) ^{(N)}\mid z_j\in D_{\frac{\rho_0}{2\sqrt{|\ln\varepsilon|}}}\left (C_{\frac{r_*}{\sqrt{|\ln\varepsilon|}}}\right ),\ \  z_j=e^{\textbf{i}\frac{2\pi(j-1)}{N}}z_1.\bigg\}
\end{split}
\end{equation*}
Our goal is to construct solutions of \eqref{equa 1} being of the form $ V_{\varepsilon,Z}+\omega_{\varepsilon,Z} $, where $ Z\in \Lambda_{\varepsilon, N,1} $.
Following the proof of Theorem \ref{thm0}, we only need to calculate critical points of $ K_\varepsilon(Z) $ in \eqref{functional I}. Note that $  K_H$ and  $ q $ are rotational-invariant. Since $ z_j=e^{\textbf{i}\frac{2\pi(j-1)}{N}}z_1 $, one can prove that $ K_\varepsilon(Z) $ is a   function of $ z_1 $ %, i,e, $ K_\varepsilon(Z)=  \hat{K}_\varepsilon(z_1)   $ for some $ \hat{K}_\varepsilon $. Moreover, $ \hat{K}_\varepsilon $ 
and is rotational-invariant, i.e., $ K_\varepsilon(Z)=\tilde{K}_\varepsilon(|z_1|) $ for some $ \tilde{K}_\varepsilon $. Let $ z_1=\left( \frac{r}{\sqrt{|\ln\varepsilon|}},0\right)  $ for $ r\in \left( r_*-\frac{\rho_0}{2},r_*+\frac{\rho_0}{2} \right)  $. %Using  Proposition \ref{order of main term} and 
Since   $ \nabla \left (q^2\sqrt{\det K_H}\right )(0)=0$, using \eqref{602}, one computes that
\begin{equation} \label{701}
\begin{split}
\tilde{K}_\varepsilon\left (\frac{r}{\sqrt{|\ln\varepsilon|}}\right )
=&N\pi\varepsilon^2|\ln\varepsilon|\left( q^2\sqrt{\det K_H}\right)(0)-\frac{N(N-1)\pi\varepsilon^2}{2}\left( q^2\det K_H\right)(0)\ln|\ln\varepsilon|\\
&+\pi\varepsilon^2\bigg(  \frac{N}{2} \left(q^2\sqrt{\det K_H} \right)^{''}(0 )r^2+ \frac{N(p-1)}{4}   \left( q^2\sqrt{\det K_H}\right) (0)      \\
&-2N\pi   \left( q^2  \det K_H\right)(0) \bar{S}_K(0,0)+N(N-1)\left( q^2  \det K_H\right)(0)\ln r\\
&+\sum_{1\leq j\neq k\leq N}\left( q^2  \det K_H\right)(0)\ln| e^{\textbf{i}\frac{2\pi(j-1)}{N}} - e^{\textbf{i}\frac{2\pi(k-1)}{N}} |\bigg)  
+O\left( \frac{\varepsilon^2\ln|\ln\varepsilon|}{|\ln\varepsilon|^{\frac{1}{2}}}\right).
\end{split}
\end{equation}
 So the existence of critical points of  $ \tilde{K}_\varepsilon $ is guaranteed by the existence of critical points of the function 
$$H_1(r)=\frac{N}{2} \left(q^2\sqrt{\det K_H} \right)^{''}(0 )r^2+N(N-1)\left( q^2  \det K_H\right)(0)\ln r.$$
We have
\begin{lemma}\label{lm701}
The function 
$ H_1(r) $
has the unqiue critical point $ r=r_* $ in $  \left( r_*-\frac{\rho_0}{2},r_*+\frac{\rho_0}{2} \right) $, which is a maximizer.
\end{lemma}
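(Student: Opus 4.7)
The proof plan is essentially a direct computation of $H_1'(r)$ using the explicit values of $K_H(0)$, $q(0)$, and the Hessian $\nabla^2(q^2\sqrt{\det K_H})(0)$, followed by solving the resulting scalar equation and checking the second derivative sign.

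First I would compute the relevant constants. Since $K_H(0)=\mathrm{Id}$, one has $\det K_H(0)=1$, so $(q^2\det K_H)(0)=(q^2\sqrt{\det K_H})(0)=q(0)^2=\beta^2$, with $\beta=\kappa/(2\pi)$. A short expansion using $\sqrt{\det K_H(x)}=h/\sqrt{h^2+|x|^2}=1-|x|^2/(2h^2)+O(|x|^4)$ and $q(x)=\beta+\tfrac{\alpha}{2}|x|^2$ yields $q^2\sqrt{\det K_H}=\beta^2+(\alpha\beta-\beta^2/(2h^2))|x|^2+O(|x|^4)$. Consequently $\nabla^2(q^2\sqrt{\det K_H})(0)=(2\alpha\beta-\beta^2/h^2)\mathrm{Id}$, and (with the paper's shorthand interpreted as the common eigenvalue) $(q^2\sqrt{\det K_H})''(0)=2\alpha\beta-\beta^2/h^2$.

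Next I would differentiate $H_1$. A direct computation gives
\begin{equation*}
H_1'(r)=N(q^2\sqrt{\det K_H})''(0)\,r+\frac{N(N-1)(q^2\det K_H)(0)}{r},
\end{equation*}
so critical points in $r>0$ are characterized by $r^2=-\frac{(N-1)\beta^2}{2\alpha\beta-\beta^2/h^2}=\frac{(N-1)\beta}{\beta/h^2-2\alpha}$. Plugging in $\alpha=\frac{\kappa}{4\pi}(1/h^2-(N-1)/r_*^2)$ and $\beta=\kappa/(2\pi)$, the denominator simplifies to $\beta/h^2-2\alpha=\kappa(N-1)/(2\pi r_*^2)$, whence $r^2=\frac{2\pi\beta r_*^2}{\kappa}=r_*^2$. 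This identifies $r=r_*$ as the unique positive solution of $H_1'(r)=0$, hence the unique critical point in $(r_*-\rho_0/2,r_*+\rho_0/2)$ for $\rho_0$ small.

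Finally I would verify the sign of $H_1''(r_*)$. By the identity above, $(q^2\sqrt{\det K_H})''(0)=-\kappa^2(N-1)/((2\pi)^2 r_*^2)<0$, and
\begin{equation*}
H_1''(r_*)=N(q^2\sqrt{\det K_H})''(0)-\frac{N(N-1)\beta^2}{r_*^2}=-\frac{2N(N-1)\kappa^2}{(2\pi)^2 r_*^2}<0,
\end{equation*}
showing $r_*$ is a strict local (hence global, by uniqueness) maximizer on the interval.

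No step is particularly delicate; the main bookkeeping obstacle is simply keeping straight the two distinct quantities $(q^2\sqrt{\det K_H})(0)$ and $(q^2\det K_H)(0)$ (which coincide at $x=0$ since $\det K_H(0)=1$, but whose Taylor coefficients differ), and interpreting $(q^2\sqrt{\det K_H})''(0)$ as the scalar arising from the isotropic Hessian. Once the compatibility condition $\alpha=\frac{\kappa}{4\pi}(1/h^2-(N-1)/r_*^2)$ is inserted, the cancellation $\beta/h^2-2\alpha=\kappa(N-1)/(2\pi r_*^2)$ makes the identification $r=r_*$ immediate.
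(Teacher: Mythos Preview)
Your proof is correct and follows essentially the same route as the paper: both compute $(q^2\sqrt{\det K_H})''(0)=\tfrac{\beta(2\alpha h^2-\beta)}{h^2}=2\alpha\beta-\beta^2/h^2$ and $(q^2\det K_H)(0)=\beta^2$, differentiate $H_1$, and use the explicit values of $\alpha,\beta$ to identify $r_*$ as the unique critical point. Your explicit second-derivative check is a slight elaboration beyond the paper's terse ``we show that $r_*$ is the unique maximizer,'' but the argument is the same.
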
  
\begin{proof}
We can prove that $  \left(q^2\sqrt{\det K_H} \right)^{''}(0 )=\frac{\beta(2\alpha h^2-\beta)}{h^2} $, $ \left( q^2  \det K_H\right)(0)=\beta^2 $ and $ H'_1(r)=N\left(\frac{\beta(2\alpha h^2-\beta)}{h^2} r+\frac{(N-1)\beta^2}{r} \right)  $. Taking $ \alpha,\beta $ into $ H'_1 $, we show that $ r_* $ is the unique maximizer.
\end{proof}

Note that from \eqref{701},
\begin{equation*}  
\begin{split}
K_\varepsilon\left (Z\right )
=&N\pi\varepsilon^2|\ln\varepsilon|\left( q^2\sqrt{\det K_H}\right)(0)-\frac{N(N-1)\pi\varepsilon^2}{2}\left( q^2\det K_H\right)(0)\ln|\ln\varepsilon|\\
&+\pi\varepsilon^2\bigg(  H_1(r)+ \frac{N(p-1)}{4}   \left( q^2\sqrt{\det K_H}\right) (0)      -2N\pi   \left( q^2  \det K_H\right)(0) \bar{S}_K(0,0)\\
&+\sum_{1\leq j\neq k\leq N}\left( q^2  \det K_H\right)(0)\ln| e^{\textbf{i}\frac{2\pi(j-1)}{N}} - e^{\textbf{i}\frac{2\pi(k-1)}{N}} |\bigg)  
+\tilde{N}_\varepsilon(z_1),
\end{split}
\end{equation*}
where $ \tilde{N}_\varepsilon(z_1) $ is a $O\left( \frac{\varepsilon^2\ln|\ln\varepsilon|}{|\ln\varepsilon|^{\frac{1}{2}}}\right) -$perturbation term which is invariant under rotation. %Since $ r_* $ is a strict local maximizer of $ H_1 $, 
By Lemma \ref{lm701}, there exists $ z_{1,\varepsilon}=\left( \frac{r_\varepsilon}{\sqrt{|\ln\varepsilon|}},0\right)  $ near $ \left( \frac{r_*}{\sqrt{|\ln\varepsilon|}},0\right) $ being a maximizer of $ \tilde{K}_\varepsilon  $.  Let $ z_{j,\varepsilon}=e^{\textbf{i}\frac{2\pi(j-1)}{N}}z_{1,\varepsilon} $ for $ j=1,\cdots,N. $ Then $ \left( z_{1,\varepsilon},\cdots,z_{N,\varepsilon} \right) $ is a maximizer of $ K_\varepsilon(Z) $, which yields a solution $ v_\varepsilon $ of \eqref{equa 1}. Similar to Theorem \ref{thm0}, %there exist $ \left (\tilde{z}_{1,\varepsilon}, \cdots, \tilde{z}_{N,\varepsilon}\right ) $ such that $ \tilde{z}_{i,\varepsilon}=\tilde{z}_{i}^*+o(1) $ for $ i=1,\cdots,N $ and 
\begin{equation*}
\left \{v_\varepsilon>q|\ln\varepsilon|\right \}\subseteq \cup_{i=1}^N B_{|\ln\varepsilon|^{-1}}\left (z_{i,\varepsilon}\right ),
\end{equation*}
and if we define $ A_{\varepsilon,i}=\left\{v_\varepsilon>q|\ln\varepsilon|\right\}\cap B_{|\ln\varepsilon|^{-1}}\left (z_{i,\varepsilon}\right )  $, then there are $ R_1,R_2>0 $ independent of $ \varepsilon $  such that $ B_{R_1\varepsilon}\left ( z_{i,\varepsilon}\right )\subseteq A_{\varepsilon,i}\subseteq B_{R_2\varepsilon}\left (z_{i,\varepsilon}\right ) $ and 
\begin{equation*}
\frac{1}{\varepsilon^2}\int_{A_{\varepsilon,i}}\left( v_\varepsilon-q|\ln\varepsilon|\right)^{p}_+dx=2\pi \left( q\sqrt{\det K_H }\right) (0)+O\left(|\ln\varepsilon|^{-\frac{1}{2}} \right) = \kappa+O\left(|\ln\varepsilon|^{-\frac{1}{2}} \right).
\end{equation*}

%Hence by Theorem \ref{thmA}, for any $ \varepsilon $ small there exists a solution $ u_\varepsilon $ of \eqref{eq01} concentrating near $ (r_*,0) $ with $ \alpha=\frac{c}{4\pi k\sqrt{k^2+r_*^2}} $ and $\frac{1}{\varepsilon^2} \int_{A_\varepsilon}\left( u_\varepsilon-\left( \frac{\alpha|x|^2}{2}+\beta\right)\ln\frac{1}{\varepsilon}\right)^p_+dx $ tending to $ c $. 
Define for any $ (x_1,x_2,x_3)\in B_{R^*}(0)\times\mathbb{R}, t\in\mathbb{R}  $
\begin{equation*}
\mathbf{w}_\varepsilon(x_1,x_2,x_3,t)= \frac{w_\varepsilon(x_1,x_2,x_3,t)}{h}(x_2,-x_1,h),
\end{equation*}
where $ w_\varepsilon(x_1,x_2,x_3,t) $ is a helical function satisfying
\begin{equation*} 
w_\varepsilon(x_1,x_2,0,t)=\frac{1}{\varepsilon^2}\left( v_\varepsilon(\bar{R}_{-\alpha|\ln\varepsilon| t}(x_1,x_2))-\left( \frac{\alpha|(x_1,x_2)|^2}{2}+\beta\right)|\ln \varepsilon|\right)^p_+\textbf{1}_{D_{\frac{\rho_0}{\sqrt{|\ln\varepsilon|}}}\left (C_{\frac{r_*}{\sqrt{|\ln\varepsilon|}}}\right )}.
\end{equation*}
Then $ w_\varepsilon(x_1,x_2,0,t) $ satisfies \eqref{vor str eq} and $ \mathbf{w}_\varepsilon $ is a  helical vorticity field of \eqref{Euler eq2}.  %Moreover, $ w_\varepsilon(x_1,x_2,0,t) $ rotates clockwise around the origin with angular velocity $ \alpha|\ln\varepsilon| $. 
The circulation of $ \mathbf{w}_\varepsilon $ is
\begin{equation*}
\iint_{A_{\varepsilon,i}}\mathbf{w}_\varepsilon\cdot\mathbf{n}d\sigma=\frac{1}{\varepsilon^2} \int_{A_{\varepsilon,i}}\left( v_\varepsilon-\left( \frac{\alpha|x|^2}{2}+\beta\right)|\ln \varepsilon|\right)^p_+dx=\kappa+O\left(|\ln\varepsilon|^{-\frac{1}{2}} \right).
\end{equation*}

Recall $ \tilde{X}_{j,\varepsilon}(s, t)=\frac{r_*}{\sqrt{|\ln\varepsilon|}}e^{-\mathbf{i}\alpha|\ln\varepsilon|t}e^{\mathbf{i}\frac{s}{h}}e^{\mathbf{i}\frac{2\pi (j-1)}{N}} $ for $ j=1,\cdots,N $. Based on the above derivation,   $ \mathbf{w}_\varepsilon $ tends asymptotically to $ \cup_{j=1}^N\tilde{X}_{j,\varepsilon} $ in sense of both topology and distribution. %, i.e.,
%\begin{equation*} 
%\mathbf{w}_\varepsilon(x,t)- \kappa\sum_{j=1}^N\delta_{\tilde{X}_{j,\varepsilon}}\textbf{t}_{\tilde{X}_{j,\varepsilon}}\rightharpoonup0\ \ \ \ \text{as}\ \varepsilon\to0.
%\end{equation*}
The proof of Theorem \ref{thm01} is complete.

\textbf{Proof of Theorem \ref{thm02}:}
 Let $ N\geq 2, R^*>0, r_*>0, h>0, \kappa>0, \mu>0 $. Consider  the following problem
\begin{equation}\label{equa 2}
\begin{cases}
-\varepsilon^2\text{div}(K_H(x)\nabla v)=&\left (v-\left(\frac{\alpha}{2}|x|^2+\beta_1 \right)|\ln\varepsilon| \right )^{p}_+\textbf{1}_{D_{\frac{\rho_0}{\sqrt{|\ln\varepsilon|}}}\left (C_{\frac{r_*}{\sqrt{|\ln\varepsilon|}}}\right )}\\
&+\left (v-\left(\frac{\alpha}{2}|x|^2+\beta_2 \right)|\ln\varepsilon| \right )^{p}_+\textbf{1}_{B_{\frac{\rho_0}{\sqrt{|\ln\varepsilon|}}}\left (0\right )},\ \ \text{in}\  B_{R^*}(0),\\
v=0,&\ \ \ \ \ \ \ \ \ \ \ \ \ \ \ \ \ \ \ \ \ \ \ \ \ \ \ \ \ \ \ \ \ \ \ \ \ \ \ \ \ \ \ \ \ \ \ \ \ \ \ \ \ \text{on}\ \partial B_{R^*}(0),
\end{cases}
\end{equation}
where  $ \rho_0>0 $ sufficiently small, %, $ \textbf{1}_{D} $ is Heaviside function of some set $ D $, $ C_{\frac{r_*}{\sqrt{|\ln\varepsilon|}}}=\left \{x\mid |x|=\frac{r_*}{\sqrt{|\ln\varepsilon|}}\right \} $ is the circle centered at the origin of radius $ \frac{r_*}{\sqrt{|\ln\varepsilon|}} $, and $ D_{\frac{\delta}{\sqrt{|\ln\varepsilon|}}}\left (C_{\frac{r_*}{\sqrt{|\ln\varepsilon|}}}\right )=\left \{x\mid |x|\in \left (\frac{r_*-\delta}{\sqrt{|\ln\varepsilon|}},\frac{r_*+\delta}{\sqrt{|\ln\varepsilon|}} \right )\right \} $ is the $ \frac{\delta}{\sqrt{|\ln\varepsilon|}} $-neighborhood of $ C_{\frac{r_*}{\sqrt{|\ln\varepsilon|}}} $.  
 $ \alpha $ and $ \beta_i $ in \eqref{equa 2} are chosen by
\begin{equation*}
\alpha=\frac{\kappa}{4\pi}\left( \frac{1}{h^2}-\frac{N-1}{r_*^2}\right)-\frac{\mu}{2\pi r_*^2},\ \ \ \ \beta_1=\frac{\kappa}{2\pi}, \ \ \ \ \beta_2=\frac{\mu}{2\pi}.
\end{equation*}
Denote $ q_1(x)=\frac{\alpha}{2}|x|^2+\beta_1 $ and $ q_2(x)=\frac{\alpha}{2}|x|^2+\beta_2. $

We choose 
\begin{equation*} 
\begin{split}
\Lambda_{\varepsilon, N+1}=\bigg\{&Z=(z_0, z_1,\cdots, z_N) \mid z_0\in B_{\frac{\rho_0}{2\sqrt{|\ln\varepsilon|}}}\left (0\right ), z_j\in D_{\frac{\rho_0}{2\sqrt{|\ln\varepsilon|}}}\left (C_{\frac{r_*}{\sqrt{|\ln\varepsilon|}}}\right ),\   z_j=e^{\textbf{i}\frac{2\pi(j-1)}{N}}z_1\bigg\}.
\end{split}
\end{equation*}
%Our goal is to construct solutions of \eqref{equa 1} being of the form $ V_{\varepsilon,Z}+\omega_{\varepsilon,Z} $, where $ Z\in \Lambda_{\varepsilon, N,1} $.
Similar to the proof of Theorem \ref{thm0}, we find critical points of $ K_\varepsilon(Z) $ over $ \Lambda_{\varepsilon, N+1} $. Since $ K_\varepsilon\left (z_0, z_1,\cdots, z_N\right )=K_\varepsilon\left (e^{\textbf{i}\frac{2\pi(j-1)}{N}}z_0, z_1,\cdots, z_N\right ) $ for $ j=1,\cdots,N $, we have $\nabla K_\varepsilon(0, z_1,\cdots, z_N)=0$ for any $ (z_1,\cdots,z_N) $.  Since $ z_j=e^{\textbf{i}\frac{2\pi(j-1)}{N}}z_1 $,   $ K_\varepsilon(0,z_1,\cdots,z_N) $ is a   function of $ z_1 $ %, i,e, $ K_\varepsilon(Z)=  \hat{K}_\varepsilon(z_1)   $ for some $ \hat{K}_\varepsilon $. Moreover, $ \hat{K}_\varepsilon $ 
and is rotational-invariant, i.e., $ K_\varepsilon(0,z_1,\cdots,z_N)=\tilde{K}_\varepsilon(|z_1|) $ for some $ \tilde{K}_\varepsilon $. Let $ z_1=\left( \frac{r}{\sqrt{|\ln\varepsilon|}},0\right)  $ for $ r\in \left (r_*-\frac{\rho_0}{2},r_*+\frac{\rho_0}{2} \right ) $. %Using  Proposition \ref{order of main term} and 
Using \eqref{602}, one computes that
\begin{equation} \label{702}
\begin{split}
\tilde{K}_\varepsilon&\left (\frac{r}{\sqrt{|\ln\varepsilon|}}\right )\\
=&\pi\varepsilon^2|\ln\varepsilon|\left( N\left( q_1^2\sqrt{\det K_H}\right)(0)+\left( q_2^2\sqrt{\det K_H}\right)(0)\right) \\
&-\frac{N(N-1)\pi\varepsilon^2}{2}\left( q_1^2 \det K_H \right)(0)\ln|\ln\varepsilon|-N\pi\varepsilon^2\left( q_1q_2 \det K_H \right)(0)\ln|\ln\varepsilon|\\
&+\pi\varepsilon^2\bigg(  \frac{N}{2} \left(q_1^2\sqrt{\det K_H} \right)^{''}(0 )r^2+ \frac{N(p-1)}{4}   \left( q_1^2\sqrt{\det K_H}\right) (0)+ \frac{(p-1)}{4}   \left( q_2^2\sqrt{\det K_H}\right) (0)     \\
&-2N\pi   \left( q_1^2  \det K_H\right)(0) \bar{S}_K(0,0)-2\pi   \left( q_2^2  \det K_H\right)(0) \bar{S}_K(0,0)+N(N-1)\left( q_1^2  \det K_H\right)(0)\ln r\\
&+2N\left( q_1q_2 \det K_H\right)(0)\ln r+\sum_{1\leq j\neq k\leq N}\left( q_1^2  \det K_H\right)(0)\ln| e^{\textbf{i}\frac{2\pi(j-1)}{N}} - e^{\textbf{i}\frac{2\pi(k-1)}{N}} |\bigg)  
+O\left( \frac{\varepsilon^2\ln|\ln\varepsilon|}{|\ln\varepsilon|^{\frac{1}{2}}}\right).
\end{split}
\end{equation}
Define 
$$H_2(r)=\frac{N}{2} \left(q_1^2\sqrt{\det K_H} \right)^{''}(0 )r^2+N(N-1)\left( q_1^2  \det K_H\right)(0)\ln r+2N\left( q_1q_2 \det K_H\right)(0)\ln r.$$
We have
\begin{lemma}\label{lm702}
	The function 
	$ H_2(r) $
	has the unqiue critical point $ r=r_* $ in $  \left (r_*-\frac{\rho_0}{2},r_*+\frac{\rho_0}{2} \right ) $, which is a maximizer.
\end{lemma}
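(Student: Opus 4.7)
The plan is to mimic the computation of Lemma \ref{lm701} almost verbatim, since the only new feature is the extra interaction term $2N(q_1 q_2 \det K_H)(0)\ln r$ coming from the center filament. First I would evaluate the three coefficients appearing in $H_2$ at the origin using $K_H(0)=h^{-2}\,\mathrm{Id}$, which gives $\sqrt{\det K_H}(0)=h^{-2}$, $\det K_H(0)=h^{-4}$, and, after a short Taylor expansion of the radial functions $q_1,q_2$ and of $\sqrt{\det K_H}$, the identities
\begin{equation*}
\bigl(q_1^2\sqrt{\det K_H}\bigr)''(0)=\frac{\beta_1(2\alpha h^2-\beta_1)}{h^{2}}\cdot\frac{1}{h^2},\quad (q_1^2\det K_H)(0)=\frac{\beta_1^2}{h^4},\quad (q_1q_2\det K_H)(0)=\frac{\beta_1\beta_2}{h^4}.
\end{equation*}
(The $h^{-2}$ or $h^{-4}$ scalar factor is harmless since it multiplies all three terms of $H_2$ proportionally for the first two and drops out of the critical-point equation after factoring $N\beta_1/h^4$.)

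Second, differentiating $H_2$ yields an equation of the form $H_2'(r)=N\bigl(A\,r+B/r\bigr)$, where
\begin{equation*}
A=\bigl(q_1^2\sqrt{\det K_H}\bigr)''(0),\qquad B=(N-1)(q_1^2\det K_H)(0)+2(q_1q_2\det K_H)(0).
\end{equation*}
So the equation $H_2'(r)=0$ becomes $A r^{2}+B=0$, which is linear in $r^2$, therefore has at most one positive root; solving it and plugging in the explicit values of the coefficients I obtain
\begin{equation*}
r^2=-\frac{B}{A}=\frac{(N-1)\beta_1+2\beta_2}{\beta_1/h^2-2\alpha}\cdot\frac{1}{1},
\end{equation*}
and substituting $\beta_1=\kappa/2\pi$, $\beta_2=\mu/2\pi$ together with the prescribed value $\alpha=\frac{\kappa}{4\pi}(\frac{1}{h^2}-\frac{N-1}{r_*^2})-\frac{\mu}{2\pi r_*^2}$ reduces the right-hand side to $r_*^{2}$. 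This proves existence and uniqueness of the critical point in the interval $(r_*-\rho_0/2,r_*+\rho_0/2)$ (for $\rho_0$ small).

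Finally, to show $r_*$ is a maximizer, I would compute
\begin{equation*}
H_2''(r_*)=N\!\left(A-\frac{B}{r_*^{2}}\right)=N\!\left(-\frac{B}{r_*^{2}}-\frac{B}{r_*^{2}}\right)=-\frac{2NB}{r_*^{2}}<0,
\end{equation*}
where in the middle step I used the critical-point relation $A r_*^{2}=-B$ and in the last step the fact that $B>0$ (both $\beta_1,\beta_2>0$). Since $B>0$ forces $A<0$ at the critical point, this argument is automatic. The only potentially delicate step is making sure the sign of $A$ is indeed negative at the chosen $\alpha$, but this is precisely the content of the compatibility built into the definition of $\alpha$ (as in Lemma \ref{lm701}), so I expect no real obstacle; the whole proof is essentially bookkeeping around the computation already carried out in Lemma \ref{lm701}.
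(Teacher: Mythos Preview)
Your approach is exactly the paper's: compute $H_2'(r)=N(Ar+B/r)$, solve $Ar^2+B=0$, verify $r=r_*$, and check $H_2''(r_*)<0$. The paper's proof is just the one-line version of this computation.

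One computational slip to fix: $K_H(0)$ is the identity, not $h^{-2}\mathrm{Id}$. Indeed
\[
K_H(0)=\frac{1}{h^2}\begin{pmatrix}h^2&0\\0&h^2\end{pmatrix}=\mathrm{Id},
\]
so $\det K_H(0)=\sqrt{\det K_H}(0)=1$. With this correction the coefficients are $\bigl(q_1^2\sqrt{\det K_H}\bigr)''(0)=\frac{\beta_1(2\alpha h^2-\beta_1)}{h^2}$, $(q_1^2\det K_H)(0)=\beta_1^2$, $(q_1q_2\det K_H)(0)=\beta_1\beta_2$ (no stray $h^{-2}$ or $h^{-4}$ factors). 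Your displayed coefficients are \emph{not} off by a common factor (the first would be off by $h^{-2}$, the other two by $h^{-4}$), so the ``harmless proportionality'' remark is false as stated; nevertheless your final expression $r^2=\frac{(N-1)\beta_1+2\beta_2}{\beta_1/h^2-2\alpha}$ is the correct one obtained from the correct coefficients, and the rest of the argument goes through unchanged.
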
  
\begin{proof}
Note that $  \left(q_1^2\sqrt{\det K_H} \right)^{''}(0 )=\frac{\beta_1(2\alpha h^2-\beta_1)}{h^2} $, $ \left( q_1q_i \det K_H\right)(0)=\beta_1\beta_i $ and $$ H'_2(r)=N\left(\frac{\beta_1(2\alpha h^2-\beta_1)}{h^2} r+\frac{(N-1)\beta_1^2+2\beta_1\beta_2}{r} \right).  $$ Taking $ \alpha,\beta_i $ into $ H'_2 $, we show that $ r_* $ is the unique maximizer.
\end{proof}
%
%Note that from \eqref{701},
%\begin{equation*}  
%\begin{split}
%K_\varepsilon\left (Z\right )
%=&N\pi\varepsilon^2|\ln\varepsilon|\left( q^2\sqrt{\det K_H}\right)(0)-\frac{N(N-1)\pi\varepsilon^2}{2}\left( q^2\sqrt{\det K_H}\right)(0)\ln|\ln\varepsilon|\\
%&+\pi\varepsilon^2\bigg(  H_1(r)+ \frac{N(p-1)}{4}   \left( q^2\sqrt{\det K_H}\right) (0)      -2N\pi   \left( q^2  \det K_H\right)(0) \bar{S}_K(0,0)\\
%&+\sum_{1\leq j\neq k\leq N}\left( q^2  \det K_H\right)(0)\ln| e^{\textbf{i}\frac{2\pi(j-1)}{N}} - e^{\textbf{i}\frac{2\pi(k-1)}{N}} |\bigg)  
%+\tilde{N}_\varepsilon(z_1).
%\end{split}
%\end{equation*}
%where $ \tilde{N}_\varepsilon(z_1) $ is a $O\left( \frac{\varepsilon^2\ln|\ln\varepsilon|}{|\ln\varepsilon|^{\frac{1}{2}}}\right) -$perturbation term which is invariant under rotation. %Since $ r_* $ is a strict local maximizer of $ H_1 $, 
By Lemma \ref{lm702}, there exists $ z_{1,\varepsilon}=\left( \frac{r_\varepsilon}{\sqrt{|\ln\varepsilon|}},0\right)  $ near $ \left( \frac{r_*}{\sqrt{|\ln\varepsilon|}},0\right) $ being  a maximizer of $ \tilde{K}_\varepsilon  $. Let $ z_{0,\varepsilon}=0 $ and $ z_{j,\varepsilon}=e^{\textbf{i}\frac{2\pi(j-1)}{N}}z_{1,\varepsilon} $ for $ j=1,\cdots,N. $ Then $ \left( z_{0,\varepsilon},z_{1,\varepsilon},\cdots,z_{N,\varepsilon} \right)  $ is a critical point  of $ K_\varepsilon  $, which yields a solution $ v_\varepsilon $ of \eqref{equa 1}. Define 
\begin{equation*} 
\begin{split}
w_\varepsilon(x_1,x_2,0,t)=&\frac{1}{\varepsilon^2}\left (v_\varepsilon\left(\bar{R}_{-\alpha|\ln\varepsilon| t}(x_1,x_2) \right) -q_1(x_1,x_2)|\ln\varepsilon| \right )^{p}_+\textbf{1}_{D_{\frac{\rho_0}{\sqrt{|\ln\varepsilon|}}}\left (C_{\frac{r_*}{\sqrt{|\ln\varepsilon|}}}\right )}\\
&+\frac{1}{\varepsilon^2}\left (v_\varepsilon\left(\bar{R}_{-\alpha|\ln\varepsilon| t}(x_1,x_2) \right)-q_2(x_1,x_2)|\ln\varepsilon| \right )^{p}_+\textbf{1}_{B_{\frac{\rho_0}{\sqrt{|\ln\varepsilon|}}}\left (0\right )}.
\end{split}
\end{equation*}
 %there exist $ \left (\tilde{z}_{1,\varepsilon}, \cdots, \tilde{z}_{N,\varepsilon}\right ) $ such that $ \tilde{z}_{i,\varepsilon}=\tilde{z}_{i}^*+o(1) $ for $ i=1,\cdots,N $ and 
Then $ supp(w_\varepsilon(\cdot,0,0))\subseteq \cup_{i=0}^N B_{|\ln\varepsilon|^{-1}}\left (z_{i,\varepsilon}\right ) $
and if we define $ A_{\varepsilon,i}=supp(w_\varepsilon(\cdot,0,0))\cap B_{|\ln\varepsilon|^{-1}}\left (z_{i,\varepsilon}\right )  $ for $ i=0,1,\cdots,N $, then there are $ R_1,R_2>0 $ independent of $ \varepsilon $  such that $ B_{R_1\varepsilon}\left ( z_{i,\varepsilon}\right )\subseteq A_{\varepsilon,i}\subseteq B_{R_2\varepsilon}\left (z_{i,\varepsilon}\right ) $ and  
\begin{equation*}
\int_{A_{\varepsilon,i}}w_\varepsilon dx=\begin{cases}
&2\pi \left( q_1\sqrt{\det K_H }\right) (0)+O\left(|\ln\varepsilon|^{-\frac{1}{2}} \right)=\kappa+O\left(|\ln\varepsilon|^{-\frac{1}{2}} \right)\ \ \ \ i=1,\cdots,N,\\
 &2\pi \left( q_2\sqrt{\det K_H }\right) (0)+O\left(|\ln\varepsilon|^{-\frac{1}{2}} \right)=\mu+O\left(|\ln\varepsilon|^{-\frac{1}{2}} \right)\ \ \ \ i=0.
\end{cases}
\end{equation*}

Recall $ 	 
\tilde{X}_{0,\varepsilon}(s,t)=0$ and $ \tilde{X}_{j,\varepsilon}(s,t)=\frac{r_*}{\sqrt{|\ln\varepsilon|}}e^{-\mathbf{i}\alpha|\ln\varepsilon|t}e^{\mathbf{i}\frac{s}{h}}e^{\mathbf{i}\frac{2\pi (j-1)}{N}} 
  $ for $ j=1,\cdots,N $. Based on the above derivation, we get Theorem \ref{thm02}.

\textbf{Proof of Theorem \ref{thm03}:}
Let $   R^*>0, h>0, $ $ \kappa_1,\kappa_2>0 $ and $ \lambda_{1,*},\lambda_{2,*}>0 $ are constants satisfying the compatibility condition
\begin{equation*} 
\frac{\kappa_2}{\kappa_1}=\frac{2\lambda_{1,*}h^2+\lambda_{1,*}\lambda_{2,*}(\lambda_{1,*}+\lambda_{2,*})}{2\lambda_{2,*}h^2+\lambda_{1,*}\lambda_{2,*}(\lambda_{1,*}+\lambda_{2,*})}.
\end{equation*}
Consider  the following problem
\begin{equation}\label{equa 3}
\begin{cases}
-\varepsilon^2\text{div}(K_H(x)\nabla v)=&\left (v-\left(\frac{\alpha}{2}|x|^2+\beta_1 \right)|\ln\varepsilon| \right )^{p}_+\textbf{1}_{B_{\frac{\rho_0}{\sqrt{|\ln\varepsilon|}}}\left (\frac{(\lambda_{1,*},0)}{\sqrt{|\ln\varepsilon|}}\right )}\\
&+\left (v-\left(\frac{\alpha}{2}|x|^2+\beta_2 \right)|\ln\varepsilon| \right )^{p}_+\textbf{1}_{B_{\frac{\rho_0}{\sqrt{|\ln\varepsilon|}}}\left (\frac{(-\lambda_{2,*},0)}{\sqrt{|\ln\varepsilon|}}\right )},\ \ \text{in}\  B_{R^*}(0),\\
v=0,&\ \ \ \ \ \ \ \ \ \ \ \ \ \ \ \ \ \ \ \ \ \ \ \ \ \ \ \ \ \ \ \ \ \ \ \ \ \ \ \ \ \ \ \ \ \ \ \ \ \ \ \ \ \ \ \ \ \ \ \ \text{on}\ \partial B_{R^*}(0),
\end{cases}
\end{equation}
where  $ \rho_0>0 $ sufficiently small, %, $ \textbf{1}_{D} $ is Heaviside function of some set $ D $, $ C_{\frac{r_*}{\sqrt{|\ln\varepsilon|}}}=\left \{x\mid |x|=\frac{r_*}{\sqrt{|\ln\varepsilon|}}\right \} $ is the circle centered at the origin of radius $ \frac{r_*}{\sqrt{|\ln\varepsilon|}} $, and $ D_{\frac{\delta}{\sqrt{|\ln\varepsilon|}}}\left (C_{\frac{r_*}{\sqrt{|\ln\varepsilon|}}}\right )=\left \{x\mid |x|\in \left (\frac{r_*-\delta}{\sqrt{|\ln\varepsilon|}},\frac{r_*+\delta}{\sqrt{|\ln\varepsilon|}} \right )\right \} $ is the $ \frac{\delta}{\sqrt{|\ln\varepsilon|}} $-neighborhood of $ C_{\frac{r_*}{\sqrt{|\ln\varepsilon|}}} $.  
$ \alpha $ and $ \beta_i $ in \eqref{equa 3} are chosen by
\begin{equation*}
\alpha=\frac{\kappa_1}{4\pi h^2}  -\frac{\kappa_2}{2\pi \lambda_{1,*}(\lambda_{1,*}+\lambda_{2,*})}=\frac{\kappa_2}{4\pi h^2}  -\frac{\kappa_1}{2\pi \lambda_{2,*}(\lambda_{1,*}+\lambda_{2,*})},\ \ \ \ \beta_i=\frac{\kappa_i}{2\pi}.
\end{equation*}
Denote $ q_1(x)=\frac{\alpha}{2}|x|^2+\beta_1 $ and $ q_2(x)=\frac{\alpha}{2}|x|^2+\beta_2. $

We choose 
\begin{equation*} 
\begin{split}
\Lambda_{\varepsilon, 2}=\bigg\{Z=(  z_1,  z_2) \mid &z_1\in B_{\frac{\rho_0}{2\sqrt{|\ln\varepsilon|}}}\left (\frac{(\lambda_{1,*},0)}{\sqrt{|\ln\varepsilon|}}\right ), z_2\in B_{\frac{\rho_0}{2\sqrt{|\ln\varepsilon|}}}\left (\frac{(-\lambda_{2,*},0)}{\sqrt{|\ln\varepsilon|}}\right )\bigg\}.
\end{split}
\end{equation*}
We will find critical points of $ K_\varepsilon(Z) $ over $ \Lambda_{\varepsilon, 2} $. By rotational-invariance of $ K_\varepsilon(z_1,z_2) $, we assume that $ z_1 $ is on $ x_1 $-axis. Define $ \overline{z_2} $ as the symmetry point of $ z_2 $ with respect to the $ x_1 $-axis. Since $ K_\varepsilon(z_1,z_2)=K_\varepsilon(z_1,\overline{z_2}) $, by  principle of symmetric criticality (see \cite{W}), we  can assume that $ z_2 $ is on $ x_1 $-axis. %Our goal is to construct solutions of \eqref{equa 1} being of the form $ V_{\varepsilon,Z}+\omega_{\varepsilon,Z} $, where $ Z\in \Lambda_{\varepsilon, N,1} $.
%Similar to the proof of Theorem \ref{thm0}, we find critical points of $ K_\varepsilon(Z) $ over $ \Lambda_{\varepsilon, N+1} $. Since $ K_\varepsilon(z_0, z_1,\cdots, z_N)=K_\varepsilon(-z_0, z_1,\cdots, z_N) $, $\nabla K_\varepsilon(0, z_1,\cdots, z_N)=0$ for any $ (z_1,\cdots,z_N) $.  Since $ z_j=e^{\textbf{i}\frac{2\pi(j-1)}{N}}z_1 $,   $ K_\varepsilon(0,z_1,\cdots,z_N) $ is a   function of $ z_1 $ %, i,e, $ K_\varepsilon(Z)=  \hat{K}_\varepsilon(z_1)   $ for some $ \hat{K}_\varepsilon $. Moreover, $ \hat{K}_\varepsilon $ 
%and is rotational-invariant, i.e., $ K_\varepsilon(0,z_1,\cdots,z_N)=\tilde{K}_\varepsilon(|z_1|) $ for some $ \tilde{K}_\varepsilon $. 
Let $ z_1=\left( \frac{\lambda_1}{\sqrt{|\ln\varepsilon|}},0\right)  $ and $ z_2=\left( \frac{-\lambda_2}{\sqrt{|\ln\varepsilon|}},0\right)  $ for $\lambda_i\in \left( \lambda_{i,*}-\frac{\rho_0}{2}, \lambda_{i,*}+\frac{\rho_0}{2}\right)   $. %Using  Proposition \ref{order of main term} and 
Using \eqref{602}, one computes that
\begin{equation} \label{703}
\begin{split}
K_\varepsilon&\left (z_1,z_2\right )\\
=&\pi\varepsilon^2|\ln\varepsilon|\sum_{i=1}^2\left( q_i^2\sqrt{\det K_H}\right) (0) -\pi\varepsilon^2\left( q_1q_2 \det K_H \right)(0)\ln|\ln\varepsilon|\\
&+\pi\varepsilon^2\bigg(  \frac{1}{2}\sum_{i=1}^2 \left(q_i^2\sqrt{\det K_H} \right)^{''}(0 )\lambda_i^2+ \frac{(p-1)}{4}   \sum_{i=1}^2\left( q_i^2\sqrt{\det K_H}\right) (0)     \\
&-2\pi  \sum_{i=1}^2 \left( q_i^2  \det K_H\right)(0) \bar{S}_K(0,0)+2\left( q_1q_2  \det K_H\right)(0)\ln (\lambda_1+\lambda_2)\bigg)  
+O\left( \frac{\varepsilon^2\ln|\ln\varepsilon|}{|\ln\varepsilon|^{\frac{1}{2}}}\right).
\end{split}
\end{equation}
Define 
$$H_3(\lambda_1,\lambda_2)=\frac{1}{2}\sum_{i=1}^2 \left(q_i^2\sqrt{\det K_H} \right)^{''}(0 )\lambda_i^2+2\left( q_1q_2 \det K_H\right)(0)\ln (\lambda_1+\lambda_2).$$
We have
\begin{lemma}\label{lm703}
	The function 
	$ H_3(\lambda_1,\lambda_2) $
	has the unqiue critical point $ \lambda_1=\lambda_{1,*}, \lambda_2=\lambda_{2,*} $ in $  \left( \lambda_{1,*}-\frac{\rho_0}{2}, \lambda_{1,*}+\frac{\rho_0}{2}\right)\times \left( \lambda_{2,*}-\frac{\rho_0}{2}, \lambda_{2,*}+\frac{\rho_0}{2}\right) $, which is a maximizer.
\end{lemma}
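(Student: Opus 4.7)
The plan mirrors the computations carried out in Lemmas \ref{lm701} and \ref{lm702}, but now in two variables $(\lambda_1,\lambda_2)$. First I will record the two structural constants appearing in $H_3$. Since $K_H(0)=Id$ and a short calculation (as used in Lemma \ref{lm702}) gives $(q_i^2\sqrt{\det K_H})''(0)=\beta_i(2\alpha h^2-\beta_i)/h^2$ and $(q_1q_2\det K_H)(0)=\beta_1\beta_2$, the function takes the explicit form
\begin{equation*}
H_3(\lambda_1,\lambda_2)=\frac{\beta_1(2\alpha h^2-\beta_1)}{2h^2}\lambda_1^2+\frac{\beta_2(2\alpha h^2-\beta_2)}{2h^2}\lambda_2^2+2\beta_1\beta_2\ln(\lambda_1+\lambda_2).
\end{equation*}

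Next I will check that $(\lambda_{1,*},\lambda_{2,*})$ is indeed a critical point. The Euler--Lagrange equations
\begin{equation*}
\frac{\beta_i(2\alpha h^2-\beta_i)}{h^2}\lambda_i+\frac{2\beta_1\beta_2}{\lambda_1+\lambda_2}=0,\qquad i=1,2,
\end{equation*}
are equivalent to
\begin{equation*}
\frac{2\alpha h^2-\beta_i}{h^2}\lambda_i(\lambda_1+\lambda_2)=-2\beta_{3-i}.
\end{equation*}
Plugging in $\beta_i=\kappa_i/(2\pi)$ and the defining value of $\alpha$ from the theorem statement, one computes $2\alpha h^2-\beta_1=-\kappa_2 h^2/(\pi\lambda_{1,*}(\lambda_{1,*}+\lambda_{2,*}))$, and the $i=1$ identity is immediate; the $i=2$ identity follows from the same kind of computation combined with the compatibility condition \eqref{compati cond1} (which was designed precisely to guarantee that both expressions for $\alpha$ coincide). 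So $(\lambda_{1,*},\lambda_{2,*})$ is a critical point.

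Finally I will show that this critical point is a strict local maximum, which will simultaneously give uniqueness in the specified neighborhood (upon shrinking $\rho_0$ if necessary). Using the first-order conditions to rewrite $\beta_i(2\alpha h^2-\beta_i)/h^2=-2\beta_1\beta_2/(\lambda_{i,*}(\lambda_{1,*}+\lambda_{2,*}))$, the Hessian of $H_3$ at $(\lambda_{1,*},\lambda_{2,*})$ becomes
\begin{equation*}
D^2H_3=-\frac{2\beta_1\beta_2}{(\lambda_{1,*}+\lambda_{2,*})^2}
\begin{pmatrix}
\dfrac{2\lambda_{1,*}+\lambda_{2,*}}{\lambda_{1,*}} & 1\\[4pt]
1 & \dfrac{\lambda_{1,*}+2\lambda_{2,*}}{\lambda_{2,*}}
\end{pmatrix}.
\end{equation*}
Its determinant equals $\tfrac{4\beta_1^2\beta_2^2}{(\lambda_{1,*}+\lambda_{2,*})^4\lambda_{1,*}\lambda_{2,*}}\bigl[(2\lambda_{1,*}+\lambda_{2,*})(\lambda_{1,*}+2\lambda_{2,*})-\lambda_{1,*}\lambda_{2,*}\bigr]=\tfrac{8\beta_1^2\beta_2^2}{(\lambda_{1,*}+\lambda_{2,*})^2\lambda_{1,*}\lambda_{2,*}}>0$, and the $(1,1)$ entry is strictly negative, so $D^2H_3$ is negative definite. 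This gives the strict local maximum property; uniqueness in $(\lambda_{1,*}\pm\rho_0/2)\times(\lambda_{2,*}\pm\rho_0/2)$ then follows by the inverse function theorem applied to $\nabla H_3$, choosing $\rho_0$ small enough. The only step that requires real computation is the algebraic verification of the Euler--Lagrange equations using \eqref{compati cond1}; everything else is bookkeeping.
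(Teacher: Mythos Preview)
Your proof is correct and follows essentially the same route as the paper: compute the explicit constants $(q_i^2\sqrt{\det K_H})''(0)=\beta_i(2\alpha h^2-\beta_i)/h^2$ and $(q_1q_2\det K_H)(0)=\beta_1\beta_2$, verify that $(\lambda_{1,*},\lambda_{2,*})$ solves the first-order system using the definition of $\alpha$ and the compatibility condition \eqref{compati cond1}, and then check that the Hessian is negative definite. The only cosmetic difference is that the paper writes the Hessian in the raw form before substituting the first-order relations, whereas you substitute first to get a cleaner $2\times 2$ matrix; both computations yield the same conclusion.
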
  
\begin{proof}
	Note that $  \left(q_i^2\sqrt{\det K_H} \right)^{''}(0 )=\frac{\beta_i(2\alpha h^2-\beta_i)}{h^2}=-\frac{\kappa_1,\kappa_2}{2\pi^2\lambda_{i,*}(\lambda_{1,*}+\lambda_{2,*})}<0 $, $ \left( q_1q_2 \det K_H\right)(0)=\beta_1\beta_2=\frac{\kappa_1\kappa_2}{4\pi^2} $ and $$ \nabla H_3(\lambda_1,\lambda_2)=\begin{pmatrix}
   \frac{\beta_1(2\alpha h^2-\beta_1)}{h^2} \lambda_1+\frac{2\beta_1\beta_2}{\lambda_1+\lambda_2}    \\
 \frac{\beta_2(2\alpha h^2-\beta_2)}{h^2} \lambda_2+\frac{2\beta_1\beta_2}{\lambda_1+\lambda_2} 
	\end{pmatrix}.  $$ 
So $ \nabla H_3=0 $ implies that $ \lambda_1=\lambda_{1,*}, \lambda_2=\lambda_{2,*} $. Since
$$\nabla^2 H_3(\lambda_{1,*},\lambda_{2,*})=\begin{pmatrix}
\frac{\beta_1(2\alpha h^2-\beta_1)}{h^2}  -\frac{2\beta_1\beta_2}{(\lambda_{1,*}+\lambda_{2,*})^2} &  -\frac{2\beta_1\beta_2}{(\lambda_{1,*}+\lambda_{2,*})^2}   \\
-\frac{2\beta_1\beta_2}{(\lambda_{1,*}+\lambda_{2,*})^2} & \frac{\beta_2(2\alpha h^2-\beta_2)}{h^2}  -\frac{2\beta_1\beta_2}{(\lambda_{1,*}+\lambda_{2,*})^2}
\end{pmatrix}$$
is negative definite, we conclude that $  (\lambda_{1,*},\lambda_{2,*}) $ is the unique maximizer.
\end{proof}
%
%Note that from \eqref{701},
%\begin{equation*}  
%\begin{split}
%K_\varepsilon\left (Z\right )
%=&N\pi\varepsilon^2|\ln\varepsilon|\left( q^2\sqrt{\det K_H}\right)(0)-\frac{N(N-1)\pi\varepsilon^2}{2}\left( q^2\sqrt{\det K_H}\right)(0)\ln|\ln\varepsilon|\\
%&+\pi\varepsilon^2\bigg(  H_1(r)+ \frac{N(p-1)}{4}   \left( q^2\sqrt{\det K_H}\right) (0)      -2N\pi   \left( q^2  \det K_H\right)(0) \bar{S}_K(0,0)\\
%&+\sum_{1\leq j\neq k\leq N}\left( q^2  \det K_H\right)(0)\ln| e^{\textbf{i}\frac{2\pi(j-1)}{N}} - e^{\textbf{i}\frac{2\pi(k-1)}{N}} |\bigg)  
%+\tilde{N}_\varepsilon(z_1).
%\end{split}
%\end{equation*}
%where $ \tilde{N}_\varepsilon(z_1) $ is a $O\left( \frac{\varepsilon^2\ln|\ln\varepsilon|}{|\ln\varepsilon|^{\frac{1}{2}}}\right) -$perturbation term which is invariant under rotation. %Since $ r_* $ is a strict local maximizer of $ H_1 $, 
By Lemma \ref{lm703}, there exists $ z_{i,\varepsilon}=\left( \frac{(-1)^{i-1}\lambda_{i,\varepsilon}}{\sqrt{|\ln\varepsilon|}},0\right)  $ near $ \left( \frac{(-1)^{i-1}\lambda_{i,*}}{\sqrt{|\ln\varepsilon|}},0\right) $  for $ i=1,2 $ being a maximizer of $ K_\varepsilon  $, which yields a solution $ v_\varepsilon $ of \eqref{equa 3}.  Define 
\begin{equation*} 
\begin{split}
w(x_1,x_2,0,0)=&\frac{1}{\varepsilon^2}\sum_{i=1}^2\left (v_\varepsilon\left(x_1,x_2 \right) -q_i(x_1,x_2)|\ln\varepsilon| \right )^{p}_+\textbf{1}_{B_{\frac{\rho_0}{\sqrt{|\ln\varepsilon|}}}\left (\left( \frac{(-1)^{i-1}\lambda_{i,*}}{\sqrt{|\ln\varepsilon|}},0\right)\right )} 
\end{split}
\end{equation*}
and $ w(x_1,x_2,0,t)=w(\bar{R}_{-\alpha|\ln\varepsilon| t}(x_1,x_2),0,0)  $.
%there exist $ \left (\tilde{z}_{1,\varepsilon}, \cdots, \tilde{z}_{N,\varepsilon}\right ) $ such that $ \tilde{z}_{i,\varepsilon}=\tilde{z}_{i}^*+o(1) $ for $ i=1,\cdots,N $ and 
Then $ supp(w_\varepsilon(\cdot,0,0))\subseteq \cup_{i=1}^2 B_{|\ln\varepsilon|^{-1}}\left (z_{i,\varepsilon}\right ) $
and if we define $ A_{\varepsilon,i}=supp(w_\varepsilon(\cdot,0,0))\cap B_{|\ln\varepsilon|^{-1}}\left (z_{i,\varepsilon}\right )  $ for $ i=1,2 $, then %there are $ R_1,R_2>0 $ independent of $ \varepsilon $  such that $ B_{R_1\varepsilon}\left ( z_{i,\varepsilon}\right )\subseteq A_{\varepsilon,i}\subseteq B_{R_2\varepsilon}\left (z_{i,\varepsilon}\right ) $ and  
\begin{equation*}
\int_{A_{\varepsilon,i}}w_\varepsilon dx=\begin{cases}
&\kappa_1+O\left(|\ln\varepsilon|^{-\frac{1}{2}} \right),\ \ \ \ i=1,\\
&\kappa_2+O\left(|\ln\varepsilon|^{-\frac{1}{2}} \right),\ \ \ \ i=2.
\end{cases}
\end{equation*}

Recall  $ \tilde{X}_{j,\varepsilon}(s,t)=\frac{(-1)^{j-1}\lambda_{j,*}}{\sqrt{|\ln\varepsilon|}}e^{-\mathbf{i}\alpha|\ln\varepsilon|t}e^{\mathbf{i}\frac{s}{h}} 
$ for $ j=1,2 $. Based on the above derivation, we get Theorem \ref{thm03}.

\textbf{Proof of Theorem \ref{thm04}:}
Let $   R^*>0, h>0, N=4, $ $ \kappa,\mu>0 $ and $ \lambda_{1,*},\lambda_{2,*}>0 $ are constants satisfying the compatibility condition
\begin{equation*} 
\frac{\kappa}{4\pi h^2}-\frac{\kappa}{4\pi\lambda_{1,*}^2}-\frac{\mu}{\pi(\lambda_{1,*}^2+\lambda_{2,*}^2)}=\frac{\mu}{4\pi h^2}-\frac{\mu}{4\pi\lambda_{2,*}^2}-\frac{\kappa}{\pi(\lambda_{1,*}^2+\lambda_{2,*}^2)}.
\end{equation*}. 
Consider  the following problem
\begin{equation}\label{equa 4}
	\begin{cases}
		-\varepsilon^2\text{div}(K_H(x)\nabla v)=&\sum_{i=1}^2\left (v-\left(\frac{\alpha}{2}|x|^2+\beta_1 \right)|\ln\varepsilon| \right )^{p}_+\textbf{1}_{B_{\frac{\rho_0}{\sqrt{|\ln\varepsilon|}}}\left (\frac{((-1)^{i-1}\lambda_{1,*},0)}{\sqrt{|\ln\varepsilon|}}\right )}\\
		&+\sum_{i=1}^2\left (v-\left(\frac{\alpha}{2}|x|^2+\beta_2 \right)|\ln\varepsilon| \right )^{p}_+\textbf{1}_{B_{\frac{\rho_0}{\sqrt{|\ln\varepsilon|}}}\left (\frac{(0,(-1)^{i-1}\lambda_{2,*})}{\sqrt{|\ln\varepsilon|}}\right )},\ \ \text{in}\  B_{R^*}(0),\\
		v=0,&\ \ \ \ \ \ \ \ \ \ \ \ \ \ \ \ \ \ \ \ \ \ \ \ \ \ \ \ \ \ \ \ \ \ \ \ \ \ \ \ \ \ \ \ \ \ \ \ \ \ \ \ \ \ \ \ \ \ \ \ \text{on}\ \partial B_{R^*}(0),
	\end{cases}
\end{equation}
where  $ \rho_0>0 $ sufficiently small, %, $ \textbf{1}_{D} $ is Heaviside function of some set $ D $, $ C_{\frac{r_*}{\sqrt{|\ln\varepsilon|}}}=\left \{x\mid |x|=\frac{r_*}{\sqrt{|\ln\varepsilon|}}\right \} $ is the circle centered at the origin of radius $ \frac{r_*}{\sqrt{|\ln\varepsilon|}} $, and $ D_{\frac{\delta}{\sqrt{|\ln\varepsilon|}}}\left (C_{\frac{r_*}{\sqrt{|\ln\varepsilon|}}}\right )=\left \{x\mid |x|\in \left (\frac{r_*-\delta}{\sqrt{|\ln\varepsilon|}},\frac{r_*+\delta}{\sqrt{|\ln\varepsilon|}} \right )\right \} $ is the $ \frac{\delta}{\sqrt{|\ln\varepsilon|}} $-neighborhood of $ C_{\frac{r_*}{\sqrt{|\ln\varepsilon|}}} $.  
$ \alpha $ and $ \beta_i $ in \eqref{equa 4} are chosen by
\begin{equation*}
	\alpha=\frac{\kappa}{4\pi h^2}-\frac{\kappa}{4\pi\lambda_{1,*}^2}-\frac{\mu}{\pi(\lambda_{1,*}^2+\lambda_{2,*}^2)},\ \ \ \ \beta_1=\frac{\kappa}{2\pi},\ \ \ \ \beta_2=\frac{\mu}{2\pi}.
\end{equation*}
Denote $ q_i(x)=\frac{\alpha}{2}|x|^2+\beta_i $ for $ i=1,2. $

We choose 
\begin{equation*} 
	\begin{split}
		\Lambda_{\varepsilon, 4}=\bigg\{Z=(  z_1,  z_2, z_3, z_4) \mid &z_1\in B_{\frac{\rho_0}{2\sqrt{|\ln\varepsilon|}}}\left (\frac{(\lambda_{1,*},0)}{\sqrt{|\ln\varepsilon|}}\right ), z_2\in B_{\frac{\rho_0}{2\sqrt{|\ln\varepsilon|}}}\left (\frac{(0,\lambda_{2,*})}{\sqrt{|\ln\varepsilon|}}\right ),\\
		&z_3=-z_1,z_4=-z_2\bigg\}.
	\end{split}
\end{equation*}
%We will find critical points of $ K_\varepsilon(Z) $ over $ \Lambda_{\varepsilon, 2} $. By rotational-invariance of $ K_\varepsilon(z_1,z_2) $, we assume that $ z_1 $ is on $ x_1 $-axis. Define $ \overline{z_2} $ as the symmetry point of $ z_2 $ with respect to the $ x_1 $-axis. Since $ K_\varepsilon(z_1,z_2)=K_\varepsilon(z_1,\overline{z_2}) $, by  principle of symmetric criticality (see \cite{W}), we  can assume that $ z_2 $ is on $ x_1 $-axis. %Our goal is to construct solutions of \eqref{equa 1} being of the form $ V_{\varepsilon,Z}+\omega_{\varepsilon,Z} $, where $ Z\in \Lambda_{\varepsilon, N,1} $.
%Similar to the proof of Theorem \ref{thm0}, we find critical points of $ K_\varepsilon(Z) $ over $ \Lambda_{\varepsilon, N+1} $. Since $ K_\varepsilon(z_0, z_1,\cdots, z_N)=K_\varepsilon(-z_0, z_1,\cdots, z_N) $, $\nabla K_\varepsilon(0, z_1,\cdots, z_N)=0$ for any $ (z_1,\cdots,z_N) $.  Since $ z_j=e^{\textbf{i}\frac{2\pi(j-1)}{N}}z_1 $,   $ K_\varepsilon(0,z_1,\cdots,z_N) $ is a   function of $ z_1 $ %, i,e, $ K_\varepsilon(Z)=  \hat{K}_\varepsilon(z_1)   $ for some $ \hat{K}_\varepsilon $. Moreover, $ \hat{K}_\varepsilon $ 
%and is rotational-invariant, i.e., $ K_\varepsilon(0,z_1,\cdots,z_N)=\tilde{K}_\varepsilon(|z_1|) $ for some $ \tilde{K}_\varepsilon $. 
From the symmetry of $ K_\varepsilon $, we can assume $ z_1=\left( \frac{\lambda_1}{\sqrt{|\ln\varepsilon|}},0\right)  $ and $ z_2=\left( 0,\frac{\lambda_2}{\sqrt{|\ln\varepsilon|}}\right)  $ for $\lambda_i\in \left( \lambda_{i,*}-\frac{\rho_0}{2}, \lambda_{i,*}+\frac{\rho_0}{2}\right)   $. %Using  Proposition \ref{order of main term} and 
Similar to \eqref{602}, we have
\begin{equation} \label{704}
	\begin{split}
		K_\varepsilon&\left (z_1,z_2,z_3,z_4\right )\\
		=&2\pi\varepsilon^2|\ln\varepsilon|\sum_{i=1}^2\left( q_i^2\sqrt{\det K_H}\right) (0) -\pi\varepsilon^2\left( \left( q_1^2+4q_1q_2+q_2^2 \right) \det K_H \right)(0)\ln|\ln\varepsilon|\\
		&+\pi\varepsilon^2\bigg(    H_4(\lambda_1,\lambda_2)+\frac{(p-1)}{2}   \sum_{i=1}^2\left( q_i^2\sqrt{\det K_H}\right) (0)      -4\pi  \sum_{i=1}^2 \left( q_i^2  \det K_H\right)(0) \bar{S}_K(0,0)\bigg) \\ 
		&+O\left( \frac{\varepsilon^2\ln|\ln\varepsilon|}{|\ln\varepsilon|^{\frac{1}{2}}}\right).
	\end{split}
\end{equation}
where
\begin{equation*} 
\begin{split}
H_4(\lambda_1,\lambda_2)=&\sum_{i=1}^2 \left(q_i^2\sqrt{\det K_H} \right)^{''}(0 )\lambda_i^2+2\sum_{i=1}^2\left( q_i^2 \det K_H\right)(0)\ln 2\lambda_i\\
& +4\left( q_1q_2  \det K_H\right)(0)\ln (\lambda_1^2+\lambda_2^2) .
\end{split}
\end{equation*}
$$$$
It is not hard to verify that 	$ H_4(\lambda_1,\lambda_2) $
	has the unqiue maximizer $ \lambda_1=\lambda_{1,*}, \lambda_2=\lambda_{2,*} $ in $  \left( \lambda_{1,*}-\frac{\rho_0}{2}, \lambda_{1,*}+\frac{\rho_0}{2}\right)\times \left( \lambda_{2,*}-\frac{\rho_0}{2}, \lambda_{2,*}+\frac{\rho_0}{2}\right) $.   
%
%Note that from \eqref{701},
%\begin{equation*}  
%\begin{split}
%K_\varepsilon\left (Z\right )
%=&N\pi\varepsilon^2|\ln\varepsilon|\left( q^2\sqrt{\det K_H}\right)(0)-\frac{N(N-1)\pi\varepsilon^2}{2}\left( q^2\sqrt{\det K_H}\right)(0)\ln|\ln\varepsilon|\\
%&+\pi\varepsilon^2\bigg(  H_1(r)+ \frac{N(p-1)}{4}   \left( q^2\sqrt{\det K_H}\right) (0)      -2N\pi   \left( q^2  \det K_H\right)(0) \bar{S}_K(0,0)\\
%&+\sum_{1\leq j\neq k\leq N}\left( q^2  \det K_H\right)(0)\ln| e^{\textbf{i}\frac{2\pi(j-1)}{N}} - e^{\textbf{i}\frac{2\pi(k-1)}{N}} |\bigg)  
%+\tilde{N}_\varepsilon(z_1).
%\end{split}
%\end{equation*}
%where $ \tilde{N}_\varepsilon(z_1) $ is a $O\left( \frac{\varepsilon^2\ln|\ln\varepsilon|}{|\ln\varepsilon|^{\frac{1}{2}}}\right) -$perturbation term which is invariant under rotation. %Since $ r_* $ is a strict local maximizer of $ H_1 $, 
Thus from \eqref{704}, there exists $ z_{2i-1,\varepsilon}=\left( \frac{(-1)^{i-1}\lambda_{1,\varepsilon}}{\sqrt{|\ln\varepsilon|}},0\right)  $ near $ \left( \frac{(-1)^{i-1}\lambda_{1,*}}{\sqrt{|\ln\varepsilon|}},0\right) $ and $ z_{2i,\varepsilon}=\left( 0,\frac{(-1)^{i-1}\lambda_{2,\varepsilon}}{\sqrt{|\ln\varepsilon|}}\right)  $ near $ \left( 0,\frac{(-1)^{i-1}\lambda_{2,*}}{\sqrt{|\ln\varepsilon|}}\right) $ for $ i=1,2 $ being a maximizer of $ K_\varepsilon  $, which yields a solution $ v_\varepsilon $ of \eqref{equa 4}.   Similar to Theorem \ref{thm03}, we get Theorem \ref{thm04}.

\textbf{Proof of Theorem \ref{thm05}:}
Let $   R^*>0, h>0, N=5, $ $ \kappa_0,\kappa,\mu>0 $ and $ \lambda_{1,*},\lambda_{2,*}>0 $ are constants satisfying the compatibility condition
\begin{equation*} 
\frac{\kappa}{4\pi h^2}-\frac{\kappa_0}{2\pi\lambda_{1,*}^2}-\frac{\kappa}{4\pi\lambda_{1,*}^2}-\frac{\mu}{\pi(\lambda_{1,*}^2+\lambda_{2,*}^2)}=\frac{\mu}{4\pi h^2}-\frac{\kappa_0}{2\pi\lambda_{2,*}^2}-\frac{\mu}{4\pi\lambda_{2,*}^2}-\frac{\kappa}{\pi(\lambda_{1,*}^2+\lambda_{2,*}^2)}.
\end{equation*}. 
Consider  the following problem
\begin{equation}\label{equa 5}
\begin{cases}
-\varepsilon^2\text{div}(K_H(x)\nabla v)=&\sum_{i=1}^2\left (v-\left(\frac{\alpha}{2}|x|^2+\beta_1 \right)|\ln\varepsilon| \right )^{p}_+\textbf{1}_{B_{\frac{\rho_0}{\sqrt{|\ln\varepsilon|}}}\left (\frac{((-1)^{i-1}\lambda_{1,*},0)}{\sqrt{|\ln\varepsilon|}}\right )}\\
&+\sum_{i=1}^2\left (v-\left(\frac{\alpha}{2}|x|^2+\beta_2 \right)|\ln\varepsilon| \right )^{p}_+\textbf{1}_{B_{\frac{\rho_0}{\sqrt{|\ln\varepsilon|}}}\left (\frac{(0,(-1)^{i-1}\lambda_{2,*})}{\sqrt{|\ln\varepsilon|}}\right )}\\
&+\left (v-\left(\frac{\alpha}{2}|x|^2+\beta_0 \right)|\ln\varepsilon| \right )^{p}_+\textbf{1}_{B_{\frac{\rho_0}{\sqrt{|\ln\varepsilon|}}}\left (0\right )},\ \ \ \ \ \ \ \ \ \text{in}\  B_{R^*}(0),\\
v=0,&\ \ \ \ \ \ \ \ \ \ \ \ \ \ \ \ \ \ \ \ \ \ \ \ \ \ \ \ \ \ \ \ \ \ \ \ \ \ \ \ \ \ \ \ \ \ \ \ \ \ \ \ \ \ \ \ \ \ \ \ \text{on}\ \partial B_{R^*}(0),
\end{cases}
\end{equation}
where  $ \rho_0>0 $ sufficiently small, %, $ \textbf{1}_{D} $ is Heaviside function of some set $ D $, $ C_{\frac{r_*}{\sqrt{|\ln\varepsilon|}}}=\left \{x\mid |x|=\frac{r_*}{\sqrt{|\ln\varepsilon|}}\right \} $ is the circle centered at the origin of radius $ \frac{r_*}{\sqrt{|\ln\varepsilon|}} $, and $ D_{\frac{\delta}{\sqrt{|\ln\varepsilon|}}}\left (C_{\frac{r_*}{\sqrt{|\ln\varepsilon|}}}\right )=\left \{x\mid |x|\in \left (\frac{r_*-\delta}{\sqrt{|\ln\varepsilon|}},\frac{r_*+\delta}{\sqrt{|\ln\varepsilon|}} \right )\right \} $ is the $ \frac{\delta}{\sqrt{|\ln\varepsilon|}} $-neighborhood of $ C_{\frac{r_*}{\sqrt{|\ln\varepsilon|}}} $.  
$ \alpha $ and $ \beta_i $ in \eqref{equa 5} are chosen by
\begin{equation*}
\alpha=\frac{\kappa}{4\pi h^2}-\frac{\kappa_0}{2\pi\lambda_{1,*}^2}-\frac{\kappa}{4\pi\lambda_{1,*}^2}-\frac{\mu}{\pi(\lambda_{1,*}^2+\lambda_{2,*}^2)},\ \ \ \ \beta_0=\frac{\kappa_0}{2\pi},\ \ \ \ \beta_1=\frac{\kappa}{2\pi},\ \ \ \ \beta_2=\frac{\mu}{2\pi}.
\end{equation*}
Denote $ q_i(x)=\frac{\alpha}{2}|x|^2+\beta_i $ for $ i=0,1,2. $

We choose 
\begin{equation*} 
\begin{split}
\Lambda_{\varepsilon, 5}=\bigg\{Z=(z_0,  z_1,  z_2, z_3, z_4) \mid &z_1\in B_{\frac{\rho_0}{2\sqrt{|\ln\varepsilon|}}}\left (\frac{(\lambda_{1,*},0)}{\sqrt{|\ln\varepsilon|}}\right ), z_2\in B_{\frac{\rho_0}{2\sqrt{|\ln\varepsilon|}}}\left (\frac{(0,\lambda_{2,*})}{\sqrt{|\ln\varepsilon|}}\right ),\\
&z_3=-z_1,z_4=-z_2, z_0\in B_{\frac{\rho_0}{2\sqrt{|\ln\varepsilon|}}}\left (0\right )\bigg\}.
\end{split}
\end{equation*}
%We will find critical points of $ K_\varepsilon(Z) $ over $ \Lambda_{\varepsilon, 2} $. By rotational-invariance of $ K_\varepsilon(z_1,z_2) $, we assume that $ z_1 $ is on $ x_1 $-axis. Define $ \overline{z_2} $ as the symmetry point of $ z_2 $ with respect to the $ x_1 $-axis. Since $ K_\varepsilon(z_1,z_2)=K_\varepsilon(z_1,\overline{z_2}) $, by  principle of symmetric criticality (see \cite{W}), we  can assume that $ z_2 $ is on $ x_1 $-axis. %Our goal is to construct solutions of \eqref{equa 1} being of the form $ V_{\varepsilon,Z}+\omega_{\varepsilon,Z} $, where $ Z\in \Lambda_{\varepsilon, N,1} $.
%Similar to the proof of Theorem \ref{thm0}, we find critical points of $ K_\varepsilon(Z) $ over $ \Lambda_{\varepsilon, N+1} $. Since $ K_\varepsilon(z_0, z_1,\cdots, z_N)=K_\varepsilon(-z_0, z_1,\cdots, z_N) $, $\nabla K_\varepsilon(0, z_1,\cdots, z_N)=0$ for any $ (z_1,\cdots,z_N) $.  Since $ z_j=e^{\textbf{i}\frac{2\pi(j-1)}{N}}z_1 $,   $ K_\varepsilon(0,z_1,\cdots,z_N) $ is a   function of $ z_1 $ %, i,e, $ K_\varepsilon(Z)=  \hat{K}_\varepsilon(z_1)   $ for some $ \hat{K}_\varepsilon $. Moreover, $ \hat{K}_\varepsilon $ 
%and is rotational-invariant, i.e., $ K_\varepsilon(0,z_1,\cdots,z_N)=\tilde{K}_\varepsilon(|z_1|) $ for some $ \tilde{K}_\varepsilon $. 
From the symmetry of $ K_\varepsilon $, we can assume $ z_0=0 $, $ z_1=\left( \frac{\lambda_1}{\sqrt{|\ln\varepsilon|}},0\right)  $ and $ z_2=\left( 0,\frac{\lambda_2}{\sqrt{|\ln\varepsilon|}}\right)  $ for $\lambda_i\in \left( \lambda_{i,*}-\frac{\rho_0}{2}, \lambda_{i,*}+\frac{\rho_0}{2}\right)   $. %Using  Proposition \ref{order of main term} and 
Similar to \eqref{602}, we have
\begin{equation} \label{705}
\begin{split}
K_\varepsilon&\left (z_0,z_1,z_2,z_3,z_4\right )\\
=&\pi\varepsilon^2|\ln\varepsilon|\left( 2\sum_{i=1}^2\left( q_i^2\sqrt{\det K_H}\right) (0)+\left( q_0^2\sqrt{\det K_H}\right) (0) \right) \\
&-\pi\varepsilon^2\left( \left( q_1^2+4q_1q_2+q_2^2+2q_1q_0+2q_2q_0 \right) \det K_H \right)(0)\ln|\ln\varepsilon|\\
&+\pi\varepsilon^2\bigg(    H_5(\lambda_1,\lambda_2)+\frac{(p-1)}{2}   \sum_{i=1}^2\left( q_i^2\sqrt{\det K_H}\right) (0)   + \frac{(p-1)}{4}\left( q_0^2\sqrt{\det K_H}\right) (0)  \\
& -4\pi  \sum_{i=1}^2 \left( q_i^2  \det K_H\right)(0) \bar{S}_K(0,0)-2\pi   \left( q_0^2  \det K_H\right)(0) \bar{S}_K(0,0)\bigg) \\ 
&+O\left( \frac{\varepsilon^2\ln|\ln\varepsilon|}{|\ln\varepsilon|^{\frac{1}{2}}}\right).
\end{split}
\end{equation}
where
\begin{equation*} 
\begin{split}
H_5(\lambda_1,\lambda_2)=&\sum_{i=1}^2 \left(q_i^2\sqrt{\det K_H} \right)^{''}(0 )\lambda_i^2+2\sum_{i=1}^2\left( q_i^2 \det K_H\right)(0)\ln 2\lambda_i \\
&+4\left( q_1q_2  \det K_H\right)(0)\ln (\lambda_1^2+\lambda_2^2)+4\sum_{i=1}^2\left( q_0q_i \det K_H\right)(0)\ln \lambda_i .
\end{split}
\end{equation*}
$$$$
Since	$ H_5(\lambda_1,\lambda_2) $
has the unqiue maximizer $ \lambda_1=\lambda_{1,*}, \lambda_2=\lambda_{2,*} $ in $  \left( \lambda_{1,*}-\frac{\rho_0}{2}, \lambda_{1,*}+\frac{\rho_0}{2}\right)\times \left( \lambda_{2,*}-\frac{\rho_0}{2}, \lambda_{2,*}+\frac{\rho_0}{2}\right) $,  
%
%Note that from \eqref{701},
%\begin{equation*}  
%\begin{split}
%K_\varepsilon\left (Z\right )
%=&N\pi\varepsilon^2|\ln\varepsilon|\left( q^2\sqrt{\det K_H}\right)(0)-\frac{N(N-1)\pi\varepsilon^2}{2}\left( q^2\sqrt{\det K_H}\right)(0)\ln|\ln\varepsilon|\\
%&+\pi\varepsilon^2\bigg(  H_1(r)+ \frac{N(p-1)}{4}   \left( q^2\sqrt{\det K_H}\right) (0)      -2N\pi   \left( q^2  \det K_H\right)(0) \bar{S}_K(0,0)\\
%&+\sum_{1\leq j\neq k\leq N}\left( q^2  \det K_H\right)(0)\ln| e^{\textbf{i}\frac{2\pi(j-1)}{N}} - e^{\textbf{i}\frac{2\pi(k-1)}{N}} |\bigg)  
%+\tilde{N}_\varepsilon(z_1).
%\end{split}
%\end{equation*}
%where $ \tilde{N}_\varepsilon(z_1) $ is a $O\left( \frac{\varepsilon^2\ln|\ln\varepsilon|}{|\ln\varepsilon|^{\frac{1}{2}}}\right) -$perturbation term which is invariant under rotation. %Since $ r_* $ is a strict local maximizer of $ H_1 $, 
from \eqref{705}, there exists $ z_0=0 $, $ z_{2i-1,\varepsilon}=\left( \frac{(-1)^{i-1}\lambda_{1,\varepsilon}}{\sqrt{|\ln\varepsilon|}},0\right)  $ near $ \left( \frac{(-1)^{i-1}\lambda_{1,*}}{\sqrt{|\ln\varepsilon|}},0\right) $ and $ z_{2i,\varepsilon}=\left( 0,\frac{(-1)^{i-1}\lambda_{2,\varepsilon}}{\sqrt{|\ln\varepsilon|}}\right)  $ near $ \left( 0,\frac{(-1)^{i-1}\lambda_{2,*}}{\sqrt{|\ln\varepsilon|}}\right) $ for $ i=1,2 $ being a critical point of $ K_\varepsilon  $, which yields a solution $ v_\varepsilon $ of \eqref{equa 5}.   Similar to Theorem \ref{thm03}, we get Theorem \ref{thm05}.

\setcounter{equation}{0}\renewcommand\theequation{A.\arabic{equation}}
\section{Appendix: Proofs of some results used before}
In this Appendix, we give proofs for some results  which have been repeatedly used before. %We first give some properties of  Green's function of $ \mathcal{L}_K=-\text{div}(K(x)\nabla) $ appeared in the linear problem \eqref{diver form}. Note that for any $ f\in L^p(\Omega) $ for $ p>2 $, there exists a unique solution $ u=\mathcal{G}_Kf $ being a weak solution to \eqref{diver form}.
%Clearly from $ L^p $ theory of elliptic equations, there exists a Green's operator $ \mathcal{G}_K  $ such that for every $ f\in L^{p}(\Omega) $ with $ p>1 $, the function $ u=\mathcal{G}_Kf $ is a weak solution to the problem \eqref{diver form}.
%From theorem 1.3 in \cite{CW1}, $ u $ can be represented as follows
%expansion of Green's function of $ \mathcal{G}_K $ as follows

We first give some explicit helical solutions of  the nearly parallel vortex filament model deduced in \cite{KMD} (see also \cite{GM2}). %Let $ N\geq 2 $ and $ \kappa_j $ be $ N $ fixed real numbers. Consider the motion of $ N $ slender vortex tube in 3D Euler equations whose cross-sections are of radius $ \varepsilon $,  circulations are $ \kappa_j $ and centerlines are filaments parameterized as $ \tilde{X}_{j,\varepsilon}(s, t) $ for $ j=1,\cdots,N $ in the complex plane $ \mathbb{C} $. The wave-length of each vortex tube is of the order $ O(1) $ and the mutual distance of any two vortex tubes is of the order $ O(\frac{1}{\sqrt{|\ln\varepsilon|}}) $. By the deduction of \cite{GM2,KMD}, if we define $   \tau=|\ln\varepsilon|t $ and $ X_j(s, \tau)=\sqrt{|\ln\varepsilon|}\tilde{X}_{j,\varepsilon}(s, t) $, then $ X_j(s, \tau) $ satisfies
%\begin{equation*} 
%\partial_\tau X_j=\frac{1}{4\pi}\left(\mathbf{i}\alpha_j\kappa_j\partial_{ss}X_j+2\mathbf{i}\sum_{k\neq j}\kappa_k\frac{X_j-X_k}{|X_j-X_k|^2} \right),
%\end{equation*}
%where $ \mathbf{i} $ is  the imaginary unit and $ \alpha_j $ is the structure constant of the vortex core, which is generally assumed to be 1. 
%If we consider helical solutions of \eqref{NP vor fi} whose pitch is $ h $ for some constant $ h>0, $ i.e., $ X_j(s, \tau)=Z_j(\tau)e^{\mathbf{i}\frac{s}{h}} $. Then $ Z_j $ satisfies for $ j=1,\cdots,N $
%\begin{equation*} 
%\partial_\tau Z_j=\frac{1}{4\pi}\left(-\frac{\mathbf{i}\kappa_j}{h^2}Z_j+2\mathbf{i}\sum_{k\neq j}\kappa_k\frac{Z_j-Z_k}{|Z_j-Z_k|^2} \right).
%\end{equation*} 
We have  exact helical solutions of  \eqref{NP vor fi} as follows.

\begin{customthm}{A.1}  \label{lemA1}
Let $ h>0 $. The following point vortex configurations are co-rotating helical solutions of \eqref{NP vor fi}.
\begin{enumerate}
	\item[\textbf{Case 1:}]  ``$ N $ vortices'' having identical circulation and placed  at each vertex of a regular $ N $-polygon with radius $ r_* $, i.e., $ N\geq 2 $, $ \kappa_j=\kappa $ for some $ \kappa>0 $ and 
	\begin{equation*} 
	X_{j}(s,\tau)=r_*e^{-\mathbf{i}\alpha\tau}e^{\mathbf{i}\frac{s}{h}}e^{\mathbf{i}\frac{2\pi (j-1)}{N}},\ \ \ \ \ \ \ \ \  j=1,\cdots,N, 
	\end{equation*}
	where $ \alpha=\frac{\kappa}{4\pi}\left( \frac{1}{h^2}-\frac{N-1}{r_*^2}\right)  $.
	\item[\textbf{Case 2:}] ``$ N+1  $ vortices'', $ N $ of them having identical circulation and placed  at each vertex of a regular $ N $-polygon with radius $ r_* $ and the $ N +1 $ vortex of arbitrary strength
	at the center of vorticity, i.e., $ N\geq 2 $, $ \kappa_0=\mu $, $ \kappa_j=\kappa $ for $ j=1,\cdots,N $ for   $ \mu, \kappa>0 $  and
	\begin{equation*} 
	X_0(s,\tau)=0,\ \ \ \ X_{j}(s,\tau)=r_*e^{-\mathbf{i}\alpha\tau}e^{\mathbf{i}\frac{s}{h}}e^{\mathbf{i}\frac{2\pi (j-1)}{N}},\ \ \ \ \ \ \  j=1,\cdots,N,
	\end{equation*}
	where $ \alpha=\frac{\kappa}{4\pi}\left( \frac{1}{h^2}-\frac{N-1}{r_*^2}\right)-\frac{\mu}{2\pi r_*^2}  $.
		\item[\textbf{Case 3:}] ``Co-rotating asymmetric $ 2 $ vortices'' with different circulations and rotation radii. In this case, $ N= 2 $, $ \kappa_1,\kappa_2>0 $ and $ \lambda_{1,*},\lambda_{2,*}>0 $ are constants satisfying the compatibility condition
		\begin{equation*} 
		\frac{\kappa_2}{\kappa_1}=\frac{2\lambda_{1,*}h^2+\lambda_{1,*}\lambda_{2,*}(\lambda_{1,*}+\lambda_{2,*})}{2\lambda_{2,*}h^2+\lambda_{1,*}\lambda_{2,*}(\lambda_{1,*}+\lambda_{2,*})},
		\end{equation*}
		and
	\begin{equation*} 
	X_{1}(s,\tau)=\lambda_{1,*}e^{-\mathbf{i}\alpha\tau}e^{\mathbf{i}\frac{s}{h}},\ \ \ \ 	X_{2}(s,\tau)=-\lambda_{2,*}e^{-\mathbf{i}\alpha\tau}e^{\mathbf{i}\frac{s}{h}},
	\end{equation*}
	where $ \alpha=\frac{\kappa_1}{4\pi h^2}  -\frac{\kappa_2}{2\pi \lambda_{1,*}(\lambda_{1,*}+\lambda_{2,*})}=\frac{\kappa_2}{4\pi h^2}  -\frac{\kappa_1}{2\pi \lambda_{2,*}(\lambda_{1,*}+\lambda_{2,*})}  $.
\item[\textbf{Case 4:}] ``Co-rotating  $  2\times2  $ vortices'' with different circulations and rotation radii. In this case, $ N= 4 $, $ \kappa_1=\kappa_3=\kappa>0 $, $ \kappa_2=\kappa_4=\mu>0$. $ \kappa,\mu, \lambda_{1,*},\lambda_{2,*}  $ are constants satisfying the compatibility condition
\begin{equation*} 
\frac{\kappa}{4\pi h^2} -\frac{\kappa}{4\pi \lambda_{1,*}^2} -\frac{\mu}{\pi \left (\lambda_{1,*}^2+\lambda_{2,*}^2\right )}=\frac{\mu}{4\pi h^2} -\frac{\mu}{4\pi \lambda_{2,*}^2} -\frac{\kappa}{\pi \left (\lambda_{1,*}^2+\lambda_{2,*}^2\right )},
\end{equation*}
and
\begin{equation*} 
X_{2j-1}(s,\tau)=\lambda_{1,*}e^{-\mathbf{i}\alpha\tau}e^{\mathbf{i}\frac{s}{h}}e^{\mathbf{i}\pi(j-1)},\ \ \ \ 	X_{2j}(s,\tau)=\lambda_{2,*}e^{-\mathbf{i}\alpha\tau}e^{\mathbf{i}\frac{s}{h}}e^{\mathbf{i}\frac{\pi(2j-1)}{2}},\ \ j=1,2,
\end{equation*}
where $ \alpha=\frac{\kappa}{4\pi h^2} -\frac{\kappa}{4\pi \lambda_{1,*}^2} -\frac{\mu}{\pi \left (\lambda_{1,*}^2+\lambda_{2,*}^2\right )}=\frac{\mu}{4\pi h^2} -\frac{\mu}{4\pi \lambda_{2,*}^2} -\frac{\kappa}{\pi \left (\lambda_{1,*}^2+\lambda_{2,*}^2\right )}  $.
\item[\textbf{Case 5:}] ``Co-rotating  $  2\times2+1  $ vortices'' with different circulations and rotation radii and the $ 5 $-th vortex of arbitrary strength
at the center of vorticity. In this case, $ N= 5 $, $ \kappa_1=\kappa_3=\kappa>0 $, $ \kappa_2=\kappa_4=\mu>0$ and $ \kappa_0>0 $. $ \kappa_0,\kappa,\mu, \lambda_{1,*},\lambda_{2,*}  $ are constants satisfying the compatibility condition
\begin{equation*} 
\frac{\kappa}{4\pi h^2}-\frac{\kappa_0}{2\pi \lambda_{1,*}^2} -\frac{\kappa}{4\pi \lambda_{1,*}^2} -\frac{\mu}{\pi \left (\lambda_{1,*}^2+\lambda_{2,*}^2\right )}=\frac{\mu}{4\pi h^2}-\frac{\kappa_0}{2\pi \lambda_{2,*}^2} -\frac{\mu}{4\pi \lambda_{2,*}^2} -\frac{\kappa}{\pi \left (\lambda_{1,*}^2+\lambda_{2,*}^2\right )},
\end{equation*}
and $ X_0(s,\tau)=0 $,
\begin{equation*} 
X_{2j-1}(s,\tau)=\lambda_{1,*}e^{-\mathbf{i}\alpha\tau}e^{\mathbf{i}\frac{s}{h}}e^{\mathbf{i}\pi(j-1)},\ \ \ \ 	X_{2j}(s,\tau)=\lambda_{2,*}e^{-\mathbf{i}\alpha\tau}e^{\mathbf{i}\frac{s}{h}}e^{\mathbf{i}\frac{\pi(2j-1)}{2}},\ \ j=1,2,
\end{equation*}
where $ \alpha=\frac{\kappa}{4\pi h^2}-\frac{\kappa_0}{2\pi \lambda_{1,*}^2} -\frac{\kappa}{4\pi \lambda_{1,*}^2} -\frac{\mu}{\pi \left (\lambda_{1,*}^2+\lambda_{2,*}^2\right )}=\frac{\mu}{4\pi h^2}-\frac{\kappa_0}{2\pi \lambda_{2,*}^2} -\frac{\mu}{4\pi \lambda_{2,*}^2} -\frac{\kappa}{\pi \left (\lambda_{1,*}^2+\lambda_{2,*}^2\right )}  $.
\end{enumerate}
 
\end{customthm}
%\begin{remark}
%We give some explanation of the compatibility condition in cases 3,4 and 5. Take case 3 for example. The sufficient and necessary condition of the existence of solutions of \eqref{NP vor fi} in the form of \eqref{fila 3} is that the condition $ 	\frac{\kappa_2}{\kappa_1}=\frac{2\lambda_{1,*}h^2+\lambda_{1,*}\lambda_{2,*}(\lambda_{1,*}+\lambda_{2,*})}{2\lambda_{2,*}h^2+\lambda_{1,*}\lambda_{2,*}(\lambda_{1,*}+\lambda_{2,*})} $ holds. Moreover, it is possible to generalize the cases 4 and 5 to $ 2N $ and $ 2N+1 $ with different circulation and rotation radii. 
%\end{remark}
\begin{proof}
For case 1, taking \eqref{fila 1} into \eqref{NP heli vor}, we have $ \alpha=\frac{\kappa}{4\pi h^2}-\frac{\kappa}{2\pi r_*^2}\sum_{k\neq j}\frac{1-e^{\mathbf{i}\frac{2\pi(j-k)}{N}}}{|1-e^{\mathbf{i}\frac{2\pi(j-k)}{N}}|^2} $.  Note that

\begin{equation}\label{A-01}
\sum_{k\neq j}\frac{1-e^{\mathbf{i}\frac{2\pi(j-k)}{N}}}{|1-e^{\mathbf{i}\frac{2\pi(j-k)}{N}}|^2}=\frac{N-1}{2}.
\end{equation} 
Then $ \alpha=\frac{\kappa}{4\pi}\left( \frac{1}{h^2}-\frac{N-1}{r_*^2}\right). $

For case 2, taking \eqref{fila 2} into \eqref{NP heli vor}, we have $ \alpha=\frac{\kappa}{4\pi h^2}-\frac{\kappa}{2\pi r_*^2}\sum_{k\neq j}\frac{1-e^{\mathbf{i}\frac{2\pi(j-k)}{N}}}{|1-e^{\mathbf{i}\frac{2\pi(j-k)}{N}}|^2}-\frac{\mu}{2\pi r_*^2} $. Using \eqref{A-01}, we have $ \alpha=\frac{\kappa}{4\pi}\left( \frac{1}{h^2}-\frac{N-1}{r_*^2}\right)-\frac{\mu}{2\pi r_*^2}  $.

For case 3, taking \eqref{fila 3} into \eqref{NP heli vor}, we have $ \alpha=\frac{\kappa_1}{4\pi h^2}  -\frac{\kappa_2}{2\pi \lambda_{1,*}(\lambda_{1,*}+\lambda_{2,*})}=\frac{\kappa_2}{4\pi h^2}  -\frac{\kappa_1}{2\pi \lambda_{2,*}(\lambda_{1,*}+\lambda_{2,*})}  $. This implies that $ \kappa_1,\kappa_2,  \lambda_{1,*},\lambda_{2,*}  $  satisfies $ 	\frac{\kappa_2}{\kappa_1}=\frac{2\lambda_{1,*}h^2+\lambda_{1,*}\lambda_{2,*}(\lambda_{1,*}+\lambda_{2,*})}{2\lambda_{2,*}h^2+\lambda_{1,*}\lambda_{2,*}(\lambda_{1,*}+\lambda_{2,*})} $. So if this condition holds, \eqref{NP vor fi} has a solution being of the form \eqref{fila 3}.

For case 4, taking \eqref{fila 4} into \eqref{NP heli vor}, we have $ \alpha=\frac{\kappa}{4\pi h^2} -\frac{\kappa}{4\pi \lambda_{1,*}^2} -\frac{\mu}{\pi \left (\lambda_{1,*}^2+\lambda_{2,*}^2\right )}=\frac{\mu}{4\pi h^2} -\frac{\mu}{4\pi \lambda_{2,*}^2} -\frac{\kappa}{\pi \left (\lambda_{1,*}^2+\lambda_{2,*}^2\right )}  $. This implies that $ \kappa_1,\kappa_2,  \lambda_{1,*},\lambda_{2,*}  $  satisfies $\frac{\kappa}{4\pi h^2} -\frac{\kappa}{4\pi \lambda_{1,*}^2} -\frac{\mu}{\pi \left (\lambda_{1,*}^2+\lambda_{2,*}^2\right )}=\frac{\mu}{4\pi h^2} -\frac{\mu}{4\pi \lambda_{2,*}^2} -\frac{\kappa}{\pi \left (\lambda_{1,*}^2+\lambda_{2,*}^2\right )} $. Under this assumption, \eqref{NP vor fi} has a solution being of the form \eqref{fila 4}. The case 5 is similar to the case 4 and so we omit the proof here.  
\end{proof}

We now turn to the proof of Lemma \ref{Green expansion2}. 
\begin{customthm}{A.2}\label{lemA2}
There holds
\begin{equation*}
\bar{S}_K(x,y)=-F_{1,y}(x)-F_{2,y}(x)+\bar{H}_1(x,y) \ \ \ \ \forall\ x,y\in\Omega,
\end{equation*}
where $ F_{1,y}(\cdot) $ is defined by \eqref{exp of F_{1,y}} and $ F_{2,y}(\cdot) $ is defined by \eqref{exp of F_{2,y}},
and $ x\to \bar{H}_1(x,y)\in C^{1,\gamma}(\overline{\Omega})$ for all $ y\in \Omega $, $ \gamma\in (0, 1) $.  Moreover, the function $(x, y)\to \bar{H}_1(x,y)\in C^1(\Omega\times\Omega),$ and in particular the corresponding Robin function $x\to \bar{S}_K(x,x)\in C^1(\Omega)$.
\end{customthm}

\begin{proof}
	Let  $ y\in\Omega $ be fixed. In the following,  we always denote $ T_{ij}=(T_y)_{ij} $ the component of row $ i $, column $ j $ of the matrix $ T_y $ for $ i,j=1,2 $. From Lemma \ref{Green expansion}, the regular part $ \bar{S}_K(x,y) $ satisfies
	\begin{equation}\label{equa of S_K}
	\begin{cases}
	-\text{div}\left (K(x)\nabla \bar{S}_K(x,y)\right )=\text{div}\left (\left (K(x)-K(y)\right )\nabla \left( \sqrt{\det K(y)}^{-1}\Gamma\left (T_y(x-y)\right )\right)  \right )\ &\text{in}\ \Omega,\\
	\bar{S}_K(x,y)=-\sqrt{\det K(y)}^{-1}\Gamma\left (T_y(x-y)\right )\ &\text{on}\ \partial\Omega.
	\end{cases}
	\end{equation}
	This implies that
	\begin{equation}\label{3-1}
	\begin{split}
	-\text{div}\left (K(x)\nabla \bar{S}_K(x,y)\right )=&\sum_{i,j=1}^2\partial_{x_i}K_{ij}(x)\partial_{x_j}\left(\sqrt{\det K(y)}^{-1}\Gamma\left (T_y(x-y)\right ) \right) \\
	&+\sum_{i,j=1}^2\left( \left( K_{ij}(x)-K_{ij}(y)\right) \partial_{x_ix_j}\left(\sqrt{\det K(y)}^{-1}\Gamma\left (T_y(x-y)\right ) \right)\right)\\
	=&:A_1+A_2.
	\end{split}
	\end{equation}
	
	As for $ A_1 $, for $ x\in\mathbb{R}^2 $, we denote $ J(x)=-\frac{1}{8\pi}|x|^2\ln|x| $. Then $ \Delta \left( J(x-y)\right)=\Gamma(x-y)-\frac{1}{2\pi}.  $
	Using transformation of coordinates, one computes directly that
	\begin{equation*}
	\text{div}\left(K(y)\nabla J(T_y(x-y)) \right)= \Gamma(T_y(x-y))-\frac{1}{2\pi},
	\end{equation*}
	from which we deduce,
	\begin{equation*}
	\text{div}\left(K(y)\nabla   \partial_{x_j}\left( J(T_y(x-y))\right)  \right)= \partial_{x_j} \left( \Gamma(T_y(x-y))\right) .
	\end{equation*}
	We define for $ x\in\Omega $
	\begin{equation}\label{F_{1,y}}
	\begin{split}
	F_{1,y}(x)=\sum_{i,j=1}^2\partial_{x_i}K_{ij}(y)\cdot\sqrt{\det K(y)}^{-1} \partial_{x_j}\left( J\left (T_y(x-y)\right )\right) ,
	\end{split}
	\end{equation}
	then one has
	\begin{equation}\label{3-2}
	\text{div}\left(K(y)\nabla F_{1,y}(x)  \right)= \sum_{i,j=1}^2\partial_{x_i}K_{ij}(y)\partial_{x_j}\left(\sqrt{\det K(y)}^{-1}\Gamma\left (T_y(x-y)\right ) \right).
	\end{equation}
	
	As for $ A_2 $, using Taylor's expansion we obtain
	\begin{equation}\label{3-3}
	\begin{split}
	\sum_{i,j=1}^2\left( K_{ij}(x)-K_{ij}(y)\right) \partial_{x_ix_j}&\left(\sqrt{\det K(y)}^{-1}\Gamma\left (T_y(x-y)\right ) \right)\\
	=\sum_{\alpha,i,j=1}^2\sqrt{\det K(y)}^{-1}&\partial_{x_\alpha}K_{ij}(y) (x-y)_\alpha\cdot\partial_{x_ix_j}\left( \Gamma\left (T_y(x-y)\right ) \right)+\phi_y(x),
	\end{split}
	\end{equation}
	where $ \phi_y(\cdot)\in L^p(\Omega) $ for all $ p>1. $
	Since
	\begin{equation*}
	\partial_{x_ix_j}\Gamma(x)=-\frac{1}{2\pi}\left(\frac{\delta_{i,j}}{|x|^2}-\frac{2x_ix_j}{|x|^4} \right),
	\end{equation*}
	where $ \delta_{i,j}=1 $ for $ i=j $ and $ \delta_{i,j}=0 $ for $ i\neq j $, we have
	\begin{equation}\label{3-6}
	\begin{split}
	\partial_{x_ix_j}\left( \Gamma(T_y(x-y))\right) =-\frac{1}{2\pi}\sum_{m,n=1}^2T_{mj}T_{ni}\left(\frac{\delta_{m,n}}{|T_y(x-y)|^2}-\frac{2\left( T_y(x-y)\right)_m\left( T_y(x-y)\right)_n}{|T_y(x-y)|^4} \right).
	\end{split}
	\end{equation}
	%Here $ T_{mj}=T_{mj}(y) $. Define $ z=T_y(x-y) $.
	Taking \eqref{3-6} into \eqref{3-3}, we get
	\begin{equation}\label{3-7}
	\begin{split}
	\sum_{i,j=1}^2\left( K_{ij}(x)-K_{ij}(y)\right) \partial_{x_ix_j}&\left(\sqrt{\det K(y)}^{-1}\Gamma\left (T_y(x-y)\right ) \right)\\
	=\sum_{\alpha,\beta,i,j,m,n=1}^2\sqrt{\det K(y)}^{-1}&\partial_{x_\alpha}K_{ij}(y)T^{-1}_{\alpha\beta} \left (T_y(x-y)\right )_\beta\cdot  \\ -\frac{1}{2\pi}T_{mj}T_{ni}&\left(\frac{\delta_{m,n}}{|T_y(x-y)|^2}-\frac{2\left( T_y(x-y)\right)_m\left( T_y(x-y)\right)_n}{|T_y(x-y)|^4} \right)  +\phi_y(x).
	\end{split}
	\end{equation}

	Note that
	\begin{equation}\label{3-4}
	\frac{x^p}{|x|^4}=-\frac{1}{8}\Delta\left( \frac{x^p}{|x|^2}\right)+\frac{1}{8}\frac{\Delta x^p}{|x|^2}\ \ \ \ \text{for}\ |p|=3,
	\end{equation}
	where $p=(p_1,p_2)$ is the multi-index and $ x^p=x_1^{p_1}x_2^{p_2} $. From \eqref{3-4}, it is not hard to check that for $ 1\leq m\neq n\leq 2 $
	\begin{equation*}
	\begin{cases}
	\frac{x_m}{|x|^2}&=\Delta\left( \frac{1}{2}x_m\ln|x|\right),\\
	\frac{x_m^2x_n}{|x|^4}&=\Delta\left( -\frac{1}{8}\frac{x_m^2x_n}{|x|^2}+\frac{1}{8}x_n\ln|x|\right),\\
	\frac{x_m^3}{|x|^4}&=\Delta\left( -\frac{1}{8}\frac{x_m^3}{|x|^2}+\frac{3}{8}x_m\ln|x|\right),
	\end{cases}
	\end{equation*}
	which implies that
	\begin{equation}\label{3-5}
	\begin{cases}
	\frac{\left (T_y\left (x-y\right )\right )_m}{|T_y\left (x-y\right )|^2}&=\text{div}\left(K(y) \nabla\left( \frac{1}{2}\left (T_y\left (x-y\right )\right )_m\ln|T_y\left (x-y\right )|\right)\right) ,\\
	\frac{\left (T_y\left (x-y\right )\right )_m^2\left (T_y\left (x-y\right )\right )_n}{|T_y\left (x-y\right )|^4}&=\text{div}\left(K(y) \nabla \left( -\frac{1}{8}\frac{\left (T_y\left (x-y\right )\right )_m^2\left (T_y\left (x-y\right )\right )_n}{|T_y\left (x-y\right )|^2}+\frac{1}{8}\left (T_y\left (x-y\right )\right )_n\ln|T_y\left (x-y\right )|\right)\right) ,\\
	\frac{\left (T_y\left (x-y\right )\right )_m^3}{|T_y\left (x-y\right )|^4}&=\text{div}\left(K(y) \nabla\left( -\frac{1}{8}\frac{\left (T_y\left (x-y\right )\right )_m^3}{|T_y\left (x-y\right )|^2}+\frac{3}{8}\left (T_y\left (x-y\right )\right )_m\ln|T_y\left (x-y\right )|\right)\right).
	\end{cases}
	\end{equation}
	We define for $ x\in \Omega $
	\begin{equation*}
	\begin{split}
	F_{2,y}(x)=-&\frac{1}{2\pi}\sqrt{\det K(y)}^{-1}\sum_{\alpha,\beta,i,j,m,n=1}^2 \partial_{x_\alpha}K_{ij}(y)T^{-1}_{\alpha\beta} T_{mj}T_{ni}\cdot\frac{1}{2}\left (T_y\left (x-y\right )\right )_\beta\ln|T_y\left (x-y\right )|\delta_{m,n}\\
	+&\frac{1}{\pi}\sqrt{\det K(y)}^{-1} \sum_{\alpha,i,j=1}^2\partial_{x_\alpha}K_{ij}(y)\cdot \\
	&\bigg[T^{-1}_{\alpha1}T_{1j}T_{1i}\left( -\frac{1}{8}\frac{\left( T_y\left (x-y\right )\right) _1^3}{|T_y\left (x-y\right )|^2}+\frac{3}{8}\left( T_y\left (x-y\right )\right)_1\ln|T_y\left (x-y\right )|\right) \\
	+&T^{-1}_{\alpha1}T_{1j}T_{2i}\left( -\frac{1}{8}\frac{\left( T_y\left (x-y\right )\right) _1^2\left( T_y\left (x-y\right )\right) _2}{|T_y\left (x-y\right )|^2}+\frac{1}{8}\left( T_y\left (x-y\right )\right)_2\ln|T_y\left (x-y\right )|\right) \\
	+&T^{-1}_{\alpha1}T_{2j}T_{1i}\left( -\frac{1}{8}\frac{\left( T_y\left (x-y\right )\right) _1^2\left( T_y\left (x-y\right )\right) _2}{|T_y\left (x-y\right )|^2}+\frac{1}{8}\left( T_y\left (x-y\right )\right)_2\ln|T_y\left (x-y\right )|\right)\\
	+&T^{-1}_{\alpha1}T_{2j}T_{2i}\left( -\frac{1}{8}\frac{\left( T_y\left (x-y\right )\right) _1\left( T_y\left (x-y\right )\right) _2^2}{|T_y\left (x-y\right )|^2}+\frac{1}{8}\left( T_y\left (x-y\right )\right)_1\ln|T_y\left (x-y\right )|\right)
	\end{split}
	\end{equation*}
	\begin{equation}\label{F_{2,y}}
	\begin{split}
	+&T^{-1}_{\alpha2}T_{1j}T_{1i}\left( -\frac{1}{8}\frac{\left( T_y\left (x-y\right )\right) _1^2\left( T_y\left (x-y\right )\right) _2}{|T_y\left (x-y\right )|^2}+\frac{1}{8}\left( T_y\left (x-y\right )\right)_2\ln|T_y\left (x-y\right )|\right)\\
	+&T^{-1}_{\alpha2}T_{1j}T_{2i}\left( -\frac{1}{8}\frac{\left( T_y\left (x-y\right )\right) _1\left( T_y\left (x-y\right )\right) _2^2}{|T_y\left (x-y\right )|^2}+\frac{1}{8}\left( T_y\left (x-y\right )\right)_1\ln|T_y\left (x-y\right )|\right)\\
	+&T^{-1}_{\alpha2}T_{2j}T_{1i}\left( -\frac{1}{8}\frac{\left( T_y\left (x-y\right )\right) _1\left( T_y\left (x-y\right )\right) _2^2}{|T_y\left (x-y\right )|^2}+\frac{1}{8}\left( T_y\left (x-y\right )\right)_1\ln|T_y\left (x-y\right )|\right)\\
	+&T^{-1}_{\alpha2}T_{2j}T_{2i}\left( -\frac{1}{8}\frac{\left( T_y\left (x-y\right )\right) _2^3 }{|T_y\left (x-y\right )|^2}+\frac{3}{8}\left( T_y\left (x-y\right )\right)_2\ln|T_y\left (x-y\right )|\right)\bigg].
	\end{split}
	\end{equation}
	Combining \eqref{F_{2,y}} with \eqref{3-7} and \eqref{3-5}, we get
	\begin{equation}\label{3-8}
	\begin{split}
	\text{div}\left(K(y) \nabla F_{2,y}(x)  \right)=&\sum_{\alpha,\beta,i,j,m,n=1}^2\sqrt{\det K(y)}^{-1}\partial_{x_\alpha}K_{ij}(y)T^{-1}_{\alpha\beta} \left (T_y(x-y)\right )_\beta\cdot \\
	&-\frac{1}{2\pi}T_{mj}T_{ni}\left(\frac{\delta_{m,n}}{|T_y(x-y)|^2}-\frac{2\left( T_y(x-y)\right)_m\left( T_y(x-y)\right)_n}{|T_y(x-y)|^4} \right)\\
	=&
	\sum_{i,j=1}^2\left( K_{ij}(x)-K_{ij}(y)\right) \partial_{x_ix_j}\left(\sqrt{\det K(y)}^{-1}\Gamma\left (T_y(x-y)\right ) \right)-\phi_y(x).
	\end{split}
	\end{equation}
	
	Now we define $ \bar{H}_{1,y}(x)=\bar{S}_K(x,y)+F_{1,y}(x)+F_{2,y}(x) $. Taking \eqref{3-2} and \eqref{3-8} into \eqref{3-1}, we obtain
	\begin{equation}\label{3-9}
	\begin{split}
	-\text{div}&\left (K(x)\nabla \bar{H}_{1,y}(x)\right )\\
	=&-\text{div}\left (\left( K(x)-K(y)\right) \nabla\left( F_{1,y}(x)+F_{2,y}(x)\right) \right) \\
	&+\sum_{i,j=1}^2\left( \partial_{x_i}K_{ij}(x)-\partial_{x_i}K_{ij}(y)\right) \partial_{x_j}\left(\sqrt{\det K(y)}^{-1}\Gamma\left (T_y(x-y)\right ) \right)+\phi_y(x).
	\end{split}
	\end{equation}
	We can verify that for all $ y\in\Omega $, the right-hand side of \eqref{3-9} belongs to $ L^p(\Omega) $ for all $ p>1. $ Note also that
	\begin{equation*}
	\bar{H}_{1,y}(x)=-\sqrt{\det K(y)}^{-1}\Gamma\left (T_y(x-y)\right )+F_{1,y}(x)+F_{2,y}(x)\ \ \ \ x\in\partial\Omega.
	\end{equation*}
	For $ x,y\in\Omega $, we define $ \bar{H}_1(x,y)=\bar{H}_{1,y}(x) $. Applying the elliptic theory, we obtain that $ x\to \bar{H}_1(x,y) $ is in $ C^{1,\gamma}(\overline{\Omega}) $, for all $ \gamma\in(0,1) $. Furthermore,
	by the continuity of the right-hand side of \eqref{3-9} and the boundary condition with respect to $ y $ in $ L^p(\Omega) $   and $ C^2(\partial\Omega) $, respectively, we can get $ \bar{H}_1(x,y)=\bar{H}_{1,y}(x)\in C(\Omega, C^{1,\gamma}(\overline{\Omega})) $ and thus $ \nabla_x\bar{H}_1(x,y)\in C(\Omega\times \Omega) $.
	
	Similarly,  taking $ \nabla_y $ to both sides of \eqref{3-9}, we can check that $ \nabla_y\bar{H}_{1,y}(x)\in C(\Omega, C^{0,\gamma}(\overline{\Omega})) $, which implies that $ \nabla_y\bar{H}_1(x,y)\in C(\Omega\times \Omega) $, then $ \bar{H}_1 $ is a $ C^1 $ function over $ \Omega\times \Omega $. From \eqref{F_{1,y}} and \eqref{F_{2,y}}, we can prove that \eqref{exp of F_{1,y}} and \eqref{exp of F_{2,y}} hold. Finally, $ \bar{S}_K(x, x) =  \bar{H}_1(x,x) $ is clearly in $ C^1(\Omega) $.
	
\end{proof}

We now give the proof of Lemma \ref{H estimate}.

\begin{customthm}{A.3}\label{lemA3}
	Define $ \zeta_{\varepsilon, \hat{x}, \hat{q}}(x)=H_{\varepsilon, \hat{x}, \hat{q}}(x)+\frac{2\pi\hat{q}\sqrt{\det K(\hat{x})}\ln\frac{1}{\varepsilon}}{\ln s_\varepsilon}\bar{S}_K(x,\hat{x}) $ for $ x\in\Omega $. Then for any $ p\in(1,2) $, there exists a constant $ C>0 $ independent of $ \varepsilon $ such that
\begin{equation*}
||\zeta_{\varepsilon, \hat{x}, \hat{q}}||_{C^{0,2-\frac{2}{p}}(\Omega)}\leq Cs_\varepsilon^{\frac{2}{p}-1}.
\end{equation*}
\end{customthm}

\begin{proof}
We follow the idea of proof of theorem 1.3 in \cite{CW1}. By the construction of Green's function $ G_K $,  on $ \{x\mid |T_{\hat{x}}(x-\hat{x})|>s_\varepsilon\}$ we have
	\begin{equation}\label{3000}
	\text{div}(K(x)\nabla G_K(x,\hat{x}))=0 ,
	\end{equation}
thus by Lemma \ref{Green expansion} we deduce that
	\begin{equation}\label{3001}
	-\sqrt{\det K(\hat{x})}^{-1}\text{div}(K(x)\nabla\Gamma\left (T_{\hat{x}}(x-\hat{x})\right ))=\text{div}(K(x)\nabla\bar{S}_K(x,\hat{x})).
	\end{equation}
Taking \eqref{eq2},\eqref{eq2-2} and \eqref{3001} into \eqref{eq3}, we have on $ \{x\mid |T_{\hat{x}}(x-\hat{x})|>s_\varepsilon\} $,
	\begin{equation*}
	\begin{split}
	-\varepsilon^2\text{div}\left (K(x)\nabla H_{\varepsilon, \hat{x}, \hat{q}}\right )&=\varepsilon^2\text{div}(K(x)\nabla V_{\varepsilon, \hat{x}, \hat{q}})\\
	&=\frac{\varepsilon^2\hat{q}\ln\frac{1}{\varepsilon}}{\ln s_{\varepsilon}}\text{div}(K(x)\nabla \ln|T_{\hat{x}}(x-\hat{x})|)\\
	&=\frac{2\pi\varepsilon^2\hat{q}\sqrt{\det K(\hat{x})}\ln\frac{1}{\varepsilon}}{\ln s_{\varepsilon}}\text{div}(K(x)\nabla \bar{S}_K(x,\hat{x})),
	\end{split}
	\end{equation*}
	which implies that $ -\text{div}(K(x)\nabla \zeta_{\varepsilon, \hat{x}, \hat{q}})\equiv 0 $ on $ \{x\mid |T_{\hat{x}}(x-\hat{x})|>s_\varepsilon\} $.
	
	On the set $ \{x\mid |T_{\hat{x}}(x-\hat{x})|<s_\varepsilon\} $, we note that
	\begin{equation}\label{3-01}
	-\text{div}(K(x)\nabla \zeta_{\varepsilon, \hat{x}, \hat{q}})(x)=\text{div}((K(x)-K(\hat{x}))\nabla V_{\varepsilon, \hat{x}, \hat{q}})(x)-\frac{2\pi\hat{q}\sqrt{\det K(\hat{x})}\ln\frac{1}{\varepsilon}}{\ln s_\varepsilon}\text{div}(K(x)\nabla\bar{S}_K(x,\hat{x})).
	\end{equation}
	From \eqref{eq2-2} and Lemma \ref{Green expansion2}, we get on
	$ \{x\mid |T_{\hat{x}}(x-\hat{x})|<s_\varepsilon\} $
	\begin{equation*}
	|\text{div}((K(x)-K(\hat{x}))\nabla V_{\varepsilon, \hat{x}, \hat{q}})(x)|\leq \frac{C}{s_\varepsilon}\left( \phi'\left(\frac{|T_{\hat{x}}(x-\hat{x})|}{s_\varepsilon}\right)+\phi''\left(\frac{|T_{\hat{x}}(x-\hat{x})|}{s_\varepsilon}\right)\right) 
	\end{equation*}
and
	\begin{equation*}
	\bigg|\frac{2\pi\hat{q}\sqrt{\det K(\hat{x})}\ln\frac{1}{\varepsilon}}{\ln s_\varepsilon}\text{div}(K(x)\nabla\bar{S}_K(x,\hat{x}))\bigg|\leq C\left ( \ln\frac{1}{|T_{\hat{x}}(x-\hat{x})|}+\frac{1}{|T_{\hat{x}}(x-\hat{x})|}+1\right ).
	\end{equation*}
	Taking these into \eqref{3-01}, for any $ p\in(1,2) $ we have
	\begin{equation*}\label{3-02}
	||\text{div}(K(x)\nabla \zeta_{\varepsilon, \hat{x}, \hat{q}})||_{L^p(\{x\mid |T_{\hat{x}}(x-\hat{x})|<s_\varepsilon\})}\leq \frac{C}{s_\varepsilon}\cdot s_\varepsilon^{\frac{2}{p}}=Cs_\varepsilon^{\frac{2}{p}-1}.
	\end{equation*}
%	On $ \partial \Omega $ we have
%	\begin{equation*}
%	H_{\delta, \hat{x}, \hat{q}}(\cdot)=-V_{\delta, \hat{x}, \hat{q}}(\cdot)=-\frac{\hat{q}}{\ln s_{\delta}}\ln|T_{\hat{x}}(\cdot-\hat{x})|=-\frac{2\pi\hat{q}\sqrt{\det K(\hat{x})}}{\ln s_{\delta}}\bar{S}_K(\cdot,\hat{x}),
%	\end{equation*}
Note also that $ \zeta_{\varepsilon, \hat{x}, \hat{q}}\equiv0 $ on $ \partial \Omega. $  By the $ L^p $ theory for second-order elliptic equations, for any $ p\in(1,2) $
	\begin{equation*}
	||\zeta_{\varepsilon, \hat{x}, \hat{q}}||_{W^{2,p}(\Omega)}\leq C||\text{div}(K(x)\nabla \zeta_{\varepsilon, \hat{x}, \hat{q}})||_{L^p(\{x\mid |T_{\hat{x}}(x-\hat{x})|<s_\varepsilon\})}\leq  C s_\varepsilon^{\frac{2}{p}-1}.
	\end{equation*}
Using the Sobolev embedding theorem, the proof is complete.
\end{proof}

\subsection*{Acknowledgements:}

\par
D. Cao was partially supported by National Key R \& D
Program (2022YFA1005602). J. Wan was supported by NNSF of China (grant No. 12271539 and 12471190).

\subsection*{Conflict of interest statement} On behalf of all authors, the corresponding author states that there is no conflict of interest.

\subsection*{Data availability statement} All data generated or analysed during this study are included in this published article  and its supplementary information files.

\end{document}